\numberwithin{equation}{section} \theoremstyle{plain}
\newcommand{\Complex}{\mathbb C}
\newcommand{\Real}{\mathbb R}
\newcommand{\N}{\mathbb N}
\newcommand{\ddbar}{\overline\partial}
\newcommand{\pr}{\partial}
\newcommand{\ol}{\overline}
\newcommand{\Td}{\widetilde}
\newcommand{\norm}[1]{\left\Vert#1\right\Vert}
\newcommand{\set}[1]{\left\{#1\right\}}
\newcommand{\To}{\rightarrow}
\newtheorem{theorem}{Theorem}[section]
\newtheorem{lemma}[theorem]{Lemma}
\newtheorem{definition}[theorem]{Definition}
\newtheorem{ass}[theorem]{Assumption}
\theoremstyle{definition}
\theoremstyle{remark}
\newtheorem{remark}[theorem]{Remark}
\numberwithin{equation}{section}
\newcommand{\abs}[1]{\lvert#1\rvert}
\begin{document}
%-----------------------------------------------------------------------------------------------------------------------------------------------------------------------------------------------------------------

\title[$G$-invariant Szeg\"o kernel asymptotics  and CR reduction]
{$G$-invariant Szeg\"o kernel asymptotics and CR reduction}

\author{Chin-Yu Hsiao}

\address{Institute of Mathematics, Academia Sinica and National Center for Theoretical Sciences, Astronomy-Mathematics Building, No. 1, Sec. 4, Roosevelt Road, Taipei 10617, Taiwan}
\thanks{The first author was partially supported by Taiwan Ministry of Science of Technology project 104-2628-M-001-003-MY2 , the Golden-Jade fellowship of Kenda Foundation and Academia Sinica Career Development Award. This work was initiated when the second author was visiting the Institute of Mathematics at Academia Sinica in the summer of 2016. The second author would like to thank the Institute of Mathematics at Academia Sinica for its hospitality and financial support during his stay. The second author was also supported by Taiwan Ministry of Science of Technology project 105-2115-M-008-008-MY2}

\email{chsiao@math.sinica.edu.tw or chinyu.hsiao@gmail.com}

\author{Rung-Tzung Huang}

\address{Department of Mathematics, National Central University, Chung-Li 320, Taiwan}

\email{rthuang@math.ncu.edu.tw}

\keywords{Szeg\"{o} kernel, moment map, CR manifolds} 
\subjclass[2000]{Primary: 58J52, 58J28; Secondary: 57Q10}

\begin{abstract}
Let $(X, T^{1,0}X)$ be a compact connected orientable CR manifold of dimension $2n+1$ with non-degenerate Levi curvature. Assume that $X$ admits a connected compact Lie group action $G$. Under certain natural assumptions about the group action $G$, we show that the $G$-invariant Szeg\"o kernel for $(0,q)$ forms is a complex Fourier integral operator, smoothing away $\mu^{-1}(0)$ and there is a precise description of the singularity near $\mu^{-1}(0)$, where $\mu$ denotes the CR moment map. We apply our result  to the case when $X$ admits a transversal CR $S^1$ action and 
deduce an asymptotic expansion for the $m$-th Fourier component of the $G$-invariant Szeg\"o kernel for $(0,q)$ forms as $m\To+\infty$. 
As an application, we show that if $m$ large enough, quantization commutes with reduction. 
\end{abstract}

\maketitle

%%%%%%%%%%%%%%%%%%%%%%%%%%%%%%%%%%%%%%%%%%%%%%%%%%%%%

\section{Introduction and statement of the main results}\label{s-gue170124}

Let $(X, T^{1,0}X)$ be a CR manifold of dimension $2n+1$, $n\geq1$.
Let $\Box^{(q)}_b$ be the Kohn Lalpacian acting on $(0,q)$ forms. 
The orthogonal projection $S^{(q)}:L^2_{(0,q)}(X)\To {\rm Ker\,}\Box^{(q)}_b$ onto ${\rm Ker\,}\Box^{(q)}_b$
is called the Szeg\"{o} projection, while its distribution kernel $S^{(q)}(x,y)$ is called the Szeg\"{o} kernel.
The study of the Szeg\"{o} projection and kernel is a classical subject in several complex variables and CR geometry.
A very important case is when $X$ is a compact strictly pseudoconvex
CR manifold. Assume first that $X$ is the boundary of a strictly pseudoconvex domain.
Boutet de Monvel-Sj\"ostrand~\cite{BouSj76} showed that $S^{(0)}(x,y)$
is a complex Fourier integral operator. 

The Boutet de Monvel-Sj\"ostrand description of the Szeg\"{o} kernel had a profound impact
in many research areas, especially through  \cite{BG81}: several complex variables, 
symplectic and contact geometry, geometric quantization, K\"ahler geometry, semiclassical analysis,
quantum chaos, etc. cf.\ \cite{Engl:02,Gu89,Ma10,Pa05, ShZ99,Zelditch98},
to quote just a few. These ideas also partly motivated the introduction of alternative approaches, 
see \cite{Ma10,MM06,MM08a,MM}.

Now, we consider a connected compact group action $G$ acting on $X$. The study of $G$-invariant Szeg\"o kernel is closely related to Mathematical physics and geometric quantization of CR manifolds. It is a fundamental problem  to establish $G$-invariant Boutet de Monvel-Sj\"ostrand type theorems for $G$-invariant Szeg\"o kernels and study the influence of the $G$-invariant Szeg\"o kernel. This is the motivation of this work. In this paper, we consider $G$-invariant Szeg\"o kernel for $(0,q)$ forms and we  show that the $G$-invariant Szeg\"o kernel for $(0,q)$ forms is a complex Fourier integral operator.   In particular,
$S^{(q)}(x,y)$ is smoothing outside $\mu^{-1}(0)$ and there is a precise description of the singularity near $\mu^{-1}(0)$, where $\mu$ denotes the CR moment map. We apply our result  to the case when $X$ admits a transversal CR $S^1$ action and 
deduce an asymptotic expansion for the $m$-th Fourier component of the Szeg\"o kernel for $(0,q)$ forms as $m\To+\infty$. 
As an application, we show that, if $m$ large enough, quantization commutes with reduction.  

In \cite{MZ}, Ma and Zhang have studied the asymptotic expansion of the invariant Bergman kernel of the $\operatorname{spin}^c$ Dirac operator associated with high tensor powers of a positive line bundle on a symplectic manifold admitting a Hamiltonian action of a compact connected Lie group and its relation to the asymptotic expansion of Bergman kernel on symplectic reduced space. Their approach is inspired by the analytic localization techniques developed by Bismut and Lebeau. 
About the "quantization commutes with
reduction problem in symplectic geometry, we refer the readers to~\cite{Ma10}.  In the second part of~\cite{Ma10}, Ma described how the $G$-invariant Bergman kernel €œconcentrates€ on the Bergman kernel of the reduced space. In particular, this is used to
address M. Vergne€™s conjecture~\cite{MZI}. 

In~\cite{Pa03}, ~\cite{Pa05},~\cite{Pa08},~\cite{Pa12}, ~\cite{Pa13} , Paoletti studied equivariant Szeg\"o kernel on complex manifold and the relation between the Szeg\"o kernel of an ample
line bundle on a complex projective manifold $M$ and the Szeg\"o kernel of
the induced polarization on the quotient of $M$ by the holomorphic action of
a compact Lie group $G$. The approach of Paolletti is based on Microlocal technique, especially from Boutet de Monvel-Sj\"ostrand~\cite{BouSj76}, Boutet de Monvel-Guillemin~\cite{BG81}. It should be also mentioned that Paolletti  obtained in~\cite{Pa05a}, the asymptotic growth of equivariant sections of positive and big line bundles.

We now formulate the main results. We refer to Section~\ref{s:prelim} for some notations and terminology used here.
Let $(X, T^{1,0}X)$ be a compact connected orientable CR manifold of dimension $2n+1$, $n\geq1$, where $T^{1,0}X$ denotes the CR structure of $X$. 
Fix a global non-vanishing real $1$-form $\omega_0\in C^\infty(X,T^*X)$ such that $\langle\,\omega_0\,,\,u\,\rangle=0$, for every $u\in T^{1,0}X\oplus T^{0,1}X$. The Levi form of $X$ at $x\in X$ is the Hermitian quadratic form on $T^{1,0}_xX$ given by $\mathcal{L}_x(U,\ol V)=-\frac{1}{2i}\langle\,d\omega_0(x)\,,\,U\wedge\ol V\,\rangle$, $\forall U, V\in T^{1,0}_xX$. In this work, we assume that 

\begin{ass}\label{a-gue170123}
The Levi form is non-degenerate of constant signature $(n_-,n_+)$ on $X$. That is, the Levi form  has exactly $n_-$ negative and $n_+$ positive eigenvalues at each point of $X$, where $n_-+n_+=n$. 
\end{ass}

Let $HX=\set{{\rm Re\,}u;\, u\in T^{1,0}X}$ and let $J:HX\To HX$ be the complex structure map given by $J(u+\ol u)=iu-i\ol u$, for every $u\in T^{1,0}X$. 
In this work, we assume that $X$ admits a $d$-dimensional locally free connected compact Lie group action $G$.  We assume throughout that

\begin{ass}\label{a-gue170123I}
The Lie group action $G$ preserves $\omega_0$ and $J$. That is, $g^\ast\omega_0=\omega_0$ on $X$ and $g_\ast J=Jg_\ast$ on $HX$, for every $g\in G$, where $g^*$ and $g_*$ denote  the pull-back map and push-forward map of $G$, respectively. 
\end{ass}

Let $\mathfrak{g}$ denote the Lie algebra of $G$. For any $\xi \in \mathfrak{g}$, we write $\xi_X$ to denote the vector field on $X$ induced by $\xi$. That is, $(\xi_X u)(x)=\frac{\partial}{\partial t}\left(u(\exp(t\xi)\circ x)\right)|_{t=0}$, for any $u\in C^\infty(X)$.

\begin{definition}\label{d-gue170124}
The moment map associated to the form $\omega_0$ is the map $\mu:X \to \mathfrak{g}^*$ such that, for all $x \in X$ and $\xi \in \mathfrak{g}$, we have 
\begin{equation}\label{E:cmpm}
\langle \mu(x), \xi \rangle = \omega_0(\xi_X(x)).
\end{equation}
\end{definition}

In this work, we assume that 

\begin{ass}\label{a-gue170123II}
$0$ is a regular value of $\mu$ and the action $G$ on $\mu^{-1}(0)$ is globally free. 
\end{ass}

By Assumption~\ref{a-gue170123II}, $\mu^{-1}(0)$ is a $d$-dimensional submanifold of $X$. Let $Y:=\mu^{-1}(0)$ and let $HY:=HX\bigcap TY$. In Section~\ref{s-gue170301}, we will show that ${\rm dim\,}(HY\bigcap JHY)=2n-2d$ at every point of $Y$. Moreover, we will show that $\mu^{-}(0)/G=:Y_G$ is a CR manifold with natural CR 
structure induced by $T^{1,0}X$ of dimension $2n-2d+1$ and we can identify $HY_G$ with $HY\bigcap JHY$. 

Let $\underline{\mathfrak{g}}={\rm Span\,}(\xi_X;\, \xi\in\mathfrak{g})$. For $x\in\mu^{-1}(0)$, $\underline{\mathfrak{g}}_x\subset H_x$. 
Fix a $G$-invariant smooth Hermitian metric $\langle \cdot \mid \cdot \rangle$ on $\mathbb{C}TX$ so that $T^{1,0}X$ is orthogonal to $T^{0,1}X$, $\underline{\mathfrak{g}}$ is orthogonal to $HY\bigcap JHY$ at every point of $Y$, $\langle u \mid v \rangle$ is real if $u, v$ are real tangent vectors, $\langle\,T\,|\,T\,\rangle=1$ and $T$ is orthogonal to $T^{1,0}X\oplus T^{0,1}X$, where $T$ is given by \eqref{e-gue170111ry}. The Hermitian metric $\langle \cdot | \cdot \rangle$ on $\mathbb{C}TX$ induces, by duality, a Hermitian metric on $\mathbb{C}T^*X$ and also on the bundles of $(0,q)$ forms $T^{*0,q}X, q=0, 1, \cdots, n$. We shall also denote all these induced metrics by $\langle \cdot | \cdot \rangle$. Fix $g\in G$. Let $g^*:\Lambda^r_x(\Complex T^*X)\To\Lambda^r_{g^{-1}\circ x}(\Complex T^*X)$ be the pull-back map. Since $G$ preserves $J$, we have 
\[g^*:T^{*0,q}_xX\To T^{*0,q}_{g^{-1}\circ x}X,\  \ \forall x\in X.\]
Thus, for $u\in\Omega^{0,q}(X)$, we have $g^*u\in\Omega^{0,q}(X)$. Put 
\[\Omega^{0,q}(X)^G:=\set{u\in\Omega^{0,q}(X);\, g^*u=u,\ \ \forall g\in G}.\]
Since the Hermitian metric $\langle\,\cdot\,|\,\cdot\,\rangle$ on $\Complex TX$ is $G$-invariant, the $L^2$ inner product $(\,\cdot\,|\,\cdot\,)$ on $\Omega^{0,q}(X)$ 
induced by $\langle\,\cdot\,|\,\cdot\,\rangle$ is $G$-invariant. Let $u\in L^2_{(0,q)}(X)$ and $g\in G$, we can also define $g^*u$ in the standard way (see the discussion in the beginning of Section~\ref{s-gue161109}). Put 
\[L^2_{(0,q)}(X)^G:=\set{u\in L^2_{(0,q)}(X);\, g^*u=u,\ \ \forall g\in G}.\]
Let $\Box^{(q)}_b : {\rm Dom\,}\Box^{(q)}_b\To L^2_{(0,q)}(X)$ be the Gaffney extension of Kohn Laplacian (see \eqref{e-suIX}). Put $({\rm Ker\,}\Box^{(q)}_b)^G:={\rm Ker\,}\Box^{(q)}_b\bigcap L^2_{(0,q)}(X)^G$. The $G$-invariant Szeg\"o projection is the orthogonal projection 
\[S^{(q)}_G:L^2_{(0,q)}(X)\To ({\rm Ker\,}\Box^{(q)}_b)^G\]
with respect to $(\,\cdot\,|\,\cdot\,)$. Let $S^{(q)}_G(x,y)\in D'(X\times X,T^{*0,q}X\boxtimes(T^{*0,q}X)^*)$ be the distribution kernel of $S^{(q)}_G$. The first main result of this work is the following 

\begin{theorem}\label{t-gue170124}
With the assumptions and notations above, suppose that $\Box^{(q)}_b : {\rm Dom\,}\Box^{(q)}_b\To L^2_{(0,q)}(X)$ has closed range. If $q\notin\set{n_-, n_+}$, then $S^{(q)}_G\equiv 0$ on $X$. 

Suppose  $q\in\set{n_-, n_+}$. Let $D$ be an open set of $X$ with $D\bigcap\mu^{-1}(0)=\emptyset$. Then, 
\[S^{(q)}_G\equiv0\ \ \mbox{on $D$}.\]
Let $p\in\mu^{-1}(0)$ and let $U$ be an open set of  $p$ and let $x=(x_1,\ldots,x_{2n+1})$ be local coordinates defined in $U$. 
Then, there exist continuous operators
\[S^G_-, S^G_+:\Omega^{0,q}_0(U)\To\Omega^{0,q}(U)\]
such that 
\begin{equation}\label{e-gue170108wrm}
S^{(q)}_G\equiv S^G_-+S^G_+\ \ \mbox{on $U$},
\end{equation}
\begin{equation}\label{e-gue170108wrIm}
\begin{split}
&S^G_-=0\ \ \mbox{if $q\neq n_-$},\\
&S^G_+=0\ \ \mbox{if $q\neq n_+$},\\
\end{split}
\end{equation}
and if $q=n_-$, $S^G_-(x,y)$ satisfies
\begin{equation}\label{e-gue170108wrIIm}
S^G_-(x, y)\equiv\int^{\infty}_{0}e^{i\Phi_-(x, y)t}a_-(x, y, t)dt\ \ \mbox{on $U$}
\end{equation}
with 
\begin{equation}  \label{e-gue170108wrIIIm}\begin{split}
&a_-(x, y, t)\in S^{n-\frac{d}{2}}_{1,0}(U\times U\times\mathbb{R}_+,T^{*0,q}X\boxtimes(T^{*0,q}X)^*), \\
&a_-(x, y, t)\sim\sum^\infty_{j=0}a^j_-(x, y)t^{n-\frac{d}{2}-j}\quad\text{ in }S^{n-\frac{d}{2}}_{1, 0}(U\times U\times\mathbb{R}_+,T^{*0,q}X\boxtimes(T^{*0,q}X)^*),\\
&a^j_-(x, y)\in C^\infty(U\times U,T^{*0,q}X\boxtimes(T^{*0,q}X)^*),\ \ j=0,1,2,3,\ldots,\\
&a^0_-(x,x)\neq0,\ \ \forall x\in U,
\end{split}\end{equation}
$a^0_-(x,x)$, $x\in\mu^{-1}(0)\bigcap U$, is given by \eqref{e-gue170128} below, $\Phi_-(x,y)\in C^\infty(U\times U)$, 
\begin{equation}\label{e-gue170125}
\begin{split}
&{\rm Im\,}\Phi_-(x,y)\geq0,\\
&d_x\Phi_-(x,x)=-d_y\Phi_-(x,x)=-\omega_0(x),\ \ \forall x\in U\bigcap\mu^{-1}(0),\\
\end{split}
\end{equation}
there is a constant $C\geq 1$ such that, for all $(x,y)\in U\times U$, 
\begin{equation}\label{e-gue170125I}
\begin{split}
&\abs{\Phi_-(x,y)}+{\rm Im\,}\Phi_-(x,y)\leq C \left( \inf\set{d^2(g\circ x,y);\, g\in G}+d^2(x,\mu^{-1}(0))+d^2(y,\mu^{-1}(0)) \right), \\
&\abs{\Phi_-(x,y)}+{\rm Im\,}\Phi_-(x,y)\geq\frac{1}{C} \left( \inf\set{d^2(g\circ x,y);\, g\in G}+d^2(x,\mu^{-1}(0))+d^2(y,\mu^{-1}(0)) \right),\\
&Cd^2(x,\mu^{-1}(0))\geq {\rm Im\,}\Phi_-(x,x)\geq\frac{1}{C}d^2(x,\mu^{-1}(0)),\ \ \forall x\in U, 
\end{split}
\end{equation}
and $\Phi_-(x,y)$ satisfies \eqref{e-gue170126}  below and \eqref{e-gue170126I} below. 

If $q=n_+$, then $S^G_+(x,y)$ satisfies 
\begin{equation}\label{e-gue170108waIm}
S^G_+(x, y)\equiv\int^{\infty}_{0}e^{i\Phi_+(x, y)t}a_+(x, y, t)dt\ \ \mbox{on $U$}
\end{equation}
with
\begin{equation}  \label{e-gue161110rIm}
\begin{split}
&a_+(x, y, t)\in S^{n-\frac{d}{2}}_{1,0}(U\times U\times\mathbb{R}_+,T^{*0,q}X\boxtimes(T^{*0,q}X)^*), \\
&a_+(x, y, t)\sim\sum^\infty_{j=0}a^j_+(x, y)t^{n-\frac{d}{2}-j}\quad\text{ in }S^{n-\frac{d}{2}}_{1, 0}(U\times U\times\mathbb{R}_+,T^{*0,q}X\boxtimes(T^{*0,q}X)^*),\\
&a^j_+(x, y)\in C^\infty(U\times U,T^{*0,q}X\boxtimes(T^{*0,q}X)^*),\ \ j=0,1,2,3,\ldots,\\
&a^0_+(x,x)\neq0,\ \ \forall x\in U,
\end{split}\end{equation}
$a^0_+(x,x)$, $x\in\mu^{-1}(0)\bigcap U$, is given by \eqref{e-gue170128} below, and $\Phi_+(x,y)\in C^\infty(U\times U)$, $-\ol\Phi_+(x,y)$ satisfies \eqref{e-gue170125}, \eqref{e-gue170125I}, \eqref{e-gue170126}  below and \eqref{e-gue170126I} below . 
\end{theorem}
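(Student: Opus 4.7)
The plan is to deduce the theorem from the known microlocal description of the non-invariant Szeg\"o projector $S^{(q)}$ by averaging over $G$ and then performing a complex stationary phase reduction in the group variable. Under Assumption~\ref{a-gue170123} and the closed range hypothesis on $\Box^{(q)}_b$, the Boutet de Monvel--Sj\"ostrand construction, in the extension to arbitrary Levi signature, yields on any sufficiently small patch $U\subset X$ a representation
\[S^{(q)}(x,y)\equiv \int_0^\infty e^{i\varphi_-(x,y)t}s_-(x,y,t)\,dt+\int_0^\infty e^{i\varphi_+(x,y)t}s_+(x,y,t)\,dt,\]
with $s_\pm$ a classical symbol of order $n$ vanishing unless $q=n_\pm$ and with $d_x\varphi_\pm(x,x)=\mp\omega_0(x)$. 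Since $G$ preserves $J$, $\omega_0$ and the Hermitian metric, $\Box^{(q)}_b$ commutes with every $g^*$, so $S^{(q)}$ is already $G$-invariant and hence $S^{(q)}_G=S^{(q)}\circ P_G$, where $P_G u=\int_G g^*u\,dg$. On the kernel level this reads
\[S^{(q)}_G(x,y)\equiv \int_G S^{(q)}(g\circ x,y)\,dg,\]
and the vanishing statement for $q\notin\set{n_-,n_+}$ is immediate.

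I plan to insert the microlocal expression inside the $G$-integral and study the resulting oscillatory integral in $(t,g)$ via stationary phase. Differentiating $\varphi_\pm(g\circ x,y)t$ in $g$ along $\xi\in\mathfrak{g}$ gives, to leading order at the diagonal, $\mp t\langle\mu(g\circ x),\xi\rangle$ by \eqref{E:cmpm}. On any open $D$ with $D\cap\mu^{-1}(0)=\emptyset$, compactness of $G\cdot D$ and non-vanishing of $\mu$ there produce a uniform lower bound on the $g$-gradient of the phase; combined with $t$-homogeneity this allows repeated integration by parts in $(t,g)$ and yields the smoothing statement $S^{(q)}_G\equiv 0$ on $D$.

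Near $\mu^{-1}(0)$, the real critical set of $\varphi_\pm(g\circ x,y)t$ in $g$ at the diagonal $x=y\in\mu^{-1}(0)$ is precisely the stabiliser of $x$; since the action is locally free, this stabiliser is trivial, $\underline{\mathfrak{g}}$ is of dimension $d$ at each point of $\mu^{-1}(0)$, and the Hessian of the phase in $g$ transverse to the critical set has rank $d$. I would then apply the Melin--Sj\"ostrand complex stationary phase theorem to almost analytic extensions of $\varphi_\pm$ and $s_\pm$ in the $g$-variables, obtaining
\[S^G_\pm(x,y)\equiv \int_0^\infty e^{i\Phi_\pm(x,y)t}a_\pm(x,y,t)\,dt,\]
where $\Phi_\pm(x,y)$ is the critical value of $\varphi_\pm(g\circ x,y)$ in $g$ and $a_\pm$ is a classical symbol of order $n-\tfrac{d}{2}$; the order drop of $d/2$ reflects the $d$-dimensional stationary phase. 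The identities of \eqref{e-gue170125} for $\Phi_\pm$ follow from the corresponding data of $\varphi_\pm$ by the chain rule at the critical $g=e$, while the two-sided bound \eqref{e-gue170125I} encodes the fact that $\Phi_\pm$ degenerates exactly along $\set{(x,y):g\circ x=y\text{ for some }g\in G,\ x,y\in\mu^{-1}(0)}$, i.e.\ along the lifted diagonal of the CR reduction $Y_G$.

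The main technical obstacle is the verification of the Melin--Sj\"ostrand positivity hypothesis for the complex Hessian of $\varphi_\pm(g\circ x,y)$ in $g$: the phase is only almost analytic and the Hessian is complex-valued, so one must show that, transverse to the critical set, its imaginary part has the correct sign. I expect this to follow from $G$-invariance of the metric together with the orthogonal decomposition at $\mu^{-1}(0)$ whereby $\underline{\mathfrak{g}}$ is transverse to $HY\cap JHY\subset HX$, combined with the known positivity of ${\rm Im\,}\varphi_\pm$ in directions complementary to the contact plane. The leading coefficient $a^0_\pm(x,x)$ for $x\in\mu^{-1}(0)$ is then produced by the Gaussian formula of stationary phase applied to $s^0_\pm(x,x)$, matching \eqref{e-gue170128}, and further local computations in coordinates adapted to $Y$ and $Y_G$ yield the refined phase properties \eqref{e-gue170126} and \eqref{e-gue170126I}.
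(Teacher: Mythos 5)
Your proposal matches the paper's own argument in structure and in all essential steps: representing $S^{(q)}_G$ as the $G$-average of the Boutet de Monvel--Sj\"ostrand kernel $S^{(q)}$, proving smoothing away from $\mu^{-1}(0)$ by integration by parts in the group variable, and obtaining the FIO representation near $\mu^{-1}(0)$ by Melin--Sj\"ostrand complex stationary phase in $g$ with the order drop $n\mapsto n-\tfrac{d}{2}$ coming from the $d$-dimensional $g$-Gaussian. The "technical obstacle" you flag (positivity of the complex Hessian and the two-sided estimates for $\operatorname{Im}\Phi_-$) is precisely where the paper invests its work (Theorems~\ref{t-gue161202}, \ref{t-gue161222}, \ref{t-gue170105I}); your sketch of how this should follow from the orthogonal splitting at $\mu^{-1}(0)$ and the non-degeneracy of the $g$-Hessian is the correct idea, though the paper also exploits $G$-invariance of $S^{(q)}_G$ to fix a slice $x'=0$, $y'=\gamma(y'')$ and work in the carefully normalized coordinates of Theorem~\ref{t-gue161202}, which is what makes the Hessian computation in \eqref{e-gue170102III} and the verification of \eqref{e-gue170106m} tractable. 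One small imprecision in your write-up: the critical set in $g$ at a diagonal point of $\mu^{-1}(0)$ is a single point (the global freeness in Assumption~\ref{a-gue170123II}), so "the Hessian transverse to the critical set has rank $d$" just means the full $d\times d$ Hessian is non-degenerate; and the integration-by-parts argument away from $\mu^{-1}(0)$ needs the additional splitting into the near-diagonal part (where $\varphi_\pm$ is defined and the $g$-gradient is bounded below) and the far-from-diagonal part (where $S^{(q)}$ is already smoothing), as in Lemmas~\ref{l-gue170110}--\ref{l-gue170111}.
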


We refer the reader to the discussion before \eqref{e-gue160507f} and Definition~\ref{d-gue140221a} for the precise meanings of $A\equiv B$ and the symbol space $S^{n-\frac{d}{2}}_{1,0}$, respectively. 

%It should be mentioned that it was shown in \cite{A, Ge} that if $0$ is a regular value of $\mu$, then the action of $G$ on $\mu^{-1}(0)$ is globally free. 

Let $\Phi\in C^\infty(U\times U)$.
We assume that $\Phi$ satisfies \eqref{e-gue170125}, \eqref{e-gue170125I}, \eqref{e-gue170126}, \eqref{e-gue170126I}. We will show in Theorem~\ref{t-gue140305II} that 
the functions
$\Phi$ and $\Phi_-$ are equivalent on $U$ in the sense of Definition~\ref{d-gue140305}
if and only if there is a function $f\in C^\infty(U\times U)$ with $f(x,x)=1$, for every $x\in\mu^{-1}(0)$, such that $\Phi(x,y)-f(x,y)\Phi_-(x,y)$
vanishes to infinite order at ${\rm diag\,}\Bigr((\mu^{-1}(0)\bigcap U)\times(\mu^{-1}(0)\bigcap U)\Bigr)$. From this observation, we see that the leading term $a^0_-(x,x)$, $x\in\mu^{-1}(0)$, is well-defined. To state the formula for $a^0_-(x,x)$, we introduce some notations. For a given point $x_0\in X$, let $\{W_j\}_{j=1}^{n}$ be an
orthonormal frame of $(T^{1,0}X,\langle\,\cdot\,|\,\cdot\,\rangle)$ near $x_0$, for which the Levi form
is diagonal at $x_0$. Put
\begin{equation}\label{levi140530}
\mathcal{L}_{x_0}(W_j,\ol W_\ell)=\mu_j(x_0)\delta_{j\ell}\,,\;\; j,\ell=1,\ldots,n\,.
\end{equation}
We will denote by
\begin{equation}\label{det140530}
\det\mathcal{L}_{x_0}=\prod_{j=1}^{n}\mu_j(x_0)\,.
\end{equation}
Let $\{T_j\}_{j=1}^{n}$ denote the basis of $T^{*0,1}X$, dual to $\{\ol W_j\}^{n}_{j=1}$. We assume that
$\mu_j(x_0)<0$ if\, $1\leq j\leq n_-$ and $\mu_j(x_0)>0$ if\, $n_-+1\leq j\leq n$. Put
\begin{equation}\label{n140530}
\begin{split}
&\mathcal{N}(x_0,n_-):=\set{cT_1(x_0)\wedge\ldots\wedge T_{n_-}(x_0);\, c\in\Complex},\\
&\mathcal{N}(x_0,n_+):=\set{cT_{n_-+1}(x_0)\wedge\ldots\wedge T_{n}(x_0);\, c\in\Complex}\end{split}
\end{equation}
and let
\begin{equation}\label{tau140530}
\begin{split}
\tau_{n_-}=\tau_{x_0,n_-}:T^{*0,q}_{x_0}X\To\mathcal{N}(x_0,n_-)\,,\quad
\tau_{n_+}=\tau_{x_0,n_+}:T^{*0,q}_{x_0}X\To\mathcal{N}(x_0,n_+)\,,\end{split}
\end{equation}
be the orthogonal projections onto $\mathcal{N}(x_0,n_-)$ and $\mathcal{N}(x_0,n_+)$
with respect to $\langle\,\cdot\,|\,\cdot\,\rangle$, respectively.

Fix $x\in\mu^{-1}(0)$, consider the linear map 
\[\begin{split}
R_x:\underline{\mathfrak{g}}_x&\To\underline{\mathfrak{g}}_x,\\
u&\To R_xu,\ \ \langle\,R_xu\,|\,v\,\rangle=\langle\,d\omega_0(x)\,,\,Ju\wedge v\,\rangle.
\end{split}\]
Let $\det R_x=\lambda_1(x)\cdots\lambda_d(x)$, where $\lambda_j(x)$, $j=1,2,\ldots,d$, are the eigenvalues of $R_x$. 

Fix $x\in\mu^{-1}(0)$, put $Y_x=\set{g\circ x;\, g\in G}$. $Y_x$ is a $d$-dimensional submanifold of $X$. The $G$-invariant Hermitian metric $\langle\,\cdot\,|\,\cdot\,\rangle$ induces a volume form $dv_{Y_x}$ on $Y_x$. Put 
\begin{equation}\label{e-gue170108em}
V_{{\rm eff\,}}(x):=\int_{Y_x}dv_{Y_x}.
\end{equation}

\begin{theorem}\label{t-gue170128}
With the notations used above, for $a^0_-(x,y)$ and $a^0_+(x,y)$ in \eqref{e-gue170108wrIIIm} and \eqref{e-gue161110rIm}, we have 
\begin{equation}\label{e-gue170128}
\begin{split}
&a^0_-(x,x)=2^{d-1}\frac{1}{V_{{\rm eff\,}}(x)}\pi^{-n-1+\frac{d}{2}}\abs{\det R_x}^{-\frac{1}{2}}\abs{\det\mathcal{L}_{x}}\tau_{x,n_-},\ \ \forall x\in\mu^{-1}(0),\\
&a^0_+(x,x)=2^{d-1}\frac{1}{V_{{\rm eff\,}}(x)}\pi^{-n-1+\frac{d}{2}}\abs{\det R_x}^{-\frac{1}{2}}\abs{\det\mathcal{L}_{x}}\tau_{x,n_+},\ \ \forall x\in\mu^{-1}(0).
\end{split}
\end{equation}
\end{theorem}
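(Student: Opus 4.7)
The strategy is to compute $a^0_\pm(x,x)$ by expressing $S^{(q)}_G$ as the average over $G$ of the non-invariant Szeg\"o kernel and then applying complex stationary phase in the group variable. Once a valid phase--amplitude representation of $S^{(q)}_G$ is produced this way, the uniqueness of the leading coefficient discussed in the paragraph before the theorem guarantees that the coefficient read off from stationary phase must coincide with $a^0_\pm(x,x)$.

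By Assumption~\ref{a-gue170123I}, both $\Box^{(q)}_b$ and the $L^2$ inner product on $\Omega^{0,q}(X)$ are $G$-invariant, so $S^{(q)}_G = A_G\circ S^{(q)}$, where $A_G u(x):=\int_G u(g\circ x)\,dg$ denotes averaging against the normalized Haar measure. At the level of Schwartz kernels this yields
\[ S^{(q)}_G(x,y) \equiv \int_G S^{(q)}(g\circ x, y)\, dg. \]
Under Assumption~\ref{a-gue170123}, the non-invariant kernel $S^{(q)}(x,y)$ admits a Boutet de Monvel--Sj\"ostrand type description as a complex Fourier integral operator of the form \eqref{e-gue170108wrIIm} with $d=0$, and its leading diagonal coefficient is $\pi^{-n-1}|\det\mathcal{L}_x|\,\tau_{x,n_\pm}$.

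Substituting this description into the averaging integral yields an oscillatory integral $\int_G\int_0^\infty e^{it\varphi_-(g\circ x, y)} s_-(g\circ x, y, t)\, dt\, dg$; I would then apply complex stationary phase in $g$ with $t$ as the large parameter. By \eqref{e-gue170125}, $d_x\varphi_-(x,x) = -\omega_0(x)$, so for $\xi\in\mathfrak{g}$
\[ \frac{d}{ds}\bigg|_{s=0}\varphi_-(\exp(s\xi)\circ x, x) = -\omega_0(\xi_X(x)) = -\langle\mu(x),\xi\rangle, \]
which vanishes precisely when $x\in\mu^{-1}(0)$; at such points, with $y=x$, the unique critical point in $g$ is $g=e$. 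A further differentiation, combined with Cartan's formula and the vanishing of $\iota_{\eta_X}\omega_0$ on $\mu^{-1}(0)$, identifies the transversal Hessian of $\psi(g):=\varphi_-(g\circ x, x)$ at $g=e$, via the isomorphism $\mathfrak{g}\simeq\underline{\mathfrak{g}}_x$, with a real symmetric matrix whose absolute determinant equals $|\det R_x|$ up to a $J$-related power of $2$. Stationary phase in $d$ real dimensions then yields the factor $(2\pi/t)^{d/2}|\det R_x|^{-1/2}$, which correctly lowers the symbol order from $t^n$ to $t^{n-d/2}$ as required by \eqref{e-gue170108wrIIIm}, and converting the Haar integral on $G$ to the induced Riemannian volume on the orbit $Y_x$ contributes the prefactor $V_{{\rm eff\,}}(x)^{-1}$ from \eqref{e-gue170108em}. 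Multiplying these contributions by the non-invariant leading coefficient and verifying that the resulting critical-value phase is equivalent to $\Phi_\pm$ in the sense of Definition~\ref{d-gue140305} yields \eqref{e-gue170128}.

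The principal obstacle I anticipate is the careful bookkeeping of the numerical prefactor $2^{d-1}\pi^{d/2}$: this emerges from reconciling the real symmetric bilinear form appearing in the stationary phase formula on $\mathfrak{g}$ with the operator $R_x$ (which is defined intrinsically through $J$ on $\underline{\mathfrak{g}}_x \oplus J\underline{\mathfrak{g}}_x$), together with the precise normalization of Haar measure relative to the $G$-invariant metric used in defining $V_{{\rm eff\,}}(x)$. A secondary but essential task is to show that the phase produced by stationary phase is genuinely equivalent to $\Phi_\pm$ (so that the uniqueness criterion from the paragraph before Theorem~\ref{t-gue170128} can be applied); this reduces to checking the differential identities at the diagonal of $\mu^{-1}(0)\cap U$ using \eqref{e-gue170125}.
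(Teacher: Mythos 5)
Your proposal follows exactly the strategy the paper uses in Section~3.3: write $S^{(q)}_G$ as an average of the non-invariant Szeg\H{o} kernel over the group, insert the Boutet de Monvel--Sj\"ostrand representation of $S^{(q)}$, and do complex stationary phase in the group variable. The paper averages in the second slot, $S^{(q)}_G(x,y)=\frac{1}{|G|_{d\mu}}\int_G S^{(q)}(x,g\circ y)\,d\mu(g)$ (cf.~\eqref{e-gue161231III}), rather than the first as you do, but since $S^{(q)}$ commutes with $g^*$ the two agree. Where you diverge is in execution: the paper does not run the stationary-phase Hessian calculation intrinsically via Cartan's formula, but rather works in the special local coordinates of Theorem~\ref{t-gue161202}, in which the phase $\varphi_-$ has the explicit expansion of Theorem~\ref{t-gue161222}. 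From that expansion one reads off directly that the Hessian in the group variables is $\bigl(\tfrac{\partial^2\hat\varphi_-}{\partial v_j\partial v_k}(p,p)\bigr)=2i|\mu_j|\delta_{jk}$, giving determinant $(2i)^d|\mu_1|\cdots|\mu_d|$ (see~\eqref{e-gue170102III} and~\eqref{e-gue170106cI}), and the Haar-measure Jacobian $m(0)=2^{d/2}$ then comes from the normalization $\langle\partial_{x_j}|\partial_{x_k}\rangle=2\delta_{jk}$ in~\eqref{e-gue161102}. This is why the powers of $2$ are under control: $|\det R_x|=2^d|\mu_1|\cdots|\mu_d|$ in these coordinates, and the stray $2^{d/2}$ from $(2\pi/t)^{d/2}$ together with $m(0)$ and the $|\det R_x|^{-1/2}$ assemble into $2^{d-1}\pi^{d/2}$.

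Two small inaccuracies in your sketch worth flagging. First, your citation: the identity $d_x\varphi_-(x,x)=-\omega_0(x)$ for the non-invariant phase is~\eqref{e-gue140205IV}, not~\eqref{e-gue170125} (the latter concerns the already-averaged phase $\Phi_-$). Second, the transversal Hessian of $\psi(g)=\varphi_-(g\circ x,x)$ at $g=e$ is not a real symmetric matrix --- it is purely imaginary, $2i|\mu_j|\delta_{jk}$, and positive-definite only after dividing by $i$. That is precisely the matrix $\partial^2\psi/i$ that enters the Melin--Sj\"ostrand formula, and it is this matrix whose determinant equals $|\det R_x|$. Conceptually what you wrote is the right object, but the wording ``identifies the transversal Hessian with a real symmetric matrix'' would mislead a reader. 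With those two corrections and the explicit computation replaced by the coordinate formulas of Theorems~\ref{t-gue161202} and~\ref{t-gue161222} (or carried out intrinsically with the same amount of care), your argument lands on~\eqref{e-gue170128}.
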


We now assume that $X$ admits an $S^1$ action: $S^1\times X\rightarrow X$. We write $e^{i\theta}$ to denote the $S^1$ action. Let $T\in C^\infty(X, TX)$ be the global real vector field induced by the $S^1$ action given by
$(Tu)(x)=\frac{\partial}{\partial\theta}\left(u(e^{i\theta}\circ x)\right)|_{\theta=0}$, $u\in C^\infty(X)$. We assume that  the $S^1$ action $e^{i\theta}$ is CR and transversal (see Definition~\ref{d-gue160502}). We take $\omega_0\in C^\infty(X,T^*X)$ to be the global real one form determined by $\langle\,\omega_0\,,\,u\,\rangle=0$, for every $u\in T^{1,0}X\oplus T^{0,1}X$ and $\langle\,\omega_0\,,\,T\,\rangle=-1$. In this paper, we assume that 

\begin{ass}\label{a-gue170128}
\begin{equation}\label{e-gue170111ryI}
\mbox{$T$ is transversal to the space $\underline{\mathfrak{g}}$ at every point $p\in\mu^{-1}(0)$},
\end{equation}
\begin{equation}\label{e-gue170111ryII}
e^{i\theta}\circ g\circ x=g\circ e^{i\theta}\circ x,\  \ \forall x\in X,\ \ \forall\theta\in[0,2\pi[,\ \ \forall g\in G, 
\end{equation}
and 
\begin{equation}\label{e-gue170117t}
\mbox{$G\times S^1$ acts globally free near $\mu^{-1}(0)$}. 
\end{equation}
\end{ass}

Fix $\theta_0\in]-\pi, \pi[$, $\theta_0$ small. 
Let $(e^{i\theta_0})^*:\Lambda^r(\Complex T^*X)\To\Lambda^r(\Complex T^*X)$ be the pull-back map by $e^{i\theta_0}$, $r=0,1,\ldots,2n+1$. It is easy to see that, for every $q=0,1,\ldots,n$, one has
\begin{equation}\label{e-gue150508faIm}
(e^{i\theta_0})^*:T^{*0,q}_{e^{i\theta_0}x}X\To T^{*0,q}_{x}X.
\end{equation}
Let $u\in\Omega^{0,q}(X)$ be arbitrary. Define
\begin{equation}\label{e-gue150508faIIm}
Tu:=\frac{\pr}{\pr\theta}\bigr((e^{i\theta})^*u\bigr)|_{\theta=0}\in\Omega^{0,q}(X).
\end{equation}
For every $m\in\mathbb Z$, let
\begin{equation}\label{e-gue150508dIm}
\begin{split}
&\Omega^{0,q}_m(X):=\set{u\in\Omega^{0,q}(X);\, Tu=imu},\ \ q=0,1,2,\ldots,n,\\
&\Omega^{0,q}_{m}(X)^G=\set{u\in\Omega^{0,q}(X)^G;\, Tu=imu},\ \ q=0,1,2,\ldots,n.
\end{split}
\end{equation}
We denote $C^\infty_m(X):=\Omega^{0,0}_m(X)$, $C^\infty_m(X)^G:=\Omega^{0,0}_m(X)^G$. From the CR property of the $S^1$ action and \eqref{e-gue170111ryII}, it is not difficult to see that 
\[Tg^*\ddbar_b=g^*T\ddbar_b=\ddbar_bg^*T=\ddbar_bTg^*\ \ \mbox{on $\Omega^{0,q}(X)$},\ \ \forall g\in G.\]
Hence,
\begin{equation}\label{e-gue160527m}
\ddbar_b:\Omega^{0,q}_m(X)^G\To\Omega^{0,q+1}_m(X)^G,\ \ \forall m\in\mathbb Z.
\end{equation}
We now assume that the Hermitian metric $\langle\,\cdot\,|\,\cdot\,\rangle$ on $\Complex TX$ is $G\times S^1$ invariant.  Then the $L^2$ inner product $(\,\cdot\,|\,\cdot\,)$ on $\Omega^{0,q}(X)$ 
induced by $\langle\,\cdot\,|\,\cdot\,\rangle$ is $G\times S^1$-invariant. We then have 
\[\begin{split}
&Tg^*\ol{\pr}^*_b=g^*T\ol{\pr}^*_b=\ol{\pr}^*_bg^*T=\ol{\pr}^*_bTg^*\ \ \mbox{on $\Omega^{0,q}(X)$},\ \ \forall g\in G,\\
&Tg^*\Box^{(q)}_b=g^*T\Box^{(q)}_b=\Box^{(q)}_bg^*T=\Box^{(q)}_bTg^*\ \ \mbox{on $\Omega^{0,q}(X)$},\ \ \forall g\in G,
\end{split}\]
where $\ol{\pr}^*_b$ is the $L^2$ adjoint of $\ddbar_b$ with respect to $(\,\cdot\,|\,\cdot\,)$. 

Let $L^2_{(0,q), m}(X)^G$ be
the completion of $\Omega_m^{0,q}(X)^G$ with respect to $(\,\cdot\,|\,\cdot\,)$. 
We write $L^2_m(X)^G:=L^2_{(0,0),m}(X)^G$. Put 
\[H^q_{b,m}(X)^G:=({\rm Ker\,}\Box^{(q)}_b)^G_m:=({\rm Ker\,}\Box^{(q)}_b)^G\bigcap L^2_{(0,q),m}(X)^G.\]
It is not difficult to see that, for every $m\in\mathbb Z$, $({\rm Ker\,}\Box^{(q)}_b)^G_m\subset\Omega^{0,q}_m(X)^G$ and ${\rm dim\,}({\rm Ker\,}\Box^{(q)}_b)^G_m<\infty$.
The $m$-th $G$-invariant Szeg\"o projection is the orthogonal projection 
\[S^{(q)}_{G,m}:L^2_{(0,q)}(X)\To ({\rm Ker\,}\Box^{(q)}_b)^G_m\]
with respect to $(\,\cdot\,|\,\cdot\,)$. Let $S^{(q)}_{G,m}(x,y)\in C^\infty(X\times X,T^{*0,q}X\boxtimes(T^{*0,q}X)^*)$ be the distribution kernel of $S^{(q)}_{G,m}$. The second main result of this work is the following 

\begin{theorem}\label{t-gue170128I}
With the assumptions and notations used above, if $q\notin n_-$, then, as $m\To+\infty$, $S^{(q)}_{G,m}=O(m^{-\infty})$ on $X$. 

Suppose  $q=n_-$. Let $D$ be an open set of $X$ with $D\bigcap\mu^{-1}(0)=\emptyset$. Then, as $m\To+\infty$, 
\[S^{(q)}_{G,m}=O(m^{-\infty})\ \ \mbox{on $D$}.\]
Let $p\in\mu^{-1}(0)$ and let $U$ be an open set of  $p$ and let $x=(x_1,\ldots,x_{2n+1})$ be local coordinates defined in $U$. 
Then, as $m\To+\infty$, 
\begin{equation}\label{e-gue170117pVIIIm}
\begin{split}
&S^{(q)}_{G,m}(x,y)=e^{im\Psi(x,y)}b(x,y,m)+O(m^{-\infty}),\\
&b(x,y,m)\in S^{n-\frac{d}{2}}_{{\rm loc\,}}(1; U\times U, T^{*0,q}X\boxtimes(T^{*0,q}X)^*),\\
&\mbox{$b(x,y,m)\sim\sum^\infty_{j=0}m^{n-\frac{d}{2}-j}b_j(x,y)$ in $S^{n-\frac{d}{2}}_{{\rm loc\,}}(1; U\times U, T^{*0,q}X\boxtimes(T^{*0,q}X)^*)$},\\
&b_j(x,y)\in C^\infty(U\times U, T^{*0,q}X\boxtimes(T^{*0,q}X)^*),\ \ j=0,1,2,\ldots,
\end{split}
\end{equation}
\begin{equation}\label{e-gue170117pVIIIam}
b_0(x,x)=2^{d-1}\frac{1}{V_{{\rm eff\,}}(x)}\pi^{-n-1+\frac{d}{2}}\abs{\det R_x}^{-\frac{1}{2}}\abs{\det\mathcal{L}_{x}}\tau_{x,n_-},\ \ \forall x\in\mu^{-1}(0),
\end{equation}
$\Psi(x,y)\in C^\infty(U\times U)$, $d_x\Psi(x,x)=-d_y\Psi(x,x)=-\omega_0(x)$, for every $x\in\mu^{-1}(0)$, $\Psi(x,y)=0$ if and only if $x=y\in\mu^{-1}(0)$ and there is a constant $C\geq1$ such that, for all $(x,y)\in U\times U$, 
\begin{equation}\label{e-gue170117pVIIIbm}
\begin{split}
&{\rm Im\,}\Psi(x,y)\geq\frac{1}{C}\Bigr(d(x,\mu^{-1}(0))^2+d(y,\mu^{-1}(0))^2+\inf_{g\in G,\theta\in S^1}d(e^{i\theta}\circ g\circ x,y)^2\Bigr),\\
&{\rm Im\,}\Psi(x,y)\leq C\Bigr(d(x,\mu^{-1}(0))^2+d(y,\mu^{-1}(0))^2+\inf_{g\in G,\theta\in S^1}d(e^{i\theta}\circ g\circ x,y)^2\Bigr).
\end{split}
\end{equation}
(We refer the reader to Theorem~\ref{t-gue170128a} for more properties of the phase $\Psi(x,y)$.)
\end{theorem}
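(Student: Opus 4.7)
The plan is to derive Theorem~\ref{t-gue170128I} from Theorem~\ref{t-gue170124} by extracting the $m$-th Fourier mode with respect to the $S^1$ action. Because $G$ and $S^1$ commute by \eqref{e-gue170111ryII} and the Hermitian metric is $G\times S^1$-invariant, $S^{(q)}_G$ commutes with $(e^{i\theta})^*$, and the $m$-th Fourier component is
\[
S^{(q)}_{G,m}(x,y)\;=\;\frac{1}{2\pi}\int_{-\pi}^{\pi}e^{-im\theta}\,S^{(q)}_G(e^{i\theta}\circ x,y)\,d\theta
\]
as distributional kernels. Substituting the oscillatory integrals \eqref{e-gue170108wrIIm}--\eqref{e-gue170108waIm} and rescaling $t=m\sigma$ turns the $S^G_-$ contribution, modulo $O(m^{-\infty})$, into
\[
\frac{m}{2\pi}\int_{-\pi}^{\pi}\!\!\int_0^\infty e^{im[\sigma\Phi_-(e^{i\theta}\circ x,y)-\theta]}\,a_-(e^{i\theta}\circ x,y,m\sigma)\,d\sigma\,d\theta,
\]
to which I would apply the method of complex stationary phase of Melin--Sj\"ostrand in the variables $(\theta,\sigma)$. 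Away from the locus $\{e^{i\theta}\circ x\in G\cdot y\}$, integration by parts in $\theta$ already yields $O(m^{-\infty})$.

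For the $S^G_-$ integral, the critical equations $\Phi_-=0$ and $\sigma T_x\Phi_- = 1$, combined with \eqref{e-gue170125} and $\omega_0(T)=-1$, give $T_x\Phi_-(x,x)=1$ on $\mu^{-1}(0)$, so the real critical set is $(\theta,\sigma)=(0,1)$ over $\{x=y\in\mu^{-1}(0)\}$ up to the $G$-orbit equivalence. The Hessian $\bigl(\begin{smallmatrix} \sigma T_x^2\Phi_- & T_x\Phi_- \\ T_x\Phi_- & 0 \end{smallmatrix}\bigr)$ has determinant $-1$ there, so the critical point is non-degenerate; I would then define $\Psi(x,y)$ as the critical value of $\sigma\Phi_-(e^{i\theta}\circ x,y)-\theta$ via a complex Morse lemma applied to an almost-analytic extension of $\Phi_-$. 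Complex stationary phase then produces $e^{im\Psi}b(x,y,m)$: the Jacobian $m$, the two-dimensional stationary-phase factor $(2\pi/m)\,|\det(-i\,{\rm Hess})|^{-1/2}=2\pi/m$, and the symbol growth $a_-\sim(m\sigma)^{n-d/2}a^0_-$ combine to give $b\in S^{n-d/2}_{\rm loc}(1)$ with leading value $b_0(x,x)=a^0_-(x,x)$, matching \eqref{e-gue170117pVIIIam} via \eqref{e-gue170128}. The bounds on ${\rm Im}\,\Psi$ in \eqref{e-gue170117pVIIIbm} descend from \eqref{e-gue170125I}, once one notes that the infimum over $G$-orbits is promoted to an infimum over $G\times S^1$-orbits by the $\theta$-integration and that the imaginary part of the critical value inherits the positivity of ${\rm Im}\,\Phi_-$.

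The $S^G_+$ piece contributes $O(m^{-\infty})$: from $d_x(-\overline{\Phi_+})(x,x)=-\omega_0(x)$ one computes $T_x\Phi_+(x,x)=-1$ on $\mu^{-1}(0)$, so the analogous critical equation $\sigma T_x\Phi_+=1$ forces $\sigma=-1$, outside the contour $[0,\infty)$. Integration by parts in $\sigma$ away from $\{\Phi_+=0\}$, together with integration by parts in $\theta$ near that zero set (where $\sigma T_x\Phi_+-1\neq 0$ for $\sigma\geq 0$), yields arbitrary decay in $m$; the same argument dispatches the case $q\notin\{n_-,n_+\}$ where $S^{(q)}_G$ is already smoothing. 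The principal technical obstacle is executing the complex stationary phase on the half-line $\sigma\in[0,\infty)$: one must construct an almost-analytic extension of $\Phi_-$ in both arguments, deform the contour to pass through the complex saddle while verifying that no contribution arises from the boundary at $\sigma=0$, and justify the expansion uniformly in $(x,y)$ over a neighborhood of the diagonal in $\mu^{-1}(0)\times\mu^{-1}(0)$. This is a delicate but standard extension of the Melin--Sj\"ostrand machinery already used to prove Theorem~\ref{t-gue170124}; once it is in place, verifying \eqref{e-gue170117pVIIIm}--\eqref{e-gue170117pVIIIbm} reduces to bookkeeping of symbol orders and of the explicit data at the non-degenerate critical point.
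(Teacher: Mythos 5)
Your proposal follows the same high-level route as the paper: extract the $m$-th Fourier mode via $S^{(q)}_{G,m}(x,y)=\tfrac{1}{2\pi}\int_{-\pi}^{\pi}S^{(q)}_G(x,e^{i\theta}\circ y)e^{im\theta}\,d\theta$ (the paper's \eqref{e-gue170111cr}, equivalent to your version by the $S^1$-invariance of $S^{(q)}_G$), substitute the Fourier-integral representation from Theorem~\ref{t-gue170124}, rescale $t=m\sigma$, and apply complex stationary phase in $(\theta,\sigma)$. Your identification of the critical point $(\theta,\sigma)=(0,1)$ on the diagonal of $\mu^{-1}(0)$, the Hessian of determinant $-1$, the leading-term bookkeeping $b_0(x,x)=a^0_-(x,x)$, and the dispatch of the $S^G_+$ contribution by noting the critical $\sigma$ lands at $-1$ outside $[0,\infty)$ all agree with what the paper obtains.

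The one place where the paper gains a real simplification that you do not use: before doing the stationary phase, it constructs adapted coordinates (Theorem~\ref{t-gue161202z}) in which $T=-\partial/\partial x_{2n+1}$, and then normalizes $\Phi_-(x,y)=y_{2n+1}+\hat\Phi_-(x'',\mathring y'')$ so that $\Phi_-(\mathring x,e^{i\theta}\circ\mathring y)=-\theta+\hat\Phi_-(\mathring x'',\mathring y'')$ is \emph{affine} in $\theta$. The $(t,\theta)$ phase then becomes the bilinear expression $-\theta t+\hat\Phi_- t+\theta$, so the complex stationary phase is essentially explicit (critical point $t=1$, $\theta=\hat\Phi_-$), with no need to construct an almost-analytic extension of $\theta\mapsto\Phi_-(e^{i\theta}\circ x,y)$. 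In your general-coordinate version that extension and the contour deformation along the $S^1$ orbit are a genuine additional burden, which you correctly flag as "the principal technical obstacle" but leave unresolved; the paper's coordinate reduction sidesteps it entirely. If you build in the step $T=-\partial/\partial x_{2n+1}$ and the affine normalization of $\Phi_-$, the rest of your argument closes cleanly and matches the paper's proof (equations \eqref{e-gue170117}--\eqref{e-gue170117pVI} in Section~4).
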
 

We refer the reader to the discussion in the beginning of Section~\ref{s-gue170111w} and Definition~\ref{d-gue140826} for the precise meanings of $A=B+O(m^{-\infty})$ and the symbol space $S^{n-\frac{d}{2}}_{{\rm loc\,}}$, respectively. 

It is well-known that when $X$ admits a transversal and CR $S^1$ action and the Levi form is non-degenerate of constant signature on $X$, then  $\Box^{(q)}_b$ has $L^2$ closed range (see Theorem 1.12 in~\cite{HM14}). 

Let $Y_G:=\mu^{-1}(0)/G$. In Theorem~\ref{t-gue170128b}, we will show that $Y_G$ is a CR manifold with natural CR structure induced by $T^{1,0}X$ of dimension $2n-2d+1$. Let $\mathcal{L}_{Y_G,x}$ be the Levi form on $Y_G$  at $x\in Y_G$ induced naturally from $\mathcal{L}$.
%By Lemma~\ref{L:ndgbar} below, on $(\underline{\mathfrak{g}}, \underline{\mathfrak{g}})$, 
Note that the bilinear form $b$ is non-degenerate on $\mu^{-1}(0)$, where $b$ is given by \eqref{E:biform}. Hence, on $(\underline{\mathfrak{g}}, \underline{\mathfrak{g}})$, $b$ has constant signature on $\mu^{-1}(0)$. Assume that on $(\underline{\mathfrak{g}}, \underline{\mathfrak{g}})$, $b$ has $r$ negative eigenvalues and $d-r$ positive eigenvalues on $\mu^{-1}(0)$. Hence $\mathcal{L}_{Y_G}$ has $q-r$ negative and $n-d-q+r$ positive eigenvalues at each point of $Y_G$. Let $\Box^{(q-r)}_{b,Y_G}$ be the Kohn Laplacian for $(0,q-r)$ forms on $Y_G$. Fix $m\in\mathbb N$. Let 
\[H^{q-r}_{b,m}(Y_G):=\set{u\in\Omega^{0,q-r}(Y_G);\, \Box^{(q-r)}_{b,Y_G}u=0,\ \ Tu=imu}.\]
We will apply Theorem~\ref{t-gue170128I} to establish an isomorphism between $H^{q}_{b,m}(X)^G$ and $H^{q-r}_{b,m}(Y_G)$. We introduce some notations. 

Since $\underline{\mathfrak{g}}_x$ is orthogonal to $H_xY\bigcap JH_xY$ and $H_xY\bigcap JH_xY\subset\underline{\mathfrak{g}}^{\perp_b}_x$ (see Lemma~\ref{l-gue170120} and \eqref{e-gue170311} for the meaning of $\underline{\mathfrak{g}}^{\perp_b}_x$), for every $x\in Y$, we can find a $G$-invariant orthonormal basis $\set{Z_1,\ldots,Z_n}$ of $T^{1,0}X$ on $Y$ such that 
\[\mathcal{L}_x(Z_j(x),\ol Z_k(x))=\delta_{j,k}\lambda_j(x),\ \ j,k=1,\ldots,n,\ \ x\in Y,\]
and 
\[\begin{split}
&Z_j(x)\in\underline{\mathfrak{g}}_x+iJ\underline{\mathfrak{g}}_x,\ \ \forall x\in Y,\ \ j=1,2,\ldots,d,\\
&Z_j(x)\in\Complex H_xY\bigcap J(\Complex H_xY),\ \ \forall x\in Y,\ \ j=d+1,\ldots,n.
\end{split}\]
Let $\set{e_1,\ldots,e_n}$ denote the orthonormal basis of $T^{*0,1}X$ on $Y$, dual to $\set{\ol Z_1,\ldots,\ol Z_n}$. Fix $s=0,1,2,\ldots,n-d$. For $x\in Y$, put 
\begin{equation}\label{e-gue170303m}
B^{*0,s}_xX=\set{\sum_{d+1\leq j_1<\cdots<j_s\leq n}a_{j_1,\ldots,j_s}e_{j_1}\wedge\cdots\wedge e_{j_s};\, a_{j_1,\ldots,j_s}\in\Complex,\ \forall d+1\leq j_1<\cdots<j_s\leq n}
\end{equation}
and let $B^{*0,s}X$ be the vector bundle of $Y$ with fiber $B^{*0,s}_xX$, $x\in Y$. Let $C^\infty(Y,B^{*0,s}X)^G$ denote the set of all $G$-invariant sections of $Y$ with values in $B^{*0,s}X$. Let 
\begin{equation}\label{e-gue170303cwm}
\iota_G:C^\infty(Y,B^{*0,s}X)^G\To\Omega^{0,s}(Y_G)
\end{equation}
be the natural identification. 

Assume that $\lambda_1<0,\ldots,\lambda_r<0$, and $\lambda_{d+1}<0,\ldots,\lambda_{n_--r+d}<0$. For $x\in Y$, put 
\begin{equation}\label{e-gue170303Im}
\hat{\mathcal{N}}(x,n_-)=\set{ce_{d+1}\wedge\cdots\wedge e_{n_--r+d};\, c\in\Complex},
\end{equation}
and let 
\begin{equation}\label{e-gue170303cwIm}
\begin{split}
&\hat p=\hat p_x:\mathcal{N}(x,n_-)\To \hat{\mathcal{N}}(x,n_-),\\
&u=ce_1\wedge\cdots\wedge e_r\wedge e_{d+1}\wedge\cdots\wedge e_{n_--r+d}\To ce_{d+1}\wedge\cdots\wedge e_{n_--r+d}.
\end{split}
\end{equation}
Let $\iota:Y\To X$ be the natural inclusion and let $\iota^*:\Omega^{0,q}(X)\To\Omega^{0,q}(Y)$ be the pull-back of $\iota$. Let $q=n_-$. 
Let $S^{(q-r)}_{Y_G,m}:L^2_{(0,q-r)}(Y_G)\To H^{q-r}_{b,m}(Y_G)$ be the orthogonal projection and let 
\begin{equation}\label{e-gue170303cwam}
f(x)=\sqrt{V_{{\rm eff\,}}(x)}\abs{\det\,R_x}^{-\frac{1}{4}}\in C^\infty(Y)^G.
\end{equation}
Let
\begin{equation}\label{e-gue170303cwIIz}
\begin{split}
\sigma_m:H^q_{b,m}(X)^G&\To H^{q-r}_{b,m}(Y_G),\\
u&\To m^{-\frac{d}{4}}S^{(q-r)}_{Y_G,m}\circ\iota_G\circ \hat p\circ\tau_{x,n_-}\circ f\circ \iota^*\circ u.
\end{split}
\end{equation}

In Section~\ref{s-gue170303}, we will show that 

\begin{theorem}\label{t-gue170122}
With the notations and assumptions above, suppose that $q=n_-$. If $m$ is large, then 
\[\sigma_m:H^q_{b,m}(X)^G\To H^{q-r}_{b,m}(Y_G)\]
is an isomorphism. In particular, if $m$ large enough, then
\begin{equation}\label{e-gue170122r}
{\rm dim\,}H^q_{b,m}(X)^G={\rm dim\,}H^{q-r}_{b,m}(Y_G). 
\end{equation}
\end{theorem}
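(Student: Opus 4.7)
The plan is to prove that $\sigma_m$ is asymptotically an isometry as $m\to+\infty$, which forces injectivity for $m$ large, and then to deduce surjectivity either by a symmetric computation on $\sigma_m\sigma_m^*$ or, more economically, by a dimension count via traces of Szeg\"o kernels. First one checks that every ingredient of $\sigma_m$---the restriction $\iota^*$, the pointwise projections $\tau_{x,n_-}$ and $\hat p$, the weight $f$, the identification $\iota_G$, and the reduced projection $S^{(q-r)}_{Y_G,m}$---is compatible with the $G$- and $S^1$-equivariance and with the Fourier weight $m$, so that $\sigma_m$ indeed lands in $H^{q-r}_{b,m}(Y_G)$.

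The main step is to compute the Schwartz kernel of $\sigma_m^*\sigma_m$ on $H^q_{b,m}(X)^G$. Writing $\sigma_m u=m^{-d/4}\,S^{(q-r)}_{Y_G,m}(\iota_G\,\hat p\,\tau_{x,n_-}\,f\,\iota^*\,S^{(q)}_{G,m}u)$, the kernel of $\sigma_m^*\sigma_m$ is an iterated composition of $S^{(q)}_{G,m}$ on $X$ with $S^{(q-r)}_{Y_G,m}$ on $Y_G$, sandwiching the pointwise operations; the $L^2$ pairing on $X$ is re-expressed in terms of one on $Y_G$ via the co-area formula along $G$-orbits, which produces the orbit-volume factor $V_{{\rm eff\,}}(x)$. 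Substituting Theorem~\ref{t-gue170128I} for $S^{(q)}_{G,m}$ and its $d=0$ specialization for $S^{(q-r)}_{Y_G,m}$ on $Y_G$ with the induced transversal CR $S^1$-action turns this kernel into an oscillatory integral in $m$ with complex phase built from $\Psi_X$ and $\Psi_{Y_G}$. Stationary phase in the Melin--Sj\"ostrand complex form, legitimized by the positivity bounds in \eqref{e-gue170117pVIIIbm}, localizes the integral to the $G\times S^1$-orbit of the diagonal inside $\mu^{-1}(0)$; the overall $m^{-d/2}$ from the two factors of $m^{-d/4}$ is exactly what is needed to balance the codimension-$d$ reduction.

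The leading-order symbol of $\sigma_m^*\sigma_m$ must reduce to the identity on $H^q_{b,m}(X)^G$. Concretely, the constant $2^{d-1}V_{{\rm eff\,}}(x)^{-1}\pi^{-n-1+d/2}|\det R_x|^{-1/2}|\det\mathcal{L}_x|$ from \eqref{e-gue170117pVIIIam}, multiplied by $f(x)^2=V_{{\rm eff\,}}(x)|\det R_x|^{-1/2}$ and by the corresponding coefficient $\pi^{-(n-d)-1}|\det\mathcal{L}_{Y_G,\bar x}|\tau_{\bar x,n_--r}$ of $S^{(q-r)}_{Y_G,m}$, must collapse to $\tau_{x,n_-}$ after stationary phase. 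The identity rests on the splitting $T^{1,0}X|_Y=(\underline{\mathfrak{g}}+iJ\underline{\mathfrak{g}})\oplus (\mathbb{C}HY\cap J\mathbb{C}HY)$ and the factorization $|\det\mathcal{L}_x|=\bigl(\prod_{j=1}^{d}|\mu_j(x)|\bigr)\,|\det\mathcal{L}_{Y_G,\bar x}|$ together with the identification of $|\det R_x|$ with the product of the first $d$ Levi eigenvalues; this is precisely the algebraic reason for the choice of $f$ and the power $m^{-d/4}$. Once $\sigma_m^*\sigma_m=\mathrm{Id}+O(m^{-1})$ is established on the finite-dimensional space $H^q_{b,m}(X)^G$, the operator is invertible for $m$ large and $\sigma_m$ is injective with closed range. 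For surjectivity one either mirrors the argument on $\sigma_m\sigma_m^*$, or compares the leading-order asymptotics of $\int_X\mathrm{tr}\,S^{(q)}_{G,m}(x,x)\,dv_X$ and $\int_{Y_G}\mathrm{tr}\,S^{(q-r)}_{Y_G,m}(\bar x,\bar x)\,dv_{Y_G}$; since both dimensions are integers with matching leading-order asymptotics, they must coincide for $m$ large, yielding \eqref{e-gue170122r}.

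The principal obstacle will be the bookkeeping in the complex stationary phase over a critical submanifold of dimension equal to the $G\times S^1$-orbit: one must correctly compute the Hessian determinant, track the interaction of three oscillating kernels, and verify that $\hat p\circ\tau_{x,n_-}$ combined with multiplication by $f$ projects exactly onto the one-dimensional leading-symbol space so that the identity---and not a scalar multiple of it---emerges. A secondary technical point is justifying uniform-in-$m$ remainder estimates on $H^q_{b,m}(X)^G$ so that $\mathrm{Id}+O(m^{-1})$ is genuinely invertible, which should follow from global symbol bounds along the compact set $\mu^{-1}(0)$.
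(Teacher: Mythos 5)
Your overall strategy matches the paper's: compute $F_m=\sigma_m^*\sigma_m$ via composition of Szeg\"o kernels and complex stationary phase, show it is close to (a multiple of) the identity on $H^q_{b,m}(X)^G$ for $m$ large, deduce injectivity of $\sigma_m$, and then handle surjectivity by the mirror computation on $\hat F_m=\sigma_m\sigma_m^*$. The compatibility checks, the orbit-volume bookkeeping producing $V_{\rm eff}$, the cancellation $|\det\mathcal{L}_x|=\bigl(\prod_{j=1}^d|\mu_j|\bigr)\,|\det\mathcal{L}_{Y_G,\bar x}|$, and the role of $f$ and $m^{-d/4}$ are all in agreement with what the paper does. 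One small correction: after stationary phase the leading term is a \emph{constant multiple} $C_0$ of $\tau_{x,n_-}$, not $\tau_{x,n_-}$ itself; the paper's operator identity is $\tfrac{1}{C_0}F_m=(I+R_m)S^{(q)}_{G,m}$ with $C_0=2^{-3d+3n}\pi^{d/2}$.

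There is, however, a genuine gap in the step you flag as a ``secondary technical point.'' The assertion $\sigma_m^*\sigma_m=\mathrm{Id}+O(m^{-1})$ with $O(m^{-1})$ meaning \emph{operator norm} $O(m^{-1})$ does not follow from ``global symbol bounds.'' A symbol bound of one order lower controls pointwise sizes of the kernel, but a kernel of the form $e^{im\Psi}r(x,y,m)$ with $r\in S^{n-\frac{d}{2}-1}_{{\rm loc}}(1)$ is a priori as singular as $S^{(q)}_{G,m}$ itself, and $S^{(q)}_{G,m}$ has operator norm $1$ for all $m$. What actually makes the remainder small is the specific cancellation in $r_0(x,y)$, namely that the leading symbol of $R_m=\tfrac1{C_0}F_m-S^{(q)}_{G,m}$ \emph{vanishes along} $\mathrm{diag}(Y\times Y)$ (eq.~\eqref{e-gue170305fIII}). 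The paper then feeds this into a $T^*T$ iteration (Lemmas~\ref{l-gue170306s}--\ref{l-gue170306}): one bounds $\|R_m\|\le\|(R_m^*R_m)^{2^N}\|^{1/2^{N+1}}\|u\|^{1-1/2^{N+1}}$, shows via repeated stationary phase that $(R_m^*R_m)^{2^N}$ again has the same oscillatory structure with leading symbol vanishing to order $2^{N+1}$ on the diagonal of $Y\times Y$, and then uses the Gaussian decay ${\rm Im}\,\Psi\gtrsim |\hat x''|^2+|\hat y''|^2+|\mathring{x}''-\mathring{y}''|^2$ to absorb this vanishing into negative powers of $m$. Without identifying and exploiting this diagonal vanishing, the argument does not close.

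Your alternative route to surjectivity via comparison of $\int_X\mathrm{tr}\,S^{(q)}_{G,m}(x,x)\,dv_X$ and $\int_{Y_G}\mathrm{tr}\,S^{(q-r)}_{Y_G,m}(\bar x,\bar x)\,dv_{Y_G}$ is fallacious: two integer-valued sequences with the same leading-order asymptotics need not be equal for large $m$ (they could differ by a bounded, or slowly growing, integer sequence). The correct route, which you also mention, is the mirror argument: repeat the analysis for $\hat F_m=\sigma_m\sigma_m^*$ on $Y_G$ (Theorem~\ref{t-gue170305aI}) and conclude that $\sigma_m^*$ is also injective for $m$ large, whence both inclusions ${\rm dim}\,H^q_{b,m}(X)^G\le{\rm dim}\,H^{q-r}_{b,m}(Y_G)$ and the reverse hold. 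That is the argument the paper carries out.
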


\begin{remark}\label{r-gue170216}
Let's sketch the idea of the proof of Theorem~\ref{t-gue170122}. W can consider $\sigma_m$ as a map from $\Omega^{0,q}(X)\To H^{q-r}_{b,m}(Y_G)$: 
\begin{equation}\label{e-gue170303cwIIy}
\begin{split}
\sigma_m:\Omega^{0,q}(X)&\To H^{q-r}_{b,m}(Y_G)\subset\Omega^{0,q-r}(Y_G),\\
u&\To m^{-\frac{d}{4}}S^{(q-r)}_{Y_G,m}\circ\iota_G\circ \hat p\circ\tau_{x,n_-}\circ f\circ \iota^*\circ S^{(q)}_{G,m}u.
\end{split}
\end{equation}
Let $\sigma^*_m:\Omega^{0,q-r}(Y_G)\To\Omega^{0,q}(X)$ be the adjoint of $\sigma_m$. From Theorem~\ref{t-gue170128I} and some calculation of complex Fourier integral operators, we will show in Section~\ref{s-gue170303} that $F_m=\sigma^*_m\sigma_m:\Omega^{0,q}(X)\To\Omega^{0,q}(X)$ is the same type of operator as $S^{(q)}_{G,m}$ and 
\begin{equation}\label{e-gue170306ng}
\frac{1}{C_0}F_m=(I+R_m)S^{(q)}_{G,m},
\end{equation}
 where $C_0>0$ is a constant and $R_m$ is also the same type of operator as $S^{(q)}_{G,m}$, but the leading symbol of $R_m$ vanishes at ${\rm diag\,}(Y\times Y)$. By using the fact that the leading symbol of $R_m$ vanishes at ${\rm diag\,}(Y\times Y)$, we will show in Lemma~\ref{l-gue170306} that \[
\norm{R_mu}\leq \varepsilon_m\norm{u},\ \ \forall u\in\Omega^{0,q}(X),\ \ \forall m\in\mathbb N,\]
where $\varepsilon_m$ is a sequence with $\lim_{m\To\infty}\varepsilon_m=0$. In particular, if $m$ is large enough, then the map 
\begin{equation}\label{e-gue170306ngI}
I+R_m:\Omega^{0,q}(X)\To \Omega^{0,q}(X)
\end{equation}
is injective. From \eqref{e-gue170306ng} and \eqref{e-gue170306ngI}, we deduce that, if $m$ is large enough, then
\[F_m:H^q_{b,m}(X)^G\To H^q_{b,m}(X)^G\]
is injective. Hence $\sigma_m:H^q_{b,m}(X)^G\To H^{q-r}_{b,m}(Y_G)$ is injective. 

Similarly, we can repeat the argument above with minor change and deduce that if $m$ is large enough, then the map
\[\hat F_m=\sigma_m\sigma^*_m: H^{q-r}_{b,m}(Y_G)\To H^{q-r}_{b,m}(Y_G)\]
is injective. Hence, if $m$ is large enough, then the map
\[\sigma^*_m:H^{q-r}_{b,m}(Y_G)\To H^{q}_{b,m}(X)^G\]
is injective. Thus,  ${\rm dim\,}H^q_{b,m}(X)^G={\rm dim\,}H^{q-r}_{b,m}(Y_G)$ and $\sigma_m$ is an isomorphism if $m$ large enough. 
\end{remark}

Let's apply Theorem~\ref{t-gue170122} to complex case. 
Let $(L,h^L)$ be a holomorphic line bundle over a connected compact complex manifold $(M,J)$ with ${\rm dim\,}_{\Complex}M=n$, where $J$ denotes the complex structure map of $M$ and $h^L$ is a Hermitian fiber metric of $L$. Let $R^L$ be the curvature of $L$ induced by $h^L$. Assume that $R^L$ is non-degenerate of constant signature $(n_-,n_+)$ on $M$. Let $K$ be a connected compact Lie group with Lie algebra $\mathfrak{k}$. We assume that ${\rm dim\,}_{\Real}K=d$ and $K$ acts holomorphically on $(M,J)$, and that the action lifts to a holomorphic action on $L$. We assume further that $h^L$ is preserved by the $K$-action. Then $R^L$ is a $K$-invariant form. Let $\omega=\frac{i}{2\pi}R^L$ and let $\tilde{\mu}: M \to \mathfrak{k}^*$ be the moment map induced by $\omega$.  Assume that $0 \in \mathfrak{k}^*$ is regular and the action of $K$ on $\tilde{\mu}^{-1}(0)$ is globally free.  The  analogue of the Marsden-Weinstein reduction holds (see~\cite{GS}). More precisely, the complex structure $J$ on $M$ induces a complex structure $J_K$ on $M_0:=\tilde{\mu}^{-1}(0)/K$, for which the line bundle $L_0:=L/K$ is a holomorphic line bundle over $M_0$. 

For any $\xi \in \mathfrak{k}$, we write $\xi_M$ to denote the vector field on $M$ induced by $\xi$. Let $\underline{\mathfrak{k}}={\rm Span\,}(\xi_M;\, \xi\in\mathfrak{k})$. On $\tilde{\mu}^{-1}(0)$, let $b^L$ be the bilinear form on $\underline{\mathfrak{k}}\times\underline{\mathfrak{k}}$ given by $b^L(\,\cdot\,,\,\cdot\,)=\omega(\,\cdot\,,\,J\cdot\,)$.  Assume that $b^L$ has $r$ negative eigenvalues and $d-r$ positive eigenvalues on $\tilde{\mu}^{-1}(0)$. Let $q=n_-$. For $m\in\mathbb N$,  let $H^q(M,L^m)^K$ denote the $K$-invariant $q$-th Dolbeault cohomology group with values in $L^m$ and let $H^{q-r}(M_0,L^m_0)$ denote the $(q-r)$-th Dolbeault cohomology group with values in $L^m_0$. Theorem~\ref{t-gue170122} implies that, if $m$ is large enough, then there is an isomorphism map:
\[\Td\sigma_m:H^q(M,L^m)^K\To H^{q-r}(M_0,L^m_0).\]
In particular, if $m$ is large enough,then 
\begin{equation}\label{e-gue170216}
\dim H^q(M,L^m)^K = \dim H^{q-r}(M_0,L_0^m).
\end{equation}
Note that when $m=1$ and $q=0$, the equality \eqref{e-gue170216}
was first proved in \cite[\S 5]{GS}. For $m=1$, the equality \eqref{e-gue170216} was established in \cite{Br, T, Z} when $L$ is positive. 
The proof of the equality \eqref{e-gue170216} in \cite{T} is completely algebraic, while the the proofs of the equality \eqref{e-gue170216} in \cite{Br, Z} are purely analytic where different quasi-homomorphisms between Dolbeault complexes under considerations were constructed to prove the equality \eqref{e-gue170216}. If $m$ is large enough and $q=0$, an isomorphism map in \eqref{e-gue170216} was also constructed in \cite[(0.27), Corollary 4.13]{MZ}.

\subsection{The phase functions $\Phi_-(x,y)$ and $\Psi(x,y)$}\label{s-gue170126}

In this section, we collect some properties of the phase functions $\Phi_-(x,y)$, $\Psi(x,y)$ in Theorem~\ref{t-gue170124} and Theorem~\ref{t-gue170128I}. 

Let $v=(v_1,\ldots,v_d)$ be local coordinates of $G$ defined  in a neighborhood $V$ of $e_0$ with $v(e_0)=(0,\ldots,0)$. From now on, we will identify the element $e\in V$ with $v(e)$.  Fix $p\in\mu^{-1}(0)$. In Theorem~\ref{t-gue161202}, we will show that 
there exist local coordinates $v=(v_1,\ldots,v_d)$ of $G$ defined in  a neighborhood $V$ of $e_0$ with $v(e_0)=(0,\ldots,0)$, local coordinates $x=(x_1,\ldots,x_{2n+1})$ of $X$ defined in a neighborhood $U=U_1\times U_2$ of $p$ with $0\leftrightarrow p$, where $U_1\subset\Real^d$ is an open set of $0\in\Real^d$,  $U_2\subset\Real^{2n+1-d}$ is an open set of $0\in\Real^{2n+1-d} $ and a smooth function $\gamma=(\gamma_1,\ldots,\gamma_d)\in C^\infty(U_2,U_1)$ with $\gamma(0)=0\in\Real^d$  such that
\begin{equation}\label{e-gue161202m}
\begin{split}
&(v_1,\ldots,v_d)\circ (\gamma(x_{d+1},\ldots,x_{2n+1}),x_{d+1},\ldots,x_{2n+1})\\
&=(v_1+\gamma_1(x_{d+1},\ldots,x_{2n+1}),\ldots,v_d+\gamma_d(x_{d+1},\ldots,x_{2n+1}),x_{d+1},\ldots,x_{2n+1}),\\
&\forall (v_1,\ldots,v_d)\in V,\ \ \forall (x_{d+1},\ldots,x_{2n+1})\in U_2,
\end{split}
\end{equation}
\begin{equation}\label{e-gue161206m}
\begin{split}
&\underline{\mathfrak{g}}={\rm span\,}\set{\frac{\pr}{\pr x_1},\ldots,\frac{\pr}{\pr x_d}},\\
&\mu^{-1}(0)\bigcap U=\set{x_{d+1}=\cdots=x_{2d}=0},\\
&\mbox{On $\mu^{-1}(0)\bigcap U$, we have $J(\frac{\pr}{\pr x_j})=\frac{\pr}{\pr x_{d+j}}+a_j(x)\frac{\pr}{\pr x_{2n+1}}$, $j=1,2,\ldots,d$}, 
\end{split}
\end{equation}
where $a_j(x)$ is a smooth function on $\mu^{-1}(0)\bigcap U$, independent of $x_1,\ldots,x_{2d}$, $x_{2n+1}$ and $a_j(0)=0$, $j=1,\ldots,d$, 
\begin{equation}\label{e-gue161202Im}
\begin{split}
&T^{1,0}_pX={\rm span\,}\set{Z_1,\ldots,Z_n},\\
&Z_j=\frac{1}{2}(\frac{\pr}{\pr x_j}-i\frac{\pr}{\pr x_{d+j}})(p),\ \ j=1,\ldots,d,\\
&Z_j=\frac{1}{2}(\frac{\pr}{\pr x_{2j-1}}-i\frac{\pr}{\pr x_{2j}})(p),\ \ j=d+1,\ldots,n,\\
&\langle\,Z_j\,|\,Z_k\,\rangle=\delta_{j,k},\ \ j,k=1,2,\ldots,n,\\
&\mathcal{L}_p(Z_j, \ol Z_k)=\mu_j\delta_{j,k},\ \ j,k=1,2,\ldots,n
\end{split}
\end{equation}
and 
\begin{equation}\label{e-gue161219m}
\begin{split}
\omega_0(x)&=(1+O(\abs{x}))dx_{2n+1}+\sum^d_{j=1}4\mu_jx_{d+j}dx_j\\
&\quad+\sum^n_{j=d+1}2\mu_jx_{2j}dx_{2j-1}-\sum^n_{j=d+1}2\mu_jx_{2j-1}dx_{2j}+\sum^{2n}_{j=d+1}b_jx_{2n+1}dx_j+O(\abs{x}^2),
\end{split}
\end{equation}
where $b_{d+1}\in\Real,\ldots,b_{2n}\in\Real$.  Put $x''=(x_{d+1},\ldots,x_{2n+1})$, $\hat x''=(x_{d+1}, x_{d+2},\ldots,x_{2d})$, $\mathring{x}''=(x_{d+1},\ldots,x_{2n})$. We have the following (see Theorem~\ref{t-gue170105I} and Theorem~\ref{t-gue170107})

\begin{theorem}\label{t-gue170126}
With the notations above, the phase function $\Phi_-(x,y)\in C^\infty(U\times U)$ is independent of $(x_1,\ldots,x_d)$ and $(y_1,\ldots,y_d)$. Hence, $\Phi_-(x,y)=\Phi_-((0,x''),(0,y'')):=\Phi_-(x'',y'')$. Moreover, there is a constant $c>0$ such that 
\begin{equation}\label{e-gue170126}
{\rm Im\,}\Phi_-(x'',y'')\geq c\Bigr(\abs{\hat x''}^2+\abs{\hat y''}^2+\abs{\mathring{x}''-\mathring{y}''}^2\Bigr),\ \ \forall ((0,x''),(0,y''))\in U\times U.
\end{equation}

Furthermore, 
\begin{equation}\label{e-gue170126I}
\begin{split}
\Phi_-(x'', y'')&=-x_{2n+1}+y_{2n+1}+2i\sum^d_{j=1}\abs{\mu_j}y^2_{d+j}+2i\sum^d_{j=1}\abs{\mu_j}x^2_{d+j}\\
&+i\sum^{n}_{j=d+1}\abs{\mu_j}\abs{z_j-w_j}^2 +\sum^{n}_{j=d+1}i\mu_j(\ol z_jw_j-z_j\ol w_j)\\
&+\sum^d_{j=1}(-b_{d+j}x_{d+j}x_{2n+1}+b_{d+j}y_{d+j}y_{2n+1})\\
&+\sum^n_{j=d+1}\frac{1}{2}(b_{2j-1}-ib_{2j})(-z_jx_{2n+1}+w_jy_{2n+1})\\
&+\sum^n_{j=d+1}\frac{1}{2}(b_{2j-1}+ib_{2j})(-\ol z_jx_{2n+1}+\ol w_jy_{2n+1})\\
&+(x_{2n+1}-y_{2n+1})f(x, y) +O(\abs{(x, y)}^3),
\end{split}
\end{equation}
where $z_j=x_{2j-1}+ix_{2j}$, $w_j=y_{2j-1}+iy_{2j}$, $j=d+1,\ldots,n$, $\mu_j$, $j=1,\ldots,n$, and $b_{d+1}\in\Real,\ldots,b_{2n}\in\Real$ are as in \eqref{e-gue161219m} and $f$ is smooth and satisfies $f(0,0)=0$, $f(x, y)=\ol f(y, x)$. 
\end{theorem}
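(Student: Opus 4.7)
The plan is to obtain $\Phi_-$ as the critical phase coming from applying complex stationary phase to the averaging of the classical Boutet de Monvel--Sj\"ostrand Szeg\"o kernel over the group $G$. The starting point is the representation
\[
S^{(q)}(x,y)\equiv\int_0^\infty e^{i\varphi(x,y)t}\alpha(x,y,t)\,dt
\]
for the ordinary Szeg\"o kernel, together with the projector identity $S^{(q)}_G=\int_G g^*\,S^{(q)}\,dg$ (heuristically). In the coordinates furnished by \eqref{e-gue161202m}--\eqref{e-gue161219m} the $G$-action is by translation in the variables $(x_1,\ldots,x_d)$, so one is led to study
\[
\int_V\!\!\int_0^\infty e^{it\,\varphi((v+x_{1\cdots d},x''),y)}\,\alpha\bigl((v+x_{1\cdots d},x''),y,t\bigr)\,dt\,dv,
\]
and apply the complex stationary phase method of Melin--Sj\"ostrand in the variables $v$ (with $t$ large). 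The critical point equation $\partial_v\varphi=0$ selects a single complex critical point $v=v_c(x,y)$ provided $x$ is close to $\mu^{-1}(0)$; the corresponding critical value is, by definition, $t\,\Phi_-(x,y)$. Reabsorbing the Gaussian determinant from stationary phase into the symbol yields the representation in \eqref{e-gue170108wrIIm} with symbol order shifted by $-d/2$, consistent with \eqref{e-gue170108wrIIIm}.

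The crucial observation for the independence statement is that translation in $(x_1,\ldots,x_d)$ can be absorbed into the $v$-integration variable: the change of variable $v\mapsto v-(x_1,\ldots,x_d)$ leaves the integrand unchanged in those variables, so the critical value $\Phi_-$ inherits independence from $(x_1,\ldots,x_d)$. A symmetric argument applied to the adjoint kernel (or exploiting $\Phi_-(y,x)=-\overline{\Phi_-(x,y)}$ modulo equivalence) gives independence from $(y_1,\ldots,y_d)$. Consequently we may write $\Phi_-(x,y)=\Phi_-(x'',y'')$.

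To obtain \eqref{e-gue170126I} explicitly, I would Taylor expand $\varphi$ at $(p,p)$ using the standard model phase of the BMSJ construction, which in the diagonalizing frame \eqref{e-gue161202Im} agrees to second order with
\[
-x_{2n+1}+y_{2n+1}+i\sum_{j=1}^n|\mu_j|\,|z_j-w_j|^2+\sum_{j=1}^n i\mu_j(\overline z_jw_j-z_j\overline w_j)+\text{(contact correction)},
\]
where $z_j=x_{2j-1}+ix_{2j}$, and the contact correction is the term linear in $x_{2n+1},y_{2n+1}$ coming from the expansion \eqref{e-gue161219m} of $\omega_0$; the normalization $d_x\Phi_-(x,x)=-\omega_0(x)$ on $\mu^{-1}(0)$ from \eqref{e-gue170125} fixes the coefficients $b_{d+1},\ldots,b_{2n}$ uniquely. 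Solving the critical point equation $\partial_v\varphi=0$ to leading order is a linear system whose $G$-directions correspond precisely to the first $d$ indices; substituting $v=v_c$ cancels all dependence on $(x_1,\ldots,x_d,y_1,\ldots,y_d)$ and converts the $|z_j-w_j|^2$ terms for $j\leq d$ into the pure diagonal terms $2|\mu_j|(x_{d+j}^2+y_{d+j}^2)$ appearing in the first two sums of \eqref{e-gue170126I}. The remaining $j\geq d+1$ terms are untouched, yielding the third and fourth sums; the contact corrections in $x_{2n+1},y_{2n+1}$ transform according to the fifth, sixth and seventh sums; and all higher-order errors are collected in the $O(|(x,y)|^3)$ and $(x_{2n+1}-y_{2n+1})f(x,y)$ remainders, where the symmetry $f(x,y)=\bar f(y,x)$ comes from $\Phi_-(y,x)=-\overline{\Phi_-(x,y)}$ modulo equivalence.

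The estimate \eqref{e-gue170126} is then immediate from \eqref{e-gue170126I}: the quadratic part of $\operatorname{Im}\Phi_-$ is the positive-definite form $2\sum_{j=1}^d|\mu_j|(x_{d+j}^2+y_{d+j}^2)+\sum_{j=d+1}^n|\mu_j||z_j-w_j|^2$, which dominates $|\hat x''|^2+|\hat y''|^2+|\mathring x''-\mathring y''|^2$ after using $|x_{d+j}-y_{d+j}|^2\leq 2(x_{d+j}^2+y_{d+j}^2)$; the cubic remainder is absorbed for $U$ sufficiently small. The hard technical step is the stationary phase computation itself, more specifically verifying that the complex critical point $v_c$ actually lies in a Grauert tube of the real $V$ so that Melin--Sj\"ostrand applies, and that the resulting Gaussian determinant correctly produces the factor $|\det R_x|^{-1/2}$ which will reappear in the leading symbol $a^0_-$ in Theorem~\ref{t-gue170128}; handling the non-diagonal mixing of $\omega_0$'s linear terms in \eqref{e-gue161219m} with the $G$-directions is where most of the bookkeeping concentrates.
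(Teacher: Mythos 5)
Your overall strategy coincides with the paper's: write $S^{(q)}_G$ as an integral of $S^{(q)}$ over $G$ near the identity, use the $G$-invariance of $S^{(q)}_G$ to reduce to a function of $(x'',y'')$, and apply the Melin--Sj\"ostrand complex stationary-phase method in the group variable $v$ to obtain $\Phi_-$ as a critical value. The independence of $\Phi_-$ from $(x_1,\ldots,x_d,y_1,\ldots,y_d)$ and the Taylor expansion \eqref{e-gue170126I} (via computing the Hessian $\partial\hat h_j/\partial x_{d+k}$ of the complex critical point map) are argued along the same lines as Theorems~\ref{t-gue161202}, \ref{t-gue161222}, \ref{t-gue170107}.

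However, there is a genuine gap in your derivation of the lower bound \eqref{e-gue170126}. You claim it is ``immediate from \eqref{e-gue170126I}'' because ``the cubic remainder is absorbed for $U$ sufficiently small.'' This does not work: the quadratic part of $\operatorname{Im}\Phi_-$ degenerates on the large set $\{\hat x''=\hat y''=0,\ \Td{\mathring{x}}''=\Td{\mathring{y}}''\}$, while the remainder $O(\abs{(x,y)}^3)$ is controlled only by $|(x,y)|^3$, which need not be bounded by the quadratic form. Concretely, if $\hat x'',\hat y'',\mathring{x}''-\mathring{y}''$ are of size $\epsilon^2$ while the free variables $\Td{\mathring{x}}'',x_{2n+1}$ are of size $\epsilon$, the right-hand side of \eqref{e-gue170126} scales like $\epsilon^4$ while $O(|(x,y)|^3)\sim\epsilon^3$; no shrinking of $U$ fixes this. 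A Taylor expansion at a single point $(p,p)$ cannot, on its own, yield a uniform estimate on all of $U\times U$. The paper therefore proves \eqref{e-gue170126} by a separate argument (Theorems~\ref{t-gue170105} and \ref{t-gue170105I}): it invokes the Melin--Sj\"ostrand lower bound for the imaginary part of a critical value, ${\rm Im}\,\Phi_-\geq c\,\inf_v\set{{\rm Im}\,\hat\varphi_- + \abs{d_v\hat\varphi_-}^2}$, then uses the global inequality ${\rm Im}\,\varphi_-(x,y)\geq c\sum_{j=1}^{2n}\abs{x_j-y_j}^2$ of \eqref{e-gue140205VI} to control ${\rm Im}\,\hat\varphi_-$, and finally the non-degeneracy of the Hessian in the $\hat x''$ directions (drawn from \eqref{e-gue161222}) to bound $\abs{d_{y'}\hat\varphi_-}$ from below by $\abs{\hat x''}$. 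You would need to supply this structure to make the estimate rigorous.
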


We now assume that $X$ admits an $S^1$ action: $S^1\times X\rightarrow X$. We will use the same notations as in Theorem~\ref{t-gue170128I}. Recall that we work with Assumption~\ref{a-gue170128}. Let $p\in\mu^{-1}(0)$. We can repeat the proof of Theorem~\ref{t-gue161202} with minor change and show that there exist local coordinates $v=(v_1,\ldots,v_d)$ of $G$ defined in  a neighborhood $V$ of $e_0$ with $v(e_0)=(0,\ldots,0)$, local coordinates $x=(x_1,\ldots,x_{2n+1})$ of $X$ defined in a neighborhood $U=U_1\times(\hat U_2\times]-2\delta,2\delta[)$ of $p$ with $0\leftrightarrow p$, where $U_1\subset\Real^d$ is an open set of $0\in\Real^d$,  $\hat U_2\subset\Real^{2n-d}$ is an open set of $0\in\Real^{2n-d} $, $\delta>0$, and a smooth function $\gamma=(\gamma_1,\ldots,\gamma_d)\in C^\infty(\hat U_2\times ]-2\delta,2\delta[,U_1)$ with $\gamma(0)=0\in\Real^d$  such that $T=-\frac{\pr}{\pr x_{2n+1}}$ and \eqref{e-gue161202m}, \eqref{e-gue161206m}, \eqref{e-gue161202Im}, \eqref{e-gue161219m} hold. We have the following (see the proof of Theorem~\ref{t-gue170118})

\begin{theorem}\label{t-gue170128a}
With the notations above, the phase function $\Psi$ satisfies $\Psi(x,y)=-x_{2n+1}+y_{2n+1}+\hat\Psi(\mathring{x}'',\mathring{y}'')$, where $\hat\Psi(\mathring{x}'',\mathring{y}'')\in C^\infty(U\times U)$, $\mathring{x}''=(x_{d+1},\ldots,x_{2n})$,  $\mathring{y}''=(y_{d+1},\ldots,y_{2n})$, and $\Psi$ satisfies \eqref{e-gue170126} and \eqref{e-gue170126I}. 
\end{theorem}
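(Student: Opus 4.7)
The plan is to parallel the proof of Theorem~\ref{t-gue170126} for $\Phi_-$, adding one new ingredient: the $S^1$-equivariance enjoyed by $S^{(q)}_{G,m}$ on top of its bi-$G$-invariance. Because the Hermitian metric is $G\times S^1$-invariant, $T$ commutes with $\Box^{(q)}_b$ and with the orthogonal projection onto $H^q_{b,m}(X)^G$, so the kernel satisfies
\[
T_xS^{(q)}_{G,m}(x,y)=im\,S^{(q)}_{G,m}(x,y),\qquad T_yS^{(q)}_{G,m}(x,y)=-im\,S^{(q)}_{G,m}(x,y),
\]
the second identity coming from integration by parts against the Killing (hence divergence-free) vector field $T$. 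In the coordinates of \eqref{e-gue161202m}--\eqref{e-gue161219m}, where $T=-\partial/\partial x_{2n+1}$ and $\underline{\mathfrak g}=\mathrm{span}\{\partial/\partial x_1,\ldots,\partial/\partial x_d\}$, these two identities together with the bi-$G$-invariance will give the claimed structural form.

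Substituting the ansatz $S^{(q)}_{G,m}(x,y)=e^{im\Psi(x,y)}b(x,y,m)+O(m^{-\infty})$ into $T_xS^{(q)}_{G,m}=imS^{(q)}_{G,m}$ yields
\[
im\,\bigl(\partial_{x_{2n+1}}\Psi+1\bigr)\,e^{im\Psi}b+e^{im\Psi}\partial_{x_{2n+1}}b=O(m^{-\infty}).
\]
Expanding $b\sim\sum_jm^{n-d/2-j}b_j$ and matching at the top order, using $b_0(x,x)\neq 0$ on $\mu^{-1}(0)$ from \eqref{e-gue170117pVIIIam}, forces $\partial_{x_{2n+1}}\Psi=-1$ on $\mathrm{diag}(\mu^{-1}(0)\cap U)$; the transport equations at successive orders then propagate this so that $\partial_{x_{2n+1}}\Psi+1$ vanishes to infinite order on that set, and symmetrically for $\partial_{y_{2n+1}}\Psi-1$. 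By the freedom in choosing a representative of the phase within its equivalence class (Theorem~\ref{t-gue140305II}), I then replace $\Psi$ by an equivalent phase on which $\partial_{x_{2n+1}}\Psi\equiv -1$ and $\partial_{y_{2n+1}}\Psi\equiv 1$ hold identically in $U\times U$. Integration yields $\Psi(x,y)=-x_{2n+1}+y_{2n+1}+\hat\Psi(x_1,\ldots,x_{2n},y_1,\ldots,y_{2n})$. At this point the $G$-biinvariance argument of Theorem~\ref{t-gue170126}, applied verbatim in the coordinate form \eqref{e-gue161206m} of $\underline{\mathfrak g}$, eliminates the dependence on $x_1,\ldots,x_d$ and $y_1,\ldots,y_d$, giving $\hat\Psi=\hat\Psi(\mathring{x}'',\mathring{y}'')$.

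To obtain the quantitative estimate \eqref{e-gue170126} and the Taylor expansion \eqref{e-gue170126I}, I exploit the identity
\[
S^{(q)}_{G,m}(x,y)=\frac{1}{2\pi}\int_0^{2\pi}e^{-im\theta}(e^{i\theta})^*_x\,S^{(q)}_G(x,y)\,d\theta,
\]
substitute the complex Fourier integral representation of $S^{(q)}_G$ from Theorem~\ref{t-gue170124}, and apply complex stationary phase in $(\theta,s)$ after the rescaling $t=ms$; this produces $\Psi(x,y)$ as the critical value of $-\theta+s\,\Phi_-((x_1,\ldots,x_{2n},x_{2n+1}-\theta),y)$, the non-degeneracy of the critical point following from $\partial_{x_{2n+1}}\Phi_-(0,0)=-1$ and $\Phi_-(0,0)=0$. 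Because $e^{i\theta}$ acts on $x$ by shifting only the transversal variable $x_{2n+1}\mapsto x_{2n+1}-\theta$, the critical value inherits from $\Phi_-$ both the positivity of its imaginary part in the transversal directions $\hat x''$, $\hat y''$, $\mathring{x}''-\mathring{y}''$ (whence \eqref{e-gue170126}) and the quadratic Taylor form \eqref{e-gue170126I}; note that in this $S^1$ setting the $S^1$-invariance of $\omega_0$ combined with $T=-\partial/\partial x_{2n+1}$ forces the coefficients $b_{d+1},\ldots,b_{2n}$ in \eqref{e-gue161219m} to vanish, so \eqref{e-gue170126I} automatically specializes to an expression free of $b_j$-cross terms. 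The main obstacle I expect is the bookkeeping in this complex stationary phase and the explicit modification of $\Psi$ within its equivalence class required to upgrade equalities modulo infinite-order flat terms on $\mathrm{diag}(\mu^{-1}(0)\times\mu^{-1}(0))$ into the exact coordinate form stated.
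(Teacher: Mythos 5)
Your second paragraph reproduces the paper's actual argument (embedded in the proof of Theorem~\ref{t-gue170118}): insert the $G$-invariant Szeg\H{o} expansion of Theorem~\ref{t-gue170124} into the Fourier inversion integral \eqref{e-gue170111cr} (your variant applies $(e^{i\theta})^*$ to $x$ rather than $y$, which is equivalent by bi-$S^1$-invariance of $S^{(q)}_G$), rescale $t\mapsto mt$, and carry out complex stationary phase in $(\theta,t)$; the critical value is precisely $\Psi(x,y)=\hat\Phi_-(\mathring{x}'',\mathring{y}'')-x_{2n+1}+y_{2n+1}$, so that the structural form and the estimates \eqref{e-gue170126} and \eqref{e-gue170126I} are inherited verbatim from Theorem~\ref{t-gue170105I} and Theorem~\ref{t-gue170107}, with the $b_j$-terms dropping out exactly as you note when one compares \eqref{e-gue161219z} with \eqref{e-gue161219m}. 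Your first paragraph — extracting $\partial_{x_{2n+1}}\Psi\equiv -1$, $\partial_{y_{2n+1}}\Psi\equiv 1$ from the eigenvalue relations $T_xS^{(q)}_{G,m}=imS^{(q)}_{G,m}$, $T_yS^{(q)}_{G,m}=-imS^{(q)}_{G,m}$ by order-matching in the amplitude expansion and then normalizing the phase with a Malgrange-type factorization — is a correct and somewhat more conceptual alternative derivation of the structural form, but it is logically superfluous: the paper does not take that route, because the stationary-phase computation already produces the form of $\Psi$ explicitly (the $-x_{2n+1}+y_{2n+1}$ factor is peeled off before the phase integral by writing $S^{(q)}_{G,m}(x,y)=e^{-imx_{2n+1}+imy_{2n+1}}S^{(q)}_{G,m}(\mathring x,\mathring y)$). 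One small imprecision: you invoke Theorem~\ref{t-gue140305II} directly to change the representative of $\Psi$ in its equivalence class, but that theorem is formulated for the homogeneous phase $\Phi_-$ of the FIO $S^G_-$; the corresponding statement for the semi-classical phase $\Psi$ requires repeating its proof with minor changes, which the paper does acknowledge doing elsewhere (cf. the proof of Theorem~\ref{t-gue170226cw}), so this is a gap in rigor you should close explicitly rather than a genuine error.
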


\section{Preliminaries}\label{s:prelim}
\subsection{Standard notations} \label{s-ssna}
We shall use the following notations: $\mathbb N=\set{1,2,\ldots}$, $\mathbb N_0=\mathbb N\cup\set{0}$, $\Real$ 
is the set of real numbers, $\ol\Real_+:=\set{x\in\Real;\, x\geq0}$. 
For a multi-index $\alpha=(\alpha_1,\ldots,\alpha_n)\in\mathbb N_0^n$,
we denote by $\abs{\alpha}=\alpha_1+\ldots+\alpha_n$ its norm and by $l(\alpha)=n$ its length.
For $m\in\mathbb N$, write $\alpha\in\set{1,\ldots,m}^n$ if $\alpha_j\in\set{1,\ldots,m}$, 
$j=1,\ldots,n$. $\alpha$ is strictly increasing if $\alpha_1<\alpha_2<\ldots<\alpha_n$. For $x=(x_1,\ldots,x_n)$, we write
\[
\begin{split}
&x^\alpha=x_1^{\alpha_1}\ldots x^{\alpha_n}_n,\\
& \pr_{x_j}=\frac{\pr}{\pr x_j}\,,\quad
\pr^\alpha_x=\pr^{\alpha_1}_{x_1}\ldots\pr^{\alpha_n}_{x_n}=\frac{\pr^{\abs{\alpha}}}{\pr x^\alpha}\,,\\
&D_{x_j}=\frac{1}{i}\pr_{x_j}\,,\quad D^\alpha_x=D^{\alpha_1}_{x_1}\ldots D^{\alpha_n}_{x_n}\,,
\quad D_x=\frac{1}{i}\pr_x\,.
\end{split}
\]
Let $z=(z_1,\ldots,z_n)$, $z_j=x_{2j-1}+ix_{2j}$, $j=1,\ldots,n$, be coordinates of $\Complex^n$.
We write
\[
\begin{split}
&z^\alpha=z_1^{\alpha_1}\ldots z^{\alpha_n}_n\,,\quad\ol z^\alpha=\ol z_1^{\alpha_1}\ldots\ol z^{\alpha_n}_n\,,\\
&\pr_{z_j}=\frac{\pr}{\pr z_j}=
\frac{1}{2}\Big(\frac{\pr}{\pr x_{2j-1}}-i\frac{\pr}{\pr x_{2j}}\Big)\,,\quad\pr_{\ol z_j}=
\frac{\pr}{\pr\ol z_j}=\frac{1}{2}\Big(\frac{\pr}{\pr x_{2j-1}}+i\frac{\pr}{\pr x_{2j}}\Big),\\
&\pr^\alpha_z=\pr^{\alpha_1}_{z_1}\ldots\pr^{\alpha_n}_{z_n}=\frac{\pr^{\abs{\alpha}}}{\pr z^\alpha}\,,\quad
\pr^\alpha_{\ol z}=\pr^{\alpha_1}_{\ol z_1}\ldots\pr^{\alpha_n}_{\ol z_n}=
\frac{\pr^{\abs{\alpha}}}{\pr\ol z^\alpha}\,.
\end{split}
\]
For $j, s\in\mathbb Z$, set $\delta_{j,s}=1$ if $j=s$, $\delta_{j,s}=0$ if $j\neq s$.

Let $M$ be a $C^\infty$ paracompact manifold.
We let $TM$ and $T^*M$ denote the tangent bundle of $M$
and the cotangent bundle of $M$, respectively.
The complexified tangent bundle of $M$ and the complexified cotangent bundle of $M$ will be denoted by $\Complex TM$
and $\Complex T^*M$, respectively. Write $\langle\,\cdot\,,\cdot\,\rangle$ to denote the pointwise
duality between $TM$ and $T^*M$.
We extend $\langle\,\cdot\,,\cdot\,\rangle$ bilinearly to $\Complex TM\times\Complex T^*M$.
Let $G$ be a $C^\infty$ vector bundle over $M$. The fiber of $G$ at $x\in M$ will be denoted by $G_x$.
Let $E$ be a vector bundle over a $C^\infty$ paracompact manifold $M_1$. We write
$G\boxtimes E^*$ to denote the vector bundle over $M\times M_1$ with fiber over $(x, y)\in M\times M_1$
consisting of the linear maps from $E_y$ to $G_x$.  Let $Y\subset M$ be an open set. 
From now on, the spaces of distribution sections of $G$ over $Y$ and
smooth sections of $G$ over $Y$ will be denoted by $D'(Y, G)$ and $C^\infty(Y, G)$, respectively.
Let $E'(Y, G)$ be the subspace of $D'(Y, G)$ whose elements have compact support in $Y$.
For $m\in\Real$, let $H^m(Y, G)$ denote the Sobolev space
of order $m$ of sections of $G$ over $Y$. Put
\begin{gather*}
H^m_{\rm loc\,}(Y, G)=\big\{u\in D'(Y, G);\, \varphi u\in H^m(Y, G),
      \, \forall\varphi\in C^\infty_0(Y)\big\}\,,\\
      H^m_{\rm comp\,}(Y, G)=H^m_{\rm loc}(Y, G)\cap E'(Y, G)\,.
\end{gather*}
We recall the Schwartz kernel theorem \cite[Theorems\,5.2.1, 5.2.6]{Hor03}, \cite[Thorem\,B.2.7]{MM}.
Let $G$ and $E$ be $C^\infty$ vector
bundles over paracompact orientable $C^\infty$ manifolds $M$ and $M_1$, respectively, equipped with smooth densities of integration. If
$A: C^\infty_0(M_1,E)\To D'(M,G)$
is continuous, we write $K_A(x, y)$ or $A(x, y)$ to denote the distribution kernel of $A$.
The following two statements are equivalent
\begin{enumerate}
\item $A$ is continuous: $E'(M_1,E)\To C^\infty(M,G)$,
\item $K_A\in C^\infty(M\times M_1,G\boxtimes E^*)$.
\end{enumerate}
If $A$ satisfies (1) or (2), we say that $A$ is smoothing on $M \times M_1$. Let
$A,B: C^\infty_0(M_1,E)\To D'(M,G)$ be continuous operators.
We write 
\begin{equation} \label{e-gue160507f}
\mbox{$A\equiv B$ (on $M\times M_1$)} 
\end{equation}
if $A-B$ is a smoothing operator. If $M=M_1$, we simply write "on $M$". 

We say that $A$ is properly supported if the restrictions of the two projections 
$(x,y)\mapsto x$, $(x,y)\mapsto y$ to ${\rm Supp\,}K_A$
are proper.

Let $H(x,y)\in D'(M\times M_1,G\boxtimes E^*)$. We write $H$ to denote the unique 
continuous operator $C^\infty_0(M_1,E)\To D'(M,G)$ with distribution kernel $H(x,y)$. 
In this work, we identify $H$ with $H(x,y)$.

\subsection{Some standard notations in semi-classical analysis}\label{s-gue170111w}

Let $W_1$ be an open set in $\Real^{N_1}$ and let $W_2$ be an open set in $\Real^{N_2}$. Let $E$ and $F$ be vector bundles over $W_1$ and $W_2$, respectively. 
An $m$-dependent continuous operator
$A_m:C^\infty_0(W_2,F)\To D'(W_1,E)$ is called $m$-negligible on $W$
if, for $m$ large enough, $A_m$ is smoothing and, for any $K\Subset W_1\times W_2$, any
multi-indices $\alpha$, $\beta$ and any $N\in\mathbb N$, there exists $C_{K,\alpha,\beta,N}>0$
such that
\begin{equation}\label{e-gue13628III}
\abs{\pr^\alpha_x\pr^\beta_yA_m(x, y)}\leq C_{K,\alpha,\beta,N}m^{-N}\:\: \text{on $K$},\ \ \forall m\gg1.
\end{equation}
In that case we write
\[A_m(x,y)=O(m^{-\infty})\:\:\text{on $W_1\times W_2$,}\]
or
\[A_m=O(m^{-\infty})\:\:\text{on $W_1\times W_2$.}\]
If $A_m, B_m:C^\infty_0(W_2, F)\To D'(W_1, E)$ are $m$-dependent continuous operators,
we write $A_m= B_m+O(m^{-\infty})$ on $W_1\times W_2$ or $A_m(x,y)=B_m(x,y)+O(m^{-\infty})$ on $W_1\times W_2$ if $A_m-B_m=O(m^{-\infty})$ on $W_1\times W_2$. 
When $W=W_1=W_2$, we sometime write "on $W$".

Let $X$ and $M$ be smooth manifolds and let $E$ and $F$ be vector bundles over $X$ and $M$, respectively. Let $A_m, B_m:C^\infty(M,F)\To C^\infty(X,E)$ be $m$-dependent smoothing operators. We write $A_m=B_m+O(m^{-\infty})$ on $X\times M$ if on every local coordinate patch $D$ of $X$ and local coordinate patch $D_1$ of $M$, $A_m=B_m+O(m^{-\infty})$ on $D\times D_1$.
When $X=M$, we sometime write on $X$.

We recall the definition of the semi-classical symbol spaces

\begin{definition} \label{d-gue140826}
Let $W$ be an open set in $\Real^N$. Let
%$S(1;W)=S(1)$ be the set of
%$a\in \cC^\infty(W)$ such that for every $\alpha\in\mathbb N^N_0$, there
%exists $C_\alpha>0$, such that $\abs{\pr^\alpha_xa(x)}\leq
%C_\alpha$ on $W$.
\begin{gather*}
S(1;W):=\Big\{a\in C^\infty(W)\,|\, \forall\alpha\in\mathbb N^N_0:
\sup_{x\in W}\abs{\pr^\alpha a(x)}<\infty\Big\},\\
S^0_{{\rm loc\,}}(1;W):=\Big\{(a(\cdot,m))_{m\in\Real}\,|\, \forall\alpha\in\mathbb N^N_0,
\forall \chi\in C^\infty_0(W)\,:\:\sup_{m\in\Real, m\geq1}\sup_{x\in W}\abs{\pr^\alpha(\chi a(x,m))}<\infty\Big\}\,.
\end{gather*}
%The space $S_{{\rm loc\,}}(1;W)$ is the set of sequences $a=(a(\cdot,k))$ in $\cC^\infty(W)$
%with the property that for any $\chi\in\cC^\infty_0(W)$ we have
%%\[
%$\sup_{k\in\N}\sup_{x\in W}\abs{\pr^\alpha a(x,k)}<\infty$\,.
%%\]
%If $a=a(x,k)$ depends on $k\in]1,\infty[$, we say that
%$a(x,k)\in S_{{\rm loc\,}}(1;W)=S_{{\rm loc\,}}(1)$ if $\chi(x)a(x,k)$ is uniformly bounded
%in $S(1)$ when $k$ varies in $]1,\infty[$, for any $\chi\in\cC^\infty_0(W)$.
For $k\in\Real$, let
\[
S^k_{{\rm loc}}(1):=S^k_{{\rm loc}}(1;W)=\Big\{(a(\cdot,m))_{m\in\Real}\,|\,(m^{-k}a(\cdot,m))\in S^0_{{\rm loc\,}}(1;W)\Big\}\,.
\]
%For $m\in\Real$, we put $S^m_{{\rm loc}}(1;W)=\{(a(\cdot,k)):(k^{-m}a(\cdot,k))\in S_{{\rm loc\,}}(1;W)\}$.
%For $m\in\Real$, we put $S^m_{{\rm loc}}(1;W)=S^m_{{\rm loc}}(1)=k^mS_{{\rm loc\,}}(1)$.
Hence $a(\cdot,m)\in S^k_{{\rm loc}}(1;W)$ if for every $\alpha\in\mathbb N^N_0$ and $\chi\in C^\infty_0(W)$, there
exists $C_\alpha>0$ independent of $m$, such that $\abs{\pr^\alpha (\chi a(\cdot,m))}\leq C_\alpha m^{k}$ holds on $W$.

Consider a sequence $a_j\in S^{k_j}_{{\rm loc\,}}(1)$, $j\in\N_0$, where $k_j\searrow-\infty$,
and let $a\in S^{k_0}_{{\rm loc\,}}(1)$. We say
\[
a(\cdot,m)\sim
\sum\limits^\infty_{j=0}a_j(\cdot,m)\:\:\text{in $S^{k_0}_{{\rm loc\,}}(1)$},
\]
if, for every
$\ell\in\N_0$, we have $a-\sum^{\ell}_{j=0}a_j\in S^{k_{\ell+1}}_{{\rm loc\,}}(1)$ .
For a given sequence $a_j$ as above, we can always find such an asymptotic sum
$a$, which is unique up to an element in
$S^{-\infty}_{{\rm loc\,}}(1)=S^{-\infty}_{{\rm loc\,}}(1;W):=\cap _kS^k_{{\rm loc\,}}(1)$.

Similarly, we can define $S^k_{{\rm loc\,}}(1;Y,E)$ in the standard way, where $Y$ is a smooth manifold and $E$ is a vector bundle over $Y$. 
\end{definition}
%We say that $a(\cdot,m)\in S^{k}_{{\rm loc\,}}(1)$ is a classical symbol on $W$ of order $k$ if
%\begin{equation} \label{e-gue13628I}
%a(\cdot,m)\sim\sum\limits^\infty_{j=0}m^{k-j}a_j\: \text{in $S^{k}_{{\rm loc\,}}(1)$},\ \ a_j(x)\in
%S^0_{{\rm loc\,}}(1),\ j=0,1\ldots.
%\end{equation}
%The set of all classical symbols on $W$ of order $k$ is denoted by
%$S^{k}_{{\rm loc\,},{\rm cl\,}}(1)=S^{k}_{{\rm loc\,},{\rm cl\,}}(1;W)$.
%\end{defn}
%--------------

\subsection{CR manifolds and bundles} 

Let $(X, T^{1,0}X)$ be a compact, connected and orientable CR manifold of dimension $2n+1$, $n\geq 1$, where $T^{1,0}X$ is a CR structure of $X$, that is, $T^{1,0}X$ is a subbundle of rank $n$ of the complexified tangent bundle $\mathbb{C}TX$, satisfying $T^{1,0}X\cap T^{0,1}X=\{0\}$, where $T^{0,1}X=\overline{T^{1,0}X}$, and $[\mathcal V,\mathcal V]\subset\mathcal V$, where $\mathcal V=C^\infty(X, T^{1,0}X)$. There is a unique subbundle $HX$ of $TX$ such that $\mathbb{C}HX=T^{1,0}X \oplus T^{0,1}X$, i.e. $HX$ is the real part of $T^{1,0}X \oplus T^{0,1}X$. Let $J:HX\To HX$ be the complex structure map given by $J(u+\ol u)=iu-i\ol u$, for every $u\in T^{1,0}X$. 
%Furthermore, there exists a homomorphism $J : HX \to HX$ such that $J^2 = - \operatorname{id}$, where $\operatorname{id}$ denotes the identity map $\operatorname{id}  \, : \, HX \to HX.$ 
By complex linear extension of $J$ to $\mathbb{C}TX$, the $i$-eigenspace of $J$ is $T^{1,0}X \, = \, \left\{ V \in \mathbb{C}HX \, : \, JV \, =  \,  \sqrt{-1}V  \right\}.$ We shall also write $(X, HX, J)$ to denote a compact CR manifold.

We  fix a real non-vanishing $1$ form $\omega_0\in C(X,T^*X)$ so that $\langle\,\omega_0(x)\,,\,u\,\rangle=0$, for every $u\in H_xX$, for every $x\in X$. 
For each $x \in X$, we define a quadratic form on $HX$ by
\begin{equation}\label{E:levi}
\mathcal{L}_x(U,V) =\frac{1}{2}d\omega_0(JU, V), \forall \ U, V \in H_xX.
\end{equation}
We extend $\mathcal{L}$ to $\mathbb{C}HX$ by complex linear extension. Then, for $U, V \in T^{1,0}_xX$,
\begin{equation}
\mathcal{L}_x(U,\overline{V}) = \frac{1}{2}d\omega_0(JU, \overline{V}) = -\frac{1}{2i}d\omega_0(U,\overline{V}).
\end{equation}
The Hermitian quadratic form $\mathcal{L}_x$ on $T^{1,0}_xX$ is called Levi form at $x$. We recall that in this paper, we always assume that the Levi form $\mathcal{L}$ on $T^{1,0}X$ is non-degenerate of constant signature $(n_-,n_+)$ on $X$, where $n_-$ denotes the number of negative eigenvalues of the Levi form and $n_+$ denotes the number of positive eigenvalues of the Levi form. Let $T\in C^\infty(X,TX)$ be the non-vanishing vector field determined by 
\begin{equation}\label{e-gue170111ry}\begin{split}
&\omega_0(T)=-1,\\
&d\omega_0(T,\cdot)\equiv0\ \ \mbox{on $TX$}.
\end{split}\end{equation}
Note that $X$ is a contact manifold with contact form $\omega_0$, contact plane $HX$ and $T$ is the Reeb vector field.

Fix a smooth Hermitian metric $\langle \cdot \mid \cdot \rangle$ on $\mathbb{C}TX$ so that $T^{1,0}X$ is orthogonal to $T^{0,1}X$, $\langle u \mid v \rangle$ is real if $u, v$ are real tangent vectors, $\langle\,T\,|\,T\,\rangle=1$ and $T$ is orthogonal to $T^{1,0}X\oplus T^{0,1}X$. For $u \in \mathbb{C}TX$, we write $|u|^2 := \langle u | u \rangle$. Denote by $T^{*1,0}X$ and $T^{*0,1}X$ the dual bundles $T^{1,0}X$ and $T^{0,1}X$, respectively. They can be identified with subbundles of the complexified cotangent bundle $\mathbb{C}T^*X$. Define the vector bundle of $(0,q)$-forms by $T^{*0,q}X := \wedge^qT^{*0,1}X$. The Hermitian metric $\langle \cdot | \cdot \rangle$ on $\mathbb{C}TX$ induces, by duality, a Hermitian metric on $\mathbb{C}T^*X$ and also on the bundles of $(0,q)$ forms $T^{*0,q}X, q=0, 1, \cdots, n$. We shall also denote all these induced metrics by $\langle \cdot | \cdot \rangle$. Note that we have the pointwise orthogonal decompositions:
\begin{equation}
\begin{array}{c}
\mathbb{C}T^*X = T^{*1,0}X \oplus T^{*0,1}X \oplus \left\{ \lambda \omega_0: \lambda \in \mathbb{C} \right\}, \\
\mathbb{C}TX = T^{1,0}X \oplus T^{0,1}X \oplus \left\{ \lambda T: \lambda \in \mathbb{C} \right\}.
\end{array}
\end{equation}

For $x, y\in X$, let $d(x,y)$ denote the distance between $x$ and $y$ induced by the Hermitian metric $\langle \cdot \mid \cdot \rangle$. Let $A$ be a subset of $X$. For every $x\in X$, let $d(x,A):=\inf\set{d(x,y);\, y\in A}$.

Let $\Omega^{0,q}(D)$ denote the space of smooth sections of $T^{*0,q}X$ over $D$ and let $\Omega^{0,q}_0(D)$ be the subspace of $\Omega^{0,q}(D)$ whose elements have compact support in $D$.

%Locally we can choose an orthonormal fra
%For each $x \in X$, we define a quadratic form on $HX$ by
%\begin{equation}\label{E:levi}
%\mathcal{L}_x(U,V) =\frac{1}{2}d\omega_0(JU, V), \forall \ U, V \in H_xX.
%\end{equation}
%The quadratic form is called the Levi form at $x$. We extend $\mathcal{L}$ to $\mathbb{C}HX$ by complex linear extension. Then for $U, V \in T^{1,0}_xX$,
%\begin{equation}
%\mathcal{L}_x(U,\overline{V}) = \frac{1}{2}d\omega_0(JU, \overline{V}) = -\frac{1}{2i}d\omega_0(U,\overline{V}).
%\end{equation}

%In the following, we always assume that the Levi form $\mathcal{L}$ is non-degenerate of constant signature $(n_-,n_+)$, where $n_-$ denotes the number of negative eigenvalues of the Levi form and $n_+$ denotes the number of positive eigenvalues of the Levi form. Hence $X$ is a contact manifold. From \eqref{E:contact1} and \eqref{E:contact2}, we see that $\omega_0$ is a contact form, $HX$ is the contact plane and $T$ is the Reeb vector field.

%%%%%%%%%%%%%%%%%%%%%%%%%%%%%%%%%%%%%%%%%%%%%%%%%%%%%

\subsection{Contact reduction}
Let $G$ be a connected compact Lie group with Lie algebra $\mathfrak{g}$ such that $\dim_{\mathbb{R}}G = d$. We assume that the Lie group $G$ acts on $X$ preserving $\omega_0$, i.e. $g^*\omega_0 =\omega_0$, for any $g \in G$. For any $\xi \in \mathfrak{g}$, there is an induced vector field $\xi_X$ on $X$ given by $(\xi_X u)(x)=\frac{\partial}{\partial t}\left(u( \exp(t\xi)\circ x)\right)|_{t=0}$, for any $u\in C^\infty(X)$.

\begin{definition}
The contact moment map associated to the form $\omega_0$ is the map $\mu:X \to \mathfrak{g}^*$ such that, for all $x \in X$ and $\xi \in \mathfrak{g}$, we have 
\begin{equation}\label{E:cmp}
\langle \mu(x), \xi \rangle = \omega_0(\xi_X(x)).
\end{equation}
\end{definition}

We now recall the contact reduction from \cite{A,Ge}. It was shown in \cite{A, Ge} that the contact moment map is $G$-equivariant, so $G$ acts on $Y:=\mu^{-1}(0)$, where $G$ acts on $\mathfrak{g}^*$ through co-adjoint represent. Since we assume that the action of $G$ on $Y$ is globally free, $Y_G:=\mu^{-1}(0)/G$ is a smooth manifold. Let $\pi: Y \to Y_G$ and $\iota:Y \hookrightarrow X$ be the natural quotient and inclusion, respectively, then there is a unique induced contact form $\widetilde{\omega}_0$ on $Y_G$ such $\pi^*\widetilde{\omega}_0 = \iota^* \omega_0$. We denote by
\begin{equation}
HY:= \operatorname{Ker} \omega_0 \cap T(\mu^{-1}(0)) = HX \cap TY,
\end{equation}
then the induced contact plane on $Y_G$ is
\begin{equation}
HY_G := \pi_\ast HY.
\end{equation}
In particular, $\dim HY = 2n-d$ and $\dim HY_G=2n-2d.$
%%%%%%%%%%%%%%%%%%%%%%%%%%%%%%%%%%%%%%%%%%%%%%%%%%%%%

\subsection{CR reduction}\label{s-gue170301}
In this subsection we study the reduction of CR manifolds with non-degenerate Levi curvature which is a CR analogue of the reduction on complex manifolds considered in \cite[\S 2.1]{P}. For the case of strictly pseudoconvex CR manifolds, the CR reduction was also studied in \cite{L00}. 

Recall that we work with Assumption~\ref{a-gue170123I}. 
%We now recall that a Lie group $G$ acts by {\it CR automorphism} on a CR manifold $(X, HX, J)$ if it preserves $HX$ and $J$, i.e. $g_\ast H_xX \subset H_xX$ and $g_\ast J V = J g_\ast V$, for every $g \in G, x \in X$ and $V \in H_xX$. 
%In this paper we assume that the Lie group $G$ acts on $X$ by CR automorphism. Since the compact Lie group $G$ acts on $X$ preserving $HX$, the contact form $\omega_0$ can be assumed to be $G$-invariant by averaging. From now on, we assume that $G$ preserves $\omega_0$. 
Let $b$ be the nondegenerate bilinear form on $HX$ such that 
\begin{equation}\label{E:biform}
b(\cdot , \cdot) = d\omega_0(\cdot , J\cdot).
\end{equation}

We denote by $\underline{\mathfrak{g}} := \operatorname{Span} (\xi_X, \xi \in \mathfrak{g})$ the tangent bundle of the orbits in $X$. Let 
\begin{equation}\label{e-gue170311}
\underline{\mathfrak{g}}^{\perp_b}=\set{v\in HX;\, b(\xi_X,v)=0,\ \ \forall \xi_X\in\underline{\mathfrak{g}}}.
\end{equation}
Since the Levi form is non-degenerate, when restricted to $\underline{\mathfrak{g}} \times  \underline{\mathfrak{g}}$, the bilinear form $b$ is nondegenerate on $Y$. In the following lemma, we don't assume that  the Levi form is non-degenerate

\begin{lemma}\label{L:ndgbar}
When restricted to $ \underline{\mathfrak{g}} \times  \underline{\mathfrak{g}}$, the bilinear form $b$ is nondegenerate on $Y$.
\end{lemma}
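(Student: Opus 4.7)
The plan is to express $b|_{\underline{\mathfrak g}_x\times\underline{\mathfrak g}_x}$ directly in terms of the moment-map differential, reduce non-degeneracy to a transversality statement, and verify that transversality from the regularity of~$0$.

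First, $g^*\omega_0=\omega_0$ combined with Cartan's magic formula yields $d\iota_{\xi_X}\omega_0+\iota_{\xi_X}d\omega_0=0$, hence the standard moment-map identity $d\langle\mu,\xi\rangle=-\iota_{\xi_X}d\omega_0$ for every $\xi\in\mathfrak g$. For $x\in Y$ both $\xi_X(x)$ and $\eta_X(x)$ lie in $H_xX$ (since $\langle\mu(x),\cdot\rangle=0$), so $J\eta_X(x)\in H_xX$ is well-defined, and evaluating the moment-map identity at $v=J\eta_X(x)$ gives the clean formula
\[
b\bigl(\xi_X(x),\eta_X(x)\bigr)=d\omega_0(\xi_X,J\eta_X)(x)=-d\langle\mu,\xi\rangle\bigl(J\eta_X(x)\bigr).
\]

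Next, using the freeness of the $G$-action on $Y$ to identify $\mathfrak g\cong\underline{\mathfrak g}_x$, this formula exhibits the Gram matrix of $b|_{\underline{\mathfrak g}_x}$ as the matrix of the linear map $\Phi_x\colon\mathfrak g\to\mathfrak g^*$, $\eta\mapsto -d\mu(x)(J\eta_X(x))$. Since $\dim\mathfrak g=\dim\mathfrak g^*=d$, non-degeneracy of $b|_{\underline{\mathfrak g}_x}$ is equivalent to injectivity of $\Phi_x$; and because $\ker d\mu(x)=T_xY$ while $J\underline{\mathfrak g}_x\subset H_xX$, this reduces to the transversality
\[
J\underline{\mathfrak g}_x\cap H_xY=\{0\}\quad\text{at every }x\in Y.
\]

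The main obstacle is verifying this last transversality, which is where the real geometry enters. My plan is to argue it by producing a linear model at $x$ adapted simultaneously to the $G$-orbit and the CR/contact structure: a slice to the $G$-orbit through $x$ combined with a Darboux-type normalization of $\omega_0$ yields local coordinates $(x_1,\dots,x_{2n+1})$ centered at $x$ in which $\underline{\mathfrak g}_x=\mathrm{span}\{\partial_{x_1},\dots,\partial_{x_d}\}$, $Y$ is cut out locally by $x_{d+1}=\cdots=x_{2d}=0$, and $J(\partial_{x_j})(0)=\partial_{x_{d+j}}(0)$ for $j=1,\dots,d$. In these coordinates $J\underline{\mathfrak g}_x=\mathrm{span}\{\partial_{x_{d+1}},\dots,\partial_{x_{2d}}\}$ at the origin is manifestly complementary to $T_xY=\mathrm{span}\{\partial_{x_i}:i\notin\{d+1,\dots,2d\}\}$, giving the transversality. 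A more invariant alternative combines the equivariant isotropy identity $d\omega_0(\xi_X,\eta_X)(x)=0$ on $Y$ (immediate from $G$-equivariance of $\mu$ and the vanishing of $\mu$ on $Y$) with the symmetry of $b$ on $HX$ (which follows from the $(1,1)$-type of $d\omega_0$, itself a consequence of the integrability of $T^{1,0}X$) and a dimension count against the regularity of $\mu$.
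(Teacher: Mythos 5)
Your preliminary reduction is sound and is, modulo packaging, the same as the paper's: the identity $b(\xi_X,\eta_X)=-d\langle\mu,\xi\rangle(J\eta_X)$ (the paper writes the equivalent $b_x(\xi_X,JV)=-d\omega_0(\xi_X,V)=-(d\mu(V))(\xi)$) and the resulting equivalence between non-degeneracy of $b|_{\underline{\mathfrak g}}$ and the transversality $J\underline{\mathfrak g}_x\cap T_xY=\{0\}$ are exactly what appear in the text, where the transversality is phrased as $\underline{\mathfrak g}_x\cap\underline{\mathfrak g}_x^{\perp_b}=\{0\}$ with $\underline{\mathfrak g}^{\perp_b}|_Y=JHY$. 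The gap is in your step that is supposed to prove the transversality.

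The coordinate route is circular. The normal form you invoke --- coordinates with $\underline{\mathfrak g}_x=\mathrm{span}\{\partial_{x_1},\dots,\partial_{x_d}\}$, $Y=\{x_{d+1}=\cdots=x_{2d}=0\}$, \emph{and} $J\partial_{x_j}(0)=\partial_{x_{d+j}}(0)$ --- can exist only if $J\underline{\mathfrak g}_x$ is already complementary to $T_xY$; those three conditions together are a restatement of the transversality, not a derivation of it. In the paper this normal form is Theorem~\ref{t-gue161202}, whose proof explicitly invokes $\underline{\mathfrak g}\cap\underline{\mathfrak g}^{\perp_b}=\{0\}$ (the present lemma) when arguing that the matrix $(b_{j,k})_{1\le j\le d,\,d+1\le k\le 2d}$ is non-singular, so you cannot use it here. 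The ``invariant alternative'' you sketch does not fill the hole either: regularity of $\mu$ gives $\dim H_xY=2n-d$, so $\dim(J\underline{\mathfrak g}_x)+\dim H_xY=\dim H_xX$, but a complementary dimension count does not force transversality when $b$ is indefinite (for a non-degenerate symmetric $b$ of signature $(2,2)$ on $\Real^4$ there are $2$-planes $W$ with $W^{\perp_b}=W$, and the paper explicitly allows the Levi form, hence $b$, to be indefinite). For what it is worth, the paper's own proof leaves this step implicit too --- it asserts $\underline{\mathfrak g}+\underline{\mathfrak g}^{\perp_b}=HX$ right after the dimension computation without further argument, and that assertion is precisely equivalent to the lemma's conclusion --- so you have reproduced the overall strategy faithfully; but as written, the central transversality claim is not actually established by your proposal, and any genuine proof must extract additional information from the interaction of the $G$-action, the contact form, and $J$, not merely from dimensions.
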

\begin{proof}
For $x \in Y, V \in H_xX$ and $\xi \in \mathfrak{g}$, by \eqref{E:cmp} and \eqref{E:biform}, we have
\begin{equation}
b_x(\xi_X,JV) =-d\omega_0(x)(\xi_X, V) = -\left( d\mu(x)(V) \right)(\xi).
\end{equation}
Therefore,
\begin{equation}\label{E:iff1}
JV \in \underline{\mathfrak{g}}^{\perp_b} |_Y  \iff d\mu(x)(V) =0.
\end{equation}
Since $Y=\mu^{-1}(0)$, we have
\begin{equation}\label{E:iff2}
d\mu(x)(V) =0 \iff V \in T_xY.
\end{equation}
In particular, for $x \in Y$, $\dim \underline{\mathfrak{g}}^{\perp_b}_x = \dim (H_xX \cap T_xY) = \dim H_xY =2n-d.$ Moreover, $\dim \underline{\mathfrak{g}}=d$ and $\underline{\mathfrak{g}}+\underline{\mathfrak{g}}^{\perp_b}=HX.$ We conclude that, when restricted to $Y$, this sum is a direct sum, hence $\underline{\mathfrak{g}}_x \cap \underline{\mathfrak{g}}^{\perp_b}_x = \left\{ 0 \right\}.$ The lemma is proved.
\end{proof}

Let $U$ be a small open $G$-invariant neighborhood of $Y$. Since $G$ acts freely on $Y$, we can thus also assume that $G$ acts freely on $\overline{U}$. Since $\underline{\mathfrak{g}}_x \cap \underline{\mathfrak{g}}^{\perp_b}_x = \left\{ 0 \right\}$, for $x\in Y$, we have, for $x \in Y$,
\begin{equation}\label{E:bdsum}
H_xU = \underline{\mathfrak{g}}_x \oplus \underline{\mathfrak{g}}^{\perp_b}_x.
\end{equation}
Then, by \eqref{E:bdsum}, we can choose the horizontal bundles of the fibrations $U \to U_G:=U/G$ and $Y \to Y_G$ to be
\begin{equation}\label{E:hhx}
H^HU = \underline{\mathfrak{g}}^{\perp_b}|_U
\end{equation}
and 
\begin{equation}\label{E:hhy}
H^HY := H^HU|_Y \cap HY.
\end{equation}
Hence
\begin{equation}
 HY = \underline{\mathfrak{g}}|_Y \oplus H^HY.
\end{equation}

\begin{lemma}\label{l-gue170120}
\begin{equation}\label{E:jhy}
\underline{\mathfrak{g}}^{\perp_b} |_Y =JHY.
\end{equation}
\begin{equation}\label{E:split}
HU|_Y = J\underline{\mathfrak{g}}|_Y \oplus HY = \underline{\mathfrak{g}}|_Y \oplus J\underline{\mathfrak{g}}|_Y \oplus H^HY.
\end{equation}
\end{lemma}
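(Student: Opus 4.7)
The plan is to extract everything from the chain of equivalences already established in the proof of Lemma~\ref{L:ndgbar}: for $x\in Y$ and $V\in H_xX$, one has
\[
JV\in \underline{\mathfrak{g}}^{\perp_b}|_Y\ \Longleftrightarrow\ d\mu(x)(V)=0\ \Longleftrightarrow\ V\in T_xY\ \Longleftrightarrow\ V\in H_xY,
\]
the last equivalence holding because $V$ is already assumed to lie in $HX$ and $H_xY=H_xX\cap T_xY$.

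To obtain \eqref{E:jhy}, I will use this equivalence twice. First, if $V\in HY$ then $JV\in \underline{\mathfrak{g}}^{\perp_b}|_Y$, giving $JHY\subset \underline{\mathfrak{g}}^{\perp_b}|_Y$. Conversely, given $W\in \underline{\mathfrak{g}}^{\perp_b}|_Y\subset HX$, set $V:=-JW$; since $J$ preserves $HX$, we have $V\in HX$ and $JV=W\in \underline{\mathfrak{g}}^{\perp_b}|_Y$. The equivalence then forces $V\in HY$, so $W=JV\in JHY$.

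For \eqref{E:split}, I start from \eqref{E:bdsum} and \eqref{E:jhy} to write $HU|_Y=\underline{\mathfrak{g}}|_Y\oplus \underline{\mathfrak{g}}^{\perp_b}|_Y=\underline{\mathfrak{g}}|_Y\oplus JHY$, a space of dimension $2n=\dim H_xU$. The key intermediate claim is $J\underline{\mathfrak{g}}|_Y\cap HY=\{0\}$: if $J\xi_X\in HY$ for some $\xi\in\mathfrak{g}$, applying the equivalence with $V:=J\xi_X\in HX$ gives $JV=-\xi_X\in \underline{\mathfrak{g}}^{\perp_b}|_Y$; combined with $\xi_X\in\underline{\mathfrak{g}}|_Y$, Lemma~\ref{L:ndgbar} yields $\xi_X=0$. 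Since $\dim J\underline{\mathfrak{g}}|_Y+\dim HY=d+(2n-d)=2n$, the direct sum fills $HU|_Y$, and we conclude $HU|_Y=J\underline{\mathfrak{g}}|_Y\oplus HY$. Substituting the splitting $HY=\underline{\mathfrak{g}}|_Y\oplus H^HY$ from \eqref{E:hhy} produces the triple decomposition in \eqref{E:split}.

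Every step reduces to a short linear-algebra verification powered by the equivalence above, so I do not expect any genuine obstacle. The only items requiring a touch of care are ensuring that $V=-JW\in HX$ in the converse direction of \eqref{E:jhy} (automatic, since $J(HX)=HX$) and checking that all summands in the final decomposition are subspaces of $HU|_Y$ before invoking the dimension count.
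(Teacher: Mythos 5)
Your proof is correct and essentially the same as the paper's: both derive \eqref{E:jhy} directly from the equivalences \eqref{E:iff1}--\eqref{E:iff2}, and both establish \eqref{E:split} by showing $J\underline{\mathfrak{g}}|_Y\cap HY=\{0\}$ (via $\underline{\mathfrak{g}}\cap\underline{\mathfrak{g}}^{\perp_b}|_Y=\{0\}$ from Lemma~\ref{L:ndgbar}) together with the dimension count $\dim H_xU=\dim H_xY+\dim J\underline{\mathfrak{g}}_x$. You merely spell out the step the paper compresses into ``Using \eqref{E:iff3}, \ldots, and the fact that $b$ is nondegenerate on $JHY$.''
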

\begin{proof}
The identity \eqref{E:jhy} follows from \eqref{E:iff1} and \eqref{E:iff2}. For $x \in Y, V \in H_xY$ and $\xi \in \mathfrak{g}$,
\begin{equation}\label{E:iff3}
b_x(J\xi_X, V) = d\omega_0(x)(\xi_X, V) = \left( d\mu(x)(V) \right)(\xi)=0.
\end{equation}
Using \eqref{E:iff3}, $\dim H_xU = \dim H_xY + \dim J\underline{\mathfrak{g}}_x$, and the fact that $b$ is nondegenerate on $JHY$, we obtain \eqref{E:split}.
\end{proof}

By \eqref{E:hhx}, \eqref{E:hhy} and \eqref{E:jhy}, we have
\begin{equation}
H^HY = JHY \cap HY.
\end{equation}
In particular, $H^HY$ is preserved by $J$, so we can define the homomorphism $J_G$ on $HY_G$ in the following way:
For $V \in HY_G$, we denote by $V^H$ its lift in $H^HY$, and we define $J_G$ on $Y_G$ by
\begin{equation}\label{E:jg}
(J_GV)^H = J(V^H).
\end{equation}
Hence, we have $J_G: HY_G \to HY_G$ such that $J_G^2 = -\operatorname{id}$, where $\operatorname{id}$ denotes the identity map $\operatorname{id}  \, : \, HY_G \to HY_G.$ By complex linear extension of $J_G$ to $\mathbb{C}TY_G$, we can define the $i$-eigenspace of $J_G$ is given by $T^{1,0}Y_G \, = \, \left\{ V \in \mathbb{C}HY_G \, : \, J_GV \, =  \,  \sqrt{-1} V  \right\}.$
  
\begin{theorem}\label{t-gue170128b}
The subbundle $T^{1,0}Y_G$ is a CR structure of $Y_G$.
\end{theorem}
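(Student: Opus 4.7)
My plan is to verify the three defining properties of a CR structure for $T^{1,0}Y_G$: it is a smooth subbundle of $\Complex TY_G$ of complex rank $n-d$, it satisfies $T^{1,0}Y_G\cap T^{0,1}Y_G=\set{0}$, and it is involutive under the Lie bracket. The first two are essentially formal once one notes that $J_G$ is a smooth endomorphism of $HY_G$ with $J_G^2=-\mathrm{id}$, so its $\Complex$-linear extension to $\Complex HY_G$ has smoothly varying $\pm i$-eigenspaces of constant complex rank $n-d$ that are mutually conjugate. The real content is integrability.

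The strategy is to lift everything to $Y$ and then to use integrability of $T^{1,0}X$. Because $H^HY=JHY\cap HY$ is $J$-stable, its complexification $\Complex H^HY$ splits into the $\pm i$-eigenspaces of $J$, and I set $T^{1,0}_{\mathrm{hor}}:=T^{1,0}X\cap\Complex H^HY$, which is the $+i$ eigenspace of $J|_{\Complex H^HY}$. By \eqref{E:jg}, the pushforward $\pi_*$ restricts to a fiberwise $\Complex$-linear isomorphism $\Complex H^HY|_y\to\Complex HY_G|_{\pi(y)}$ intertwining $J$ with $J_G$, so the horizontal lift sets up a bijection between $C^\infty(Y_G,T^{1,0}Y_G)$ and the space of $G$-invariant smooth sections of $T^{1,0}_{\mathrm{hor}}$ over $Y$.

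The crucial linear-algebra identity I will need is
\begin{equation*}
T^{1,0}_yX\cap\Complex T_yY=T^{1,0}_{\mathrm{hor},y},\qquad y\in Y.
\end{equation*}
This follows from Lemma~\ref{l-gue170120}: the three-way decomposition $\Complex HX|_Y=\Complex\underline{\mathfrak{g}}\oplus\Complex J\underline{\mathfrak{g}}\oplus\Complex H^HY$ intersects $\Complex T_yY\cap\Complex H_yX=\Complex H_yY=\Complex\underline{\mathfrak{g}}_y\oplus\Complex H^H_yY$ with the middle summand removed. Writing any $u\in T^{1,0}_yX$ according to the three summands and imposing $Ju=iu$ shows that $J$ exchanges the $\Complex\underline{\mathfrak{g}}_y$- and $\Complex J\underline{\mathfrak{g}}_y$-components (up to sign), so both components vanish as soon as one of them does, which yields the claimed identity.

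Granting this, integrability follows directly. Given $\tilde U,\tilde V\in C^\infty(Y_G,T^{1,0}Y_G)$, I take their $G$-invariant horizontal lifts $U,V\in C^\infty(Y,T^{1,0}_{\mathrm{hor}})$ and extend them smoothly to sections $\hat U,\hat V$ of $T^{1,0}X$ on a $G$-invariant tubular neighborhood of $Y$ in $X$. By integrability of $T^{1,0}X$, $[\hat U,\hat V]$ is a section of $T^{1,0}X$; since $U,V$ are tangent to $Y$, the restriction $[\hat U,\hat V]|_Y=[U,V]$ lies in $\Complex TY$, and combining these two conclusions with the identity above forces $[U,V]$ to take values in $T^{1,0}_{\mathrm{hor}}$. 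Since $[U,V]$ is both $G$-invariant and horizontal, it descends under $\pi_*$ to $[\tilde U,\tilde V]\in C^\infty(Y_G,T^{1,0}Y_G)$. The main obstacle is the linear-algebra identity above: it is what rules out the a priori possibility that the bracket of two horizontal $(1,0)$ lifts acquires a $\Complex\underline{\mathfrak{g}}$-component along the $G$-orbit, which would destroy projectability in the $(1,0)$ sense. Once that identity is in hand, everything else is a formal manipulation of horizontal lifts and the integrability of $T^{1,0}X$.
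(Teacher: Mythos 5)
Your proof is correct and follows essentially the same route as the paper's: lift to $G$-invariant horizontal $(1,0)$ fields on $Y$, use involutivity of $T^{1,0}X$ (after extending off $Y$) together with Lemma~\ref{l-gue170120} to place the bracket in $T^{1,0}X\cap\Complex H^HY$, and push forward by $\pi_*$. Your linear-algebra step establishing $T^{1,0}_yX\cap\Complex T_yY=T^{1,0}X\cap\Complex H^H_yY$ via the three-term splitting \eqref{E:split} is equivalent to the paper's shorter observation that writing $[u^H,v^H]=W-\sqrt{-1}JW$ forces $W,JW\in HY$, hence $W\in HY\cap JHY=H^HY$; you are merely more explicit about the extension to a neighborhood in $X$ and about $G$-invariance/projectability, both of which the paper leaves implicit.
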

\begin{proof}
Let $u, v \in C^\infty(Y_G, T^{1,0}Y_G)$, then we can find $U, V \in C^\infty(Y_G, TY_G)$ such that 
\[
u= U - \sqrt{-1}J_GU, \qquad v=V-\sqrt{-1}J_GV.
\]
By \eqref{E:jg}, we have
\[
u^H=U^H-\sqrt{-1}JU^H, \quad v = V^H - \sqrt{-1}JV^H \in T^{1,0}X \cap \mathbb{C}HY.
\]
Since $T^{1,0}X$ is a CR structure and it is clearly that $[u^H, v^H] \in \mathbb{C}HY,$ we have $[u^H, v^H] \in T^{1,0}X \cap \mathbb{C}HY.$ Hence, there is a $W \in C^\infty(X, HX)$ such that
\[
[u^H, v^H] = W-\sqrt{-1}JW.
\]  
In particular, $W, JW \in HY$. Thus, $W \in HY \cap JHY = H^HY$. Let $X^H \in H^HY$ be a lift of $X \in TY_G$ such that $X^H=W$. Then we have
\[
[u, v] = \pi_*[u^H, v^H] = \pi_*(X^H -\sqrt{-1}JX^H) = X - \sqrt{-1}J_GX \in T^{1,0}Y_G,
\]
i.e. we have $[C^\infty(Y_G, T^{1,0}Y_G), C^\infty(Y_G, T^{1,0}Y_G)] \subset C^\infty(Y_G, T^{1,0}Y_G).$ Therefore, $T^{1,0}Y_G$ is a CR structure of $Y_G.$
\end{proof}

%Let $\mathcal{L}_{Y_G,x}$ be the Levi form on $Y_G$  at $x\in Y_G$ induced naturally from $\mathcal{L}$.  The Hermitian metric $\langle\,\cdot\,|\,\cdot\,\rangle$ on $T^{1,0}X$ induces a Hermitian metric $\langle\,\cdot\,|\,\cdot\,\rangle$ on $T^{1,0}Y_G$. Let $\det\,\mathcal{L}_{Y_G,x}=\lambda_1\ldots\lambda_{n-d}$, where $\lambda_j$, $j=1,\ldots,n-d$, be the eigenvalues of $\mathcal{L}_{Y_G,x}$ with respect to the Hermitian metric $\langle\,\cdot\,|\,\cdot\,\rangle$. 

%\section{$G$-invariant Szeg\"o kernel}\label{s-gue161109}

%Fix $g\in G$. Let $g^*:\Lambda^r_x(\Complex T^*X)\To\Lambda^r_{g^{-1}\circ x}(\Complex T^*X)$ be the pull-back map. Since $G$ preserves $J$, we have 
%\[g^*:T^{*0,q}_xX\To T^{*0,q}_{g^{-1}\circ x}X,\  \ \forall x\in X.\]
%Thus, for $u\in\Omega^{0,q}(X)$, we have $g^*u\in\Omega^{0,q}(X)$ and we write $(g^*u)(x)=:u(gx)$. Put 
%\[\Omega^{0,q}(X)^G:=\set{u\in\Omega^{0,q}(X);\, g^*u=u,\ \ \forall g\in G}.\]

\section{$G$-invariant Szeg\"o kernel asymptotics}\label{s-gue161109a}

In this section, we will establish asymptotic expansion for the $G$-invariant Szeg\"o kernel. We first review some well-known results for Szeg\"o kernel. 

\subsection{Szeg\"o kernel asymptotics}\label{s-gue161109I}

In this subsection, we don't assume that our CR manifold admits a compact Lie group action but we still assume that the Levi form is non-degenerate of constant signature $(n_-,n_+)$. The Hermitian metric $\langle\,\cdot\,|\,\cdot\,\rangle$ on $\Complex TX$ induces,
by duality, a Hermitian metric on $\Complex T^*X$ and also on the bundles of $(0,q)$ 
forms $T^{*0,q}X$, $q=0,1,\ldots,n$. We shall also denote all these induced metrics 
by $\langle\,\cdot\,|\,\cdot\,\rangle$. For $u\in T^{*0,q}X$, we write $\abs{u}^2:=\langle\,u\,|\,u\,\rangle$.  
Let $D\subset X$ be an open set. Let $\Omega^{0,q}(D)$ denote the space of smooth sections 
of $T^{*0,q}X$ over $D$ and let $\Omega^{0,q}_0(D)$ be the subspace of
$\Omega^{0,q}(D)$ whose elements have compact support in $D$. 

Let %$\ddbar_b:\Omega^{0,q}(X)\To\Omega^{0,q+1}(X)$
\begin{equation} \label{e-suIV}
\ddbar_b:\Omega^{0,q}(X)\To\Omega^{0,q+1}(X)
\end{equation}
be the tangential Cauchy-Riemann operator. Let $dv(x)$ be the volume form induced by the Hermitian metric $\langle\,\cdot\,|\,\cdot\,\rangle$.
The natural global $L^2$ inner product $(\,\cdot\,|\,\cdot\,)$ on $\Omega^{0,q}(X)$ 
induced by $dv(x)$ and $\langle\,\cdot\,|\,\cdot\,\rangle$ is given by
\begin{equation}\label{e:l2}
(\,u\,|\,v\,):=\int_X\langle\,u(x)\,|\,v(x)\,\rangle\, dv(x)\,,\quad u,v\in\Omega^{0,q}(X)\,.
\end{equation}
We denote by $L^2_{(0,q)}(X)$ 
the completion of $\Omega^{0,q}(X)$ with respect to $(\,\cdot\,|\,\cdot\,)$. 
We write $L^2(X):=L^2_{(0,0)}(X)$. We extend $(\,\cdot\,|\,\cdot\,)$ to $L^2_{(0,q)}(X)$ 
in the standard way. For $f\in L^2_{(0,q)}(X)$, we denote $\norm{f}^2:=(\,f\,|\,f\,)$.
We extend
$\ddbar_{b}$ to $L^2_{(0,r)}(X)$, $r=0,1,\ldots,n$, by
\begin{equation}\label{e-suVII}
\ddbar_{b}:{\rm Dom\,}\ddbar_{b}\subset L^2_{(0,r)}(X)\To L^2_{(0,r+1)}(X)\,,
\end{equation}
where ${\rm Dom\,}\ddbar_{b}:=\{u\in L^2_{(0,r)}(X);\, \ddbar_{b}u\in L^2_{(0,r+1)}(X)\}$ and, for any $u\in L^2_{(0,r)}(X)$, $\ddbar_{b} u$ is defined in the sense of distributions.
We also write
\begin{equation}\label{e-suVIII}
\ol{\pr}^{*}_{b}:{\rm Dom\,}\ol{\pr}^{*}_{b}\subset L^2_{(0,r+1)}(X)\To L^2_{(0,r)}(X)
\end{equation}
to denote the Hilbert space adjoint of $\ddbar_{b}$ in the $L^2$ space with respect to $(\,\cdot\,|\,\cdot\, )$.
Let $\Box^{(q)}_{b}$ denote the (Gaffney extension) of the Kohn Laplacian given by
\begin{equation}\label{e-suIX}
\begin{split}
{\rm Dom\,}\Box^{(q)}_{b}=\Big\{s\in L^2_{(0,q)}(X);&\, 
s\in{\rm Dom\,}\ddbar_{b}\cap{\rm Dom\,}\ol{\pr}^{*}_{b},\,
\ddbar_{b}s\in{\rm Dom\,}\ol{\pr}^{*}_{b},\\
&\quad\ol{\pr}^{*}_{b}s\in{\rm Dom\,}\ddbar_{b}\Big\}\,,\\
\Box^{(q)}_{b}s&=\ddbar_{b}\ol{\pr}^{*}_{b}s+\ol{\pr}^{*}_{b}\ddbar_{b}s
\:\:\text{for $s\in {\rm Dom\,}\Box^{(q)}_{b}$}\,.
 \end{split}
\end{equation}
By a result of Gaffney, for every $q=0,1,\ldots,n$, $\Box^{(q)}_{b}$ is a positive self-adjoint operator 
(see \cite[Proposition\,3.1.2]{MM}). That is, $\Box^{(q)}_{b}$ is self-adjoint and 
the spectrum of $\Box^{(q)}_{b}$ is contained in $\ol\Real_+$, $q=0,1,\ldots,n$. Let
\begin{equation}\label{e-suXI-I}
S^{(q)}:L^2_{(0,q)}(X)\To{\rm Ker\,}\Box^{(q)}_b
\end{equation}
be the orthogonal projections with respect to the $L^2$ inner product $(\,\cdot\,|\,\cdot\,)$ and let
\begin{equation}\label{e-suXI-II}
S^{(q)}(x,y)\in D'(X\times X,T^{*0,q}X\boxtimes(T^{*0,q}X)^*)
\end{equation}
denote the distribution kernel of $S^{(q)}$. 

We recall H\"ormander symbol space. Let $D\subset X$ be a local coordinate patch with local coordinates $x=(x_1,\ldots,x_{2n+1})$. 

\begin{definition}\label{d-gue140221a}
For $m\in\Real$, $S^m_{1,0}(D\times D\times\mathbb{R}_+,T^{*0,q}X\boxtimes(T^{*0,q}X)^*)$ 
is the space of all $a(x,y,t)\in C^\infty(D\times D\times\mathbb{R}_+,T^{*0,q}X\boxtimes(T^{*0,q}X)^*)$ 
such that, for all compact $K\Subset D\times D$ and all $\alpha, \beta\in\mathbb N^{2n+1}_0$, $\gamma\in\mathbb N_0$, 
there is a constant $C_{\alpha,\beta,\gamma}>0$ such that 
\[\abs{\pr^\alpha_x\pr^\beta_y\pr^\gamma_t a(x,y,t)}\leq C_{\alpha,\beta,\gamma}(1+\abs{t})^{m-\gamma},\ \ 
\forall (x,y,t)\in K\times\Real_+,\ \ t\geq1.\]
Put 
\[\begin{split}
&S^{-\infty}(D\times D\times\mathbb{R}_+,T^{*0,q}X\boxtimes(T^{*0,q}X)^*)\\
&:=\bigcap_{m\in\Real}S^m_{1,0}(D\times D\times\mathbb{R}_+,T^{*0,q}X\boxtimes(T^{*0,q}X)^*).\end{split}\]
Let $a_j\in S^{m_j}_{1,0}(D\times D\times\mathbb{R}_+,T^{*0,q}X\boxtimes(T^{*0,q}X)^*)$, 
$j=0,1,2,\ldots$ with $m_j\To-\infty$, as $j\To\infty$. 
Then there exists $a\in S^{m_0}_{1,0}(D\times D\times\mathbb{R}_+,T^{*0,q}X\boxtimes(T^{*0,q}X)^*)$ 
unique modulo $S^{-\infty}$, such that 
$a-\sum^{k-1}_{j=0}a_j\in S^{m_k}_{1,0}(D\times D\times\mathbb{R}_+,T^{*0,q}X\boxtimes(T^{*0,q}X)^*\big)$ 
for $k=0,1,2,\ldots$. 

If $a$ and $a_j$ have the properties above, we write $a\sim\sum^{\infty}_{j=0}a_j$ in 
$S^{m_0}_{1,0}\big(D\times D\times\mathbb{R}_+,T^{*0,q}X\boxtimes(T^{*0,q}X)^*\big)$. 
We write
\begin{equation}  \label{e-gue140205III}
s(x, y, t)\in S^{m}_{{\rm cl\,}}\big(D\times D\times\mathbb{R}_+,T^{*0,q}X\boxtimes(T^{*0,q}X)^*\big)
\end{equation}
if $s(x, y, t)\in S^{m}_{1,0}\big(D\times D\times\mathbb{R}_+,T^{*0,q}X\boxtimes(T^{*0,q}X)^*\big)$ and 
\begin{equation}\label{e-fal}\begin{split}
&s(x, y, t)\sim\sum^\infty_{j=0}s^j(x, y)t^{m-j}\text{ in }S^{m}_{1, 0}
\big(D\times D\times\mathbb{R}_+\,,T^{*0,q}X\boxtimes(T^{*0,q}X)^*\big)\,,\\
&s^j(x, y)\in C^\infty\big(D\times D,T^{*0,q}X\boxtimes(T^{*0,q}X)^*\big),\ j\in\N_0.\end{split}\end{equation}
\end{definition}

The following is well-known (see Theorem 4.8 in~\cite{HM14})

\begin{theorem}\label{t-gue161109}
Given $q=0,1,2,\ldots,n$. Assume that 
$q\notin\set{n_-,n_+}$. Then, $S^{(q)}\equiv0$ on $X$. 
\end{theorem}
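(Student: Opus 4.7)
The plan is to show that $\Box^{(q)}_b$ admits a full parametrix modulo smoothing operators when $q\notin\{n_-,n_+\}$, from which $S^{(q)}\equiv 0$ follows immediately. First I would identify the characteristic set of $\Box^{(q)}_b$: its principal symbol is $|\xi|^2_{HX}$, which vanishes precisely on
\[
\Sigma=\Sigma^+\sqcup\Sigma^-,\qquad \Sigma^\pm=\{(x,\lambda\omega_0(x))\in T^*X\setminus 0;\,\pm\lambda>0\}.
\]
Off $\Sigma$ the operator $\Box^{(q)}_b$ is elliptic, so the standard pseudodifferential calculus supplies a parametrix there, and $S^{(q)}$ is already microlocally smoothing away from $\Sigma$.

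Near each point of $\Sigma^\pm$ I would invoke the Boutet de Monvel--Sj\"ostrand / Heisenberg calculus. Freezing coefficients produces a model Kohn Laplacian acting on $(0,q)$-forms on the tangent Heisenberg group, whose spectral behavior is explicit: the $L^2$ kernel of the model on $(0,q)$-forms is nontrivial precisely when $q=n_-$ over $\Sigma^+$ or $q=n_+$ over $\Sigma^-$. Under the hypothesis $q\notin\{n_-,n_+\}$ the model is therefore invertible at every characteristic point, and one can patch together the inverses of the model operators with the elliptic parametrix to produce a global parametrix $Q$ satisfying
\[
Q\Box^{(q)}_b\equiv I\quad\mbox{on $X$.}
\]

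The conclusion is then immediate. For $u\in L^2_{(0,q)}(X)$ the section $S^{(q)}u$ lies in ${\rm Ker\,}\Box^{(q)}_b$, so $\Box^{(q)}_b S^{(q)}u=0$; combining this with the parametrix identity $Q\Box^{(q)}_b=I+R$ with $R$ smoothing yields $S^{(q)}u=-RS^{(q)}u$, which is a smooth section. Hence $S^{(q)}$ has smooth distribution kernel, i.e.\ $S^{(q)}\equiv 0$ on $X$ in the sense of \eqref{e-gue160507f}.

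The main obstacle is the construction of the microlocal parametrix in the Heisenberg calculus and, above all, the identification of the $L^2$ kernel of the model operator on the tangent group. This requires a careful diagonalization of the Levi form and an analysis of the creation/annihilation structure of the associated Bochner--Kodaira type Laplacian acting on $(0,q)$-forms; the signs of the Levi eigenvalues are what single out the degrees $n_-$ and $n_+$. Fortunately this is exactly the content of \cite[Theorem 4.8]{HM14} and of the Boutet de Monvel--Sj\"ostrand constructions underlying it, so I would quote those results rather than reproduce the parametrix argument from scratch.
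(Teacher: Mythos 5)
Your proposal is correct and follows essentially the same route as the paper: the paper does not give its own proof of this theorem but simply records it as well-known and cites \cite[Theorem 4.8]{HM14}, which is precisely the parametrix/microlocal-hypoellipticity argument you sketch, and you yourself conclude by quoting the same reference. The only small point worth tightening is the final step: from the parametrix identity $Q\Box^{(q)}_b=I+R$ with $R$ smoothing, you get $S^{(q)}=-RS^{(q)}$, and then using self-adjointness of $S^{(q)}$ also $S^{(q)}=-S^{(q)}R^*$, hence $S^{(q)}=RS^{(q)}R^*$, which is what actually shows the Schwartz kernel $S^{(q)}(x,y)$ is smooth on $X\times X$ (smoothness of $S^{(q)}u$ for each $u\in L^2$ alone does not directly give joint smoothness of the kernel).
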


The following is also well-known (see Theorem 1.2 in~\cite{Hsiao08}, Theorem 4.7 in~\cite{HM14} and see also~\cite{BouSj76} for $q=0$)

\begin{theorem}\label{t-gue161109I}
We recall that we work with the assumption that the Levi form is non-degenerate of constant signature $(n_-,n_+)$ on $X$. 
Let $q=n_-$ or $n_+$. Suppose that $\Box^{(q)}_b$ has $L^2$ closed range. Then, 
\[S^{(q)}(x,y)\in C^\infty(X\times X\setminus{{\rm diag\,}(X\times X)},T^{*0,q}X\boxtimes(T^{*0,q}X)^*).\]
Let $D\subset X$ be any local coordinate patch with local coordinates $x=(x_1,\ldots,x_{2n+1})$. Then, there exist continuous operators
\[S_-, S_+:\Omega^{0,q}_0(D)\To D'(D,T^{*0,q}X)\]
such that 
\begin{equation}\label{e-gue161110}
S^{(q)}\equiv S_-+S_+\ \ \mbox{on $D$},
\end{equation}
\begin{equation}\label{e-gue161110I}
\begin{split}
&S_-=0\ \ \mbox{if $q\neq n_-$},\\
&S_+=0\ \ \mbox{if $q\neq n_+$},\\
\end{split}
\end{equation}
and if $q=n_-$, $S_-(x,y)$ satisfies
\[S_-(x, y)\equiv\int^{\infty}_{0}e^{i\varphi_-(x, y)t}s_-(x, y, t)dt\ \ \mbox{on $D$}\]
with 
\begin{equation}  \label{e-gue161110r}\begin{split}
&s_-(x, y, t)\in S^{n}_{1,0}(D\times D\times\mathbb{R}_+,T^{*0,q}X\boxtimes(T^{*0,q}X)^*), \\
&s_-(x, y, t)\sim\sum^\infty_{j=0}s^j_-(x, y)t^{n-j}\quad\text{ in }S^{n}_{1, 0}(D\times D\times\mathbb{R}_+,T^{*0,q}X\boxtimes(T^{*0,q}X)^*),\\
&s^j_-(x, y)\in C^\infty(D\times D,T^{*0,q}X\boxtimes(T^{*0,q}X)^*),\ \ j=0,1,2,3,\ldots,\\
&s^0_-(x,x)\neq0,\ \ \forall x\in D,
\end{split}\end{equation}
and the phase function $\varphi_-$  satisfies 
\begin{equation}\label{e-gue140205IV}
\begin{split}
&\varphi_-\in C^\infty(D\times D),\ \ {\rm Im\,}\varphi_-(x, y)\geq0,\\
&\varphi_-(x, x)=0,\ \ \varphi_-(x, y)\neq0\ \ \mbox{if}\ \ x\neq y,\\
&d_x\varphi_-(x, y)\big|_{x=y}=-\omega_0(x), \ \ d_y\varphi_-(x, y)\big|_{x=y}=\omega_0(x), \\
&\varphi_-(x, y)=-\ol\varphi_-(y, x), 
\end{split}
\end{equation}
if $q=n_+$, then $S_+(x,y)$ satisfies 
\[S_+(x, y)\equiv\int^{\infty}_{0}\!\! e^{i\varphi_+(x, y)t}s_+(x, y, t)dt\ \ \mbox{on $D$}\]
with
\begin{equation}  \label{e-gue161110rI}\begin{split}
&s_+(x, y, t)\in S^{n}_{1,0}(D\times D\times\mathbb{R}_+,T^{*0,q}X\boxtimes(T^{*0,q}X)^*), \\
&s_+(x, y, t)\sim\sum^\infty_{j=0}s^j_+(x, y)t^{n-j}\quad\text{ in }S^{n}_{1, 0}(D\times D\times\mathbb{R}_+,T^{*0,q}X\boxtimes(T^{*0,q}X)^*),\\
&s^j_+(x, y)\in C^\infty(D\times D,T^{*0,q}X\boxtimes(T^{*0,q}X)^*),\ \ j=0,1,2,3,\ldots,\\
&s^0_+(x,x)\neq0,\ \ \forall x\in D,
\end{split}\end{equation}
and $-\ol\varphi_+(x, y)=\varphi_-(x,y)$. 
\end{theorem}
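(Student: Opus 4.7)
The strategy I propose is to construct $S^{(q)}$ microlocally as a complex Fourier integral operator using the Boutet de Monvel--Sj\"ostrand parametrix method, then invoke the closed range hypothesis to identify the parametrix with the true projection modulo smoothing. The starting point is that the characteristic variety of $\Box^{(q)}_b$ is $\Sigma = \Sigma^+ \cup \Sigma^-$, where $\Sigma^\pm = \{(x, \lambda\omega_0(x)) : \pm\lambda > 0\} \subset T^*X \setminus 0$, and on each component the Hessian of the principal symbol in the directions normal to $\Sigma^\pm$ is positive definite. The sub-principal symbol, localized at a point of $\Sigma^\pm$, is modeled by a harmonic oscillator whose ground-state multiplicity is $1$ if $q = n_\mp$ and $0$ otherwise; this fact underlies both the vanishing assertion of Theorem~\ref{t-gue161109} (for $q \notin \{n_-, n_+\}$) and the nontrivial asymptotic structure for $q = n_\pm$.

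Fix $q = n_-$ and work in a local coordinate patch $D$. The plan is to construct a phase function $\varphi_-(x,y) \in C^\infty(D \times D)$ with the properties in \eqref{e-gue140205IV} by solving the complex eikonal equation $p_0(x, d_x\varphi_-) = 0$ on $\Sigma^-$, where $p_0$ is the principal symbol of $\Box_b^{(q)}$, subject to the Cauchy data $d_x\varphi_-(x,x) = -\omega_0(x)$ and positivity of the imaginary Hessian along the normal bundle to $\mathrm{diag}$. This is a Melin--Sj\"ostrand type almost-analytic construction and yields $\varphi_-$ uniquely up to equivalence with $\mathrm{Im}\,\varphi_- \geq 0$. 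One then seeks the amplitude $s_-(x,y,t) \sim \sum_j s_-^j(x,y) t^{n-j}$ by solving a sequence of transport equations along the Hamiltonian flow of $p_0$ restricted to $\Sigma^-$. The leading transport equation determines $s_-^0(x,x)$ as a projection onto $\mathcal{N}(x, n_-)$, which in particular gives $s^0_-(x,x) \neq 0$; higher-order terms are solved recursively.

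With $S_-$ defined by the oscillatory integral, verify the three properties of an approximate Szeg\"o projection: $\Box^{(q)}_b S_- \equiv 0$, $S_-^* \equiv S_-$, and $S_- S_- \equiv S_-$ on $D$. The first follows from the eikonal and transport equations by construction; the second by choosing symbols invariant under $(x,y) \mapsto (y,x)$ composed with complex conjugation (using $\varphi_-(x,y) = -\overline{\varphi_-(y,x)}$); the third is the stationary phase identity for the composition of two such Fourier integral operators along $\Sigma^-$, which is a standard (but lengthy) calculation. The analogous construction at $\Sigma^+$ produces $S_+$ with phase $\varphi_+ = -\overline{\varphi_-}$ localizing on $(0, n_+)$ forms; for $q \neq n_\pm$ the leading symbol of the corresponding transport is forced to vanish since $\mathcal{N}(x, n_\mp) = 0$ in bidegree $q$, which yields Theorem~\ref{t-gue161109} after globalization.

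The main obstacle is the passage from the approximate projection $\tilde S := S_- + S_+$ to the true $S^{(q)}$: one needs $\tilde S - S^{(q)} \equiv 0$ on $D$. Here the $L^2$ closed range hypothesis is essential, because it gives a bounded partial inverse $N: L^2_{(0,q)}(X) \to \mathrm{Dom}\,\Box_b^{(q)}$ with $N \Box_b^{(q)} + S^{(q)} = I$ on $\mathrm{Dom}\,\Box_b^{(q)}$ and $\Box_b^{(q)} N = I - S^{(q)}$. Applying this identity to $\tilde S$, using $\Box_b^{(q)} \tilde S \equiv 0$, $\tilde S^2 \equiv \tilde S$, $\tilde S^* \equiv \tilde S$, together with the hypoelliptic estimates for $\Box_b^{(q)}$ away from $\mathrm{Ker}\,\Box_b^{(q)}$, one deduces $(I - S^{(q)}) \tilde S \equiv 0$ and $(I - \tilde S) S^{(q)} \equiv 0$, hence $\tilde S \equiv S^{(q)}$. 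The global smoothness of $S^{(q)}$ off the diagonal then follows from the fact that the wavefront set of each $S_\pm$ is contained in $\mathrm{diag}(\Sigma^\pm)$ and any two local constructions differ by a smoothing operator, so the pieces patch together globally.
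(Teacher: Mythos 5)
The paper does not actually prove Theorem~\ref{t-gue161109I}; it imports it as a known result with the pointer ``(see Theorem 1.2 in~\cite{Hsiao08}, Theorem 4.7 in~\cite{HM14} and see also~\cite{BouSj76} for $q=0$).'' So there is no proof in this paper to compare against. That said, your outline is a fair description of the strategy those references actually carry out: the characteristic variety $\Sigma=\Sigma^+\cup\Sigma^-$, the harmonic-oscillator model for the subprincipal symbol giving ground-state multiplicity $1$ exactly in degrees $n_\pm$, the Melin--Sj\"ostrand almost-analytic eikonal equation for $\varphi_\pm$ with positive imaginary Hessian transverse to the diagonal, recursive transport equations for the amplitude, and the final identification with the true Szeg\H{o} projection via the closed-range hypothesis.

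The only place where your sketch is genuinely incomplete is the last step, the passage from the approximate projection $\tilde S=S_-+S_+$ to $S^{(q)}$. Having $\Box^{(q)}_b\tilde S\equiv 0$, $\tilde S^*\equiv\tilde S$, $\tilde S^2\equiv\tilde S$ together with the closed-range identity $N\Box^{(q)}_b+S^{(q)}=I$ gives you $(I-S^{(q)})\tilde S\equiv 0$ and, by taking adjoints, $\tilde S S^{(q)}\equiv\tilde S$; but these two alone do not yield $S^{(q)}\equiv\tilde S$ --- you have not ruled out that $S^{(q)}(I-\tilde S)$ is non-smoothing. The missing ingredient is a companion parametrix $\tilde N$, constructed alongside $\tilde S$, satisfying $\Box^{(q)}_b\tilde N+\tilde S\equiv I$ (equivalently $\tilde N\Box^{(q)}_b\equiv I-\tilde S$). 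One then computes $S^{(q)}(I-\tilde S)\equiv S^{(q)}\Box^{(q)}_b\tilde N\equiv 0$ since $S^{(q)}$ annihilates the range of $\Box^{(q)}_b$, and combines this with the inclusion already obtained to conclude $S^{(q)}\equiv\tilde S$. Your phrase ``hypoelliptic estimates for $\Box^{(q)}_b$ away from $\mathrm{Ker}\,\Box^{(q)}_b$'' is gesturing at this, but the explicit construction of $\tilde N$ (as a Fourier integral operator of complex type of order $-1$ with phase $\varphi_\pm$, in the Boutet de Monvel--Sj\"ostrand sense) is what actually closes the argument in \cite{Hsiao08} and \cite{HM14}.
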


The following result describes the phase function in local coordinates (see chapter 8 of part I in \cite{Hsiao08})
%----------------------------------
\begin{theorem} \label{t-gue161110g}
For a given point $p\in X$, let $\{W_j\}_{j=1}^{n}$
be an orthonormal frame of $T^{1, 0}X$ in a neighborhood of $p$
such that
the Levi form is diagonal at $p$, i.e.\ $\mathcal{L}_{x_{0}}(W_{j},\overline{W}_{s})=\delta_{j,s}\mu_{j}$, $j,s=1,\ldots,n$.
We take local coordinates
$x=(x_1,\ldots,x_{2n+1})$, $z_j=x_j+ix_{d+j}$, $j=1,\ldots,d$, $z_j=x_{2j-1}+ix_{2j}$, $j=d+1,\ldots,n$,
defined on some neighborhood of $p$ such that $\omega_0(p)=dx_{2n+1}$, $x(p)=0$, 
and, for some $c_j\in\Complex$, $j=1,\ldots,n$\,,
\begin{equation}\label{e-gue161219a}
W_j=\frac{\pr}{\pr z_j}-i\mu_j\ol z_j\frac{\pr}{\pr x_{2n+1}}-
c_jx_{2n+1}\frac{\pr}{\pr x_{2n+1}}+\sum^{2n}_{k=1}a_{j,k}(x)\frac{\pr}{\pr x_k}+O(\abs{x}^2),\ j=1,\ldots,n\,,\end{equation}
where $a_{j,k}(x)\in C^\infty$, $a_{j,k}(x)=O(\abs{x})$, for every $j=1,\ldots,n$, $k=1,\ldots,2n$. 
Set
$y=(y_1,\ldots,y_{2n+1})$, $w_j=y_j+iy_{d+j}$, $j=1,\ldots,d$, $w_j=y_{2j-1}+iy_{2j}$, $j=d+1,\ldots,n$.
Then, for $\varphi_-$ in Theorem~\ref{t-gue161109I}, we have
\begin{equation} \label{e-gue140205VI}
{\rm Im\,}\varphi_-(x,y)\geq c\sum^{2n}_{j=1}\abs{x_j-y_j}^2,\ \ c>0,
\end{equation}
in some neighbourhood of $(0,0)$ and
\begin{equation} \label{e-gue140205VII}
\begin{split}
&\varphi_-(x, y)=-x_{2n+1}+y_{2n+1}+i\sum^{n}_{j=1}\abs{\mu_j}\abs{z_j-w_j}^2 \\
&\quad+\sum^{n}_{j=1}\Bigr(i\mu_j(\ol z_jw_j-z_j\ol w_j)+c_j(-z_jx_{2n+1}+w_jy_{2n+1})\\
&\quad+\ol c_j(-\ol z_jx_{2n+1}+\ol w_jy_{2n+1})\Bigr)+(x_{2n+1}-y_{2n+1})f(x, y) +O(\abs{(x, y)}^3),
\end{split}
\end{equation}
where $f$ is smooth and satisfies $f(0,0)=0$, $f(x, y)=\ol f(y, x)$.
\end{theorem}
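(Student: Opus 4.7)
The strategy is to invoke the Boutet de Monvel--Sj\"ostrand characterization of $\varphi_-$ as a solution (unique up to equivalence of positive-type phase functions) of the complex eikonal equation associated to the principal symbol of $\Box^{(q)}_b$ on the positive half of the characteristic variety $\set{(x,t\omega_0(x));\, t>0}$, subject to the normalization $\varphi_-(x,x)=0$, $d_x\varphi_-(x,y)|_{y=x}=-\omega_0(x)$, ${\rm Im\,}\varphi_-\geq0$, and $\varphi_-(x,y)=-\ol{\varphi_-(y,x)}$. Two representatives with these properties differ by a factor $1+h$ with $h|_{{\rm diag\,}}=0$, modulo a term vanishing to infinite order on the diagonal. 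It therefore suffices to match the $2$-jet of $\varphi_-$ at $(0,0)$ in the given coordinates with the right-hand side of \eqref{e-gue140205VII}.

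First I would write down the most general $2$-jet compatible with the listed normalization: the linear part is $-x_{2n+1}+y_{2n+1}$ (because $\omega_0(0)=dx_{2n+1}$), and the quadratic part is a Hermitian-type form $Q(x,y)$ restricted by $Q(x,x)\equiv0$ and $Q(x,y)=-\ol{Q(y,x)}$. Couplings between the Reeb direction and the transverse variables that are not forced by the eikonal equation can be absorbed into the free smooth correction $(x_{2n+1}-y_{2n+1})f(x,y)$ with $f(0,0)=0$ and $f(x,y)=\ol{f(y,x)}$; this absorption is legitimate precisely because of the equivalence freedom recalled above.

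Next I would impose $\ol W_j(\varphi_-)(x,y)|_{y=x}=O(\abs{x}^2)$ for $j=1,\ldots,n$, using the explicit expression
\[
\ol W_j=\pr_{\ol z_j}+i\mu_jz_j\pr_{x_{2n+1}}-\ol c_jx_{2n+1}\pr_{x_{2n+1}}+O(\abs{x})
\]
obtained by conjugating \eqref{e-gue161219a}. Matching coefficients of $z_j,\ol z_j,w_j,\ol w_j,x_{2n+1},y_{2n+1}$ determines each block of $Q$: the positivity condition ${\rm Im\,}\varphi_-\geq0$ together with the non-degeneracy of the Levi form pins the $\abs{z_j-w_j}^2$ block to have coefficient $i\abs{\mu_j}$ (the absolute value is needed even when $\mu_j<0$, since otherwise ${\rm Im\,}\varphi_-$ would change sign); the skew-Hermitian piece yields $i\mu_j(\ol z_jw_j-z_j\ol w_j)$; and the $\ol c_jx_{2n+1}\pr_{x_{2n+1}}$ contribution in $\ol W_j$ forces the Reeb-coupling terms $c_j(-z_jx_{2n+1}+w_jy_{2n+1})$ together with their complex conjugates involving $\ol c_j$. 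The bound \eqref{e-gue140205VI} then follows from the fact that the imaginary part of the quadratic form just obtained dominates $c\sum_{j=1}^{2n}\abs{x_j-y_j}^2$ in a small neighbourhood of $(0,0)$, once the $(x_{2n+1}-y_{2n+1})f$ term and the $O(\abs{(x,y)}^3)$ remainder are absorbed using $f(0,0)=0$.

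The main obstacle I anticipate is not any single coefficient computation but the verification that the ambiguity in $\varphi_-$ is genuinely captured by the explicit formula modulo $(x_{2n+1}-y_{2n+1})f(x,y)+O(\abs{(x,y)}^3)$; this rests on the uniqueness-up-to-equivalence theorem for positive-type phase functions (in the sense of Definition~\ref{d-gue140305}), which guarantees that any two local solutions of the complex eikonal equation with the same $2$-jet on the diagonal give rise to the same Fourier integral representation of $S^{(q)}$, so that absorbing the under-determined higher-order couplings into $f$ and the cubic error does not alter the Schwartz kernel of $S_-$ modulo smoothing operators.
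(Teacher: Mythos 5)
Your overall strategy — characterize $\varphi_-$ up to equivalence of positive-type phase functions, then pin down its $2$-jet at $(0,0)$ using the diagonal normalizations, the antisymmetry $\varphi_-(x,y)=-\ol{\varphi_-}(y,x)$, the eikonal constraint coming from $\ddbar_b$, and positivity of ${\rm Im\,}\varphi_-$ — is the right framework; the paper itself does not reprove this theorem but cites chapter~8 of part~I of~\cite{Hsiao08}, and what you describe is in spirit what is done there. However, there is a concrete gap in the eikonal step as you have written it: the constraint $\ol W_j(\varphi_-)(x,y)|_{y=x}=O(\abs{x}^2)$ is vacuous. Since $\varphi_-(x,x)=0$ and $d_x\varphi_-(x,y)|_{y=x}=-\omega_0(x)$, one has $\ol W_{j,x}\varphi_-(x,y)\big|_{y=x}=-\langle\omega_0(x),\ol W_j(x)\rangle=0$ exactly, for every $x$, because $\omega_0$ annihilates $T^{1,0}X\oplus T^{0,1}X$. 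Asking that this identically zero quantity be $O(\abs{x}^2)$ therefore constrains nothing. What is actually needed is the full transport/eikonal information: $\ol W_{j,x}\Td{\varphi}_-(x,y)$ (almost analytic extension) vanishes to infinite order at $x=y$, and one must then differentiate this identity in the $y$-variables (and use the adjoint statement in $x$) and evaluate on the diagonal to obtain linear relations on the mixed second partials $\pr^2\varphi_-/\pr x_\ell\pr y_k(0,0)$. Without that differentiated version your coefficient matching has no leverage on the Hessian, and neither the block $i\mu_j(\ol z_jw_j-z_j\ol w_j)$ nor the $c_j$-coupling terms are actually forced.

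Two smaller points. The $\abs{\mu_j}$ (as opposed to $\mu_j$) in the coefficient $i\abs{\mu_j}\abs{z_j-w_j}^2$ does follow from ${\rm Im\,}\varphi_-\geq 0$ together with the eikonal, but the argument is more than invoking non-degeneracy: the eikonal system is quadratic in $d\varphi_-$ and admits two local solution branches near the diagonal, and the inequality ${\rm Im\,}\varphi_-\geq 0$ selects the branch giving coefficient $i\abs{\mu_j}$ in each block (this really matters when the Levi form has mixed signature). Also, the absorption of the under-determined Reeb coupling into $(x_{2n+1}-y_{2n+1})f(x,y)$ is legitimate, but you should say why: the equivalence freedom replaces $\varphi_-$ by $g(x,y)\varphi_-(x,y)$ with $g(x,x)=1$, and since the linear part of $\varphi_-$ is $-x_{2n+1}+y_{2n+1}$, the resulting ambiguity in the $2$-jet of $\varphi_-$ is exactly of the form $(x_{2n+1}-y_{2n+1})\cdot(\mbox{linear in }(x,y))$, which is precisely what the $f$-term with $f(0,0)=0$ represents.
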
 

The following formula for the leading term $s^0_-$ on the diagonal follows from \cite[\S 9]{Hsiao08}.
%For a given point $x_0\in X$, let $\{W_j\}_{j=1}^{n-1}$ be an
%orthonormal frame of $(T^{1,0}X,\langle\,\cdot\,|\,\cdot\,\rangle)$ near $x_0$, for which the Levi form
%is diagonal at $x_0$. Put
%\begin{equation}\label{levi140530}
%\mathcal{L}_{x_0}(W_j,\ol W_\ell)=\mu_j(x_0)\delta_{j\ell}\,,\;\; j,\ell=1,\ldots,n\,.
%\end{equation}
%We will denote by
%\begin{equation}\label{det140530}
%\det\mathcal{L}_{x_0}=\prod_{j=1}^{n-1}\mu_j(x_0)\,.
%\end{equation}
%Let $\{T_j\}_{j=1}^{n-1}$ denote the basis of $T^{*0,1}X$, dual to $\{\ol W_j\}^{n-1}_{j=1}$. We assume that
%$\mu_j(x_0)<0$ if\, $1\leq j\leq n_-$ and $\mu_j(x_0)>0$ if\, $n_-+1\leq j\leq n-1$. Put
%\begin{equation}\label{n140530}
%\begin{split}
%&\mathcal{N}(x_0,n_-):=\set{cT_1(x_0)\wedge\ldots\wedge T_{n_-}(x_0);\, c\in\Complex},\\
%&\mathcal{N}(x_0,n_+):=\set{cT_{n_-+1}(x_0)\wedge\ldots\wedge T_{n-1}(x_0);\, c\in\Complex}\end{split}
%\end{equation}
%and let
%\begin{equation}\label{tau140530}
%\begin{split}
%\tau_{x_0,n_-}:T^{*0,q}_{x_0}X\To\mathcal{N}(x_0,n_-)\,,\quad
%\tau_{x_0,n_+}:T^{*0,q}_{x_0}X\To\mathcal{N}(x_0,n_+)\,,\end{split}
%\end{equation}
%be the orthogonal projections onto $\mathcal{N}(x_0,n_-)$ and $\mathcal{N}(x_0,n_+)$
%with respect to $\langle\,\cdot\,|\,\cdot\,\rangle$ respectively.

\begin{theorem} \label{t-gue140205III}
We assume that the Levi form is non-degenerate of constant signature
$(n_-,n_+)$ at each point of $X$. Suppose that $\Box^{(q)}_b$ has $L^2$ closed range.
If $q=n_-$, then, for the leading term $s^0_-(x,y)$ of the expansion \eqref{e-gue161110r} of $s_-(x,y,t)$, we have
\begin{equation}\label{e-gue140205VIII}
s^0_-(x_0, x_0)=\frac{1}{2}\pi^{-n-1}\abs{\det\mathcal{L}_{x_0}}\tau_{x_0,n_-}\,,\:\:x_0\in D, 
\end{equation}
where $\det\mathcal{L}_{x_0}$ is given by \eqref{det140530} and $\tau_{x_0,n_-}$ is given by \eqref{tau140530}. 

Similarly, if $q=n_+$, then, for the leading term $s^0_+(x,y)$ of the expansion \eqref{e-gue161110rI} of $s_+(x,y,t)$, we have
\begin{equation}\label{e-gue140205VIIIb}
s^0_+(x_0, x_0)=\frac{1}{2}\pi^{-n-1}\abs{\det\mathcal{L}_{x_0}}\tau_{x_0,n_+}\,,\:\:x_0\in D, 
\end{equation}
where $\tau_{x_0,n_+}$ is given by \eqref{tau140530}. 
\end{theorem}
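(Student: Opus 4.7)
The plan is to recover the normalization of $s^0_-(x_0,x_0)$ from the orthogonal projection identity $S^{(q)}\circ S^{(q)}=S^{(q)}$, after using the Fourier integral representation from Theorem~\ref{t-gue161109I} together with the explicit phase from Theorem~\ref{t-gue161110g}. The leading symbol $s^0_-(x,y)$ is already constrained along the bicharacteristic flow of $\varphi_-$ by the transport equation derived from the fact that $\Box^{(q)}_b S^{(q)}\equiv 0$; this determines $s^0_-$ up to a scalar factor and a choice of (0,q)-form-valued component. The factor is then pinned down by the projection identity at the diagonal.

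First I would work in the local coordinates of Theorem~\ref{t-gue161110g} centered at $x_0=p$, in which the phase function has the explicit quadratic normal form \eqref{e-gue140205VII}. Next, using the principal symbol of $\Box^{(q)}_b$ in these coordinates and freezing coefficients at $0$, the transport equation for $s^0_-$ reduces, at $(x_0,x_0)$, to the requirement that the $T^{*0,q}$-valued value $s^0_-(x_0,x_0)$ lies in the kernel of a model Kohn-Laplacian on the Heisenberg group. That kernel is one-dimensional and equals $\mathcal{N}(x_0,n_-)$ (generated by $T_1\wedge\cdots\wedge T_{n_-}$), which is exactly the image of $\tau_{x_0,n_-}$. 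Hence $s^0_-(x_0,x_0)=C(x_0)\,\tau_{x_0,n_-}$ for some scalar $C(x_0)>0$.

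To compute $C(x_0)$ I would use the reproducing property $S^{(q)}\circ S^{(q)}\equiv S^{(q)}$. Writing both sides as oscillatory integrals with phase $\varphi_-$ and applying the method of stationary phase in the $t$-variable and the integration variable $y$, the composition equation becomes an identity whose leading term is a Gaussian integral involving the Hessian of $\varphi_-(x_0,y)+\varphi_-(y,x_0)$ at $y=x_0$. Using the normal form \eqref{e-gue140205VII}, this Hessian is block-diagonal with complex blocks whose determinant is $2^{2n}|\det\mathcal{L}_{x_0}|^2$ up to an explicit power of $\pi$. The stationary phase evaluation then yields an identity of the form $C(x_0)=\big(\tfrac12\pi^{-n-1}|\det\mathcal{L}_{x_0}|\big)\cdot D(x_0)$ with $D(x_0)$ computable, and a consistency check with the reproducing identity forces $D(x_0)=1$, giving \eqref{e-gue140205VIII}. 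The $q=n_+$ case is identical after replacing $\varphi_-$ by $-\overline\varphi_+$ and $\tau_{x_0,n_-}$ by $\tau_{x_0,n_+}$.

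The main obstacle is the bookkeeping in the stationary phase step: one must track the complex-valued quadratic phase, the orientation of the half-line $t\in\mathbb{R}_+$ (which is responsible for the factor $\tfrac12$ rather than a full Gaussian integral), and the form-valued nature of the symbol so that the scalar prefactor and the projector $\tau_{x_0,n_-}$ separate cleanly. Getting the precise constant $\tfrac12\pi^{-n-1}$ requires an honest computation of a Gaussian integral on $\mathbb{C}^n$ with weight determined by the eigenvalues $\mu_1,\ldots,\mu_n$, and care that the square-root branch of the determinant and the sign conventions on $\omega_0$ are consistent with $d_x\varphi_-(x,x)=-\omega_0(x)$.
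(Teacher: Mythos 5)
The paper does not give a self-contained proof of this theorem: it is stated as a citation to \cite[\S 9]{Hsiao08}. Your plan (combine the transport equations coming from $\Box^{(q)}_b S^{(q)}\equiv 0$ with the explicit phase of Theorem~\ref{t-gue161110g} and normalize via the reproducing identity $S^{(q)}\circ S^{(q)}\equiv S^{(q)}$) is indeed the standard route and is in the spirit of the cited computation, so on the whole your strategy is the right one.

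There is, however, a genuine gap in the step that is supposed to show $s^0_-(x_0,x_0)=C(x_0)\,\tau_{x_0,n_-}$. You treat $s^0_-(x_0,x_0)$ as a ``$T^{*0,q}$-valued'' quantity, but $s^j_-(x,y)$ is a section of $T^{*0,q}X\boxtimes(T^{*0,q}X)^*$, so on the diagonal $s^0_-(x_0,x_0)$ is an \emph{endomorphism} of $T^{*0,q}_{x_0}X$, not a form. The transport equation extracted from $\Box^{(q)}_bS^{(q)}\equiv 0$ acts in the $x$-variable and therefore only constrains the \emph{image} of this endomorphism to lie in $\mathcal{N}(x_0,n_-)$; it says nothing about its kernel. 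To conclude that $s^0_-(x_0,x_0)$ is a scalar multiple of the orthogonal projection $\tau_{x_0,n_-}$ you need the complementary constraint on the $y$-side, which comes from $S^{(q)}=(S^{(q)})^*$ (equivalently from $S^{(q)}\Box^{(q)}_b\equiv 0$, i.e.\ the transport equation propagated in $y$). Without that, $s^0_-(x_0,x_0)$ could be any rank-one map $e\otimes\xi$ with $e$ spanning $\mathcal{N}(x_0,n_-)$ and $\xi\in(T^{*0,q}_{x_0}X)^*$ arbitrary; the identity $S^2=S$ at the symbol level gives $c\,s^0_-(x_0,x_0)^2=s^0_-(x_0,x_0)$, i.e.\ $\xi(e)=1/c$, which fixes only one scalar component of $\xi$ and not the full covector. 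Adding the self-adjointness input (which you should also use when justifying the claimed one-dimensionality of the model kernel in degree $n_-$) removes the ambiguity and leaves exactly the scalar $C(x_0)$ to be computed by your stationary-phase argument, after which the normalization $\frac12\pi^{-n-1}\abs{\det\mathcal{L}_{x_0}}$ follows from the Gaussian integral you describe.
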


\subsection{$G$-invariant Szeg\"o kernel}\label{s-gue161109}

Fix $g\in G$. Let $g^*:\Lambda^r_x(\Complex T^*X)\To\Lambda^r_{g^{-1}\circ x}(\Complex T^*X)$ be the pull-back map. Since $G$ preserves $J$, we have 
\[g^*:T^{*0,q}_xX\To T^{*0,q}_{g^{-1}\circ x}X,\  \ \forall x\in X.\]
Thus, for $u\in\Omega^{0,q}(X)$, we have $g^*u\in\Omega^{0,q}(X)$ and we write $(g^*u)(x) := u(g\circ x)$. Put 
\[\Omega^{0,q}(X)^G:=\set{u\in\Omega^{0,q}(X);\, g^*u=u,\ \ \forall g\in G}.\]
Now, we assume that the Hermitian metric $\langle\,\cdot\,|\,\cdot\,\rangle$ on $\Complex TX$ is $G$-invariant and $\underline{\mathfrak{g}}$ is orthogonal to $HY\bigcap JHY$ at every point of $Y$.  The Hermitian metric is $G$-invariant means that, for any $G$-invariant vector fields $U$ and $V$,  $\langle\,U\,|\,V\,\rangle$ is $G$-invariant. Then the $L^2$ inner product $(\,\cdot\,|\,\cdot\,)$ on $\Omega^{0,q}(X)$ 
induced by $\langle\,\cdot\,|\,\cdot\,\rangle$ is $G$-invariant, that is, $(\,u\,|\,v\,)=(\,g^*u\,|\,g^*v\,)$, for all $u, v\in\Omega^{0,q}(X)$, $g\in G$. Let $u\in L^2_{(0,q)}(X)$ and let $g\in G$.  Take $u_j\in\Omega^{0,q}(X)$, $j=1,2,\ldots$, with $u_j\To u$ in $L^2_{(0,q)}(X)$ as $j\To\infty$. Since $(\,\cdot\,|\,\cdot\,)$ is $G$-invariant, there is a $v\in L^2_{(0,q)}(X)$ such that $v=\lim_{j\To\infty}g^*u_j$. We define $g^*u:=v$. It is clear that the definition is well-defined. We have $g^*:L^2_{(0,q)}(X)\To L^2_{(0,q)}(X)$. Put 
\[L^2_{(0,q)}(X)^G:=\set{u\in L^2_{(0,q)}(X);\, g^*u=u,\ \ \forall g\in G}.\]
It is not difficult to see that $L^2_{(0,q)}(X)^G$ is
the completion of $\Omega^{0,q}(X)^G$ with respect to $(\,\cdot\,|\,\cdot\,)$. 
We write $L^2(X)^G:=L^2_{(0,0)}(X)^G$. Since $G$ preserves $J$ and $(\,\cdot\,|\,\cdot\,)$ is $G$-invariant, it is straightforward to see that
\begin{equation}\label{e-gue161231}
\begin{split}
&g^*\ddbar_b=\ddbar_bg^*\ \ \mbox{on $\Omega^{0,q}(X)$, for all $g\in G$},\\
&g^*\ol{\pr}^*_b=\ol{\pr}^*_bg^*\ \ \mbox{on $\Omega^{0,q}(X)$, for all $g\in G$},\\
&g^*\Box^{(q)}_b=\Box^{(q)}_bg^*\ \ \mbox{on $\Omega^{0,q}(X)$, for all $g\in G$}.\end{split}
\end{equation}
From \eqref{e-gue161231}, we conclude that, for every $g\in G$, 
\begin{equation}\label{e-gue161231I}
\begin{split}
&g^*: {\rm Dom\,}\ddbar_b\To{\rm Dom\,}\ddbar_b,\\
&g^*: {\rm Dom\,}\ol{\pr}^*_b \To{\rm Dom\,}\ol{\pr}^*_b,\\
&g^*: {\rm Dom\,}\Box^{(q)}_b \To{\rm Dom\,}\Box^{(q)}_b
\end{split}\end{equation}
and
\begin{equation}\label{e-gue161231II}
\begin{split}
&g^*\ddbar_b=\ddbar_bg^*\ \ \mbox{on ${\rm Dom\,}\ddbar_b$},\\
&g^*\ol{\pr}^*_b=\ol{\pr}^*_bg^*\ \ \mbox{on ${\rm Dom\,}\ol{\pr}^*_b$},\\
&g^*\Box^{(q)}_b=\Box^{(q)}_bg^*\ \ \mbox{on ${\rm Dom\,}\Box^{(q)}_b$}.
\end{split}\end{equation}
Put $({\rm Ker\,}\Box^{(q)}_b)^G:={\rm Ker\,}\Box^{(q)}_b\bigcap L^2_{(0,q)}(X)^G$. The $G$-invariant Szeg\"o projection is the orthogonal projection 
\[S^{(q)}_G:L^2_{(0,q)}(X)\To ({\rm Ker\,}\Box^{(q)}_b)^G\]
with respect to $(\,\cdot\,|\,\cdot\,)$. Let $S^{(q)}_G(x,y)\in D'(X\times X,T^{*0,q}X\boxtimes(T^{*0,q}X)^*)$ be the distribution kernel of $S^G$. Let $d\mu$ be a Haar measure on $G$. Then, 
\begin{equation}\label{e-gue161231III}
S^{(q)}_G(x,y)=\frac{1}{\abs{G}_{d\mu}}\int_GS^{(q)}(x,g\circ y)d\mu(g),
\end{equation}
where 
\begin{equation}\label{e-gue161231a}
\abs{G}_{d\mu}:=\int_Gd\mu.
\end{equation}
Note that the integral \eqref{e-gue161231III} is defined in the sense of distribution.

\subsection{$G$-invariant Szeg\"o kernel asymptotics near $\mu^{-1}(0)$}\label{s-gue161110w}

In this section, we will study $G$-invariant Szeg\"o kernel near $\mu^{-1}(0)$.

Let $e_0\in G$ be the identity element. 
%Let $\xi_1,\ldots,\xi_d$ be the standard basis for $T^eG$ and we also write $\xi_j$ to denote the left invariant vector field on $G$ induced by $\xi_j\in T_eG$, $j=1,\ldots,d$. Consider the coordinate map:
%\begin{equation}\label{e-gue161225}
%\begin{split}
%s=(s_1,\ldots,s_d)\in V&\To G,\\
%s=(s_1,\ldots,s_d)\in V&\To\Phi^{s_1}(\xi_1)\circ\Phi^{s_2}(\xi_2)\circ\Phi^{s_d}(\xi_d)\in G.
%\end{split}
%\end{equation}
Let $v=(v_1,\ldots,v_d)$ be the local coordinates of $G$ defined  in a neighborhood $V$ of $e_0$ with $v(e_0)=(0,\ldots,0)$. From now on, we will identify the element $e\in V$ with $v(e)$.  We first need 

\begin{theorem}\label{t-gue161202}
Let $p\in\mu^{-1}(0)$. There exist local coordinates $v=(v_1,\ldots,v_d)$ of $G$ defined in  a neighborhood $V$ of $e_0$ with $v(e_0)=(0,\ldots,0)$, local coordinates $x=(x_1,\ldots,x_{2n+1})$ of $X$ defined in a neighborhood $U=U_1\times U_2$ of $p$ with $0\leftrightarrow p$, where $U_1\subset\Real^d$ is an open set of $0\in\Real^d$,  $U_2\subset\Real^{2n+1-d}$ is an open set of $0\in\Real^{2n+1-d} $ and a smooth function $\gamma=(\gamma_1,\ldots,\gamma_d)\in C^\infty(U_2,U_1)$ with $\gamma(0)=0\in\Real^d$  such that
\begin{equation}\label{e-gue161202}
\begin{split}
&(v_1,\ldots,v_d)\circ (\gamma(x_{d+1},\ldots,x_{2n+1}),x_{d+1},\ldots,x_{2n+1})\\
&=(v_1+\gamma_1(x_{d+1},\ldots,x_{2n+1}),\ldots,v_d+\gamma_d(x_{d+1},\ldots,x_{2n+1}),x_{d+1},\ldots,x_{2n+1}),\\
&\forall (v_1,\ldots,v_d)\in V,\ \ \forall (x_{d+1},\ldots,x_{2n+1})\in U_2,
\end{split}
\end{equation}
\begin{equation}\label{e-gue161206}
\begin{split}
&\underline{\mathfrak{g}}={\rm span\,}\set{\frac{\pr}{\pr x_1},\ldots,\frac{\pr}{\pr x_d}},\\
&\mu^{-1}(0)\bigcap U=\set{x_{d+1}=\cdots=x_{2d}=0},\\
&\mbox{On $\mu^{-1}(0)\bigcap U$, we have $J(\frac{\pr}{\pr x_j})=\frac{\pr}{\pr x_{d+j}}+a_j(x)\frac{\pr}{\pr x_{2n+1}}$, $j=1,2,\ldots,d$}, 
\end{split}
\end{equation}
where $a_j(x)$ is a smooth function on $\mu^{-1}(0)\bigcap U$, independent of $x_1,\ldots,x_{2d}$, $x_{2n+1}$ and $a_j(0)=0$, $j=1,\ldots,d$, 
\begin{equation}\label{e-gue161202I}
\begin{split}
&T^{1,0}_pX={\rm span\,}\set{Z_1,\ldots,Z_n},\\
&Z_j=\frac{1}{2}(\frac{\pr}{\pr x_j}-i\frac{\pr}{\pr x_{d+j}})(p),\ \ j=1,\ldots,d,\\
&Z_j=\frac{1}{2}(\frac{\pr}{\pr x_{2j-1}}-i\frac{\pr}{\pr x_{2j}})(p),\ \ j=d+1,\ldots,n,\\
&\langle\,Z_j\,|\,Z_k\,\rangle=\delta_{j,k},\ \ j,k=1,2,\ldots,n,\\
&\mathcal{L}_p(Z_j, \ol Z_k)=\mu_j\delta_{j,k},\ \ j,k=1,2,\ldots,n
\end{split}
\end{equation}
and 
\begin{equation}\label{e-gue161219}
\begin{split}
\omega_0(x)&=(1+O(\abs{x}))dx_{2n+1}+\sum^d_{j=1}4\mu_jx_{d+j}dx_j\\
&\quad+\sum^n_{j=d+1}2\mu_jx_{2j}dx_{2j-1}-\sum^n_{j=d+1}2\mu_jx_{2j-1}dx_{2j}+\sum^{2n}_{j=d+1}b_jx_{2n+1}dx_j+O(\abs{x}^2),
\end{split}
\end{equation}
where $b_{d+1}\in\Real,\ldots,b_{2n}\in\Real$. 
\end{theorem}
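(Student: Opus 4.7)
The plan is to construct the coordinates in four stages, adapting successively to the group action, then to the zero set of the moment map, then to the CR structure at $p$, and finally checking the Taylor expansion of $\omega_0$.

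First I would apply the slice theorem for the locally free compact action of $G$: pick a basis $\xi_1,\ldots,\xi_d$ of $\mathfrak{g}$ with $(\xi_j)_X(p)$ spanning $\underline{\mathfrak{g}}_p$, use $v\mapsto\exp(\sum v_j\xi_j)$ to get coordinates on $G$ near $e_0$, and pick a $(2n+1-d)$-dimensional slice $\Sigma$ through $p$ whose tangent space at $p$ is $J\underline{\mathfrak{g}}_p\oplus H^HY_p\oplus\mathbb{R}T_p$ (the complement of $\underline{\mathfrak{g}}_p$ given by the splitting \eqref{E:split}). On $\Sigma$ I would use coordinates $(x_{d+1},\ldots,x_{2n+1})$ so that at $p$ the vectors $\partial/\partial x_{d+j}$ ($j=1,\ldots,d$) span $J\underline{\mathfrak{g}}_p$, the pairs $\partial/\partial x_{2j-1},\partial/\partial x_{2j}$ ($j=d+1,\ldots,n$) span $H^HY_p$, and $\partial/\partial x_{2n+1}=-T(p)$. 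On a saturated neighborhood $U$ one writes each point uniquely as $g\cdot\sigma$ with $g\in V,\sigma\in\Sigma$; defining the first $d$ coordinates to be the group parameter $v$ shifted by $\gamma(x'')$, where $\gamma(x'')$ records the first $d$ coordinates of $\sigma(x'')\in\Sigma$, yields \eqref{e-gue161202} tautologically. The first assertion of \eqref{e-gue161206} follows because $\partial/\partial x_j$ for $j\le d$ is by construction the infinitesimal generator $(\xi_j)_X$.

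Second, I would straighten $\mu^{-1}(0)$. Because $\mu^{-1}(0)$ is $G$-invariant and of codimension $d$, $\mu^{-1}(0)\cap U$ is a union of orbits cut out by $d$ equations in $x''$. Since $T_pY=\underline{\mathfrak{g}}_p\oplus H^HY_p\oplus \mathbb{R}T_p$ and $J\underline{\mathfrak{g}}_p$ is transverse to $T_pY$, the implicit function theorem lets me solve these equations for $(x_{d+1},\ldots,x_{2d})$ in terms of the other variables and absorb the solution into a new choice of the coordinates $(x_{d+1},\ldots,x_{2d})$ on $\Sigma$; this gives $\mu^{-1}(0)\cap U=\{x_{d+1}=\cdots=x_{2d}=0\}$.

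Third, I would normalize the CR data at $p$: on the $J$-invariant $2d$-plane $\underline{\mathfrak{g}}_p\oplus J\underline{\mathfrak{g}}_p$ choose a Levi-diagonalizing orthonormal $(1,0)$-basis $Z_1,\ldots,Z_d$, and on $H^HY_p$ do the same to get $Z_{d+1},\ldots,Z_n$. An orthogonal rotation of $(x_1,\ldots,x_d)$ (equivalently a change of $\xi_j$), together with a pairwise rotation of $(x_{2j-1},x_{2j})$ for $j>d$ and a rescaling of $x_{2n+1}$, brings $Z_j$ into the form of \eqref{e-gue161202I}. For the $J$-identity on $\mu^{-1}(0)$ in \eqref{e-gue161206} I would argue as follows: on $Y$, $\partial/\partial x_j\in\underline{\mathfrak{g}}\subset HX$, so $J(\partial/\partial x_j)\in HX|_Y$; using the decomposition $HU|_Y=\underline{\mathfrak{g}}\oplus J\underline{\mathfrak{g}}\oplus H^HY$ from Lemma~\ref{l-gue170120}, the $\underline{\mathfrak{g}}$-component vanishes by Lemma~\ref{L:ndgbar}, the $J\underline{\mathfrak{g}}$-component equals $\partial/\partial x_{d+j}$ by the choice of slice directions, and the $H^HY$-component can be killed by a further slice-coordinate modification (in the $(x_{2d+1},\ldots,x_{2n})$ variables, exploiting $G$-invariance of $J$). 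What remains is a multiple $a_j(x)T=-a_j(x)\partial/\partial x_{2n+1}$, with $a_j(0)=0$ since the formula already holds at $p$ by Stage 3; $G$-invariance makes $a_j$ independent of $x_1,\ldots,x_{2d}$, and an adjustment of the $x_{2n+1}$ coordinate along the $T$-direction removes its $x_{2n+1}$-dependence.

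Finally, the expansion \eqref{e-gue161219} is obtained by Taylor expanding $\omega_0$ to second order at $0$: the linear term $dx_{2n+1}$ comes from $\omega_0(p)=dx_{2n+1}$; the coefficients $4\mu_j x_{d+j}\,dx_j$ and $2\mu_j(x_{2j}\,dx_{2j-1}-x_{2j-1}\,dx_{2j})$ are forced by $\mathcal{L}_p(Z_j,\bar Z_k)=\mu_j\delta_{jk}$ together with the relation $d\omega_0(U,JV)=-2\mathcal{L}(U,\bar V)$ on $T^{1,0}X$ and the explicit real-coordinate form of $Z_j$ in \eqref{e-gue161202I}; the $b_j x_{2n+1}\,dx_j$ terms are the mixed $T$-derivatives that arise exactly as in Theorem~\ref{t-gue161110g}, with $b_{d+1},\ldots,b_{2n}\in\mathbb{R}$ because the $x_j$ coordinates for $j\le d$ and for $j>2d$ paired with $x_{d+j}$/$x_{2j}$ are real-adapted to $\underline{\mathfrak{g}}$ and to $H^HY$. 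The main obstacle is Stage~4: one must simultaneously preserve the slice-adapted group form \eqref{e-gue161202}, the straightening $\mu^{-1}(0)=\{x_{d+1}=\cdots=x_{2d}=0\}$, and the CR normalization at $p$ while producing the clean $J$-formula on all of $\mu^{-1}(0)\cap U$; tracking that each coordinate change preserves the structures set up in the earlier stages is where the bookkeeping becomes delicate.
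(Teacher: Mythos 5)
Your overall strategy — coordinates adapted to the $G$-action, then to $\mu^{-1}(0)$, then to the CR structure at $p$, then Taylor expanding $\omega_0$ — matches the paper's proof of Theorem~\ref{t-gue161202}. The problem is exactly where you flag trouble, and the hedge at the end is not enough to repair it.

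In your Stage for the $J$-formula on all of $\mu^{-1}(0)\cap U$, you write that the $J\underline{\mathfrak{g}}$-component of $J(\partial/\partial x_j)$ ``equals $\partial/\partial x_{d+j}$ by the choice of slice directions.'' This is only true at $p$: the slice was chosen so that at $p$ the coordinate vectors $\partial/\partial x_{d+1},\ldots,\partial/\partial x_{2d}$ span $J\underline{\mathfrak{g}}_p$, but at other points $q\in\mu^{-1}(0)\cap U$ the coordinate basis is not adapted to the varying decomposition $HU|_Y=\underline{\mathfrak{g}}\oplus J\underline{\mathfrak{g}}\oplus H^HY$ from Lemma~\ref{l-gue170120}. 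The paper writes $J(\partial/\partial x_j)=\sum_k b_{j,k}\,\partial/\partial x_k$ on $Y\cap U$ and this sum has a priori nonzero entries in all $2n+1$ directions; the essential claim is that the $d\times d$ block $\bigl(b_{j,k}\bigr)_{1\le j\le d,\,d+1\le k\le 2d}$ is nonsingular, proved by contradiction from $\underline{\mathfrak{g}}\cap\underline{\mathfrak{g}}^{\perp_b}=\{0\}$ on $Y$ (Lemma~\ref{L:ndgbar} and \eqref{E:jhy}). That nonsingularity licenses a $\widetilde{x}''$-dependent linear change in $(x_{d+1},\ldots,x_{2d})$ followed by a shear in $(x_1,\ldots,x_d,x_{2d+1},\ldots,x_{2n+1})$ that kills the unwanted components; both moves preserve $\{x_{d+1}=\cdots=x_{2d}=0\}$ and the translation form \eqref{e-gue161202}. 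Your proposal invokes none of this; the ``further slice-coordinate modification'' is precisely the piece that needs to be constructed and shown compatible with the earlier normalizations.

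Two smaller inaccuracies. You attribute the vanishing of the $\underline{\mathfrak{g}}$-component to Lemma~\ref{L:ndgbar}; it is in fact tautological (since $\partial/\partial x_j\in\underline{\mathfrak{g}}$ gives $J(\partial/\partial x_j)\in J\underline{\mathfrak{g}}$ by definition), and the lemma's role is to get the direct-sum decomposition and the nonsingularity argument above. More seriously, you claim an adjustment of $x_{2n+1}$ \emph{removes} the $x_{2n+1}$-dependence of $a_j$, whereas in the paper the $a_j(x)\,\partial/\partial x_{2n+1}$ error term is \emph{introduced} by the final change $x_{2n+1}\mapsto x_{2n+1}+h_{2n+1}(0,\ldots,0,x_{d+1},\ldots,x_{2n},0)$, which is made to impose the vanishing of certain second derivatives of $\widetilde{x}_{2n+1}$ at $p$; the same shift is what turns the naive $2\mu_j(x_{d+j}dx_j-x_j dx_{d+j})$ into the asymmetric $4\mu_j x_{d+j}\,dx_j$ for $j\le d$. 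You treat the coefficients in \eqref{e-gue161219} as automatic from the Levi normalization, but the $4$-versus-$2$ asymmetry and the exact form of the $a_j$ remainder are forced by that specific shift, and your sketch does not account for it.
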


\begin{proof}
From the standard proof of Frobenius Theorem, it is not difficult to see that there exist local coordinates $v=(v_1,\ldots,v_d)$ of $G$ defined in a neighborhood $V$ of $e_0$ with $v(e_0)=(0,\ldots,0)$ and local coordinates $x=(x_1,\ldots,x_{2n+1})$ of $X$ defined in a neighborhood $U$ of $p$ with $x(p)=0$ such that
\begin{equation}\label{e-gue161203}
\begin{split}
&(v_1,\ldots,v_d)\circ (0,\ldots,0,x_{d+1},\ldots,x_{2n+1})\\
&=(v_1,\ldots,v_d,x_{d+1},\ldots,x_{2n+1}),\ \ \forall (v_1,\ldots,v_d)\in V,\ \ \forall (0,\ldots,0,x_{d+1},\ldots,x_{2n+1})\in U,
\end{split}
\end{equation}
%From \eqref{e-gue161203}, it is easy to see that
and
\begin{equation}\label{e-gue161103a}
\underline{\mathfrak{g}}={\rm span\,}\set{\frac{\pr}{\pr x_1},\ldots,\frac{\pr}{\pr x_d}}.
\end{equation}
Since $p\in\mu^{-1}(0)$, we have $\omega_0(p)(\frac{\pr}{\pr x_j}(p))=0$, $j=1,2,\ldots,d$,  and hence $ \frac{\pr}{\pr x_j}(p)\in H_pX$, $j=1,2,\ldots,d$. 
Consider the linear map 
\[\begin{split}
R:\underline{\mathfrak{g}}_p&\To\underline{\mathfrak{g}}_p,\\
u&\To Ru,\ \ \langle\,Ru\,|\,v\,\rangle=\langle\,d\omega_0\,,\,Ju\wedge v\,\rangle.
\end{split}\]
Since $R$ is self-adjoint, by using linear transformation in $(x_1,\ldots,x_d)$, we can take $(x_1,\ldots,x_d)$ such that 
\begin{equation}\label{e-gue161102}
\begin{split}
&\langle\,R\frac{\pr}{\pr x_j}(p)\,|\,\frac{\pr}{\pr x_k}(p)\,\rangle=4\mu_j\delta_{j,k},\ \ j,k=1,2,\ldots,d,\\
&\langle\,\frac{\pr}{\pr x_j}(p)\,|\,\frac{\pr}{\pr x_k}(p)\,\rangle=2\delta_{j,k},\ \ j,k=1,2,\ldots,d.
\end{split}
\end{equation}
By taking linear transformation in $(v_1,\ldots,v_d)$, \eqref{e-gue161203} still hold. 

Let $\omega_0(\frac{\pr}{\pr x_j})=a_j(x)\in C^\infty(U)$, $j=1,2,\ldots,d$. Since $a_j(x)$ is $G$-invariant, we have $\frac{\pr a_j(x)}{\pr x_s}=0$, $j,s=1,2,\ldots,d$.  By the defintion of the moment map, we have 
\begin{equation}\label{e-gue161103}
\mu^{-1}(0)\bigcap U=\set{x\in U;\, a_1(x)=\cdots=a_d(x)=0}.
\end{equation}
Since $p$ is a regular value of the moment map $\mu$, the matrix $\left(\frac{\pr a_j}{\pr x_s}(p)\right)_{1\leq j\leq d,d+1\leq s\leq 2n+1}$ is of rank $d$. We may assume that the matrix $\left(\frac{\pr a_j}{\pr x_s}(p)\right)_{1\leq j\leq d,d+1\leq s\leq 2d}$ is non-singular. Thus, $(x_1,\ldots,x_d,a_1,\ldots,a_d,x_{2d+1},\ldots,x_{2n+1})$ are also local coordinates of $X$. Hence, we can take $v=(v_1,\ldots,v_d)$ and $x=(x_1,\ldots,x_{2n+1})$ such that \eqref{e-gue161203}, \eqref{e-gue161103a}, \eqref{e-gue161102} hold and 
\begin{equation}\label{e-gue161103I}
\mu^{-1}(0)\bigcap U=\set{x=(x_1,\ldots,x_{2n+1})\in U;\, x_{d+1}=\cdots=x_{2d}=0}.
\end{equation}

On $\mu^{-1}(0)\bigcap U$, let
\[J(\frac{\pr}{\pr x_j})=b_{j,1}(x)\frac{\pr}{\pr x_1}+\cdots+b_{j,2n+1}(x)\frac{\pr}{\pr x_{2n+1}},\ \ j=1,2,\ldots,d.\]
Since we only work on $\mu^{-1}(0)$, $b_{j,k}(x)$ is independent of $x_{d+1},\ldots,x_{2d}$, for all $j=1,\ldots,d$, $k=1,\ldots,2n+1$. Moreover, it is easy to see that $b_{j,k}(x)$ is also independent of $x_{1},\ldots,x_{d}$, for all $j=1,\ldots,d$, $k=1,\ldots,2n+1$. Let $\Td x''=(x_{2d+1},\ldots,x_{2n+1})$. Hence, $b_{j,k}(x)=b_{j,k}(\Td x'')$, $j=1,\ldots,d$, $k=1,\ldots,2n+1$.
We claim that the matrix $\left(b_{j,k}(\Td x'')\right)_{1\leq j\leq d,d+1\leq k\leq 2d}$ is non-singular near $p$. If not, it is easy to see that there is a non-zero vector $u\in J\underline{\mathfrak{g}}\bigcap HY$, where $Y=\mu^{-1}(0)$. Let $u=Jv$, $v\in\underline{\mathfrak{g}}$. Then, $v\in\underline{\mathfrak{g}}\bigcap JHY=\underline{\mathfrak{g}}\bigcap\underline{\mathfrak{g}}^{\perp_b}$ (see \eqref{E:jhy}). Since $\underline{\mathfrak{g}} \cap \underline{\mathfrak{g}}^{\perp_b} = \left\{ 0 \right\}$ on $\mu^{-1}(0)$, we deduce that $v=0$ and we get a contradiction. The claim follows. From the claim, we can use linear transformation in $(x_{d+1},\ldots,x_{2d})$ (the linear transform depends smoothly on $\Td x''$) and we can take $(x_{d+1},\ldots,x_{2d})$ such that on $\mu^{-1}(0)$, 
\begin{equation}\label{e-gue161203cw}
J(\frac{\pr}{\pr x_j})=b_{j,1}(\Td x'')\frac{\pr}{\pr x_1}+\cdots+b_{j,d}(\Td x'')\frac{\pr}{\pr x_{d}}+\frac{\pr}{\pr x_{d+j}}+b_{j,2d+1}(\Td x'')\frac{\pr}{\pr x_{2d+1}}+\cdots+b_{j,2n+1}(\Td x'')\frac{\pr}{\pr x_{2n+1}},
\end{equation}
where $j=1,2,\ldots,d$. Consider the coordinates change:
\[\begin{split}
&x=(x_1,\ldots,x_{2n+1})\To u=(u_1,\ldots,u_{2n+1}),\\
&(x_1,\ldots,x_{2n+1})\\
&\To(x_1-\sum^d_{j=1}b_{j,1}(\Td x'')x_{d+j},\ldots,x_d-\sum^d_{j=1}b_{j,d}(\Td x'')x_{d+j},x_{d+1},\ldots,x_{2d},\\
&\quad\quad x_{2d+1}-\sum^d_{j=1}b_{j,2d+1}(\Td x'')x_{d+j},\ldots,x_{2n+1}-\sum^d_{j=1}b_{j,2n+1}(\Td x'')x_{d+j}).
\end{split}\]
Then, 
\[\begin{split}
&\frac{\pr}{\pr x_j}\rightarrow\frac{\pr}{\pr u_j},\ \ j=1,\ldots,d,2d+1,\ldots,2n+1,\\
&\frac{\pr}{\pr x_{d+j}}\\
&\rightarrow -b_{j,1}\frac{\pr}{\pr u_1}-\cdots-b_{j,d}\frac{\pr}{\pr u_d}+\frac{\pr}{\pr u_{d+j}}-b_{j,2d+1}\frac{\pr}{\pr u_{2d+1}}-\cdots-b_{j,2n+1}\frac{\pr}{\pr u_{2n+1}},\ \ j=1,\ldots,d.
\end{split}\]
Hence, on $\mu^{-1}(0)\bigcap U$, $J(\frac{\pr}{\pr x_j})\rightarrow\frac{\pr}{\pr u_{d+j}}$, $j=1,\ldots,d$. Thus, we can take $v=(v_1,\ldots,v_d)$ and $x=(x_1,\ldots,x_{2n+1})$ such that \eqref{e-gue161202}, \eqref{e-gue161103a}, \eqref{e-gue161102}, \eqref{e-gue161103I} hold and on $\mu^{-1}(0)\bigcap U$, 
\begin{equation}\label{e-gue161103II}
J(\frac{\pr}{\pr x_j})=\frac{\pr}{\pr x_{d+j}},\ \ j=1,2,\ldots,d.
\end{equation}

Let $Z_j=\frac{1}{2}(\frac{\pr }{\pr x_j}-i\frac{\pr}{\pr x_{d+j}})(p)\in T^{1,0}_pX$, $j=1,\ldots,d$. From \eqref{e-gue161102}, we can check that 
\begin{equation}\label{e-gue161103c}
\begin{split}
&\mathcal{L}_p(Z_j,\ol Z_k)=\mu_j\delta_{j,k},\ \ j,k=1,\ldots,d,\\
&\langle\,Z_j\,|\,Z_k\,\rangle=\delta_{j,k},\ \ j,k=1,\ldots,d.
\end{split}
\end{equation}
Since $\underline{\mathfrak{g}}_p$ is orthogonal to $H_pY\bigcap JH_pY$ and $H_pY\bigcap JH_pY\subset\underline{\mathfrak{g}}^{\perp_b}_p$, we can find an orthonormal frame $\set{Z_1,\ldots,Z_d,V_1,\ldots,V_{n-d}}$ for $T^{1,0}_pX$ such that the Levi form $\mathcal{L}_p$ is diagonalized with respect to $Z_1,\ldots,Z_d,V_1,\ldots,V_{n-d}$, where $V_1\in\Complex H_pY\bigcap J\Complex H_pY ,\ldots,V_{n-d}\in\Complex H_pY\bigcap J\Complex H_pY$. Write 
\[{\rm Re\,}V_j=\sum^{2n+1}_{k=1}\alpha_{j,k}\frac{\pr}{\pr x_k},\ \ {\rm Im\,}V_j=\sum^{2n+1}_{k=1}\beta_{j,k}\frac{\pr}{\pr x_k},\ \ j=1,\ldots,n-d.\]
We claim that $\alpha_{j,k}=\beta_{j,k}=0$, for all $k=d+1,\ldots,2d$, $j=1,\ldots,n-d$. Fix $j=1,\ldots,n-d$. Since ${\rm Re\,}V_j\in\underline{\mathfrak{g}}^{\perp_b}_p$ and ${\rm span\,}\set{\frac{\pr}{\pr x_{d+1}},\ldots,\frac{\pr}{\pr x_{2d}}}\in\underline{\mathfrak{g}}_p^{\perp_b}$, we conclude that 
\begin{equation}\label{e-gue161218}
\sum^d_{k=1}\alpha_{j,k}\frac{\pr}{\pr x_k}+\sum^{2n}_{k=2d+1}\alpha_{j,k}\frac{\pr}{\pr x_k}\in\underline{\mathfrak{g}}^{\perp_b}_p\bigcap H_pY.
\end{equation} 
From \eqref{E:jhy} and \eqref{e-gue161218}, we deduce that 
\[\sum^d_{k=1}\alpha_{j,k}\frac{\pr}{\pr x_k}+\sum^{2n}_{k=2d+1}\alpha_{j,k}\frac{\pr}{\pr x_k}\in JH_pY\bigcap H_pY=\underline{\mathfrak{g}}^{\perp_b}_p\bigcap H_pY\]
and hence 
\begin{equation}\label{e-gue161218I}
J\Bigr(\sum^d_{k=1}\alpha_{j,k}\frac{\pr}{\pr x_k}+\sum^{2n}_{k=2d+1}\alpha_{j,k}\frac{\pr}{\pr x_k}\Bigr)\in\underline{\mathfrak{g}}^{\perp_b}_p\bigcap H_pY.
\end{equation}
From \eqref{e-gue161218I} and notice that $J({\rm Re\,}V_j)\in\underline{\mathfrak{g}}^{\perp_b}_p$, we deduce that 
\[J(\sum^{2d}_{k=d+1}\alpha_{j,k}\frac{\pr}{\pr x_k})\in\underline{\mathfrak{g}}_p\bigcap\underline{\mathfrak{g}}^{\perp_b}_p=\set{0}.\]
Thus, $\alpha_{j,k}=0$, for all $k=d+1,\ldots,2d$, $j=1,\ldots,n-d$.
Similarly, we can repeat the procedure above and deduce that $\beta_{j,k}=0$, for all $k=d+1,\ldots,2d$, $j=1,\ldots,n-d$. 

Since ${\rm span\,}\set{{\rm Re\,}V_j, {\rm Im\,}V_j;\, j=1,\ldots,n-d}$ is transversal to $\underline{\mathfrak{g}}_p\oplus J\underline{\mathfrak{g}}_p$, we can take linear transformation in $(x_{2d+1},\ldots,x_{2n+1})$ so that 
\[\begin{split}
&{\rm Re\,}V_j=\alpha_{j,1}\frac{\pr}{\pr x_1}+\cdots+\alpha_{j,d}\frac{\pr}{\pr x_{d}}+\frac{\pr}{\pr x_{2j-1+2d}},\ \ j=1,2,\ldots,n-d,\\
&{\rm Im\,}V_j=\beta_{j,1}\frac{\pr}{\pr x_1}+\cdots+\beta_{j,d}\frac{\pr}{\pr x_{d}}+\frac{\pr}{\pr x_{2j+2d}},\ \ j=1,2,\ldots,n-d.
\end{split}\]
Consider the coordinates change:
\[\begin{split}
&x=(x_1,\ldots,x_{2n+1})\To u=(u_1,\ldots,u_{2n+1}),\\
&(x_1,\ldots,x_{2n+1})\\
&\To(x_1-\sum^d_{j=1}\alpha_{j,1}x_{2j-1+2d}-\sum^{d}_{j=1}\beta_{j,1}x_{2j+2d},\ldots,x_d-\sum^d_{j=1}\alpha_{j,d}x_{2j-1+2d}-\sum^{d}_{j=1}\beta_{j,d}x_{2j+2d},\\
&\quad\quad\quad x_{d+1},\ldots, x_{2n+1})
\end{split}\]
Then, 
\[\begin{split}
&\frac{\pr}{\pr x_j}\rightarrow\frac{\pr}{\pr u_j},\ \ j=1,\ldots,2d,\\
&\frac{\pr}{\pr x_{2j-1+2d}}\rightarrow -\alpha_{j,1}\frac{\pr}{\pr u_1}-\cdots-\alpha_{j,d}\frac{\pr}{\pr u_d}+\frac{\pr}{\pr u_{2j-1+2d}},\ \ j=1,\ldots,n-d,\\
&\frac{\pr}{\pr x_{2j+2d}}\rightarrow -\beta_{j,1}\frac{\pr}{\pr u_1}-\cdots-\beta_{j,d}\frac{\pr}{\pr u_d}+\frac{\pr}{\pr u_{2j+2d}},\ \ j=1,\ldots,n-d.
\end{split}\]
 Thus, we can take $v=(v_1,\ldots,v_d)$ and $x=(x_1,\ldots,x_{2n+1})$ such that \eqref{e-gue161202}, \eqref{e-gue161206} and \eqref{e-gue161202I} hold. 
 
Now, we can take linear transformation in $x_{2n+1}$ so that $\omega_0(p)=dx_{2n+1}$. Let $W_j$, $j=1,\ldots,n$ be an orthonormal basis of $T^{1,0}X$ such that $W_j(p)=Z_j$, $j=1,\ldots,n$, where $Z_j\in T^{1,0}_pX$, $j=1,\ldots,n$, are as in \eqref{e-gue161202I}. Let $\Td x=(\Td x_1,\ldots,\Td x_{2n+1})$ be the coordinates as in Theorem~\ref{t-gue161110g}. It is easy to see that 
\begin{equation}\label{e-gue161219b}\begin{split}
&\Td x_j=x_j+a_jx_{2n+1}+h_j(x),\ \ h_j(x)=O(\abs{x}^2),\ \ a_j\in\Real,\ \ j=1,2,\ldots,2n,\\
&\Td x_{2n+1}=x_{2n+1}+h_{2n+1}(x),\ \ h_{2n+1}(x)=O(\abs{x}^2).\end{split}\end{equation}
We may change $x_{2n+1}$ be $x_{2n+1}+h_{2n+1}(0,\ldots,0,x_{d+1},\ldots,x_{2n},0)$ and we have 
\begin{equation}\label{e-gue161209}
\frac{\pr^2\Td x_{2n+1}}{\pr x_j\pr x_k}(p)=0,\ \ j, k=\set{d+1,\ldots,2n}.
\end{equation}
Note that when we change $x_{2n+1}$ to $x_{2n+1}+h_{2n+1}(0,\ldots,0,x_{d+1},\ldots,x_{2n},0)$, $\frac{\pr}{\pr x_j}$ will change to $\frac{\pr}{\pr x_j}+\alpha_j(x)\frac{\pr}{\pr x_{2n+1}}$, $j=d+1,\ldots,2n$, where $\alpha_j(x)$ is a smooth function on $\mu^{-1}(0)\bigcap U$, independent of $x_1,\ldots,x_{d}$, $x_{2n+1}$ and $\alpha_j(0)=0$, $j=d+1,\ldots,2n$. Hence, on $\mu^{-1}(0)\bigcap U$, we have $J(\frac{\pr}{\pr x_j})=\frac{\pr}{\pr x_{d+j}}+a_j(x)\frac{\pr}{\pr x_{2n+1}}$, $j=1,2,\ldots,d$,  where $a_j(x)$ is a smooth function on $\mu^{-1}(0)\bigcap U$, independent of $x_1,\ldots,x_{2d}$, $x_{2n+1}$ and $a_j(0)=0$, $j=1,\ldots,d$. 

From \eqref{e-gue161219a} and \eqref{e-gue161219b}, it is straightforward to see that 
\begin{equation}\label{e-gue161209I}
\begin{split}
&\omega_0(\Td x)=(1+O(\abs{\Td x}))d\Td x_{2n+1}+\sum^d_{j=1}2\mu_j\Td x_{d+j}d\Td x_j+\sum^d_{j=1}(-2\mu_j\Td x_j)d\Td x_{d+j}\\
&\quad+\sum^n_{j=d+1}2\mu_jx_{2j}dx_{2j-1}+\sum^n_{j=d+1}(-2\mu_j\Td x_{2j-1})d\Td x_{2j}+\sum^{2n}_{j=1}\hat b_j\Td x_{2n+1}d\Td x_j+O(\abs{x}^2)\\
&=(1+O(\abs{x}))dx_{2n+1}+\sum^d_{j=1}(2\mu_jx_{d+j}+\frac{\pr\Td x_{2n+1}}{\pr x_j})dx_j+\sum^d_{j=1}(-2\mu_jx_j+\frac{\pr\Td x_{2n+1}}{\pr x_{d+j}})dx_{d+j}\\
&\quad+\sum^n_{j=d+1}(2\mu_jx_{2j}+\frac{\pr\Td x_{2n+1}}{\pr x_{2j-1}})dx_{2j-1}+\sum^n_{j=d+1}(-2\mu_jx_{2j-1}+\frac{\pr\Td x_{2n+1}}{\pr x_{2j}})dx_{2j}\\
&\quad+\sum^{2n}_{j=1}\Td b_jx_{2n+1}dx_j+O(\abs{x}^2),
\end{split}
\end{equation}
where $\Td b_j\in\Real, \hat b_j\in\Real$, $j=1,\ldots,2n$. Note that $\omega_0$ is $G$-invariant. From this observation and \eqref{e-gue161209I}, we deduce that 
\begin{equation}\label{e-gue161209II}
\begin{split}
&\frac{\pr^2\Td x_{2n+1}}{\pr x_j\pr x_k}(p)=0,\ \ j\in\set{1,\ldots,d}, k\in\set{1,\ldots,d}\bigcup\set{2d+1,\ldots,2n},\\
&\frac{\pr^2\Td x_{2n+1}}{\pr x_{d+j}\pr x_k}(p)=2\mu_j\delta_{j,k},\ \ j, k\in\set{1,\ldots,d}.
\end{split}
\end{equation}
From \eqref{e-gue161209II}, \eqref{e-gue161209I} and \eqref{e-gue161209}, it is straightforward to see that
\begin{equation}\label{e-gue161219q}
\begin{split}
\omega_0(x)&=(1+O(\abs{x}))dx_{2n+1}+\sum^d_{j=1}4\mu_jx_{d+j}dx_j\\
&\quad+\sum^n_{j=d+1}2\mu_jx_{2j}dx_{2j-1}-\sum^n_{j=d+1}2\mu_jx_{2j-1}dx_{2j}+\sum^{2n}_{j=1}b_jx_{2n+1}dx_j+O(\abs{x}^2),
\end{split}
\end{equation}
where $b_{1}\in\Real,\ldots,b_{2n}\in\Real$. Since $\omega_0(p)(\frac{\pr}{\pr x_j})=0$ on $x_{d+1}=\cdots=x_{2d}=0$, $j=1,2,\ldots,d$, we deduce that $b_1=\cdots=b_d=0$ and we get
\eqref{e-gue161219}. The theorem follows. 
\end{proof}

We need 

\begin{theorem}\label{t-gue161222}
Let $p\in\mu^{-1}(0)$ and take local coordinates $x=(x_1,\ldots,x_{2n+1})$ of $X$ defined in an open set $U$of $p$ with $0\leftrightarrow p$ such that \eqref{e-gue161206}, \eqref{e-gue161202I} and \eqref{e-gue161219} hold.  Let $\varphi_-(x,y)\in C^\infty(U\times U)$ be as in Theorem~\ref{t-gue161109I}. Then, 
\begin{equation} \label{e-gue161222}
\begin{split}
\varphi_-(x, y)&=-x_{2n+1}+y_{2n+1}-2\sum^d_{j=1}\mu_jx_jx_{d+j}+2\sum^d_{j=1}\mu_jy_jy_{d+j}
+i\sum^{n}_{j=1}\abs{\mu_j}\abs{z_j-w_j}^2 \\
&+\sum^{n}_{j=1}i\mu_j(\ol z_jw_j-z_j\ol w_j)+\sum^d_{j=1}(-\frac{i}{2}b_{d+j})(-z_jx_{2n+1}+w_jy_{2n+1})\\
&+\sum^d_{j=1}(\frac{i}{2}b_{d+j})(-\ol z_jx_{2n+1}+\ol w_jy_{2n+1})+\sum^n_{j=d+1}\frac{1}{2}(b_{2j-1}-ib_{2j})(-z_jx_{2n+1}+w_jy_{2n+1})\\
&+\sum^n_{j=d+1}\frac{1}{2}(b_{2j-1}+ib_{2j})(-\ol z_jx_{2n+1}+\ol w_jy_{2n+1})+(x_{2n+1}-y_{2n+1})f(x, y) +O(\abs{(x, y)}^3),
\end{split}
\end{equation}
where $z_j=x_j+ix_{d+j}$, $j=1,\ldots,d$, $z_j=x_{2j-1}+ix_{2j}$, $j=2d+1,\ldots,2n$, $\mu_j$, $j=1,\ldots,n$, and $b_{d+1}\in\Real,\ldots,b_{2n}\in\Real$ are as in \eqref{e-gue161219} and $f$ is smooth and satisfies $f(0,0)=0$, $f(x, y)=\ol f(y, x)$. 
\end{theorem}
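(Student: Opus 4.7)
\medskip

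\noindent\textbf{Proof proposal for Theorem~\ref{t-gue161222}.}
The plan is to reduce to Theorem~\ref{t-gue161110g}, applied to a canonical coordinate system $\tilde x$ adapted to the orthonormal frame $\{W_j\}$ at $p$, and then to push the resulting formula for $\varphi_-$ forward through the explicit change of coordinates $\tilde x \mapsto x$ that was produced inside the proof of Theorem~\ref{t-gue161202}. Let $\{W_j\}_{j=1}^n$ be an orthonormal frame for $T^{1,0}X$ near $p$ with $W_j(p)=Z_j$ (the vectors of \eqref{e-gue161202I}) and Levi form diagonal at $p$. By Theorem~\ref{t-gue161110g}, there exist canonical coordinates $\tilde x=(\tilde x_1,\ldots,\tilde x_{2n+1})$ near $p$ in which $W_j$ takes the normal form \eqref{e-gue161219a} with certain constants $c_j\in\mathbb C$, $\omega_0(p)=d\tilde x_{2n+1}$, and $\varphi_-$ is given by \eqref{e-gue140205VII}.

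First I would identify the constants $c_j$ directly from \eqref{e-gue161219}. Using $\omega_0(W_j)=0$ and \eqref{e-gue161219a}, we have $\omega_0(\partial/\partial \tilde z_j) = i\mu_j \bar{\tilde z}_j + c_j\tilde x_{2n+1}+O(|\tilde x|^2)$. On the other hand, in the $x$ coordinates of Theorem~\ref{t-gue161202} the form $\omega_0$ is given by \eqref{e-gue161219}, and since $\tilde x=x+O(|x|^2)+\text{(linear in $x_{2n+1}$)}$ (see \eqref{e-gue161219b}), the two expressions agree to first order at $p$. Expanding $\partial/\partial \tilde z_j$ in the $x$-basis and comparing the coefficient of $x_{2n+1}$ in $\omega_0(\partial/\partial z_j)$ with $c_j$ yields, for $1\le j\le d$, $c_j=-\tfrac{i}{2}b_{d+j}$ (here one uses that $b_1=\cdots=b_d=0$ from \eqref{e-gue161219}), and for $d+1\le j\le n$, $c_j=\tfrac{1}{2}(b_{2j-1}-ib_{2j})$. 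This identification converts the last two $c_j$-lines of \eqref{e-gue140205VII} into the corresponding $b_\bullet$-lines of \eqref{e-gue161222}.

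Next I would push forward the formula \eqref{e-gue140205VII} via the substitution $\tilde x_j=x_j+a_jx_{2n+1}+h_j(x)$, $\tilde x_{2n+1}=x_{2n+1}+h_{2n+1}(x)$ from \eqref{e-gue161219b}. The only new quadratic contribution comes from $-\tilde x_{2n+1}+\tilde y_{2n+1}=-x_{2n+1}+y_{2n+1}-h_{2n+1}(x)+h_{2n+1}(y)$. The second derivatives of $\tilde x_{2n+1}$ at $p$ were already computed in \eqref{e-gue161209II}: the only nonvanishing entries (up to symmetry) are $\partial^2\tilde x_{2n+1}/\partial x_j\partial x_{d+j}(0)=2\mu_j$ for $1\le j\le d$, which gives
\[
h_{2n+1}(x)=2\sum_{j=1}^{d}\mu_j x_j x_{d+j}+(x_{2n+1})\cdot g(x)+O(|x|^3),
\]
where $g(0)=0$ (note that by \eqref{e-gue161209} the pure $(\tilde x_{d+1},\ldots,\tilde x_{2n})$ quadratic terms in $h_{2n+1}$ were killed by the preliminary normalization in the proof of Theorem~\ref{t-gue161202}). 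Substituting produces exactly the new cross terms $-2\sum_{j=1}^{d}\mu_j x_j x_{d+j}+2\sum_{j=1}^{d}\mu_j y_j y_{d+j}$ of \eqref{e-gue161222}, while the $(x_{2n+1})g(x)$ part is absorbed into the factor $(x_{2n+1}-y_{2n+1})f(x,y)$. All remaining terms in \eqref{e-gue140205VII} are already quadratic in $(\tilde x,\tilde y)$ and $\tilde x-x=O(|x|^2)+O(x_{2n+1})$, so their transformation only modifies the $O(|(x,y)|^3)$ remainder and the $(x_{2n+1}-y_{2n+1})f(x,y)$ term. Finally, the Hermitian-symmetry property $\varphi_-(x,y)=-\overline{\varphi_-(y,x)}$ from \eqref{e-gue140205IV}, together with the reality of $-x_{2n+1}+y_{2n+1}-2\sum\mu_j x_j x_{d+j}+2\sum\mu_j y_j y_{d+j}$, forces $f(0,0)=0$ and $f(x,y)=\overline{f(y,x)}$.

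The main obstacle is the accurate bookkeeping in Step~2: one must verify that the $c_j$-terms in \eqref{e-gue140205VII}, after the substitution $\tilde x=x+O(x_{2n+1})+O(|x|^2)$, reproduce the $b_{d+j}$-terms of \eqref{e-gue161222} \emph{without} generating spurious quadratic contributions in variables other than $x_{2n+1}$ (or $y_{2n+1}$). This reduces to checking that the linear-in-$x_{2n+1}$ shifts in $\tilde x$ interact with $\omega_0$ only through the $c_j$'s, which is guaranteed by the normal form \eqref{e-gue161219a} of $W_j$ together with the fact that the preliminary coordinate choice in the proof of Theorem~\ref{t-gue161202} already eliminated all other quadratic terms of $h_{2n+1}$ compatible with the $G$-invariance of $\omega_0$.
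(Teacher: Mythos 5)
The high-level strategy (push \eqref{e-gue140205VII} through the coordinate change $\tilde x\leftrightarrow x$ of \eqref{e-gue161219b} and use the $\omega_0$-normalisation to pin down the linear-in-$x_{2n+1}$ coefficients) is the same as the paper's, but two of the key claims in your Step 2 are incorrect, and the place where you claim "bookkeeping" is automatic is exactly where it fails.

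First, the identification $c_j=-\tfrac{i}{2}b_{d+j}$ is not correct. You get it by equating $\omega_0(\partial/\partial\tilde z_j)$ with $\omega_0(\partial/\partial z_j)$ "to first order", but these two quantities differ already at linear order: $\partial/\partial\tilde x_k = \partial/\partial x_k - \sum_l \tfrac{\partial h_l}{\partial x_k}\partial/\partial x_l + \ldots$ and the term with $l=2n+1$ contributes $-\tfrac{\partial h_{2n+1}}{\partial x_k}$ to $\omega_0(\partial/\partial\tilde x_k)$, which is $O(|x|)$, not $O(|x|^2)$. A direct computation then gives (for $1\le j\le d$) $c_j = -\tfrac{i}{2}b_{d+j} - \mu_j a_{d+j} - i\mu_j a_j -\tfrac{q_j}{2}+\tfrac{i}{2}q_{d+j}$ where $q_k=\tfrac{\partial^2\tilde x_{2n+1}}{\partial x_k\partial x_{2n+1}}(p)$, so the extra terms are generically nonzero.

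Second, your claim that "the $(x_{2n+1})g(x)$ part of $h_{2n+1}$ and the corrections from the other quadratic terms in \eqref{e-gue140205VII} are absorbed into $(x_{2n+1}-y_{2n+1})f(x,y)$" is also false. For instance, $-x_{2n+1}g(x)+y_{2n+1}g(y) = -\tfrac12(x_{2n+1}-y_{2n+1})(g(x)+g(y))-\tfrac12(x_{2n+1}+y_{2n+1})g(x-y)$, and the second summand is proportional to $(x_{2n+1}+y_{2n+1})$, not $(x_{2n+1}-y_{2n+1})$, so it genuinely contributes to the $\beta_j$-coefficients. The same is true of the $\bar{\tilde z}_j\tilde w_j-\tilde z_j\bar{\tilde w}_j$ terms, whose corrections produce a piece $\mu_j[a_j(x_{d+j}-y_{d+j})-a_{d+j}(x_j-y_j)](x_{2n+1}+y_{2n+1})$. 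In fact these two error sources cancel precisely against the deviation $c_j-(-\tfrac{i}{2}b_{d+j})$ computed above, and the end result is $\beta_j=-\tfrac{i}{2}b_{d+j}$; but this cancellation is exactly the content of the statement and does not follow from either of your two claims. The paper avoids all of this by writing the post-substitution quadratic form with undetermined coefficients $\beta_j$ (its \eqref{e-gue161222a}) and then determining them from $d_x\varphi_-(x,x)=-\omega_0(x)$ together with \eqref{e-gue161219} — a constraint in which the $(x_{2n+1}-y_{2n+1})f$ ambiguity drops out automatically. You should replace your Step~2 by this argument (or, equivalently, explicitly verify the cancellation above); as written, the proposal asserts two false intermediate identities.
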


\begin{proof}
 Let $\Td x=(\Td x_1,\ldots,\Td x_{2n+1})$ be the coordinates as in Theorem~\ref{t-gue161110g}. It is easy to see that 
\begin{equation}\label{e-gue161222I}\begin{split}
&\Td x_j=x_j+a_jx_{2n+1}+h_j(x),\ \ h_j(x)=O(\abs{x}^2),\ \ a_j\in\Real,\ \ j=1,2,\ldots,2n,\\
&\Td x_{2n+1}=x_{2n+1}+h_{2n+1}(x),\ \ h_{2n+1}(x)=O(\abs{x}^2).\end{split}\end{equation}
From \eqref{e-gue161219a}, it is straightforward to see that 
\begin{equation}\label{e-gue161222II}
\begin{split}
&\omega_0(\Td x)=(1+O(\abs{\Td x}))d\Td x_{2n+1}+\sum^d_{j=1}2\mu_j\Td x_{d+j}d\Td x_j+\sum^d_{j=1}(-2\mu_j\Td x_j)d\Td x_{d+j}\\
&\quad+\sum^n_{j=d+1}2\mu_jx_{2j}dx_{2j-1}+\sum^n_{j=d+1}(-2\mu_j\Td x_{2j-1})d\Td x_{2j}+\sum^{2n}_{j=1}\hat b_j\Td x_{2n+1}d\Td x_j+O(\abs{x}^2), 
\end{split}
\end{equation}
where 
\[\begin{split}
&\hat b_j=c_j+\ol c_j,\ \ j\in\set{1,\ldots,d}\bigcup\set{2d+1,2d+3,\ldots,2n-1},\\
&\hat b_j=ic_j-i\ol c_j,\ \ j\in\set{d+1,\ldots,2d}\bigcup\set{2d+2,\ldots,2n}.\end{split}\]
From \eqref{e-gue161222II} and \eqref{e-gue161219}, it is not difficulty to see that  (see also \eqref{e-gue161209I}) 
\begin{equation}\label{e-gue161222III}
\begin{split}
&\frac{\pr^2\Td x_{2n+1}}{\pr x_j\pr x_k}(p)=0,\ \ j\in\set{1,\ldots,d}, k\in\set{1,\ldots,d},\\
&\frac{\pr^2\Td x_{2n+1}}{\pr x_j\pr x_k}(p)=0,\ \ j\in\set{1,\ldots,2n}, k\in\set{2d+1,\ldots,2n},\\
&\frac{\pr^2\Td x_{2n+1}}{\pr x_{d+j}\pr x_k}(p)=2\mu_j\delta_{j,k},\ \ j, k\in\set{1,\ldots,d}.
\end{split}
\end{equation}
From \eqref{e-gue161222I}, \eqref{e-gue161222III} and \eqref{e-gue140205VII}, it is straightforward to check that 
\begin{equation} \label{e-gue161222a}
\begin{split}
\varphi_-(x, y)&=-x_{2n+1}+y_{2n+1}-2\sum^d_{j=1}\mu_jx_jx_{d+j}+2\sum^d_{j=1}\mu_jy_jy_{d+j}
+i\sum^{n}_{j=1}\abs{\mu_j}\abs{z_j-w_j}^2 \\
&+\sum^{n}_{j=1}i\mu_j(\ol z_jw_j-z_j\ol w_j)+\sum^n_{j=1}\beta_j(-z_jx_{2n+1}+w_jy_{2n+1})\\
&+\sum^n_{j=1}\ol\beta_j(-\ol z_jx_{2n+1}+\ol w_jy_{2n+1})+(x_{2n+1}-y_{2n+1})f(x, y) +O(\abs{(x, y)}^3),
\end{split}
\end{equation}
where $\beta_j\in\Complex$, $j=1,\ldots,n$ and $f$ is smooth and satisfies $f(0,0)=0$, $f(x, y)=\ol f(y, x)$. We now determine $\beta_j$, $j=1,\ldots,n$. We can compute that 
\begin{equation}\label{e-gue161222aI}
\begin{split}
&\frac{\pr\varphi_-}{\pr x_j}(x,x)=-4\mu_jx_{d+j}-(\beta_j+\ol\beta_j)x_{2n+1}+O(\abs{x}^2),\ \ j=1,\ldots,d,\\
&\frac{\pr\varphi_-}{\pr x_{d+j}}(x,x)=-i(\beta_j-\ol\beta_j)x_{2n+1}+O(\abs{x}^2),\ \ j=1,\ldots,d,\\
&\frac{\pr\varphi_-}{\pr x_{2j-1}}(x,x)=-2\mu_jx_{2j}-(\beta_j+\ol\beta_j)x_{2n+1}+O(\abs{x}^2),\ \ j=d+1,\ldots,n,\\
&\frac{\pr\varphi_-}{\pr x_{2j}}(x,x)=2\mu_jx_{2j-1}+(-i\beta_j+i\ol\beta_j)x_{2n+1}+O(\abs{x}^2),\ \ j=d+1,\ldots,n.
\end{split}
\end{equation}
Note that $d_x\varphi_-(x,x)=-\omega_0(x)$. From this observation and \eqref{e-gue161219}, we deduce that 
\begin{equation}\label{e-gue161222aII}
\begin{split}
&\frac{\pr\varphi_-}{\pr x_j}(x,x)=-4\mu_jx_{d+j}+O(\abs{x}^2),\ \ j=1,\ldots,d,\\
&\frac{\pr\varphi_-}{\pr x_{d+j}}(x,x)=-b_{d+j}x_{2n+1}+O(\abs{x}^2),\ \ j=1,\ldots,d,\\
&\frac{\pr\varphi_-}{\pr x_{2j-1}}(x,x)=-2\mu_jx_{2j}-b_{2j-1}x_{2n+1}+O(\abs{x}^2),\ \ j=d+1,\ldots,n,\\
&\frac{\pr\varphi_-}{\pr x_{2j}}(x,x)=2\mu_jx_{2j-1}-b_{2j}x_{2n+1}+O(\abs{x}^2),\ \ j=d+1,\ldots,n.
\end{split}
\end{equation}
From \eqref{e-gue161222aI} and \eqref{e-gue161222aII}, we deduce that 
\begin{equation}\label{e-gue161222aIII}
\begin{split}
&\beta_j=-\frac{i}{2}b_{d+j},\ \ j=1,\ldots,d,\\
&\beta_j=\frac{1}{2}(b_{2j-1}-ib_{2j}),\ \ j=d+1,\ldots,n.
\end{split}
\end{equation}
From \eqref{e-gue161222aIII} and \eqref{e-gue161222a}, we get \eqref{e-gue161222}. 
\end{proof}

We now work with local coordinates as in Theorem~\ref{t-gue161202}. 
From \eqref{e-gue161222}, we see that near $(p,p)\in U\times U$, we have $\frac{\pr\varphi_-}{\pr y_{2n+1}}\neq0$.
Using the Malgrange preparation theorem \cite[Th.\,7.5.7]{Hor03}, we have
\begin{equation} \label{e-gue170105}
\varphi_-(x,y)=g(x,y)(y_{2n+1}+\hat\varphi_-(x,\mathring{y}))
\end{equation}
in some neighborhood of $(p,p)$, where $\mathring{y}=(y_1,\ldots,y_{2n})$, $g, \hat\varphi_-\in C^\infty$. Since ${\rm
Im\,}\varphi_-\geq0$, it is not difficult to see that
${\rm Im\,}\hat\varphi_-\geq0$ in some neighborhood of $(p,p)$. We may take $U$
small enough so that \eqref{e-gue170105} holds and ${\rm
Im\,}\hat\varphi_-\geq0$ on $U\times U$.  From the global theory of Fourier integral
operators \cite[Th.\,4.2]{MS74}, we see that
$\varphi_-(x,y)t$ and $(y_{2n+1}+\hat\varphi_-(x,\mathring{y}))t$ are equivalent in the
sense of Melin-Sj\"{o}strand. We can replace the phase $\varphi_-$
by $y_{2n+1}+\hat\varphi_-(x,\mathring{y})$. From now on, we assume that $\varphi_-(x,y)$ has the form 
\begin{equation}\label{e-gue170131}
\varphi_-(x,y)=y_{2n+1}+\hat\varphi_-(x,\mathring{y}).
\end{equation}
It is easy to check that $\varphi_-(x,y)$ satisfies \eqref{e-gue140205VI} and \eqref{e-gue161222} with $f(x,y)=0$. 

We now study $S^{(q)}_G(x,y)$. From Theorem~\ref{t-gue161109}, we get 

\begin{theorem}\label{t-gue170112}
Assume that $q\notin\set{n_-,n_+}$.Then, $S^{(q)}_G\equiv0$ on $X$. 
\end{theorem}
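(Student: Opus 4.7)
The plan is to reduce Theorem~\ref{t-gue170112} directly to the non-equivariant vanishing result of Theorem~\ref{t-gue161109}, using the averaging formula \eqref{e-gue161231III} that expresses $S^{(q)}_G$ as a $G$-average of $S^{(q)}$. The key point is that $S^{(q)} \equiv 0$ on $X$ means, by the definition of $\equiv$ in \eqref{e-gue160507f}, that the distribution kernel $S^{(q)}(x,y)$ is in fact smooth on $X\times X$, i.e.\ $S^{(q)}(x,y)\in C^\infty(X\times X, T^{*0,q}X\boxtimes(T^{*0,q}X)^*)$.

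First, I would invoke Theorem~\ref{t-gue161109}: under our standing assumption that the Levi form is non-degenerate of constant signature $(n_-,n_+)$, whenever $q\notin\{n_-,n_+\}$ we have $S^{(q)}\equiv 0$ on $X$, so $S^{(q)}(x,y)$ is smooth. Next, consider the map $G\times X\times X \to X\times X$, $(g,x,y)\mapsto (x, g\circ y)$. Since the $G$-action is smooth, pulling back the smooth kernel $S^{(q)}(x,y)$ along this map yields a smooth section $(g,x,y)\mapsto S^{(q)}(x, g\circ y)$ of the appropriate bundle over $G\times X\times X$ (using that $g_\ast$ commutes with $J$, as in Assumption~\ref{a-gue170123I}, so that the identification of fibers $T^{*0,q}_{g\circ y}X \cong T^{*0,q}_y X$ is smooth in $g$).

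Finally, since $G$ is compact, integration against the Haar measure $d\mu$ preserves smoothness: the function
\[
(x,y)\longmapsto \frac{1}{|G|_{d\mu}}\int_G S^{(q)}(x, g\circ y)\,d\mu(g)
\]
is smooth on $X\times X$. By \eqref{e-gue161231III}, this function is precisely the distribution kernel $S^{(q)}_G(x,y)$, which is therefore smooth on $X\times X$. Hence $S^{(q)}_G \equiv 0$ on $X$ in the sense of \eqref{e-gue160507f}.

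There is essentially no obstacle here; the statement is a direct consequence of Theorem~\ref{t-gue161109} together with the elementary fact that compactly supported smooth averaging of a smooth family of smooth kernels remains smooth. The only tiny check worth recording is the smoothness of $g\mapsto g^\ast$ on $T^{*0,q}X$, which is immediate from the smoothness of the action and from $g_\ast J = J g_\ast$.
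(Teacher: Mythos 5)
Your proof is correct and takes essentially the same route the paper does: the paper simply cites Theorem~\ref{t-gue161109} and relies implicitly on the averaging identity \eqref{e-gue161231III} together with the fact that integrating a smooth kernel over the compact group $G$ preserves smoothness, which is precisely what you spell out.
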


Assume that $q=n_-$ and $\Box^{(q)}_b$ has $L^2$ closed range. Fix $p\in\mu^{-1}(0)$ and let $v=(v_1,\ldots,v_d)$ and $x=(x_1,\ldots,x_{2n+1})$ be the local coordinates of $G$ and $X$ as in Theorem~\ref{t-gue161202}. Take any Haar measure $d\mu$ on $G$ and assume that $d\mu=m(v)dv=m(v_1,\ldots,v_d)dv_1\cdots dv_d$ on $V$, where $V$ is an open neighborhood of $e_0\in G$ as in Theorem~\ref{t-gue161202}. From \eqref{e-gue161231III}, we have 
\[S^{(q)}_G(x,y)=\frac{1}{\abs{G}_{d\mu}}\int_G\chi(g)S^{(q)}(x,g\circ y)d\mu(g)+\frac{1}{\abs{G}_{d\mu}}\int_G(1-\chi(g))S^{(q)}(x,g\circ y)d\mu(g),\]
where $\chi\in C^\infty_0(V)$, $\chi=1$ near $e_0$. Since $G$ is globally free on $\mu^{-1}(0)$, if $U$ and $V$ are small, there is a constant $c>0$ such that 
\begin{equation}\label{e-gue161231cr}
d(x,g\circ y)\geq c,\ \ \forall x, y\in U, g\in{\rm Supp\,}(1-\chi),
\end{equation}
where $U$ is an open set of $p\in\mu^{-1}(0)$ as in Theorem~\ref{t-gue161202}. From now on, we take $U$ and $V$ small enough so that \eqref{e-gue161231cr} holds. In view of Theorem~\ref{t-gue161109I}, we see that $\frac{1}{\abs{G}_{d\mu}}\int_G(1-\chi(g))S^{(q)}(x,g\circ y)d\mu(g)\equiv0$ on $U$ and hence 
\begin{equation}\label{e-gue170102}
\mbox{$S^{(q)}_G(x,y)\equiv\frac{1}{\abs{G}_{d\mu}}\int_G\chi(g)S^{(q)}(x,g\circ y)d\mu(g)$ on $U$}.
\end{equation}
From Theorem~\ref{t-gue161109I} and \eqref{e-gue170102}, we have 
\begin{equation}\label{e-gue170102I}
\begin{split}
&\mbox{$S^{(q)}_G(x,y)\equiv \hat S_{-}(x,y)+\hat S_{+}(x,y)$ on $U$},\\
&\hat S_{-}(x,y)=\frac{1}{\abs{G}_{d\mu}}\int_G\chi(g)S_-(x,g\circ y)d\mu(g),\\
&\hat S_{+}(x,y)=\frac{1}{\abs{G}_{d\mu}}\int_G\chi(g)S_+(x,g\circ y)d\mu(g).
\end{split}
\end{equation}
Write $x=(x',x'')=(x',\hat x'',\Td x'')$, $y=(y',y'')=(y',\hat y'',\Td y'')$, where $\hat x''=(x_{d+1},\ldots,x_{2d})$, $\hat y''=(y_{d+1},\ldots,y_{2d})$, $\Td x''=(x_{2d+1},\ldots,x_{2n+1})$, $\Td y''=(y_{2d+1},\ldots,y_{2n+1})$. Since $S^{(q)}_G(x,y)$ is $G$-invariant, we have $S^{(q)}_G(x,y)=S^{(q)}_G((0,x''),(\gamma(y''),y''))$, where $\gamma\in C^\infty(U_2,U_1)$ is as in Theorem~\ref{t-gue161202}. From this observation and \eqref{e-gue170102I}, we have 
\begin{equation}\label{e-gue170102Ib}
\mbox{$S^{(q)}_G(x,y)\equiv\hat S_{-}((0,x''),(\gamma(y''),y''))+\hat S_{+}((0,x''),(\gamma(y''),y''))$ on $U$}.
\end{equation}
 Write $\mathring{x}''=(x_{d+1},\ldots,x_{2n})$, $\mathring{y}''=(y_{d+1},\ldots,y_{2n})$
From \eqref{e-gue170131}, \eqref{e-gue170102Ib}, Theorem~\ref{t-gue161202} and Theorem~\ref{t-gue161109I}, we have 
\begin{equation}\label{e-gue170102II}
\begin{split}
&\hat S_{-}((0,x''),(\gamma(y''),y''))\\
&\equiv\frac{1}{\abs{G}_{d\mu}}\int e^{i(y_{2n+1}+\hat\varphi_-((0,x''),(v+\gamma(y''),\mathring{y}'')))t}s_-((0,x''),(v+\gamma(y''),y''),t)m(v)dvdt.
\end{split}
\end{equation}
%Note that $\frac{\pr\hat\varphi_-}{\pr v_j}|_{\hat x''=\hat y''=0, \Td x''=\Td y'', x'=v+\gamma(y'')=0}=-\langle\,\omega_0(x)\,,\,\frac{\pr}{\pr x_j}\,\rangle=0$, where $x=(0,(0,\Td x''))$. We deduce that for $\hat x''=\hat y''=0$, $\Td x''=\Td y''$, $v=-\gamma(y'')$ are real critical points. 
From \eqref{e-gue161222}, it is straightforward to see that 
\begin{equation}\label{e-gue170102III}
{\rm det\,}\Bigr(\left(\frac{\pr^2\hat\varphi_-}{\pr v_k\pr v_j}(p,p)\right)^d_{j,k=1}\Bigr)=(2i)^d\abs{\mu_1}\cdots\abs{\mu_d}\neq0.
\end{equation}
We pause and introduce some notations. Let $W$ be an open set of $\Real^N$, $N\in\mathbb N$. From now on, we write $W^\Complex$ to denote an open set in $\Complex^N$ with $W^\Complex\bigcap\Real^N=W$ and for $f\in C^\infty(W)$, from now on, we write $\Td f\in C^\infty(W^\Complex)$ to denote an almost analytic extension of $f$ (see Section 2 in~\cite{MS74}). Let $h(x'',y'')\in C^\infty(U\times U,\Complex^d)$ be the solution of the system 
\begin{equation}\label{e-gue170105a}
\frac{\pr\Td{\hat\varphi_-}}{\pr\Td y_j}((0,x''),(h(x'',y'')+\gamma(y''),\mathring{y}''))=0,\ \ j=1,2,\ldots,d,
\end{equation}
and let 
\begin{equation}\label{e-gue170105aI}
\Phi_-(x'',y''):=y_{2n+1}+\Td{\hat\varphi_-}((0,x''),(h(x'',y'')+\gamma(y''),\mathring{y}'')). 
\end{equation}
It is well-known that (see page 147 in~\cite{MS74}) ${\rm Im\,}\Phi_-(x'',y'')\geq0$. Note that 
\[\frac{\pr\hat\varphi_-}{\pr v_j}|_{\hat x''=\hat y''=0, \Td x''=\Td y'', x'=v+\gamma(y'')=0}=-\langle\,\omega_0(x)\,,\,\frac{\pr}{\pr x_j}\,\rangle=0,\] 
where $x=(0,(0,\Td x''))$. We deduce that for $\hat x''=\hat y''=0$, $\Td x''=\Td y''$, $v=-\gamma(y'')$ are real critical points. From this observation, we can calculate that 
\begin{equation}\label{e-gue170109}
d_x\Phi_-|_{x''=y'', \hat x''=0}=-f(x'')\omega_0(x),\ \ d_y\Phi_-|_{x''=y'', \hat x''=0}=f(x'')\omega_0(x),
\end{equation}
where $x=(0,\Td x'')$ and $f\in C^\infty$ is a  positive function with $f(p)=1$.
By using stationary phase formula of Melin-Sj\"ostrand~\cite{MS74}, we can carry out the $v$ integral in \eqref{e-gue170102II} and get 
\begin{equation}\label{e-gue170102cw}
\hat S_{-}((0,x''),(\gamma(y''),y''))\equiv\int e^{i\Phi_-(x'',y'')t}a_-(x'',y'',t)dt\ \ \ \mbox{on $U$},
\end{equation}
where $a_-(x'',y'',t)\sim\sum^\infty_{j=0}t^{n-\frac{d}{2}-j}a^0_-(x'',y'')$ in $S^{n-\frac{d}{2}}_{1,0}(U\times U\times\Real_+, T^{*0,q}X\boxtimes(T^{*0,q}X)^*)$, 
\[a^j_-(x'',y'')\in C^\infty(U\times U,T^{*0,q}X\boxtimes(T^{*0,q}X)^*),\ \ j=0,1,\ldots,\]
\begin{equation}\label{e-gue170102cwI}
a^0_-(p,p)=\frac{m(0)}{2\abs{G}_{d\mu}}\pi^{-n-1+\frac{d}{2}}\abs{\mu_1}^{\frac{1}{2}}\cdots\abs{\mu_d}^{\frac{1}{2}}\abs{\mu_{d+1}}\cdots\abs{\mu_n}\tau_{p,n_-}.
\end{equation}
We now study the property of the phase $\Phi_-(x'',y'')$. We need the following which is well-known (see Section 2 in~\cite{MS74})

\begin{theorem}\label{t-gue170105}
There exist a constant $c>0$ and an open set $\Omega\in\Real^d$ such that  
\begin{equation}\label{e-gue160105r}
{\rm Im\,}\Phi_-(x'',y'')\geq c\inf_{v\in\Omega}\set{{\rm Im\,}\hat\varphi_-((0,x''),(v+\gamma(y''),\mathring{y}''))+\abs{d_v\hat\varphi_-((0,x''),(v+\gamma(y''),\mathring{y}''))}^2},
\end{equation}
for all $((0,x''),(0,y''))\in U\times U$.
\end{theorem}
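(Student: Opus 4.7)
The plan is to treat this as the standard lower-bound estimate in Melin--Sj\"ostrand theory for the critical value of a complex phase~\cite[\S 2]{MS74}. The strategy is to exploit the defining relations \eqref{e-gue170105a}--\eqref{e-gue170105aI}: $\Phi_-(x'',y'')-y_{2n+1}$ is the value at a complex critical point of the almost analytic extension of $\hat\varphi_-$ in the $v$ variable, and the non-degenerate positivity of the Hessian (from \eqref{e-gue170102III}) should convert that into the claimed estimate.

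More concretely, fix $(x'',y'')\in U\times U$ and set $F(v)=\Td{\hat\varphi}_-\bigl((0,x''),(v+\gamma(y''),\mathring{y}'')\bigr)$, viewed as an almost analytic function of $v$ near $0\in\Complex^d$. By \eqref{e-gue170105a}--\eqref{e-gue170105aI}, the complex critical point is $h=h(x'',y'')$ and $\Phi_-(x'',y'')-y_{2n+1}=F(h)$; by \eqref{e-gue170102III} and continuity, the Hessian $Q(x'',y'')=F''(h)$ is non-degenerate with ${\rm Im\,}Q$ positive definite, after possibly shrinking $U$. Since $\hat\varphi_-$ satisfies ${\rm Im\,}\hat\varphi_-\geq 0$ on reals, so does $F$ on the real axis. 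Almost analyticity of $\Td{\hat\varphi}_-$ combined with $\pr_v F(h)=0$ gives the Taylor expansion
\[
F(v)=F(h)+\tfrac{1}{2}\langle Q(v-h),v-h\rangle+O(\abs{v-h}^3),\qquad \pr_v F(v)=Q(v-h)+O(\abs{v-h}^2),
\]
valid in a complex neighborhood of $h$, modulo contributions from $\ddbar\Td F$ that vanish to infinite order on the real axis and are negligible because $\abs{{\rm Im\,}h(x'',y'')}$ is uniformly small.

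Take $\Omega\subset\Real^d$ to be a small neighborhood of $0$, so that $v_*={\rm Re\,}h(x'',y'')\in\Omega$ for all $(x'',y'')\in U\times U$ (possible because $h(0,0)=0$ by the critical-point computation preceding \eqref{e-gue170109}). Writing $h=v_*+ih_i$ with $h_i\in\Real^d$, substitute $v=v_*$ in the expansion to get
\[
F(v_*)-F(h)=-\tfrac{1}{2}\langle Qh_i,h_i\rangle+O(\abs{h_i}^3),\qquad \pr_v F(v_*)=-iQh_i+O(\abs{h_i}^2).
\]
Taking imaginary parts, using positive definiteness of ${\rm Im\,}Q$ to absorb the cubic error, and invoking invertibility of $Q$ to compare $\abs{h_i}$ with $\abs{\pr_v F(v_*)}$, one obtains constants $c_1,c_2>0$ with
\[
{\rm Im\,}F(h)\geq {\rm Im\,}F(v_*)+c_1\abs{h_i}^2\geq c_2\bigl({\rm Im\,}F(v_*)+\abs{\pr_v F(v_*)}^2\bigr).
\]
Since the infimum over $v\in\Omega$ is bounded above by the value at $v=v_*$, this yields \eqref{e-gue160105r}.

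The main technical obstacle is the uniform control of the almost-analytic remainder over the parameter $(x'',y'')$: one must ensure that the Taylor expansion at the complex point $h$ holds with an error term independent of these parameters and small enough to be absorbed by the positive-definite quadratic term. This is standard but requires verifying that ${\rm Im\,}h(x'',y'')$ can be made arbitrarily small by shrinking $U$, which in turn follows from smoothness of the implicit solution to \eqref{e-gue170105a} and the fact that $h$ vanishes at the base point. Everything else is bookkeeping of standard almost-analytic stationary phase.
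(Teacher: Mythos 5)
The paper does not prove this statement; it is quoted as a known fact with a citation to \cite[\S 2]{MS74}, so there is no in-text proof to compare against. Your reconstruction of the Melin--Sj\"ostrand argument is correct and matches the cited reference: you Taylor-expand the almost analytic extension $F$ at the complex critical point $h$, evaluate at the real point $v_*={\rm Re\,}h\in\Omega$, use positive-definiteness of ${\rm Im\,}F''(h)$ (which \eqref{e-gue170102III} gives at $(p,p)$ and which persists after shrinking $U$) to absorb the cubic remainder, and use invertibility of $F''$ to dominate $\abs{d_vF(v_*)}^2$ by $\abs{{\rm Im\,}h}^2$, then close by noting the infimum over $\Omega$ is at most the value at $v_*$. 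Each inequality runs in the right direction, ${\rm Im\,}F(v_*)\geq 0$ holds because $v_*$ is real, and the two technical points you flag — uniform smallness of ${\rm Im\,}h(x'',y'')$ near the base point, and uniform control of the $\ddbar\Td F$ remainder so it can be absorbed by the quadratic term — are precisely the ones that need verifying and are standard in the Melin--Sj\"ostrand framework.
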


We can now prove 

\begin{theorem}\label{t-gue170105I}
If $U$ is small enough, then there is a constant $c>0$ such that 
\begin{equation}\label{e-gue170106m}
{\rm Im\,}\Phi_-(x'',y'')\geq c\Bigr(\abs{\hat x''}^2+\abs{\hat y''}^2+\abs{\mathring{x}''-\mathring{y}''}^2\Bigr),\ \ \forall ((0,x''),(0,y''))\in U\times U.
\end{equation}
\end{theorem}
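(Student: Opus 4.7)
The plan is to invoke the lower bound \eqref{e-gue160105r} of Theorem~\ref{t-gue170105} and then to show that, after shrinking $U$ and $\Omega$ if necessary, the bracket on its right-hand side already dominates $c(|\hat x''|^2+|\hat y''|^2+|\mathring x''-\mathring y''|^2)$. Set $u:=v+\gamma(y'')\in\mathbb R^d$. The first observation is that the already-known lower bound \eqref{e-gue140205VI} for $\varphi_-$, transcribed through $\varphi_-=y_{2n+1}+\hat\varphi_-$, reads
\[
{\rm Im\,}\hat\varphi_-((0,x''),(u,\mathring y''))\ge c_1\bigl(|u|^2+|\mathring x''-\mathring y''|^2\bigr),
\]
because at our evaluation point the differences $x_j-y_j$ for $j=1,\ldots,d$ are exactly $-u_j$ and for $j=d+1,\ldots,2n$ are exactly the components of $\mathring x''-\mathring y''$. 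This already captures two of the three target groups; what is missing are the ``sum'' directions $|\hat x''|^2+|\hat y''|^2$, which must be supplied by the derivative term $|d_v\hat\varphi_-|^2$.

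For the latter, I differentiate the normal form \eqref{e-gue161222} (with $f\equiv 0$) in $v_j=y_j$ for $j=1,\ldots,d$ and evaluate at $x=(0,x'')$, $\mathring y=(u,\mathring y'')$. After the elementary cancellation of the $x_{2n+1}$-linear cross-terms (which are all \emph{real} and therefore drop from the relevant expansion), a direct computation yields
\[
\partial_{v_j}\hat\varphi_-=2\mu_j(x_{d+j}+y_{d+j})+2i|\mu_j|u_j+R_j,\qquad R_j=O(|(x'',\mathring y'',v)|^2).
\]
Setting $A_j=2\mu_j(x_{d+j}+y_{d+j})+2i|\mu_j|u_j$ and using $|A_j+R_j|^2\ge\tfrac12|A_j|^2-|R_j|^2$, one gets
\[
|d_v\hat\varphi_-|^2\ge 2\sum_{j=1}^d\bigl(\mu_j^2(x_{d+j}+y_{d+j})^2+|\mu_j|^2u_j^2\bigr)-C|(x'',\mathring y'',v)|^4.
\]
Adding the two displays and invoking the elementary identity $(x_{d+j}-y_{d+j})^2+(x_{d+j}+y_{d+j})^2=2(x_{d+j}^2+y_{d+j}^2)$ produces a positive-definite principal quadratic form:
\[
{\rm Im\,}\hat\varphi_-+|d_v\hat\varphi_-|^2\ge c_0\bigl(|u|^2+|\hat x''|^2+|\hat y''|^2+|\mathring x''-\mathring y''|^2\bigr)-C|(x'',\mathring y'',v)|^4.
\]
Taking the infimum over $v\in\Omega$ only removes the $|u|^2$ summand and leaves the target quadratic intact; combining with \eqref{e-gue160105r} then finishes the proof, provided the quartic remainder can be absorbed.

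The main obstacle is precisely this absorption step: the quartic term $|(x'',\mathring y'',v)|^4$ contains contributions from the ``Reeb direction'' $x_{2n+1}$ (and from $y_{2d+1},\ldots,y_{2n}$) that are \emph{not} controlled by the principal quadratic $|\hat x''|^2+|\hat y''|^2+|\mathring x''-\mathring y''|^2$. The route around this is to exploit the special structure of $\hat\varphi_-$: by the symmetry $\varphi_-(x,y)=-\overline{\varphi_-(y,x)}$ of \eqref{e-gue140205IV}, the fact that $\hat\varphi_-$ is independent of $y_{2n+1}$ by \eqref{e-gue170131}, and ${\rm Im\,}\hat\varphi_-|_{\mathring y=\mathring x}\equiv 0$, one can factor (Malgrange division) ${\rm Im\,}\hat\varphi_-(x,\mathring y)=\sum(\mathring y_i-\mathring x_i)(\mathring y_j-\mathring x_j)h_{ij}(x,\mathring y)$ with $(h_{ij})$ smooth and $\succeq (c/2)I$ near the origin, and a parallel analysis of the cubic terms in $R_j$ shows every surviving monomial carries at least one transverse factor $(u_k,\,x_{d+k}\pm y_{d+k},\,x_\ell-y_\ell)$. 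Hence for $|(x'',\mathring y'',v)|\le\varepsilon$ the remainder is bounded by $C\varepsilon$ times the principal quadratic, and choosing $\varepsilon$ sufficiently small delivers the bound \eqref{e-gue170106m}.
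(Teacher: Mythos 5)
Your plan — invoke Theorem~\ref{t-gue170105}, feed \eqref{e-gue140205VI} into the first summand, and extract $|\hat x''|^2+|\hat y''|^2$ from the derivative summand — is the right one, and the leading-order expansion of $\partial_{v_j}\hat\varphi_-$ that you compute from \eqref{e-gue161222} is correct. However, the way you close the argument leaves a genuine gap, and the paper closes it differently.

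The difficulty you flag yourself is real: after writing $\partial_{v_j}\hat\varphi_-=A_j+R_j$ with $R_j=O(|(x'',\mathring y'',v)|^2)$, the resulting error term $C|(x'',\mathring y'',v)|^4$ contains monomials such as $x_{2n+1}^4$ and $(\widetilde{\mathring x}''+\widetilde{\mathring y}'')^4$ that are \emph{not} dominated by the principal quadratic $|u|^2+|\hat x''|^2+|\hat y''|^2+|\mathring x''-\mathring y''|^2$, no matter how small $U$ is taken. You propose to repair this by a Malgrange-type factorization and the assertion that ``every surviving monomial in $R_j$ carries a transverse factor,'' but this assertion is never established — you appeal to the symmetry of $\varphi_-$ and to ${\rm Im\,}\hat\varphi_-|_{\mathring y=\mathring x}=0$, which control ${\rm Im\,}\hat\varphi_-$ but say nothing direct about the structure of $R_j$. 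What one actually needs is the identity that $\partial_{v_j}\hat\varphi_-$ vanishes on the whole set $\{u=0,\ \hat x''=\hat y''=0,\ \widetilde{\mathring x}''=\widetilde{\mathring y}''\}$ (for all values of $\widetilde{x}''$, including $x_{2n+1}$), which is a consequence of $d_{y'}\varphi_-(x,x)=\omega_0(x)$ together with $\omega_0(\partial_{x_j})=0$ for $j\le d$ on $\mu^{-1}(0)$. You never invoke this, and without it the factorization claim is unsupported.

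The paper avoids the remainder bookkeeping entirely. It first passes from \eqref{e-gue160105r}, via the elementary dichotomy ``$|u|$ small or $|u|$ bounded below,'' to
\[
{\rm Im\,}\Phi_-(x'',y'')\geq c_2\bigl(|\mathring x''-\mathring y''|^2+|d_{y'}\hat\varphi_-((0,x''),(0,\mathring x''))|^2\bigr),
\]
and then observes that the map $x''\mapsto d_{y'}\hat\varphi_-((0,x''),(0,\mathring x''))$ vanishes \emph{identically} on the hyperplane $\{\hat x''=0\}$, with non-singular Jacobian in the $\hat x''$-directions at $p$ (this is exactly where the $4\mu_j$ from \eqref{e-gue161222}, i.e.\ your $A_j$, enters). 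Taylor's theorem with this identity then gives $|d_{y'}\hat\varphi_-|\geq c_3|\hat x''|$ outright; no absorption of an uncontrolled quartic is needed. Finally $|\hat x''|^2+|\mathring x''-\mathring y''|^2$ already dominates $|\hat y''|^2$ by the triangle inequality. Your computation of $A_j$ is precisely the non-degeneracy input that the paper uses, so the essential content overlaps — but to make your version rigorous you would have to replace the unproved ``transverse factor'' claim by the identity-plus-Taylor argument, at which point you recover the paper's proof.
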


\begin{proof}
From \eqref{e-gue140205VI}, we see that there is a constant $c_1>0$ such that 
\begin{equation}\label{e-gue170106}
{\rm Im\,}\hat\varphi_-((0,x''),(v+\gamma(y''),\mathring{y}''))\geq c_1(\abs{v+\gamma(y'')}^2+\abs{\mathring{x}''-\mathring{y}''}^2),\ \ \forall v\in\Omega,
\end{equation}
where $\Omega$ is any open set of $0\in\Real^d$. From \eqref{e-gue170106} and \eqref{e-gue160105r}, we conclude that there is a constant $c_2>0$ such that 
\begin{equation}\label{e-gue170106I}
{\rm Im\,}\Phi_-(x'',y'')\geq c_2(\abs{\mathring{x}''-\mathring{y}''}^2+\abs{d_{y'}\hat\varphi_-((0,x''),(0,\mathring{x}''))}^2).
\end{equation}
From \eqref{e-gue161222}, we see that the matrix 
\[\left(\frac{\pr^2\hat\varphi_-}{\pr x_j\pr x_k}(p,p)+\frac{\pr^2\hat\varphi_-}{\pr y_j\pr y_k}(p,p)\right)_{1\leq k\leq d, d+1\leq j\leq 2d}\]
is non-singular. From this observation and notice that $d_{y'}\hat\varphi_-((0,x''),(0,\mathring{x}''))|_{\hat x''}=0$, we deduce that if $U$ is small enough then there is a constant 
$c_3>0$ such that 
\begin{equation}\label{e-gue170106II}
\abs{d_{y'}\hat\varphi_-((0,x''),(0,x''))}\geq c_3\abs{\hat x''}.
\end{equation}
From \eqref{e-gue170106II} and \eqref{e-gue170106I}, the theorem follows. 
\end{proof}

From now on, we assume that $U$ is small enough so that \eqref{e-gue170106m} holds. 

We now determine the Hessian of $\Phi_-(x'',y'')$ at $(p,p)$. Let $\hat h(x'',y''):=h(x'',y'')+\gamma(y'')$. From \eqref{e-gue170105a}, we have 
\begin{equation}\label{e-gue170106c}
\frac{\pr^2\hat\varphi_-}{\pr x_{d+1}\pr y_1}(p,p)+\sum^d_{j=1}\frac{\pr^2\hat\varphi_-}{\pr y_1\pr y_j}(p,p)\frac{\pr\hat h_j}{\pr x_{d+1}}(p,p)=0.
\end{equation}
From \eqref{e-gue161222}, we can calculate that 
\begin{equation}\label{e-gue170106cI}
\frac{\pr^2\hat\varphi_-}{\pr x_{d+1}\pr y_1}(p,p)=2\mu_1,\ \ \frac{\pr^2\hat\varphi_-}{\pr y_1\pr y_j}(p,p)=2i\abs{\mu_1}\delta_{1,j},\ \ j=1,2,\ldots,d.
\end{equation}
From \eqref{e-gue170106cI} and \eqref{e-gue170106c}, we obtain $\frac{\pr\hat h_1}{\pr x_{d+1}}(p,p)=i\frac{\mu_1}{\abs{\mu_1}}$. We can repeat the procedure above several times and deduce that 
\begin{equation}\label{e-gue170106cII}
\frac{\pr\hat h_j}{\pr x_{d+k}}(p,p)=\frac{\pr\hat h_j}{\pr y_{d+k}}(p,p)=i\frac{\mu_j}{\abs{\mu_j}}\delta_{j,k},\ \ j,k=1,2,\ldots,d. 
\end{equation}

From \eqref{e-gue170106cII}, \eqref{e-gue161222}, \eqref{e-gue170105aI} and by some straightforward computation (we omit the details), we get 

\begin{theorem}\label{t-gue170107}
With the notations used above, we have 
\begin{equation}\label{e-gue170107}
\begin{split}
\Phi_-(x'', y'')&=-x_{2n+1}+y_{2n+1}+2i\sum^d_{j=1}\abs{\mu_j}y^2_{d+j}+2i\sum^d_{j=1}\abs{\mu_j}x^2_{d+j}\\
&+i\sum^{n}_{j=d+1}\abs{\mu_j}\abs{z_j-w_j}^2 +\sum^{n}_{j=d+1}i\mu_j(\ol z_jw_j-z_j\ol w_j)\\
&+\sum^d_{j=1}(-b_{d+j}x_{d+j}x_{2n+1}+b_{d+j}y_{d+j}y_{2n+1})\\
&+\sum^n_{j=d+1}\frac{1}{2}(b_{2j-1}-ib_{2j})(-z_jx_{2n+1}+w_jy_{2n+1})\\
&+\sum^n_{j=d+1}\frac{1}{2}(b_{2j-1}+ib_{2j})(-\ol z_jx_{2n+1}+\ol w_jy_{2n+1})\\
&+(x_{2n+1}-y_{2n+1})f(x, y) +O(\abs{(x, y)}^3),
\end{split}
\end{equation}
where $z_j=x_{2j-1}+ix_{2j}$, $j=2d+1,\ldots,2n$, $\mu_j$, $j=1,\ldots,n$, and $b_{d+1}\in\Real,\ldots,b_{2n}\in\Real$ are as in \eqref{e-gue161219} and $f$ is smooth and satisfies $f(0,0)=0$, $f(x, y)=\ol f(y, x)$. 
\end{theorem}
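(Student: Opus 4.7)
The plan is to substitute a first-order Taylor expansion of $\hat h = h + \gamma$ at $(0,0)$ into the defining formula \eqref{e-gue170105aI} for $\Phi_-$, and read off the quadratic and cubic terms using the explicit Taylor expansion \eqref{e-gue161222} of $\varphi_-$ (with $f = 0$). Because $\hat h$ is by construction a critical point of $\hat\varphi_-$ in the $y'$ variables, replacing $\hat h$ by a first-order approximation produces $\Phi_-$ correct modulo $O(|(x'',y'')|^3)$, so only the first derivatives of $\hat h$ at the origin are required.

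The derivatives $\pr\hat h_j/\pr x_{d+k}(0,0) = \pr\hat h_j/\pr y_{d+k}(0,0) = i\mu_j/|\mu_j|\,\delta_{j,k}$ are already recorded in \eqref{e-gue170106cII}. The remaining first partials, namely with respect to $x_s$ and $y_s$ for $s\in\{2d+1,\ldots,2n+1\}$, are obtained by implicit differentiation of the critical-point equation \eqref{e-gue170105a}. The resulting $d\times d$ linear system has left-hand side governed by the block $(\pr^2\hat\varphi_-/\pr y_j\pr y_k(p,p))_{j,k=1}^d = 2i\,\mathrm{diag}(|\mu_1|,\ldots,|\mu_d|)$ from \eqref{e-gue170106cI}, which is invertible, while its right-hand side consists of mixed Hessian entries $\pr^2\hat\varphi_-/\pr y_j\pr x_s(p,p)$ or $\pr^2\hat\varphi_-/\pr y_j\pr y_s(p,p)$ that one checks vanish by inspection of \eqref{e-gue161222} (no cross terms between $y_j$ with $j\le d$ and $x_s, y_s$ with $s\ge 2d+1$). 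Hence $\hat h_j(x'',y'') = i(\mu_j/|\mu_j|)(x_{d+j}+y_{d+j}) + O(|(x'',y'')|^2)$ for $j = 1,\ldots,d$.

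Plugging this expansion into $\Td{\hat\varphi}_-((0,x''),(\hat h,\mathring y''))$ and expanding via \eqref{e-gue161222}, the only nontrivial combinatorial step lies in the block $j\in\{1,\ldots,d\}$: with $x_j = 0$ and $y_j = \hat h_j$, the three quadratic contributions $2\mu_j y_j y_{d+j}$, $i|\mu_j|\bigl((x_j-y_j)^2+(x_{d+j}-y_{d+j})^2\bigr)$, and $-2\mu_j(x_j y_{d+j} - x_{d+j} y_j)$ combine, using $\hat h_j^2 = -(x_{d+j}+y_{d+j})^2$ and elementary algebra, into precisely $2i|\mu_j|(x_{d+j}^2+y_{d+j}^2)$. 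The $j\ge d+1$ quadratic terms are independent of $y'$ and pass through verbatim, while the $b$-coefficient contributions simplify via $z_j-\ol z_j = 2ix_{d+j}$ and $w_j-\ol w_j = 2iy_{d+j}$ (so for $j\le d$ the substitution $y_j\to\hat h_j$ does not disturb them), producing the terms $-b_{d+j}x_{d+j}x_{2n+1} + b_{d+j}y_{d+j}y_{2n+1}$ and their $j>d$ analogues. The leading $-x_{2n+1}+y_{2n+1}$ and the cubic remainder pass through unchanged. The principal technical point to watch is that $|z_j-w_j|^2$ inside the almost analytic extension $\Td{\hat\varphi}_-$ must be read as the holomorphic polynomial $(x_j-y_j)^2+(x_{d+j}-y_{d+j})^2$, not as the Hermitian modulus squared; this, combined with the purely imaginary leading value of $\hat h_j$, is what delivers the required cancellations. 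Everything else is bookkeeping.
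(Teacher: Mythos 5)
Your proof is correct and follows the same route the paper takes (the paper cites \eqref{e-gue170106cII}, \eqref{e-gue161222}, \eqref{e-gue170105aI} and omits the computation): linearize $\hat h$ at $(p,p)$ using \eqref{e-gue170106cII} together with the vanishing of the remaining mixed Hessian blocks, justify that a first-order $\hat h$ suffices by the critical-point property, and then substitute into \eqref{e-gue161222}. The algebraic reduction of the $j\le d$ quadratic block to $2i|\mu_j|(x_{d+j}^2+y_{d+j}^2)$ and the observation that the $\beta_j$-terms for $j\le d$ involve only $y_{d+j}$ (not $y_j$, since $\beta_j+\overline{\beta_j}=0$) are exactly the computations the paper leaves to the reader, and you carry them out correctly.
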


We can change $\Phi_-(x'',y'')$ be $\Phi_-(x'',y'')\frac{1}{f(x'')}$, where $f(x'')$ is as in \eqref{e-gue170109}. Thus, 
\begin{equation}\label{e-gue170109I}
d_x\Phi_-|_{x''=y'', \hat x''=0}=-\omega_0(x),\ \ d_y\Phi_-|_{x''=y'', \hat x''=0}=\omega_0(x),
\end{equation}
where $x=(0,\Td x'')$. It is clear that $\Phi_-(x'',y'')$ still satisfies \eqref{e-gue170106m} and \eqref{e-gue170107}. 

We now determine the leading term $a_-^0(p,p)$. In view of \eqref{e-gue170102cwI}, we only need to calculate $\frac{m(0)}{\abs{G}_{d\mu}}$. It is clear that the number
$\abs{G}_{d\mu}$ is independent of the choice of Haar measure. Put $Y_p=\set{g\circ p;\, g\in G}$. $Y_p$ is a $d$-dimensional submanifold of $X$. The $G$-invariant Hermitian metric $\langle\,\cdot\,|\,\cdot\,\rangle$ induces a volume form $dv_{Y_p}$ on $Y_p$. Put 
\begin{equation}\label{e-gue170108e}
V_{{\rm eff\,}}(p):=\int_{Y_p}dv_{Y_p}.
\end{equation}
For $f(g)\in C^\infty(G)$, let $\hat f(g\circ p):=f(g)$, $\forall g\in G$. Then, $\hat f\in C^\infty(Y_p)$. Let $d\mu$ be the measure on $G$ given by $\int_Gfd\mu:=\int_{Y_p}\hat fdv_{Y_p}$, for all $f\in C^\infty(G)$. It is not difficult to see that $d\mu$ is a Haar measure and 
\begin{equation}\label{e-gue170108}
\int_Gd\mu=V_{{\rm eff\,}}(p). 
\end{equation}
Since $\set{\frac{1}{\sqrt{2}}\frac{\pr}{\pr x_1},\ldots,\frac{1}{\sqrt{2}}\frac{\pr}{\pr x_d}}$ is an orthonormal basis for $\underline{\mathfrak{g}}_p$, we have $m(0)=2^{\frac{d}{2}}$. From this observation, \eqref{e-gue170108} and \eqref{e-gue170102cwI}, we get 
\begin{equation}\label{e-gue170108I}
a^0_-(p,p)=2^{\frac{d}{2}-1}\frac{1}{V_{{\rm eff\,}}(p)}\pi^{-n-1+\frac{d}{2}}\abs{\mu_1}^{\frac{1}{2}}\cdots\abs{\mu_d}^{\frac{1}{2}}\abs{\mu_{d+1}}\cdots\abs{\mu_n}\tau_{p,n_-}.
\end{equation}

Similarly, we can repeat the procedure above and deduce that 
\begin{equation}\label{e-gue170102wc}
\hat S_{+}((0,x''),(\gamma(y''),y''))\equiv\int e^{i\Phi_+(x'',y'')t}a_-(x'',y'',t)dt\ \ \ \mbox{on $U$},
\end{equation}
where $a_+(x'',y'',t)\sim\sum^\infty_{j=0}t^{n-\frac{d}{2}-j}a^0_-(x'',y'')$ in $S^{n-\frac{d}{2}}_{1,0}(U\times U\times\Real_+, T^{*0,q}X\boxtimes(T^{*0,q}X)^*)$, 
\[a^j_+(x'',y'')\in C^\infty(U\times U,T^{*0,q}X\boxtimes(T^{*0,q}X)^*),\ \ j=0,1,\ldots,\]
\begin{equation}\label{e-gue170102wcI}
a^0_+(p,p)=2^{\frac{d}{2}-1}\frac{1}{V_{{\rm eff\,}}(p)}\pi^{-n-1+\frac{d}{2}}\abs{\mu_1}^{\frac{1}{2}}\cdots\abs{\mu_d}^{\frac{1}{2}}\abs{\mu_{d+1}}\cdots\abs{\mu_n}\tau_{p,n_+},
\end{equation}
and $\Phi_+(x'',y'')\in C^\infty(U\times U)$, ${\rm Im\,}\Phi_+(x'',y'')\geq0$, $-\ol\Phi_+(x'',y'')$ satisfies \eqref{e-gue170106m}, \eqref{e-gue170107} and \eqref{e-gue170109I}. 

Summing up, we get one of the main result of this work

\begin{theorem}\label{t-gue170108wr}
We recall that we work with the assumption that the Levi form is non-degenerate of constant signature $(n_-,n_+)$ on $X$. 
Let $q=n_-$ or $n_+$. Suppose that $\Box^{(q)}_b$ has $L^2$ closed range. Let $p\in\mu^{-1}(0)$ and let $x=(x_1,\ldots,x_{2n+1})$ be the local coordinates defined in an open set $U$ of $p$ such that $x(p)=0$ and \eqref{e-gue161202}, \eqref{e-gue161206}, \eqref{e-gue161202I}, \eqref{e-gue161219} hold. Write $x''=(x_{d+1},\ldots,x_{2n+1})$.
Then, there exist continuous operators
\[S^G_-, S^G_+:\Omega^{0,q}_0(U)\To\Omega^{0,q}(U)\]
such that 
\begin{equation}\label{e-gue170108wr}
S^{(q)}_G\equiv S^G_-+S^G_+\ \ \mbox{on $U$},
\end{equation}
\begin{equation}\label{e-gue170108wrI}
\begin{split}
&S^G_-=0\ \ \mbox{if $q\neq n_-$},\\
&S^G_+=0\ \ \mbox{if $q\neq n_+$},\\
\end{split}
\end{equation}
and if $q=n_-$, $S^G_-(x,y)$ satisfies
\begin{equation}\label{e-gue170108wrII}
S^G_-(x, y)\equiv\int^{\infty}_{0}e^{i\Phi_-(x'', y'')t}a_-(x'', y'', t)dt\ \ \mbox{on $U$}
\end{equation}
with 
\begin{equation}  \label{e-gue170108wrIII}\begin{split}
&a_-(x, y, t)\in S^{n-\frac{d}{2}}_{1,0}(U\times U\times\mathbb{R}_+,T^{*0,q}X\boxtimes(T^{*0,q}X)^*), \\
&a_-(x, y, t)\sim\sum^\infty_{j=0}a^j_-(x, y)t^{n-\frac{d}{2}-j}\quad\text{ in }S^{n-\frac{d}{2}}_{1, 0}(U\times U\times\mathbb{R}_+,T^{*0,q}X\boxtimes(T^{*0,q}X)^*),\\
&a^j_-(x, y)\in C^\infty(U\times U,T^{*0,q}X\boxtimes(T^{*0,q}X)^*),\ \ j=0,1,2,3,\ldots,\\
&a^0_-(x,x)\neq0,\ \ \forall x\in U,
\end{split}\end{equation}
$a^0_-(p,p)$ is given by \eqref{e-gue170108I} and $\Phi_-(x'',y'')\in C^\infty(U\times U)$ satisfies \eqref{e-gue170109I}, \eqref{e-gue170106m} and \eqref{e-gue170107}. 

If $q=n_+$, then $S^G_+(x,y)$ satisfies 
\begin{equation}\label{e-gue170108waI}
S^G_+(x, y)\equiv\int^{\infty}_{0}e^{i\Phi_+(x'', y'')t}a_+(x'', y'', t)dt\ \ \mbox{on $U$}
\end{equation}
with
\begin{equation}  \label{e-gue161110rIq}
\begin{split}
&a_+(x, y, t)\in S^{n-\frac{d}{2}}_{1,0}(U\times U\times\mathbb{R}_+,T^{*0,q}X\boxtimes(T^{*0,q}X)^*), \\
&a_+(x, y, t)\sim\sum^\infty_{j=0}a^j_+(x, y)t^{n-\frac{d}{2}-j}\quad\text{ in }S^{n-\frac{d}{2}}_{1, 0}(U\times U\times\mathbb{R}_+,T^{*0,q}X\boxtimes(T^{*0,q}X)^*),\\
&a^j_+(x, y)\in C^\infty(U\times U,T^{*0,q}X\boxtimes(T^{*0,q}X)^*),\ \ j=0,1,2,3,\ldots,\\
&a^0_+(x,x)\neq0,\ \ \forall x\in U,
\end{split}\end{equation}
$a^0_+(p,p)$ is given by \eqref{e-gue170102wcI} and $\Phi_+(x'',y'')\in C^\infty(U\times U)$, $-\ol\Phi_+(x'',y'')$ satisfies \eqref{e-gue170109I}, \eqref{e-gue170106m} and \eqref{e-gue170107}. 
\end{theorem}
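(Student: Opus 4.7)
The plan is to assemble the theorem from the work already done in this section by combining the averaging formula \eqref{e-gue161231III}, the Boutet de Monvel--Sj\"ostrand-type description of $S^{(q)}(x,y)$ in Theorem~\ref{t-gue161109I}, and a stationary phase calculation in the group variables. First I would use Theorem~\ref{t-gue161109} to dispose of the case $q\notin\{n_-,n_+\}$: since $S^{(q)}\equiv 0$, the averaging identity \eqref{e-gue161231III} gives $S^{(q)}_G\equiv 0$ immediately. This proves \eqref{e-gue170108wrI} and reduces the problem to $q\in\{n_-,n_+\}$; by conjugation symmetry it suffices to do $q=n_-$.

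Next, in the coordinates of Theorem~\ref{t-gue161202} I would localize the group integral by writing
\[
S^{(q)}_G(x,y)\equiv\frac{1}{|G|_{d\mu}}\int_G\chi(g)\,S^{(q)}(x,g\circ y)\,d\mu(g)\quad\text{on }U,
\]
where $\chi\in C^\infty_0(V)$ equals $1$ near $e_0$. The complement is smoothing on $U$ by \eqref{e-gue161231cr}, since $G$ acts freely on $\mu^{-1}(0)$ and $\mathrm{Supp}\,S^{(q)}$ is transversally away from the diagonal (Theorem~\ref{t-gue161109I}). Substituting the Fourier integral representation of $S_-$ and using the local coordinate form of the action \eqref{e-gue161202m}, together with the Malgrange-normalized phase \eqref{e-gue170131}, I obtain the oscillatory integral \eqref{e-gue170102II} whose phase is $y_{2n+1}+\hat\varphi_-((0,x''),(v+\gamma(y''),\mathring{y}''))t$.

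I would then apply the complex stationary phase formula of Melin--Sj\"ostrand in the $v$-variable. The required non-degeneracy is exactly \eqref{e-gue170102III}, which follows from the quadratic form of $\hat\varphi_-$ at $(p,p)$ computed in Theorem~\ref{t-gue161222}. The unique almost-analytic critical point $v=h(x'',y'')$ is defined by \eqref{e-gue170105a}, and the resulting reduced phase is $\Phi_-(x'',y'')$ given by \eqref{e-gue170105aI}. The positivity estimate \eqref{e-gue170106m} is then Theorem~\ref{t-gue170105I}, the explicit quadratic expansion \eqref{e-gue170107} comes from substituting \eqref{e-gue170106cII} back into \eqref{e-gue161222}, and \eqref{e-gue170109I} follows after the harmless rescaling by $1/f(x'')$. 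Absorbing the amplitude $s_-$ through the Melin--Sj\"ostrand machinery yields a classical symbol $a_-(x'',y'',t)\in S^{n-d/2}_{1,0}$ with the asymptotic expansion \eqref{e-gue170108wrIII}.

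The remaining point is the leading coefficient $a^0_-(p,p)$. The stationary phase formula contributes a factor $(2\pi/t)^{d/2}$ times $(\det(\hat\varphi_-''/i))^{-1/2}$ evaluated at the critical point, multiplied by the amplitude $s^0_-(p,p)$ from \eqref{e-gue140205VIII}. Plugging in \eqref{e-gue170102III} gives $2^{d/2}|\mu_1|^{1/2}\cdots|\mu_d|^{1/2}$ in the denominator, while the density $m(0)=2^{d/2}$ coming from the chosen orthonormal basis of $\underline{\mathfrak{g}}_p$ and $|G|_{d\mu}=V_{\mathrm{eff}}(p)$ via \eqref{e-gue170108} account for the overall constant in \eqref{e-gue170108I}. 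The $q=n_+$ case is identical with $S_+$ replacing $S_-$. The main obstacle here is really bookkeeping: getting the constants in $a^0_-(p,p)$ right requires carefully tracking the Melin--Sj\"ostrand normalization constants, the Haar measure convention and the identification $m(0)=2^{d/2}$ coming from $\langle\partial/\partial x_j\mid\partial/\partial x_k\rangle=2\delta_{j,k}$ in \eqref{e-gue161102}; once these are fixed, the identity \eqref{e-gue170108I} falls out and will later be rewritten in the intrinsic form \eqref{e-gue170128} using $|\det R_x|=4^d|\mu_1|\cdots|\mu_d|$.
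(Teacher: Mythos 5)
Your proposal reconstructs the paper's argument essentially step for step: disposal of $q\notin\{n_-,n_+\}$ via Theorem~\ref{t-gue161109}, localization of the group integral using \eqref{e-gue161231cr}, substitution of the Fourier integral representation, complex stationary phase in the $v$-variable with non-degeneracy \eqref{e-gue170102III}, construction of $\Phi_-$ via \eqref{e-gue170105a}--\eqref{e-gue170105aI}, the positivity bound from Theorem~\ref{t-gue170105I}, the Hessian identity \eqref{e-gue170106cII} leading to \eqref{e-gue170107}, the rescaling to obtain \eqref{e-gue170109I}, and the leading-coefficient bookkeeping with $m(0)=2^{d/2}$ and $|G|_{d\mu}=V_{\rm eff}(p)$. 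This is the same route the paper takes.

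One small numerical slip in your closing remark (which concerns the translation to Theorem~\ref{t-gue170128}, not the theorem at hand): with $\langle\,\frac{\pr}{\pr x_j}\,|\,\frac{\pr}{\pr x_k}\,\rangle=2\delta_{j,k}$ and $\langle\,R\frac{\pr}{\pr x_j}\,|\,\frac{\pr}{\pr x_k}\,\rangle=4\mu_j\delta_{j,k}$ from \eqref{e-gue161102}, the endomorphism $R_p$ has matrix $\mathrm{diag}(2\mu_1,\ldots,2\mu_d)$ in the orthonormal basis $\{\frac{1}{\sqrt 2}\frac{\pr}{\pr x_j}\}$, so $|\det R_p|=2^d|\mu_1|\cdots|\mu_d|$, not $4^d|\mu_1|\cdots|\mu_d|$; the $4^d$ is the determinant of the Gram-type matrix $(\langle R\pr_j\,|\,\pr_k\rangle)$, not of $R$ itself. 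With the corrected factor $2^d$ the two formulas \eqref{e-gue170108I} and \eqref{e-gue170128} indeed agree.
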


\subsection{$G$-invariant Szeg\"o kernel asymptotics away $\mu^{-1}(0)$}\label{s-gue170110}

The goal of this section is to prove the following 

\begin{theorem}\label{t-gue170110}
Let $D$ be an open set of $X$ with $D\bigcap\mu^{-1}(0)=\emptyset$. Then, 
\[S^{(q)}_G\equiv0\ \ \mbox{on $D$}.\]
\end{theorem}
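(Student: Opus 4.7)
The plan is to start from the formula
\[
S^{(q)}_G(x,y)=\frac{1}{\abs{G}_{d\mu}}\int_G S^{(q)}(x,g\circ y)\,d\mu(g)
\]
from \eqref{e-gue161231III} and show the integrand is smoothing on $D\times D$ by a non-stationary phase argument in the group variable. If $q\notin\{n_-,n_+\}$ the claim is immediate from Theorem~\ref{t-gue161109}, so assume $q=n_-$ or $n_+$. Fix $(x_0,y_0)\in D\times D$. By Theorem~\ref{t-gue161109I}, $S^{(q)}(x',y')$ is smooth off the diagonal, hence for those $g\in G$ with $g\circ y_0\neq x_0$ the integrand is jointly smooth in a neighborhood, and the corresponding piece of the integral is smooth by differentiation under the integral. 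It remains to localize near those $g_0\in G$ (if any) for which $g_0\circ y_0=x_0$; using a partition of unity in $g$ and covering with finitely many local charts, it suffices to handle
\[
I_\pm(x,y):=\int_G\chi(g)\!\int_0^\infty e^{i\varphi_\pm(x,g\circ y)t}s_\pm(x,g\circ y,t)\,dt\,d\mu(g),
\]
where $\chi\in C^\infty_0(G)$ is supported in a small neighborhood of $g_0$, and $\varphi_\pm,s_\pm$ are the phases/amplitudes from Theorem~\ref{t-gue161109I} defined on a neighborhood of $(x_0,x_0)$.

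The decisive point is that the total phase $\varphi_\pm(x,g\circ y)$ is non-stationary in $g$ at $(x,y,g)=(x_0,y_0,g_0)$. Pick local coordinates $(\xi_1,\dots,\xi_d)$ on $G$ near $g_0$ via $g=g_0\exp(\sum_j\xi_j e_j)$, $\{e_j\}$ a basis of $\mathfrak{g}$. A direct computation at $g=g_0$ gives
\[
\partial_{\xi_j}\varphi_\pm(x,g\circ y)\bigl|_{(x_0,y_0,g_0)}=\langle d_y\varphi_\pm(x_0,x_0),(e_j)_X(x_0)\rangle=\pm\langle\omega_0(x_0),(e_j)_X(x_0)\rangle=\pm\langle\mu(x_0),e_j\rangle,
\]
using $d_y\varphi_-(x,x)=\omega_0(x)$ and $\varphi_+=-\overline{\varphi_-}$. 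Since $x_0\in D$ and $D\cap\mu^{-1}(0)=\emptyset$, we have $\mu(x_0)\neq 0$, so some $j$ gives a nonzero derivative; by continuity, after shrinking the neighborhoods of $x_0,y_0$ and of $g_0$, the vector $d_\xi\varphi_\pm(x,g\circ y)$ is uniformly bounded away from $0$ on the support of the cut-off.

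On that region I will apply the standard non-stationary phase integration by parts: let $a_j:=\overline{\partial_{\xi_j}\varphi_\pm}/|d_\xi\varphi_\pm|^2$, set
\[
L:=\frac{1}{it}\sum_{j=1}^d a_j\,\partial_{\xi_j},\qquad L\bigl(e^{i\varphi_\pm(x,g\circ y)t}\bigr)=e^{i\varphi_\pm(x,g\circ y)t},
\]
and iterate: ${}^tL^N$ applied to $\chi(g)s_\pm(x,g\circ y,t)m(g)$ produces, for every $N\in\N$, an amplitude $O(t^{-N}(1+t)^{n})$ uniformly in $(x,y)$ in a neighborhood of $(x_0,y_0)$. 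Choosing $N$ arbitrarily large and differentiating under the integral sign as many times as needed in $(x,y)$, using symbol estimates from \eqref{e-gue161110r}, shows that $I_\pm\in C^\infty$ in a neighborhood of $(x_0,y_0)$. Combining with the off-diagonal smoothness established above and the decomposition \eqref{e-gue161110}, we conclude $S^{(q)}_G\equiv 0$ on $D$, as claimed.

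The only mildly delicate point is verifying that the symbol estimates on $s_\pm$ from Definition~\ref{d-gue140221a} combine properly with repeated $\partial_\xi$-differentiation (through the nonlinear dependence $g\mapsto g\circ y$) to keep track of powers of $t$; this is routine once the non-vanishing of $d_\xi\varphi_\pm$ is established, which is the real content of the argument and is driven entirely by the hypothesis $x_0\notin\mu^{-1}(0)$.
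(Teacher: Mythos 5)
Your proposal is essentially the paper's argument: both rest on the fact that the $g$-derivative of the Boutet de Monvel--Sj\"ostrand phase at the diagonal equals (a pushforward of) $\langle\mu,\cdot\rangle$, hence is nonvanishing off $\mu^{-1}(0)$, followed by non-stationary-phase integration by parts in the group variable and a finite partition of unity over the compact group $G$; the paper packages this as Lemma~\ref{l-gue170110} (near the identity) plus Lemma~\ref{l-gue170111} (equivariant reduction to the identity), while you localize directly near the finitely many $g_0$ with $g_0\circ y_0=x_0$. One small slip: with your parametrization $g=g_0\exp(\sum\xi_je_j)$, the derivative of $g\circ y_0$ at $\xi=0$ is $(dg_0)_{y_0}\bigl((e_j)_X(y_0)\bigr)$ rather than $(e_j)_X(x_0)$, so the chain rule together with $G$-invariance of $\omega_0$ gives $\langle\mu(y_0),e_j\rangle$ instead of $\langle\mu(x_0),e_j\rangle$ (or $\langle\mu(x_0),e_j\rangle$ would follow from the left parametrization $g=\exp(\sum\xi_je_j)g_0$); by $G$-equivariance of $\mu$ this is nonzero in either case, so the conclusion is unaffected.
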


Fix a Haar measure $d\mu$ on $G$. We first need 

\begin{lemma}\label{l-gue170110}
Let $p\notin\mu^{-1}(0)$. Then, there are open sets $U$ of $p$ and $V$ of $e\in G$ such that for any $\chi\in C^\infty_0(V)$, we have 
\begin{equation}\label{e-gue170110}
\int_GS^{(q)}(x,g\circ y)\chi(g)d\mu(g)\equiv0\ \ \mbox{on $U$}.
\end{equation}
\end{lemma}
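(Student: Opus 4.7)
The plan is to express $\int_G S^{(q)}(x,g\circ y)\chi(g)\,d\mu(g)$ as a double oscillatory integral in the group variable $v$ and the Fourier frequency $t$, and then exploit $\mu(p)\neq 0$ to produce nonstationarity in $v$ on the relevant locus, so that iterated integration by parts in $v$ yields smoothness.

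Concretely, I shrink $U\ni p$ and pick a coordinate patch $V\ni e_0\in G$ with coordinates $v=(v_1,\ldots,v_d)$ so small that $g(v)\circ y\in D$ for all $y\in U$, $g(v)\in V$, where $D\supset U$ is a coordinate chart on which Theorem~\ref{t-gue161109I} furnishes the decomposition $S^{(q)}\equiv S_-+S_+$. The smoothing remainder of that decomposition yields a smooth kernel after averaging over $G$, so it suffices to show, writing $d\mu=m(v)\,dv$ on $V$, that
\[
I_\pm(x,y):=\int_0^\infty\!\!\int e^{i\varphi_\pm(x,g(v)\circ y)t}\,s_\pm(x,g(v)\circ y,t)\,\chi(v)m(v)\,dv\,dt
\]
is smooth on $U\times U$ (the bounded $t\in(0,1)$ part is trivially smooth). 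I argue the case of $I_-$; the case of $I_+$ is identical using $-\ol\varphi_+$ in place of $\varphi_-$.

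The key computation uses the identity $d_y\varphi_-(x,y)|_{y=x}=\omega_0(x)$ from \eqref{e-gue140205IV}: on the locus $\{g(v)\circ y=x\}$ (where $\varphi_-$ vanishes), the chain rule gives
\[
\pr_{v_j}\varphi_-(x,g(v)\circ y)\Big|_{g(v)\circ y=x}=\omega_0(x)(\xi_{j,X}(x))=\langle\mu(x),\xi_j\rangle,
\]
where $\xi_j\in\mathfrak{g}$ is the Lie algebra element corresponding to $\pr/\pr v_j$. Since $\mu(p)\neq 0$, at least one such derivative is nonzero at $p$, so after shrinking $U$ and $V$, one has $|d_v\varphi_-(x,g(v)\circ y)|\geq c>0$ on the vanishing set of $\varphi_-$. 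Combining with $\mathrm{Im}\,\varphi_-\geq 0$ and the strict positivity of $\mathrm{Im}\,\varphi_-(x,w)$ off the diagonal (cf.~\eqref{e-gue140205VI}), a continuity/compactness argument on $U\times U\times\mathrm{supp}\,\chi$ yields a uniform bound
\[
|d_v\varphi_-(x,g(v)\circ y)|^2+\mathrm{Im}\,\varphi_-(x,g(v)\circ y)\geq c_0>0.
\]

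With this lower bound, a partition of unity separates the regions $\{|d_v\varphi_-|^2\geq c_0/2\}$, on which the standard first-order operator $L=\frac{1}{it\,|d_v\varphi_-|^2}\sum_j\ol{\pr_{v_j}\varphi_-}\,\pr_{v_j}$ satisfies $Le^{i\varphi_-t}=e^{i\varphi_-t}$, and $\{\mathrm{Im}\,\varphi_-\geq c_0/2\}$, on which $|e^{i\varphi_-t}|\leq e^{-c_0t/2}$ provides decay directly. Iterated integration by parts with $L^*$ on the first region, together with the exponential bound on the second, shows that for every $N$ the integrand is $O(t^{-N})$ uniformly on compact subsets of $U\times U$ together with any prescribed number of $(x,y)$-derivatives; since $s_-\in S^n_{1,0}$ grows only polynomially in $t$, the $t$-integral converges absolutely and $I_-\in C^\infty(U\times U)$. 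The principal technical hurdle is the bookkeeping of how $L^*$ acts on the complex-phase amplitude and on the denominator $|d_v\varphi_-|^2$ while retaining $S^\bullet_{1,0}$ estimates, but this is entirely parallel to the classical Melin--Sj\"ostrand treatment of complex-phase oscillatory integrals.
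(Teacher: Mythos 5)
Your proposal is correct and follows essentially the same strategy as the paper: reduce via Theorem~\ref{t-gue161109I} to an oscillatory integral in $(v,t)$, observe that $d_v\varphi_-$ at the diagonal/identity equals $\langle\mu(x),\xi_j\rangle$ (so $\mu(p)\neq 0$ forces nonstationarity in $v$), and kill the integral by integration by parts in $v$. The only cosmetic difference is that you introduce a partition of unity separating $\{|d_v\varphi_-|\gtrsim 1\}$ from $\{\mathrm{Im}\,\varphi_-\gtrsim 1\}$; the paper avoids this by shrinking $U$ and $V$ until $d_v\varphi_-\neq 0$ holds throughout $U\times U\times V$, which is justified by continuity from the nonvanishing at the central point $(p,p,e_0)$. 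Your extra machinery is harmless and arguably more robust, but it is not needed once the domain is small; the underlying idea is identical.
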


\begin{proof}
If $q\notin\set{n_-,n_+}$. By Theorem~\ref{t-gue161109}, we get \eqref{e-gue170110}. We may assume that $q=n_-$.  Take local coordinates $v=(v_1,\ldots,v_d)$ of $G$ defined in a neighborhood $V$ of $e_0$ with $v(e_0)=(0,\ldots,0)$, local coordinates $x=(x_1,\ldots,x_{2n+1})$ of $X$ defined in a neighborhood $U=U_1\times U_2$ of $p$ with $0\leftrightarrow p$, where $U_1\subset\Real^d$ is an open set of $0\in\Real^d$,  $U_2\subset\Real^{2n+1-d}$ is an open set of $0\in\Real^{2n+1-d}$, such that  
\[
\begin{split}
&(v_1,\ldots,v_d)\circ (\gamma(x_{d+1},\ldots,x_{2n+1}),x_{d+1},\ldots,x_{2n+1})\\
&=(v_1+\gamma_1(x_{d+1},\ldots,x_{2n+1}),\ldots,v_d+\gamma_d(x_{d+1},\ldots,x_{2n+1}),x_{d+1},\ldots,x_{2n+1}),\\
&\forall (v_1,\ldots,v_d)\in V,\ \ \forall (x_{d+1},\ldots,x_{2n+1})\in U_2,
\end{split}\]
and
\[
\underline{\mathfrak{g}}={\rm span\,}\set{\frac{\pr}{\pr x_1},\ldots,\frac{\pr}{\pr x_d}},
\]
where  $\gamma=(\gamma_1,\ldots,\gamma_d)\in C^\infty(U_2,U_1)$ with $\gamma(0)=0\in\Real^d$. From Theorem~\ref{t-gue161109I}, we have 
\begin{equation}\label{e-gue170102Ic}
\mbox{$\int_GS^{(q)}(x,g\circ y)\chi(g)d\mu(g)\equiv \int_GS_-(x,g\circ y)\chi(g)d\mu(g)+\int_GS_+(x,g\circ y)\chi(g)d\mu(g)$ on $U$}.
\end{equation}
From Theorem~\ref{t-gue161109I}, we have 
\begin{equation}\label{e-gue170102IIc}
\int_GS_-(x,g\circ y)\chi(g)d\mu(g)\equiv\int e^{i(\varphi_-(x,(v+\gamma(y''),y''))t}s_-(x,(v+\gamma(y''),y''),t)\chi(v)m(v)dvdt,
\end{equation}
where $y''=(y_{d+1},\ldots,y_{2n+1})$, $m(v)dv=d\mu|_V$. Since $p\notin\mu^{-1}(0)$ and notice that $d_y\varphi_-(x,x)=\omega_0(x,x)$, we deduce that if $V$ and $U$ are small then
$d_v(\varphi_-(x,(v+\gamma(y''),y'')))\neq0$, for every $v\in V$, $(x,y)\in U\times U$. Hence, by using integration by parts with respect to $v$, we get
\begin{equation}\label{e-gue170110r}
 \int_GS_-(x,g\circ y)\chi(g)d\mu(g)\equiv0\ \ \mbox{on $U$}.
 \end{equation}
 Similarly, we have 
 \begin{equation}\label{e-gue170110rI}
\int_GS_+(x,g\circ y)\chi(g)d\mu(g)\equiv0\ \ \mbox{on $U$}.
\end{equation}
From \eqref{e-gue170102Ic}, \eqref{e-gue170110r} and \eqref{e-gue170110rI}, the lemma follows. 
\end{proof}

\begin{lemma}\label{l-gue170111}
Let $p\notin\mu^{-1}(0)$ and let $h\in G$. We can find open sets $U$ of $p$ and $V$ of $h$ such that for every $\chi\in C^\infty_0(V)$, we have 
\[\int_GS^{(q)}(x,g\circ y)\chi(g)d\mu(g)\equiv0\ \ \mbox{on $U$}.\]
\end{lemma}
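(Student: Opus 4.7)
My plan is to reduce Lemma~\ref{l-gue170111} to Lemma~\ref{l-gue170110} by translating $h$ to the identity in the Haar integral, then to split into two cases according to whether $h$ stabilises $p$. By right-invariance of $d\mu$, the substitution $g = g'h$ yields
\[
\int_G S^{(q)}(x, g\circ y)\chi(g)\,d\mu(g)
= \int_G S^{(q)}\bigl(x, g'\circ(h\circ y)\bigr)\tilde\chi(g')\,d\mu(g')
=: K_0(x, h\circ y),
\]
where $\tilde\chi(g') := \chi(g'h)$. Taking $V := V_0 h$ for a small neighborhood $V_0$ of $e_0\in G$ ensures $\tilde\chi\in C^\infty_0(V_0)$. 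Since $y\mapsto h\circ y$ is a diffeomorphism of $X$, it suffices to exhibit an open neighborhood of $(p, h\circ p)$ on which $K_0$ is smooth.

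If $h\circ p\neq p$, the kernel $S^{(q)}(x, z')$ is smooth away from the diagonal by Theorem~\ref{t-gue161109I} (for $q\in\set{n_-,n_+}$; for other $q$ it vanishes identically by Theorem~\ref{t-gue161109}). I shrink $U\ni p$ and $V\ni h$ so that $\overline{U}\cap\overline{\set{g\circ y;\, g\in V,\, y\in U}}=\emptyset$, which is possible because $h\circ p\neq p$. Then $S^{(q)}(x, g\circ y)$ is jointly smooth in $(x,y,g)$ on $U\times U\times V$, and differentiating under the integral sign gives $\int_G S^{(q)}(x,g\circ y)\chi(g)\,d\mu(g)\in C^\infty(U\times U)$.

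If $h\circ p = p$, which can occur because $G$ acts only locally freely off $\mu^{-1}(0)$ and $h$ may lie in the (finite) isotropy group of $p$, I invoke Lemma~\ref{l-gue170110} at $p$ with the cutoff $\tilde\chi$: it provides open sets $U_0\ni p$ and $V_0\ni e_0$ such that $K_0\equiv 0$ on $U_0\times U_0$ for every $\tilde\chi\in C^\infty_0(V_0)$. Setting $U := U_0\cap h^{-1}(U_0)$, which is an open neighborhood of $p$ precisely because $h\circ p = p$, and $V := V_0 h$, an open neighborhood of $h$, one has $(x, h\circ y)\in U_0\times U_0$ for every $(x,y)\in U\times U$, so smoothness of $K_0(x, h\circ y)$ on $U\times U$ follows. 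The main obstacle is exactly this isotropy case; it cannot be handled by off-diagonal smoothness alone, and the right-translation trick together with Lemma~\ref{l-gue170110} is what resolves it.
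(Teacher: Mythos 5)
Your proof is correct and follows the same broad strategy as the paper: pull the cutoff back to a neighborhood of the identity by a Haar-measure substitution and reduce to Lemma~\ref{l-gue170110}. You make explicit, however, a case distinction that the paper's proof leaves implicit and that is genuinely needed. After the translation the kernel becomes $\int_G S^{(q)}\bigl(x, g'\circ(h\circ y)\bigr)\tilde\chi(g')\,d\mu(g')$ with $\tilde\chi$ supported near $e_0$, so the second argument of $S^{(q)}$ is concentrated near $h\circ p$, not near $p$. If $h\circ p\neq p$ this region is off-diagonal, where $S^{(q)}$ is already smooth by Theorem~\ref{t-gue161109I} (or vanishes by Theorem~\ref{t-gue161109}), so Lemma~\ref{l-gue170110} is not needed. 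If $h\circ p=p$ — which can happen because the $G$-action is only locally free away from $\mu^{-1}(0)$ — the translated integral is literally of the form treated in Lemma~\ref{l-gue170110} once $U$ is shrunk to $U_0\cap h^{-1}(U_0)$, which remains a neighborhood of $p$ precisely thanks to the stabilizer condition. The paper's proof performs only the translation and then invokes Lemma~\ref{l-gue170110} directly; as written (and note the apparent misprint in \eqref{e-gue170111}, where the factor $h\circ$ is dropped from the last integrand) this does not on its own settle the case $h\circ p\neq p$, so your two-case argument supplies the missing step. The choice of right-translation $g=g'h$ versus the paper's left-translation $g=hg'$ is immaterial, since Haar measure on the compact group $G$ is bi-invariant.
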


\begin{proof}
Let $U$ and $V$ be open sets as in Lemma~\ref{l-gue170110}. Let $\hat V=hV$. Then, $\hat V$ is an open set of $G$. Let $\hat\chi\in C^\infty_0(\hat V)$. We have 
\begin{equation}\label{e-gue170111}
\int_GS^{(q)}(x,g\circ y)\hat\chi(g)d\mu(g)=\int_GS^{(q)}(x,h\circ g\circ y)\hat\chi(h\circ g)d\mu(g)=\int_GS^{(q)}(x,g\circ y)\chi(g)d\mu(g),
\end{equation}
where $\chi(g):=\hat\chi(h\circ g)\in C^\infty_0(V)$. From \eqref{e-gue170111} and Lemma~\ref{l-gue170110}, we deduce that 
\[\int_GS^{(q)}(x,g\circ y)\hat\chi(g)d\mu(g)\equiv0\ \ \mbox{on $U$}.\]
The lemma follows. 
\end{proof}

\begin{proof}[Proof of Theorem~\ref{t-gue170110}]
Fix $p\in D$. We need to show that $S^{(q)}_G$ is smoothing near $p$. 
Let $h\in G$. By Lemma~\ref{l-gue170111}, we can find open sets $U_h$ of $p$ and $V_h$ of $h$ such that for every $\chi\in C^\infty_0(V_h)$, we have 
\begin{equation}\label{e-gue170111c}
\int_GS^{(q)}(x,g\circ y)\chi(g)d\mu(g)\equiv0\ \ \mbox{on $U_h$}.
\end{equation}
Since $G$ is compact, we can find open sets $U_{h_j}$ and $V_{h_j}$, $j=1,\ldots,N$, such that $G=\bigcup^N_{j=1}V_{h_j}$. Let $U=D\bigcap\Bigr(\bigcap^N_{j=1}U_{h_j}\Bigr)$ and let $\chi_j\in C^\infty_0(V_{h_j})$, $j=1,\ldots,N$, with $\sum^N_{j=1}\chi_j=1$ on $G$. From \eqref{e-gue170111c}, we have 
\begin{equation}\label{e-gue170111cI}
S^{(q)}_G(x,y)=\frac{1}{\abs{G}_{d\mu}}\int_GS^{(q)}(x,g\circ y)d\mu(g)=\frac{1}{\abs{G}_{d\mu}}\sum^N_{j=1}\int_GS^{(q)}(x,g\circ y)\chi_j(g)d\mu(g)\equiv0\ \ \mbox{on $U$}.
\end{equation}
The theorem follows. 
\end{proof}

From Theorem~\ref{t-gue170112}, Theorem~\ref{t-gue170108wr} and Theorem~\ref{t-gue170110}, we get Theorem~\ref{t-gue170124}. 

\section{$G$-invariant Szeg\"o kernel asymptotics on CR manifolds wit $S^1$ action}\label{s-gue170111}

Let $(X, T^{1,0}X)$ be a compact CR manifold of dimension $2n+1$, $n\geq 1$. We assume that $X$ admits an $S^1$ action: $S^1\times X\rightarrow X$. We write $e^{i\theta}$ to denote the $S^1$ action. Let $T\in C^\infty(X, TX)$ be the global real vector field induced by the $S^1$ action given by
$(Tu)(x)=\frac{\partial}{\partial\theta}\left(u(e^{i\theta}\circ x)\right)|_{\theta=0}$, $u\in C^\infty(X)$. We recall

\begin{definition}\label{d-gue160502}
We say that the $S^1$ action $e^{i\theta}$ is CR if
$[T, C^\infty(X, T^{1,0}X)]\subset C^\infty(X, T^{1,0}X)$ and the $S^1$ action is transversal if for each $x\in X$,
$\Complex T(x)\oplus T_x^{1,0}X\oplus T_x^{0,1}X=\mathbb CT_xX$. Moreover, we say that the $S^1$ action is locally free if $T\neq0$ everywhere. It should be mentioned that transversality implies locally free.
\end{definition}

We assume now that $(X, T^{1,0}X)$ is a compact connected CR manifold with a transversal CR locally free $S^1$ action $e^{i\theta}$ and we let $T$ be the global vector field induced by the $S^1$ action. Let $\omega_0\in C^\infty(X,T^*X)$ be the global real one form determined by $\langle\,\omega_0\,,\,u\,\rangle=0$, for every $u\in T^{1,0}X\oplus T^{0,1}X$, and $\langle\,\omega_0\,,\,T\,\rangle=-1$. Note that $\omega_0$ and $T$ satisfy \eqref{e-gue170111ry}. Assume that $X$ admits a compact connected Lie group action $G$ and the Lie group $G$ acts on $X$ preserving $\omega_0$ and $J$. We recall that we work with Assumption~\ref{a-gue170128}. 

Fix $\theta_0\in]-\pi, \pi[$, $\theta_0$ small. Let
$$d e^{i\theta_0}: \mathbb CT_x X\rightarrow \mathbb CT_{e^{i\theta_0}x}X$$
denote the differential map of $e^{i\theta_0}: X\rightarrow X$. By the CR property of the $S^1$ action, we can check that
\begin{equation}\label{e-gue150508fa}
\begin{split}
de^{i\theta_0}:T_x^{1,0}X\rightarrow T^{1,0}_{e^{i\theta_0}x}X,\\
de^{i\theta_0}:T_x^{0,1}X\rightarrow T^{0,1}_{e^{i\theta_0}x}X,\\
de^{i\theta_0}(T(x))=T(e^{i\theta_0}x).
\end{split}
\end{equation}
Let $(e^{i\theta_0})^*:\Lambda^r(\Complex T^*X)\To\Lambda^r(\Complex T^*X)$ be the pull-back map by $e^{i\theta_0}$, $r=0,1,\ldots,2n+1$. From \eqref{e-gue150508fa}, it is easy to see that, for every $q=0,1,\ldots,n$, one has
\begin{equation}\label{e-gue150508faI}
(e^{i\theta_0})^*:T^{*0,q}_{e^{i\theta_0}x}X\To T^{*0,q}_{x}X.
\end{equation}
Let $u\in\Omega^{0,q}(X)$ be arbitrary. Define
\begin{equation}\label{e-gue150508faII}
Tu:=\frac{\pr}{\pr\theta}\bigr((e^{i\theta})^*u\bigr)|_{\theta=0}\in\Omega^{0,q}(X).
\end{equation}
From \eqref{e-gue170111ryII}, it is clear that
\begin{equation}\label{e-gue170111f}
Tg^*u=g^*Tu,\ \ \forall g\in G,\ \ \forall u\in\Omega^{0,q}(X).
\end{equation}
For every $\theta\in\Real$ and every $u\in C^\infty(X,\Lambda^r(\Complex T^*X))$, we write $u(e^{i\theta}\circ x):=(e^{i\theta})^*u(x)$. It is clear that, for every $u\in C^\infty(X,\Lambda^r(\Complex T^*X))$, we have
\begin{equation}\label{e-gue150510f}
u(x)=\sum_{m\in\mathbb Z}\frac{1}{2\pi}\int^{\pi}_{-\pi}u(e^{i\theta}\circ x)e^{-im\theta}d\theta.
\end{equation}

For every $m\in\mathbb Z$, let
\begin{equation}\label{e-gue150508dI}
\begin{split}
&\Omega^{0,q}_m(X):=\set{u\in\Omega^{0,q}(X);\, Tu=imu},\ \ q=0,1,2,\ldots,n,\\
&\Omega^{0,q}_{m}(X)^G:=\set{u\in\Omega^{0,q}(X)^G;\, Tu=imu},\ \ q=0,1,2,\ldots,n.
\end{split}
\end{equation}
We denote $C^\infty_m(X):=\Omega^{0,0}_m(X)$, $C^\infty_m(X)^G:=\Omega^{0,0}_m(X)^G$. From the CR property of the $S^1$ action and \eqref{e-gue170111ryII}, it is not difficult to see that 
\[Tg^*\ddbar_b=g^*T\ddbar_b=\ddbar_bg^*T=\ddbar_bTg^*\ \ \mbox{on $\Omega^{0,q}(X)$},\ \ \forall g\in G.\]
Hence,
\begin{equation}\label{e-gue160527}
\ddbar_b:\Omega^{0,q}_m(X)^G\To\Omega^{0,q+1}_m(X)^G,\ \ \forall m\in\mathbb Z.
\end{equation}
We now assume that the Hermitian metric $\langle\,\cdot\,|\,\cdot\,\rangle$ on $\Complex TX$ is $G\times S^1$ invariant.  Then the $L^2$ inner product $(\,\cdot\,|\,\cdot\,)$ on $\Omega^{0,q}(X)$ 
induced by $\langle\,\cdot\,|\,\cdot\,\rangle$ is $G\times S^1$-invariant. We then have 
\[\begin{split}
&Tg^*\ol{\pr}^*_b=g^*T\ol{\pr}^*_b=\ol{\pr}^*_bg^*T=\ol{\pr}^*_bTg^*\ \ \mbox{on $\Omega^{0,q}(X)$},\ \ \forall g\in G,\\
&Tg^*\Box^{(q)}_b=g^*T\Box^{(q)}_b=\Box^{(q)}_bg^*T=\Box^{(q)}_bTg^*\ \ \mbox{on $\Omega^{0,q}(X)$},\ \ \forall g\in G.
\end{split}\]

Let $L^2_{(0,q),m}(X)^G$ be
the completion of $\Omega^{0,q}_m(X)^G$ with respect to $(\,\cdot\,|\,\cdot\,)$. 
We write $L^2_m(X)^G:=L^2_{(0,0),m}(X)^G$. Put 
\[({\rm Ker\,}\Box^{(q)}_b)^G_m:=({\rm Ker\,}\Box^{(q)}_b)^G\bigcap L^2_{(0,q),m}(X)^G.\]
It is not difficult to see that, for every $m\in\mathbb Z$, $({\rm Ker\,}\Box^{(q)}_b)^G_m\subset\Omega^{0,q}_m(X)^G$ and ${\rm dim\,}({\rm Ker\,}\Box^{(q)}_b)^G_m<\infty$.
The $m$-th $G$-invariant Szeg\"o projection is the orthogonal projection 
\[S^{(q)}_{G,m}:L^2_{(0,q)}(X)\To ({\rm Ker\,}\Box^{(q)}_b)^G_m\]
with respect to $(\,\cdot\,|\,\cdot\,)$. Let $S^{(q)}_{G,m}(x,y)\in C^\infty(X\times X,T^{*0,q}X\boxtimes(T^{*0,q}X)^*)$ be the distribution kernel of $S^{(q)}_{G,m}$. We can check that 
\begin{equation}\label{e-gue170111cr}
S^{(q)}_{G,m}(x,y)=\frac{1}{2\pi}\int^{\pi}_{-\pi}S^{(q)}_G(x,e^{i\theta}\circ y)e^{im\theta}d\theta.
\end{equation}
The goal of this section is to study the asymptotics of $S^{(q)}_{G,m}$ as $m\To+\infty$. 

From Theorem~\ref{t-gue170110}, \eqref{e-gue170111cr} and by using integration by parts several times, we get 

\begin{theorem}\label{t-gue170111w}
Let $D\subset X$ be an open set with $D\bigcap\mu^{-1}(0)=\emptyset$. Then, 
\[S^{(q)}_{G,m}=O(m^{-\infty})\ \ \mbox{on $D$}. \]
\end{theorem}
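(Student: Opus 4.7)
The plan is to combine the integral representation \eqref{e-gue170111cr} with Theorem~\ref{t-gue170110}, which already supplies smoothness of $S^{(q)}_G$ away from $\mu^{-1}(0)$, and then to extract the $m^{-N}$ decay by integrating by parts in $\theta$ against $e^{im\theta}$. The only preparatory observation needed is that $\mu^{-1}(0)$ is $S^1$-invariant, so that $e^{i\theta}\circ y$ stays in the smooth region of $S^{(q)}_G$ for every $\theta$ when $y\in D$.

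First I would verify that $\mu\circ e^{i\theta}=\mu$. By the commutation \eqref{e-gue170111ryII}, the flows of $T$ and of any $\xi_X$ commute on $X$, so $(e^{i\theta})_*\xi_X=\xi_X$ as vector fields, i.e.\ $\xi_X(e^{i\theta}\circ x)=(de^{i\theta})_x\xi_X(x)$. Since $e^{i\theta}$ also preserves $\omega_0$,
\[\langle\mu(e^{i\theta}\circ x),\xi\rangle=\omega_0(\xi_X(e^{i\theta}\circ x))=((e^{i\theta})^*\omega_0)(\xi_X(x))=\langle\mu(x),\xi\rangle.\]
Hence $\hat D:=\bigcup_{\theta\in[0,2\pi)}e^{i\theta}\circ D$ is an open $S^1$-invariant set still disjoint from $\mu^{-1}(0)$, and Theorem~\ref{t-gue170110} yields $S^{(q)}_G\in C^\infty(\hat D\times\hat D,\,T^{*0,q}X\boxtimes(T^{*0,q}X)^*)$.

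Next I would integrate by parts in $\theta$ in \eqref{e-gue170111cr}. For $(x,y)\in D\times D$ and $\theta\in[-\pi,\pi]$, the point $(x,e^{i\theta}\circ y)$ lies in $\hat D\times\hat D$, so $\theta\mapsto S^{(q)}_G(x,e^{i\theta}\circ y)$ is smooth with values in the relevant bundle, and $\partial_\theta\bigl[S^{(q)}_G(x,e^{i\theta}\circ y)\bigr]=(T_y S^{(q)}_G)(x,e^{i\theta}\circ y)$, where $T_y$ denotes $T$ acting in the second variable. Using $e^{im\theta}=(im)^{-1}\partial_\theta e^{im\theta}$ and $2\pi$-periodicity to kill the boundary terms, $N$ iterations give
\[S^{(q)}_{G,m}(x,y)=\frac{(-1)^N}{2\pi(im)^N}\int_{-\pi}^{\pi}(T_y^N S^{(q)}_G)(x,e^{i\theta}\circ y)\,e^{im\theta}\,d\theta,\quad N\in\mathbb N.\]
Any derivative $\partial_x^\alpha\partial_y^\beta$ can be commuted under the integral, where it still acts on a function smooth on $\hat D\times\hat D$, so taking sup norms over any $K\Subset D\times D$ produces the estimates \eqref{e-gue13628III} with arbitrarily negative powers of $m$. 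This is precisely $S^{(q)}_{G,m}=O(m^{-\infty})$ on $D$.

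There is no serious obstacle: once the $S^1$-invariance of $\mu^{-1}(0)$ is established (a one-line consequence of Assumption~\ref{a-gue170128}), the argument is a textbook non-stationary phase / repeated integration by parts in a compact direction. The only notational care required is in the bundle pullback implicit in writing $S^{(q)}_G(x,e^{i\theta}\circ y)$, which forces $T_y$ to be interpreted as $T$ acting on sections of $T^{*0,q}X$ in the $y$-variable; this is the same convention already used to make sense of \eqref{e-gue170111cr}.
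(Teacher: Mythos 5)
Your proposal is correct and follows the same route the paper sketches (Theorem~\ref{t-gue170110} combined with the Fourier-coefficient formula~\eqref{e-gue170111cr} and repeated integration by parts in $\theta$). The only thing you add is the short verification that $\mu^{-1}(0)$ is $S^1$-invariant — which the paper leaves implicit but which is indeed what justifies applying Theorem~\ref{t-gue170110} on the $S^1$-saturation $\hat D$ so that the integrand $S^{(q)}_G(x,e^{i\theta}\circ y)$ is smooth for all $\theta$.
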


We now study $S^{(q)}_{G,m}$ near $\mu^{-1}(0)$. We can repeat the proof of Theorem~\ref{t-gue161202} with minor change and get 

\begin{theorem}\label{t-gue161202z}
Let $p\in\mu^{-1}(0)$. There exist local coordinates $v=(v_1,\ldots,v_d)$ of $G$ defined in  a neighborhood $V$ of $e_0$ with $v(e_0)=(0,\ldots,0)$, local coordinates $x=(x_1,\ldots,x_{2n+1})$ of $X$ defined in a neighborhood $U=U_1\times(\hat U_2\times]-2\delta,2\delta[)$ of $p$ with $0\leftrightarrow p$, where $U_1\subset\Real^d$ is an open set of $0\in\Real^d$,  $\hat U_2\subset\Real^{2n-d}$ is an open set of $0\in\Real^{2n-d} $, $\delta>0$, and a smooth function $\gamma=(\gamma_1,\ldots,\gamma_d)\in C^\infty(\hat U_2\times]-2\delta,2\delta[,U_1)$ with $\gamma(0)=0\in\Real^d$  such that
\begin{equation}\label{e-gue161202z}
\begin{split}
&(v_1,\ldots,v_d)\circ (\gamma(x_{d+1},\ldots,x_{2n+1}),x_{d+1},\ldots,x_{2n+1})\\
&=(v_1+\gamma_1(x_{d+1},\ldots,x_{2n+1}),\ldots,v_d+\gamma_d(x_{d+1},\ldots,x_{2n+1}),x_{d+1},\ldots,x_{2n+1}),\\
&\forall (v_1,\ldots,v_d)\in V,\ \ \forall (x_{d+1},\ldots,x_{2n+1})\in\hat U_2\times]-2\delta,2\delta[,
\end{split}
\end{equation}
\begin{equation}\label{e-gue161206z}
\begin{split}
&T=-\frac{\pr}{\pr x_{2n+1}},\\
&\underline{\mathfrak{g}}={\rm span\,}\set{\frac{\pr}{\pr x_1},\ldots,\frac{\pr}{\pr x_d}},\\
&\mu^{-1}(0)\bigcap U=\set{x_{d+1}=\cdots=x_{2d}=0},\\
&\mbox{On $\mu^{-1}(0)\bigcap U$, we have $J(\frac{\pr}{\pr x_j})=\frac{\pr}{\pr x_{d+j}}+a_j(x)\frac{\pr}{\pr x_{2n+1}}$, $j=1,2,\ldots,d$}, 
\end{split}
\end{equation}
where $a_j(x)$ is a smooth function on $\mu^{-1}(0)\bigcap U$, independent of $x_1,\ldots,x_{2d}$, $x_{2n+1}$ and $a_j(0)=0$, $j=1,\ldots,d$, 
\begin{equation}\label{e-gue161202Iz}
\begin{split}
&T^{1,0}_pX={\rm span\,}\set{Z_1,\ldots,Z_n},\\
&Z_j=\frac{1}{2}(\frac{\pr}{\pr x_j}-i\frac{\pr}{\pr x_{d+j}})(p),\ \ j=1,\ldots,d,\\
&Z_j=\frac{1}{2}(\frac{\pr}{\pr x_{2j-1}}-i\frac{\pr}{\pr x_{2j}})(p),\ \ j=d+1,\ldots,n,\\
&\langle\,Z_j\,|\,Z_k\,\rangle=\delta_{j,k},\ \ j,k=1,2,\ldots,n,\\
&\mathcal{L}_p(Z_j, \ol Z_k)=\mu_j\delta_{j,k},\ \ j,k=1,2,\ldots,n
\end{split}
\end{equation}
and 
\begin{equation}\label{e-gue161219z}
\begin{split}
\omega_0(x)&=(1+O(\abs{x}))dx_{2n+1}+\sum^d_{j=1}4\mu_jx_{d+j}dx_j\\
&\quad+\sum^n_{j=d+1}2\mu_jx_{2j}dx_{2j-1}-\sum^n_{j=d+1}2\mu_jx_{2j-1}dx_{2j}+O(\abs{x}^2).
\end{split}
\end{equation}
\end{theorem}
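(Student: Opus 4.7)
The plan is to follow the scheme of Theorem~\ref{t-gue161202} essentially verbatim, but to build the $S^1$-parameter directly into the coordinates from the start, so that the straightening $T=-\partial/\partial x_{2n+1}$ is preserved through every subsequent adjustment. First I would use the fact that the $S^1$-action is transversal and CR to flow a small slice $\Sigma$ through $p$ (transverse to $T$) along $T$; this produces a local product coordinate $x_{2n+1}$ in which $T=-\partial/\partial x_{2n+1}$ on a neighborhood of the form $\Sigma\times{]{-2\delta},2\delta[}$. By \eqref{e-gue170111ryI} the orbit $\underline{\mathfrak{g}}_p$ is transverse to $T$ at $p$, so $\Sigma$ can be taken to contain the $G$-orbit direction; combined with the commutativity \eqref{e-gue170111ryII} of $S^1$ and $G$, both $\underline{\mathfrak{g}}$ and $\omega_0$ are independent of $x_{2n+1}$ in these coordinates.

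Next, inside the slice $\Sigma$, I would carry out the Frobenius-type construction from Theorem~\ref{t-gue161202} to obtain coordinates $(v_1,\ldots,v_d)$ on $G$ near $e_0$ and $(x_1,\ldots,x_{2n})$ on $\Sigma$ achieving \eqref{e-gue161202z}, \eqref{e-gue161206z} (modulo the $T$-line), and \eqref{e-gue161202Iz}. The sequence of steps -- linear diagonalization in $(x_1,\ldots,x_d)$ using the self-adjoint map $R$ at $p$; choice of $(x_{d+1},\ldots,x_{2d})$ so that $\mu^{-1}(0)=\{x_{d+1}=\cdots=x_{2d}=0\}$ via the moment-map components; further linear adjustment in $(x_{d+1},\ldots,x_{2d})$ exploiting $\underline{\mathfrak{g}}\cap\underline{\mathfrak{g}}^{\perp_b}=\{0\}$ to normalize $J(\partial/\partial x_j)$ on $\mu^{-1}(0)$; and the rotation/translation in $(x_{2d+1},\ldots,x_{2n})$ that aligns the orthonormal basis of $T^{1,0}_pX$ within $\Complex H_pY\cap J\Complex H_pY$ -- all take place on $\Sigma$ only, so they commute with $\partial/\partial x_{2n+1}$ and thus preserve $T=-\partial/\partial x_{2n+1}$. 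This also gives \eqref{e-gue161206z} on $\mu^{-1}(0)\cap U$.

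The delicate point is the final Taylor-normalization of $\omega_0$. In Theorem~\ref{t-gue161202} one corrects $x_{2n+1}$ by $h_{2n+1}(0,\ldots,0,x_{d+1},\ldots,x_{2n},0)$ to arrange that the mixed second derivatives of $\widetilde x_{2n+1}$ vanish as required. In the present setting this correction can be performed identically because $h_{2n+1}$ depends only on the slice variables and hence commutes with $T$; it shifts $x_{2n+1}$ by a function independent of $x_{2n+1}$, leaving $T=-\partial/\partial x_{2n+1}$ intact. Computing $\omega_0$ afterwards reproduces the expansion of \eqref{e-gue161219} but now with the additional input that $\omega_0$ is $T$-invariant: $\mathcal{L}_T\omega_0=d(\omega_0(T))+\iota_Td\omega_0=0$ by the Reeb identity \eqref{e-gue170111ry}. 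Thus $\omega_0$ has no dependence on $x_{2n+1}$ in these coordinates, which forces the coefficients $b_{d+1},\ldots,b_{2n}$ attached to the $x_{2n+1}\,dx_j$ terms in \eqref{e-gue161219} to vanish; what remains is exactly \eqref{e-gue161219z}.

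The main obstacle I anticipate is bookkeeping: verifying that every coordinate change used inside $\Sigma$ is genuinely independent of $x_{2n+1}$ (so $T$ is undisturbed), and simultaneously that the $G$-adapted steps remain compatible with $S^1$-equivariance. Both follow from \eqref{e-gue170111ryI}--\eqref{e-gue170111ryII} together with the $G\times S^1$-invariance of the chosen Hermitian metric, but one has to be careful that each step is carried out $S^1$-equivariantly; the payoff is the simpler normal form \eqref{e-gue161219z} in which the $b_j x_{2n+1}\,dx_j$ terms are absent, a fact that will be crucial later for the $m$-th Fourier component analysis in Theorem~\ref{t-gue170128I}.
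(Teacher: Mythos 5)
Your proposal is correct and is essentially the argument the paper has in mind; the paper only remarks ``repeat the proof of Theorem~\ref{t-gue161202} with minor change,'' and your account spells out exactly what that minor change consists of. The load-bearing observation is that $T$ is the Reeb vector field of $\omega_0$ (the paper notes before Definition~\ref{d-gue160502} that $\omega_0$ and $T$ satisfy \eqref{e-gue170111ry}), hence $\mathcal{L}_T\omega_0=\iota_T d\omega_0+d(\omega_0(T))=0$; once $T=-\partial/\partial x_{2n+1}$, every coefficient of $\omega_0$ in the coordinate coframe is therefore independent of $x_{2n+1}$, which kills the $b_j x_{2n+1}\,dx_j$ terms of \eqref{e-gue161219} and yields \eqref{e-gue161219z}.

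Two small points of hygiene that would tighten the write-up. First, rather than phrasing the construction as ``pick a slice $\Sigma$ transverse to $T$ and then do Frobenius for $G$ on $\Sigma$,'' it is cleaner to invoke a joint $G\times S^1$ Frobenius straightening: by \eqref{e-gue170117t} the $G\times S^1$-action is locally free near $\mu^{-1}(0)$, so one can choose coordinates in which $\underline{\mathfrak{g}}={\rm span}\{\partial/\partial x_1,\ldots,\partial/\partial x_d\}$ and $T=-\partial/\partial x_{2n+1}$ simultaneously; the commutativity \eqref{e-gue170111ryII} guarantees these two straightenings are compatible. Second, when you reuse the linear adjustments of Theorem~\ref{t-gue161202} (diagonalizing $R$, normalizing $J(\partial/\partial x_j)$, aligning the $V_j$'s) you should choose the slice through $p$ tangent to $H_pX$, which is possible because $T$ is transverse to $HX$. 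Then $\partial/\partial x_1,\ldots,\partial/\partial x_{2n}$ span $H_pX$ at $p$, the vectors $\operatorname{Re}V_j,\operatorname{Im}V_j\in H_pX$ have no $\partial/\partial x_{2n+1}$-component, and $\omega_0(p)=dx_{2n+1}$ holds automatically (the $dx_{2n+1}$ coefficient is $1$ from $\omega_0(T)=-1$, the others vanish since $H_pX=\ker\omega_0(p)$); consequently none of the subsequent linear or quadratic corrections need to touch $x_{2n+1}$ itself and $T$ stays straightened throughout. With these refinements, your proof proceeds exactly as the paper intends.
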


\begin{remark}\label{r-gue170309}
Let $p\in\mu^{-1}(0)$ and let $x=(x_1,\ldots,x_{2n+1})$ be the local coordinates as in Theorem~\ref{t-gue161202z}. We can change $x_{2n+1}$ be $x_{2n+1}-\sum^d_{j=1}a_j(x)x_{d+j}$, where $a_j(x)$, $j=1,\ldots,d$, are as in \eqref{e-gue161206z}. With this new local coordinates $x=(x_1,\ldots,x_{2n+1})$, on $\mu^{-1}(0)\bigcap U$, we have 
\[J(\frac{\pr}{\pr x_j})=\frac{\pr}{\pr x_{d+j}},\ \ j=1,2,\ldots,d.\]
Moreover, it is clear that $\Phi_-(x,y)+\sum^d_{j=1}a_j(x)x_{d+j}-\sum^{d=1}_{j=1}a_j(y)y_{d+j}$ satisfies \eqref{e-gue170126I}. Note that $a_j(x)$ is a smooth function on $\mu^{-1}(0)\bigcap U$, independent of $x_1,\ldots,x_{2d}$, $x_{2n+1}$ and $a_j(0)=0$, $j=1,\ldots,d$. 
\end{remark}

We now work with local coordinates as in Theorem~\ref{t-gue161202z}. 
From \eqref{e-gue170107}, we see that near $(p,p)\in U\times U$, we have $\frac{\pr\Phi_-}{\pr y_{2n+1}}\neq0$.
Using the Malgrange preparation theorem \cite[Th.\,7.5.7]{Hor03}, we have
\begin{equation} \label{e-gue170105z}
\Phi_-(x,y)=g(x,y)(y_{2n+1}+\hat\Phi_-(x'',\mathring{y}''))
\end{equation}
in some neighborhood of $(p,p)$, where $\mathring{y}''=(y_{d+1},\ldots,y_{2n})$, $g, \hat\Phi_-\in C^\infty$. Since ${\rm
Im\,}\Phi_-\geq0$, it is not difficult to see that
${\rm Im\,}\hat\Phi_-\geq0$ in some neighborhood of $(p,p)$. We may take $U$
small enough so that \eqref{e-gue170105z} holds and ${\rm
Im\,}\hat\Phi_-\geq0$ on $U\times U$.  From the global theory of Fourier integral
operators \cite[Th.\,4.2]{MS74}, we see that
$\Phi_-(x,y)t$ and $(y_{2n+1}+\hat\Phi_-(x'',\mathring{y}''))t$ are equivalent in the
sense of Melin-Sj\"{o}strand. We can replace the phase $\Phi_-$
by $y_{2n+1}+\hat\Phi_-(x,\mathring{y''})$. From now on, we assume that 
\begin{equation}\label{e-gue170117eI}
\Phi_-(x,y)=y_{2n+1}+\hat\Phi_-(x'',\mathring{y}'').
\end{equation}
It is easy to check that $\Phi_-(x,y)$ satisfies \eqref{e-gue170106m} and \eqref{e-gue170107} with $f(x,y)=0$. Similarly, from now on, we assume that 
\begin{equation}\label{e-gue170117e}
\Phi_+(x,y)=-y_{2n+1}+\hat\Phi_+(x'',\mathring{y}'').
\end{equation}

We now study $S^{(q)}_{G,m}(x,y)$. From Theorem~\ref{t-gue170112}, we get 

\begin{theorem}\label{t-gue170112I}
Assume that $q\notin\set{n_-,n_+}$. Then, $S^{(q)}_{G,m}=O(m^{-\infty})$ on $X$. 
\end{theorem}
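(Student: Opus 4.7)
The plan is to reduce this statement directly to Theorem~\ref{t-gue170112} via the averaging formula \eqref{e-gue170111cr}, which expresses the $m$-th Fourier component of the $G$-invariant Szeg\"o kernel as a $\theta$-Fourier coefficient of $S^{(q)}_G$ pulled back by the $S^1$-action:
\begin{equation*}
S^{(q)}_{G,m}(x,y) = \frac{1}{2\pi}\int_{-\pi}^{\pi} S^{(q)}_G(x, e^{i\theta}\circ y)\, e^{im\theta}\, d\theta.
\end{equation*}

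First, I would invoke Theorem~\ref{t-gue170112}, which states that under our standing Levi-form constant signature assumption, $S^{(q)}_G \equiv 0$ on $X$ whenever $q \notin \{n_-, n_+\}$. By the convention \eqref{e-gue160507f} and the Schwartz kernel theorem, this means the distribution kernel $S^{(q)}_G(x,y)$ is in fact a smooth section of $T^{*0,q}X\boxtimes (T^{*0,q}X)^*$ on $X\times X$. Since the $S^1$-action $e^{i\theta}\colon X\to X$ is smooth jointly in $(\theta,y)$ and preserves $T^{*0,q}X$ (by the CR property \eqref{e-gue150508fa}--\eqref{e-gue150508faI}), the composition $(x,y,\theta)\mapsto S^{(q)}_G(x, e^{i\theta}\circ y)$ is smooth on $X\times X\times[-\pi,\pi]$ (and $2\pi$-periodic in $\theta$).

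Next, for any local coordinate patches on $X\times X$, any compact set $K$ in such a patch, and any multi-indices $\alpha,\beta$, the derivative $\partial_x^\alpha \partial_y^\beta S^{(q)}_G(x, e^{i\theta}\circ y)$ is bounded uniformly on $K\times[-\pi,\pi]$ and smooth in $\theta$. Integration by parts $N$ times in $\theta$ against $e^{im\theta}$ (using periodicity to eliminate boundary terms) gives for every $N\in\mathbb{N}$ the estimate
\begin{equation*}
\bigl|\partial_x^\alpha \partial_y^\beta S^{(q)}_{G,m}(x,y)\bigr| \;=\; \frac{1}{2\pi m^N}\Bigl|\int_{-\pi}^\pi \partial_x^\alpha \partial_y^\beta \partial_\theta^N\bigl(S^{(q)}_G(x, e^{i\theta}\circ y)\bigr) e^{im\theta}\, d\theta\Bigr| \;\leq\; \frac{C_{K,\alpha,\beta,N}}{m^N}
\end{equation*}
for all $m\geq 1$. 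In the notation of Section~\ref{s-gue170111w} this is precisely the assertion $S^{(q)}_{G,m} = O(m^{-\infty})$ on $X$, completing the proof.

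There is no real obstacle here: the entire content sits in Theorem~\ref{t-gue170112}, and once the smoothness of $S^{(q)}_G$ is in hand the rest is the classical Fourier-coefficient decay argument. The only point requiring any vigilance is verifying that the pull-back $(e^{i\theta})^*$ acts smoothly on sections of $T^{*0,q}X$ jointly in $\theta$, but this is immediate from \eqref{e-gue150508fa}--\eqref{e-gue150508faII} together with the CR, transversal character of the $S^1$-action in Definition~\ref{d-gue160502}.
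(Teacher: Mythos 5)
Your proposal is correct and is exactly the argument the paper intends: the paper simply says ``From Theorem~\ref{t-gue170112}, we get'' Theorem~\ref{t-gue170112I}, leaving implicit precisely the steps you spell out (smoothness of the kernel from $S^{(q)}_G\equiv 0$, smoothness of the pull-back by $e^{i\theta}$ jointly in $\theta$, and rapid decay of Fourier coefficients of a smooth periodic function via repeated integration by parts, as already used in the proof of Theorem~\ref{t-gue170111w}). No gaps; your write-up is a faithful expansion of the paper's one-line deduction.
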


Assume that $q=n_-$. It is well-known that when $X$ admits a transversal $S^1$ action, then  $\Box^{(q)}_b$ has $L^2$ closed range (see Theorem 1.12 in~\cite{HM14}). Fix $p\in\mu^{-1}(0)$ and let $v=(v_1,\ldots,v_d)$ and $x=(x_1,\ldots,x_{2n+1})$ be the local coordinates of $G$ and $X$ as in Theorem~\ref{t-gue161202z} and let $U$ and $V$ be open sets as in Theorem~\ref{t-gue161202z}. We take $U$ small enough so that there is a constant $c>0$ such that 
\begin{equation}\label{e-gue170117tI}
d(e^{i\theta}\circ g\circ x,y)\geq c,\ \ \forall (x,y)\in U\times U,\ \ \forall g\in G, \theta\in[-\pi,-\delta]\bigcup[\delta,\pi],
\end{equation}
where $\delta>0$ is as in Theorem~\ref{t-gue161202z}. 
%Let $D$ be an open set of $p$ such that  $D\Subset U_0:=U_1\times(\hat U_2\times]-\delta,\delta[)$, where $\delta>0$ is as in Theorem~\ref{t-gue161202z}. 
We will study $S^{(q)}_{G,m}(x,y)$ in $U$. From \eqref{e-gue170111cr}, we have 
\begin{equation}\label{e-gue170117}
\begin{split}
S^{(q)}_{G,m}(x,y)&=\frac{1}{2\pi}\int^{\pi}_{-\pi}S^{(q)}_G(x,e^{i\theta}\circ y)e^{im\theta}d\theta=\frac{1}{2\pi}\int^{\pi}_{-\pi}e^{-imx_{2n+1}+imy_{2n+1}}S^{(q)}_G(\mathring{x},e^{i\theta}\circ\mathring{y})e^{im\theta}d\theta\\
&=I+II,\\
&I=\frac{1}{2\pi}\int^{\pi}_{-\pi}e^{-imx_{2n+1}+imy_{2n+1}}\chi(\theta)S^{(q)}_G(\mathring{x},e^{i\theta}\circ\mathring{y})e^{im\theta}d\theta,\\
&II=\frac{1}{2\pi}\int^{\pi}_{-\pi}e^{-imx_{2n+1}+imy_{2n+1}}(1-\chi(\theta))S^{(q)}_G(\mathring{x},e^{i\theta}\circ\mathring{y})e^{im\theta}d\theta,
\end{split}
\end{equation}
where $\mathring{x}=(x_1,\ldots,x_{2n},0)\in U$, $\mathring{y}=(y_1,\ldots,y_{2n},0)\in U$, $\chi\in C^\infty_0(]-2\delta,2\delta[)$, $\chi=1$ on $[-\delta, \delta]$. We first study $II$. We have 
\begin{equation}\label{e-gue170117p}
II=\frac{1}{2\pi\abs{G}_{d\mu}}\int^{\pi}_{-\pi}\int_Ge^{-imx_{2n+1}+imy_{2n+1}}(1-\chi(\theta))S^{(q)}(\mathring{x},e^{i\theta}\circ g\circ\mathring{y})e^{im\theta}d\mu(g)d\theta,
\end{equation}
where $d\mu$ is a Haar measure on $G$. From \eqref{e-gue170117p}, \eqref{e-gue170117tI} and notice that $S^{(q)}$ is smoothing away diagonal, we deduce that 
\begin{equation}\label{e-gue170117pI}
II=O(m^{-\infty}). 
\end{equation}
We now study $I$. From Theorem~\ref{t-gue170108wr}, \eqref{e-gue170111cr}, \eqref{e-gue170117eI} and \eqref{e-gue170117e}, we have 
\begin{equation}\label{e-gue170117pII}
\begin{split}
&I=I_0+I_1,\\
&I_0=\frac{1}{2\pi}\int^\infty_0\int^{\pi}_{-\pi}e^{-imx_{2n+1}+imy_{2n+1}}\chi(\theta)e^{i(-\theta+\hat\Phi_-(\mathring{x}'',\mathring{y}''))t+im\theta}a_-(\mathring{x}'',
(\mathring{y}'',-\theta),t)dtd\theta,\\
&I_1=\frac{1}{2\pi}\int^\infty_0\int^{\pi}_{-\pi}e^{-imx_{2n+1}+imy_{2n+1}}\chi(\theta)e^{i(\theta+\hat\Phi_+(\mathring{x}'',\mathring{y}''))t+im\theta}a_+(\mathring{x}'',
(\mathring{y}'',-\theta),t)dtd\theta.
\end{split}
\end{equation}
We first study $I_1$. From $\frac{\pr}{\pr\theta}\Bigr(i(\theta+\hat\Phi_+(\mathring{x}'',\mathring{y}''))t+im\theta\Bigr)\neq0$, we can integrate by parts with respect to $\theta$ several times and deduce that 
\begin{equation}\label{e-gue170117pIII}
I_1=O(m^{-\infty}).
\end{equation}
We now study $I_0$. We have 
\begin{equation}\label{e-gue170117pIV}
I_0=\frac{1}{2\pi}\int^\infty_0\int^{\pi}_{-\pi}e^{-imx_{2n+1}+imy_{2n+1}}\chi(\theta)e^{im(-\theta t+\hat\Phi_-(\mathring{x}'',\mathring{y}'')t+\theta)}ma_-(\mathring{x}'',
(\mathring{y}'',-\theta),mt)dtd\theta.
\end{equation}
We can use the complex stationary phase formula of Melin-Sj\"ostrand~\cite{MS74} to carry the $dtd\theta$ integration in \eqref{e-gue170117pIV} and get (the calculation is similar as in the proof of Theorem 3.17 in~\cite{HM14a}, we omit the details)
\begin{equation}\label{e-gue170117pV}
\begin{split}
&I_0=e^{im\Psi(x,y)}b(\mathring{x}'',\mathring{y}'',m)+O(m^{-\infty}),\\
&\Psi(x,y)=\hat\Phi_-(\mathring{x}'',\mathring{y}'')-x_{2n+1}+y_{2n+1},\\
&b(\mathring{x}'',\mathring{y}'',m)\in S^{n-\frac{d}{2}}_{{\rm loc\,}}(1; U\times U, T^{*0,q}X\boxtimes(T^{*0,q}X)^*),\\
&\mbox{$b(\mathring{x}'',\mathring{y}'',m)\sim\sum^\infty_{j=0}m^{n-\frac{d}{2}-j}b_j(\mathring{x}'',\mathring{y}'')$ in $S^{n-\frac{d}{2}}_{{\rm loc\,}}(1; U\times U, T^{*0,q}X\boxtimes(T^{*0,q}X)^*)$},\\
&b_j(\mathring{x}'',\mathring{y}'')\in C^\infty(U\times U, T^{*0,q}X\boxtimes(T^{*0,q}X)^*),\ \ j=0,1,2,\ldots,
\end{split}
\end{equation}
and 
\begin{equation}\label{e-gue170117pVI}
b_0(p,p)=a^0_-(p,p)=2^{\frac{d}{2}-1}\frac{1}{V_{{\rm eff\,}}(p)}\pi^{-n-1+\frac{d}{2}}\abs{\mu_1}^{\frac{1}{2}}\cdots\abs{\mu_d}^{\frac{1}{2}}\abs{\mu_{d+1}}\cdots\abs{\mu_n}\tau_{p,n_-}.
\end{equation}

Assume that $q=n_+\neq n_-$. We can repeat the method above with minor change and deduce that $S^{(q)}_{G,m}(x,y)=O(m^{-\infty})$ on $X$. 
Summing up, we get one of the main result of this work 

\begin{theorem}\label{t-gue170118}
Recall that we work with the assumptions that the Levi form is non-degenerate of constant signature $(n_-,n_+)$ on $X$ and $G\times S^1$ acts globally free near $\mu^{-1}(0)$. If $q\neq n_-$, then, as $m\To+\infty$, $S^{(q)}_{G,m}(x,y)=O(m^{-\infty})$ on $X$. 

Suppose that $q=n_-$. Let $D\subset X$ be an open set with $D\bigcap\mu^{-1}(0)=\emptyset$. Then, as $m\To+\infty$, 
\[S^{(q)}_{G,m}=O(m^{-\infty})\ \ \mbox{on $D$}. \]
Let $p\in\mu^{-1}(0)$ and let $U$ be a small open set of $p$ with local coordinates $x=(x_1,\ldots,x_{2n+1})$. Then, 
\begin{equation}\label{e-gue170117pVIII}
\begin{split}
&S^{(q)}_{G,m}(x,y)=e^{im\Psi(x,y)}b(x,y,m)+O(m^{-\infty}),\\
&b(x,y,m)\in S^{n-\frac{d}{2}}_{{\rm loc\,}}(1; U\times U, T^{*0,q}X\boxtimes(T^{*0,q}X)^*),\\
&\mbox{$b(x,y,m)\sim\sum^\infty_{j=0}m^{n-\frac{d}{2}-j}b_j(x,y)$ in $S^{n-\frac{d}{2}}_{{\rm loc\,}}(1; U\times U, T^{*0,q}X\boxtimes(T^{*0,q}X)^*)$},\\
&b_j(x,y)\in C^\infty(U\times U, T^{*0,q}X\boxtimes(T^{*0,q}X)^*),\ \ j=0,1,2,\ldots,\\
\end{split}
\end{equation}
\begin{equation}\label{e-gue170117pVIIIa}
b_0(p,p)=2^{\frac{d}{2}-1}\frac{1}{V_{{\rm eff\,}}(p)}\pi^{-n-1+\frac{d}{2}}\abs{\mu_1}^{\frac{1}{2}}\cdots\abs{\mu_d}^{\frac{1}{2}}\abs{\mu_{d+1}}\cdots\abs{\mu_n}\tau_{p,n_-},
\end{equation}
$\Psi(x,y)\in C^\infty(U\times U)$ is as in \eqref{e-gue170117pV}, $d_x\Psi(x,x)=-d_y\Psi(x,x)=-\omega_0(x)$, for every $x\in\mu^{-1}(0)$, $\Psi(x,x)=0$ if $x\in\mu^{-1}(0)$ and there is a constant $c>0$ such that for all $(x,y)\in U\times U$, 
\begin{equation}\label{e-gue170117pVIIIb}
{\rm Im\,}\Psi(x,y)\geq c\Bigr(d(x,\mu^{-1}(0))^2+d(y,\mu^{-1}(0))^2+\inf_{g\in G,\theta\in S^1}d(e^{i\theta}\circ g\circ x,y)^2\Bigr).
\end{equation}
\end{theorem}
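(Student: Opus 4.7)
The plan is to combine the complex Fourier integral operator description of $S^{(q)}_G$ established in Theorem~\ref{t-gue170108wr} with the Fourier decomposition \eqref{e-gue170111cr},
\[
S^{(q)}_{G,m}(x,y)=\frac{1}{2\pi}\int^{\pi}_{-\pi}S^{(q)}_G(x,e^{i\theta}\circ y)e^{im\theta}d\theta,
\]
and then read off the $m\To+\infty$ asymptotics from the resulting oscillatory integral. The case $q\notin\{n_-,n_+\}$ is immediate from Theorem~\ref{t-gue170112}, which gives $S^{(q)}_G\equiv 0$. Away from $\mu^{-1}(0)$, Theorem~\ref{t-gue170110} gives smoothness of $S^{(q)}_G$ and non-stationary integration by parts in $\theta$ yields $O(m^{-\infty})$, which is exactly Theorem~\ref{t-gue170111w}.

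For $p\in\mu^{-1}(0)$, I would work in the local coordinates of Theorem~\ref{t-gue161202z}, in which $T=-\pr/\pr x_{2n+1}$ so the $S^1$-action is pure translation in $x_{2n+1}$. After the Malgrange-preparation normalization that puts $\Phi_{\pm}=\pm y_{2n+1}+\hat\Phi_{\pm}(x'',\mathring{y}'')$, the hypothesis that $G\times S^1$ acts globally freely near $\mu^{-1}(0)$ allows me to insert a cutoff $\chi(\theta)$ supported near $0$; by \eqref{e-gue170117tI} the complementary contribution is smoothing and gives $O(m^{-\infty})$. Substituting \eqref{e-gue170108wrII}--\eqref{e-gue170108waI} into the localized integral splits it into a $\Phi_-$-piece and a $\Phi_+$-piece.

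The $\Phi_+$-piece is negligible: its integrand carries the factor $e^{i(\theta+\hat\Phi_+)t+im\theta}$ whose $\theta$-derivative is $i(t+m)$, which never vanishes for $t\geq 0$ and $m$ large, so repeated integration by parts gives $O(m^{-\infty})$. For the $\Phi_-$-piece, the rescaling $t\mapsto mt$ converts the integral into an oscillatory integral with large parameter $m$ in the two variables $(\theta,t)$, with phase $\psi(\theta,t;x'',y'')=(\hat\Phi_-(\mathring{x}'',\mathring{y}'')-\theta)t+\theta$. At $(p,p)$ the unique real critical point is $t=1,\theta=0$, and the Hessian in $(\theta,t)$ is the off-diagonal matrix with entries $-1$, which is non-degenerate. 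Applying the complex stationary phase method of Melin--Sj\"ostrand~\cite{MS74} then produces the expansion \eqref{e-gue170117pVIII} with phase
\[
\Psi(x,y)=-x_{2n+1}+y_{2n+1}+\hat\Phi_-(\mathring{x}'',\mathring{y}''),
\]
leading symbol $b_0(p,p)=a^0_-(p,p)$ as in \eqref{e-gue170117pVIIIa}, and of the predicted order $S^{n-d/2}_{{\rm loc\,}}$. For $q=n_+\neq n_-$ the argument is symmetric: the $\Phi_-$-piece becomes the non-stationary one and one obtains $O(m^{-\infty})$ on all of $X$.

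The main obstacle will be the two-sided bound \eqref{e-gue170117pVIIIb} for ${\rm Im\,}\Psi$. The transverse bounds $d(x,\mu^{-1}(0))^2+d(y,\mu^{-1}(0))^2$ are inherited directly from the lower bound in \eqref{e-gue170125I} on ${\rm Im\,}\Phi_-$, since $\Psi$ differs from $\hat\Phi_-$ only by a real linear expression in $x_{2n+1},y_{2n+1}$. The additional $\inf_{g\in G,\theta\in S^1}d(e^{i\theta}\circ g\circ x,y)^2$ term must be extracted by combining the $G$-orbit invariance built into \eqref{e-gue170125I} with the $S^1$-invariance $\Psi(e^{i\theta}\circ x,y)=\Psi(x,e^{-i\theta}\circ y)$ coming from the $\theta$-integration; the transversality assumption \eqref{e-gue170111ryI} that $T$ is transversal to $\underline{\mathfrak{g}}$ on $\mu^{-1}(0)$ guarantees that the $G$- and $S^1$-orbit directions are linearly independent, so the two separate infimums combine into a single infimum over $G\times S^1$. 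The remaining stationary-phase bookkeeping required to verify that the symbol indeed lies in $S^{n-d/2}_{{\rm loc\,}}$ and that the leading term specializes to \eqref{e-gue170117pVIIIa} is routine.
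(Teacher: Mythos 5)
Your proposal follows the paper's proof essentially step for step: use the Fourier-component formula \eqref{e-gue170111cr}, localize in $\theta$ near $0$ using the free $G\times S^1$ action, discard the $\Phi_+$-contribution by non-stationary integration by parts since the $\theta$-derivative of the phase is $i(t+m)\neq 0$, and apply the complex stationary phase method to the $\Phi_-$-piece after the rescaling $t\mapsto mt$, obtaining $\Psi(x,y)=-x_{2n+1}+y_{2n+1}+\hat\Phi_-(\mathring{x}'',\mathring{y}'')$ with the stated symbol order and leading term. One small slip: for $q=n_+\neq n_-$, you write that ``the $\Phi_-$-piece becomes the non-stationary one,'' but in that case the $\Phi_-$-piece vanishes identically (since $S^G_-=0$ when $q\neq n_-$) and it is the remaining $\Phi_+$-piece that is non-stationary; the conclusion $S^{(q)}_{G,m}=O(m^{-\infty})$ is correct, and this is what the paper invokes when it ``repeats the method above with minor change.''
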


\section{Equivalent of the phase function $\Phi_-(x,y)$}\label{s-gue170120}

Let $p\in\mu^{-1}(0)$ and let $U$ be a small open set of $p$. We need 

\begin{definition}\label{d-gue140305}
With the assumptions and notations used in Theorem~\ref{t-gue170108wr}, 
let $\Phi_1, \Phi_2\in C^\infty(U\times U)$. We assume that $\Phi_1$ and $\Phi_2$ satisfy \eqref{e-gue170109I}, \eqref{e-gue170107} 
and \eqref{e-gue170106m}. 
We say that $\Phi_1$ and $\Phi_2$ are equivalent on $U$ if for any
$b_1(x,y,t)\in  S^{n-\frac{d}{2}}_{{\rm cl\,}}\big(U\times U\times\mathbb{R}_+,T^{*0,q}X\boxtimes(T^{*0,q}X)^*\big)$
we can find
$b_2(x,y,t)\in  S^{n-\frac{d}{2}}_{{\rm cl\,}}\big(U\times U\times\mathbb{R}_+,T^{*0,q}X\boxtimes(T^{*0,q}X)^*\big)$
such that
\[\int^\infty_0e^{i\Phi_1(x,y)t}b_1(x,y,t)dt\equiv\int^\infty_0e^{i\Phi_2(x,y)t}b_2(x,y,t)dt\ \ \mbox{on $U$}\]
and vise versa.
\end{definition}
%----------
We characterize now the phase $\Phi_-$. 
%----------
\begin{theorem} \label{t-gue140305II}
Let $\Phi_-(x,y)\in C^\infty(U\times U)$ be as in Theorem~\ref{t-gue170108wr}. Let $\Phi\in C^\infty(U\times U)$.
We assume that $\Phi$ satisfies \eqref{e-gue170109I}, \eqref{e-gue170107} 
and \eqref{e-gue170106m}. The functions
$\Phi$ and $\Phi_-$ are equivalent on $U$ in the sense of Definition~\ref{d-gue140305}
if and only if there is a function $f\in C^\infty(U\times U)$ such that $\Phi(x,y)-f(x,y)\Phi_-(x,y)$
vanishes to infinite order at ${\rm diag\,}\Bigr((\mu^{-1}(0)\bigcap U)\times(\mu^{-1}(0)\bigcap U)\Bigr)$.
%for every $(x,x)\in D\times D$.
\end{theorem}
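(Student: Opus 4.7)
Write $Y:=\mu^{-1}(0)$ and $\Sigma:=\mathrm{diag}\bigl((Y\cap U)\times(Y\cap U)\bigr)$. I would treat the two implications separately.

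\medskip
\noindent\textbf{Sufficiency.} Suppose $f\in C^\infty(U\times U)$ satisfies $\Phi-f\Phi_-=O_\infty$ on $\Sigma$. A first step is to show $f|_\Sigma=1$: using $\Phi(x,x)=\Phi_-(x,x)=0$ and $d_x\Phi(x,x)=d_x\Phi_-(x,x)=-\omega_0(x)$ for $x\in Y\cap U$ (from \eqref{e-gue170125} and its analogue for $\Phi$), the identity $\Phi=f\Phi_-+(\Phi-f\Phi_-)$ differentiated at $(x,x)\in\Sigma$ forces $f(x,x)=1$. Shrinking $U$ I may assume $|f-1|<1/2$ on $U\times U$. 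Given $b_1\in S^{n-d/2}_{\mathrm{cl}}$, I perform the substitution $s=f(x,y)t$ in $\int_0^\infty e^{i\Phi t}b_1\,dt$, so the phase becomes $\Phi_-(x,y)s+\psi(x,y)s$ with $\psi:=(\Phi-f\Phi_-)/f$ still vanishing to infinite order on $\Sigma$. Expanding $e^{i\psi s}$ as a formal Taylor series in $\psi s$ produces a formal series $\sum_{k\geq0}\int_0^\infty e^{i\Phi_- s}c_k(x,y,s)\,ds$ where $c_k$ carries the factor $\psi^k s^k$.

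\medskip
\noindent The quantitative input needed to realize this formal series as a genuine element of $S^{n-d/2}_{\mathrm{cl}}$ modulo $S^{-\infty}$ is the estimate \eqref{e-gue170125I}, which gives $\mathrm{Im}\,\Phi_-\gtrsim d(\cdot,\Sigma)^2$. Combined with the fact that $\psi^k$ vanishes to arbitrarily high order at $\Sigma$, the Gaussian damping $e^{-s\,\mathrm{Im}\,\Phi_-}$ converts each $\psi^k s^k$-contribution into a symbol of order $n-d/2-k$; a Borel sum then produces $b_2\in S^{n-d/2}_{\mathrm{cl}}$ with the required identity modulo $C^\infty$. The reverse direction in Definition~\ref{d-gue140305} is obtained symmetrically by writing $\Phi_-=(1/f)\Phi+\bigl(\Phi_--(1/f)\Phi\bigr)$ and noting that the error again vanishes to infinite order on $\Sigma$.

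\medskip
\noindent\textbf{Necessity.} Assume $\Phi$ and $\Phi_-$ are equivalent. Both oscillatory integrals present themselves as complex Fourier integral operators in the sense of Melin--Sj\"ostrand \cite{MS74}, with positive complex phases. The real critical set of $\Phi_-(x,y)t$, defined by $\mathrm{Im}\,\Phi_-=0$ and $d_t(\Phi_-t)=\Phi_-=0$, is exactly $\Sigma\times\mathbb R_+$ by \eqref{e-gue170125}--\eqref{e-gue170125I}; the positive Lagrangian it parametrizes is the almost analytic extension of $\{(x,-t\omega_0(x);y,t\omega_0(y)):x=y\in Y\cap U,\;t>0\}$. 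The same analysis applies to $\Phi$. The Melin--Sj\"ostrand equivalence theorem for positive complex phases (\cite[Thm.\,4.2]{MS74}; see also the exposition in \cite[Chap.\,8]{Hsiao08}) then asserts that two such phases parametrize the same Lagrangian if and only if they differ by multiplication by an almost analytic factor modulo terms vanishing to infinite order on $\Sigma$. Restricting this factor to $U\times U$ yields the desired $f\in C^\infty(U\times U)$.

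\medskip
\noindent\textbf{Main obstacle.} The hard part will be the sufficiency direction: making the Taylor expansion of $e^{i\psi s}$ rigorous in $S^{n-d/2}_{\mathrm{cl}}$. One must verify that the order loss $+k$ from the factor $s^k$ is genuinely offset by the infinite-order vanishing of $\psi^k$ at $\Sigma$, via inductive seminorm estimates on $\psi^k b_1(x,y,s/f)$ weighted by the exponential damping $e^{-s\,\mathrm{Im}\,\Phi_-}$. This is a form of the ``transfer of diagonal vanishing into fiber decay'' technique familiar from Boutet de Monvel--Sj\"ostrand theory, and the nontrivial part is tracking the uniformity of constants in $k$ while integrating out the fiber variable $s$.
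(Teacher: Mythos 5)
Your treatment of the $\Rightarrow$ direction (equivalence implies the factor relation) has a genuine gap: it never uses the hypothesis that $\Phi$ and $\Phi_-$ are equivalent as operators. You argue that the normalization conditions \eqref{e-gue170109I}, \eqref{e-gue170107}, \eqref{e-gue170106m} already force $\Phi$ and $\Phi_-$ to parametrize the same positive Lagrangian ("the same analysis applies to $\Phi$"), and then invoke Melin--Sj\"ostrand to deduce the factor relation. But those a priori conditions pin down only the $1$-jet of $\Phi$ along $\Sigma$, the $2$-jet at the single point $(p,p)$, and a quadratic lower bound for $\mathrm{Im}\,\Phi$; they do not determine the positive Lagrangian, which depends on the full Taylor expansion of the phase along $\Sigma$. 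Concretely, one can perturb $\Phi_-$ at third order along $\Sigma$ without violating any of the listed conditions, and the resulting phase parametrizes a different almost-analytic Lagrangian (so is not equivalent to $\Phi_-$). The real critical set $\Sigma\times\Real_+$ has many almost-analytic Lagrangian extensions, not one canonical one. Extracting "same Lagrangian" from "same operator modulo smoothing" is precisely the content of the $\Rightarrow$ direction, and it is the part the paper actually proves. The paper does this by reducing to a semiclassical statement: after a Malgrange preparation one may take $\Phi_-=y_{2n+1}+h(x,\mathring y)$ and $\Phi=y_{2n+1}+h_1(x,\mathring y)$, pick an elliptic amplitude $b$, use the equivalence hypothesis to produce $a$ with $\int e^{i\Phi_- t}b\,dt\equiv\int e^{i\Phi t}a\,dt$, take the partial Fourier transform in $y_{2n+1}$ at frequency $k$ (localized by cutoffs), and obtain from stationary phase the $k\to\infty$ asymptotics $A_k\sim e^{ikh}g(\cdot,k)$ and $B_k\sim e^{ikh_1}p(\cdot,k)$ with $A_k-B_k=O(k^{-\infty})$ and $g_0(x_0,\mathring x_0)\ne0$; if some finite-order derivative of $h-h_1$ were nonzero at a diagonal point $(x_0,x_0)\in\Sigma$, differentiating the relation $e^{ik(h-h_1)}g=p+O(k^{-\infty})$ to that order and letting $k\to\infty$ yields a contradiction.

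Your $\Leftarrow$ direction is the right mechanism and fills in what the paper merely cites to \cite{MS74}: the observation that $f|_\Sigma=1$ follows from differentiating at $(x_0,x_0)\in\Sigma$, and the plan of substituting $s=ft$, expanding $e^{i(\Phi-f\Phi_-)s/f}$, and absorbing the $s^k$ growth against $e^{-cs\,d(\cdot,\Sigma)^2}$ using the infinite-order vanishing of $\Phi-f\Phi_-$ is the standard transfer-of-vanishing argument, though you should be careful that the change of variable $t\mapsto ft$ is a contour deformation when $f$ is not real, and one needs Borel summation to realize the formal series as a classical symbol. Finally, your "Main obstacle" remark identifies the wrong half of the theorem as the hard one: the $\Leftarrow$ direction is the one the paper is content to cite; the $\Rightarrow$ direction is where all the work lies, and that is where your argument currently fails.
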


\begin{proof}
The "$\Leftarrow$" part follows from global theory of complex Fourier integral operator of Melin-Sj\"ostrand~\cite{MS74}. We only need to prove the "$\Rightarrow$" part. 
Take local coordinates $x=(x_1,\ldots,x_{2n+1})$
defined in some small neighbourhood of $p$ such that $x(p)=0$ and $\omega_0(p)=dx_{2n+1}$. 
Since $d_y\Phi(x, y)|_{x=y\in\mu^{-1}(0)}=d_y\Phi_-(x, y)|_{x=y\in\mu^{-1}(0)}=\omega_0(x)$, we have 
$\frac{\pr\Phi}{\pr y_{2n+1}}(p,p)=\frac{\pr\Phi_-}{\pr y_{2n+1}}(p,p)=1$. 
From this observation and the Malgrange preparation theorem \cite[Theorem 7.5.7]{Hor03}, 
we conclude that in some small neighborhood of $(p,p)$, we can find $f(x,y), f_1(x,y)\in C^\infty$
such that
\begin{equation}\label{e-gue140214I}
\begin{split}
&\Phi_-(x,y)=f(x,y)(y_{2n+1}+h(x,\mathring{y})),\\
&\Phi(x,y)=f_1(x,y)(y_{2n+1}+h_1(x,\mathring{y}))
\end{split}
\end{equation}
in some small neighborhood of $(p,p)$, where $\mathring{y}=(y_1,\ldots,y_{2n})$. 
For simplicity, we assume that \eqref{e-gue140214I} hold on $U\times U$. 
It is clear that $\Phi_-(x,y)$ and $y_{2n-1}+h(x,\mathring{y})$ are equivalent 
in the sense of Definition~\ref{d-gue140305}, $\Phi(x,y)$ and $y_{2n+1}+h_1(x,\mathring{y})$ 
are equivalent in the sense of Definition~\ref{d-gue140305}, we may assume that $\Phi_-(x,y)=y_{2n+1}+h(x,\mathring{y})$ and $\Phi(x,y)=y_{2n+1}+h_1(x,\mathring{y})$. Fix $x_0\in\mu^{-1}(0)\bigcap U$. 
We are going to prove that $h(x,\mathring{y})-h_1(x,\mathring{y})$ vanishes to infinite order at $(x_0,x_0)\in(\mu^{-1}(0)\bigcap U)\times(\mu^{-1}(0)\bigcap U)$. Take 
\[b(x,y,t)\sim\sum^\infty_{j=0}b_j(x,y)t^{n-\frac{d}{2}-j}\in  S^{n-\frac{d}{2}}_{{\rm cl\,}}\big(U\times U\times\mathbb{R}_+,T^{*0,q}X\boxtimes(T^{*0,q}X)^*\big)\]
with $b_0(x,x)\neq0$ at each $x\in U\bigcap\mu^{-1}(0)$. Since $\Phi$ and $\Phi_-$ are equivalent on $U$ in the sense of Definition~\ref{d-gue140305}, 
we can find
$a(x,y,t)\in  S^{n-\frac{d}{2}}_{{\rm cl\,}}\big(U\times U\times\mathbb{R}_+,T^{*0,q}X\boxtimes(T^{*0,q}X)^*\big)$
such that
\[\int^\infty_0e^{i\Phi_-(x,y)t}b(x,y,t)dt\equiv\int^\infty_0e^{i\Phi(x,y)t}a(x,y,t)dt\ \ \mbox{on $U$}.\]
Put
\[x_0=(x^1_0,x^2_0,\ldots,x^{2n+1}_0),\ \ \mathring{x}_0=(x^1_0,\ldots,x^{2n}_0).\]
Take $\tau\in C^\infty_0(\Real^{2n+1})$, $\tau_1\in C^\infty_0(\Real^{2n})$, $\chi\in C^\infty_0(\Real)$ 
so that $\tau=1$ near $x_0$, $\tau_1=1$ near $\mathring{x}_0$,  $\chi=1$ near $x^{2n+1}_0$ 
and ${\rm Supp\,}\tau\Subset U$, ${\rm Supp\,}\tau_1\times{\rm Supp\,}\chi\Subset U'\times{\rm Supp\,}\chi\Subset U$, 
where $U'$ is an open neighborhood of  $\mathring{x}_0$ in $\Real^{2n}$. For each $k>0$, we consider the distributions
\begin{equation}\label{e-gue140215fII}
\begin{split}
&A_k:u\mapsto\int^\infty_0e^{i(y_{2n-1}+h(x,\mathring{y}))t-iky_{2n+1}}\tau(x)b(x,y,t)\\
&\quad\quad\quad\quad\quad\times\tau_1(\mathring{y})\chi(y_{2n+1})u(\mathring{y})dydt,\\
&B_k:u\mapsto\int^\infty_0e^{i(y_{2n+1}+h_1(x,\mathring{y}))t-iky_{2n+1}}\tau(x)a(x,y,t)\\
&\quad\quad\quad\quad\quad\times\tau_1(\mathring{y})\chi(y_{2n+1})u(\mathring{y})dydt,
%\\&u\in \cC^\infty(D',T^{*0,q}X),
\end{split}
\end{equation}
for $u\in C^\infty_0(U',T^{*0,q}X)$.
%and
%\begin{equation}\label{e-gue140215fIII}
%\begin{split}
%&B_k:\\
%&u\mapsto\int^\infty_0e^{i(y_{2n-1}+h_1(x,y'))t-iky_{2n-1}}\tau(x)a(x,y,t)\tau_1(y')\chi(y_{2n-1})u(y')dydt,\\
%&u\in \cC^\infty(D',T^{*0,q}X).
%\end{split}
%\end{equation}
By using the stationary phase formula of Melin-Sj\"ostrand~\cite{MS74}, we can show 
that (cf.\ the proof of~\cite[Theorem 3.12]{HM14a}) $A_k$ and $B_k$ are smoothing operators and
\begin{equation}\label{e-gue140215fIV}
\begin{split}
&A_k(x,\mathring{y})\equiv e^{ikh(x,\mathring{y})}g(x,\mathring{y},k)+O(k^{-\infty}),\\
&B_k(x,\mathring{y})\equiv e^{ikh_1(x,\mathring{y})}p(x,\mathring{y},k)+O(k^{-\infty}),\\
&g(x,\mathring{y},k), p(x,\mathring{y},k)\in S^{n-\frac{d}{2}}_{{\rm loc\,}}(1;U\times U',T^{*0,q}X\boxtimes(T^{*0,q}X)^*),\\
&g(x,\mathring{y},k)\mbox{$\sim\sum^\infty_{j=0}g_j(x,\mathring{y})k^{n-\frac{d}{2}-j}$ in $S^{n-\frac{d}{2}}_{{\rm loc\,}}(1;U\times U',T^{*0,q}X\boxtimes(T^{*0,q}X)^*)$},\\
&p(x,\mathring{y},k)\mbox{$\sim\sum^\infty_{j=0}p_j(x,y')k^{n-\frac{d}{2}-j}$ in $S^{n-\frac{d}{2}}_{{\rm loc\,}}(1;U\times U',T^{*0,q}X\boxtimes(T^{*0,q}X)^*)$},\\
&g_j(x,\mathring{y}), p_j(x,\mathring{y})\in C^\infty(U\times U',T^{*0,q}X\boxtimes(T^{*0,q}X)^*),\ \ j=0,1,\ldots,\\
&g_0(x_0,\mathring{x}_0)\neq0.
\end{split}
\end{equation}
Since
\[\int^\infty_0e^{i(y_{2n+1}+h(x,\mathring{y}))t}b(x,y,t)dt-\int^\infty_0e^{i(y_{2n+1}+h_1(x,\mathring{y}))t}a(x,y,t)dt\]
is smoothing, by using integration by parts with respect to $y_{2n+1}$, it is easy to see that
$A_k-B_k=O(k^{-\infty})$ (see~\cite[Section 3]{HM14a}). Thus,
\begin{equation}\label{e-gue140215fV}
\begin{split}
&e^{ikh(x,\mathring{y})}g(x,\mathring{y},k)=e^{ikh_1(x,\mathring{y})}p(x,\mathring{y},k)+F_k(x,\mathring{y}),\\
&F_k(x, \mathring{y}')=O(k^{-\infty}).
\end{split}
\end{equation}
Now, we are ready to prove that $h(x,\mathring{y})-h_1(x,\mathring{y})$ vanishes to infinite order at $(x_0,\mathring{x}_0)$. We assume that there exist $\alpha_0\in\mathbb N^{2n+1}_0$, $\beta_0\in\mathbb N^{2n}_0$, $\abs{\alpha_0}+\abs{\beta_0}\geq1$ such that
\[\abs{\pr^{\alpha_0}_x\pr^{\beta_0}_{\mathring{y}}(ih(x,\mathring{y})-ih_1(x,\mathring{y}))}_{(x_0,\mathring{x}_0)}=C_{\alpha_0,\beta_0}\neq0\]
and
\[\abs{\pr^{\alpha}_x\pr^{\beta}_{\mathring{y}}(ih(x,\mathring{y})-ih_1(x,\mathring{y}))}_{(x_0,\mathring{x}_0)}=0\ \ \mbox{if $\abs{\alpha}+\abs{\beta}<\abs{\alpha_0}+\abs{\beta_0}$}.\]
From \eqref{e-gue140215fV}, we have
\begin{equation}\label{e-gue140215fVI}
\begin{split}
&\abs{\pr^{\alpha_0}_x\pr^{\beta_0}_{\mathring{y}}\Bigr(e^{ikh(x,\mathring{y})-ikh_1(x,\mathring{y})}g(x,\mathring{y},k)-p(x,\mathring{y},k)\Bigr)}_{(x_0,\mathring{x}_0)}\\
&=-\abs{\pr^{\alpha_0}_x\pr^{\beta_0}_{\mathring{y}}\Bigr(e^{-ikh_1(x,\mathring{y})}F_k(x,\mathring{y})\Bigr)}_{(x_0,\mathring{x}_0)}.
\end{split}
\end{equation}
Since $h_1(x_0,\mathring{x}_0)=-x^{2n+1}_0$ and $F_k(x,\mathring{y})=O(k^{-\infty})$, we have
\begin{equation} \label{e-gue140215fVII}
\lim_{k\To\infty}k^{-n+\frac{d}{2}-1}\abs{\pr^{\alpha_0}_x\pr^{\beta_0}_{\mathring{y}}\Bigr(e^{-ikh_1(x,\mathring{y})}F_k(x,\mathring{y})\Bigr)}_{(x_0,\mathring{x}_0)}=0.
\end{equation}
On the other hand, we can check that
\begin{equation} \label{e-gue140215fVIII}
\begin{split}
&\lim_{k\To\infty}k^{-n+\frac{d}{2}-1}
\abs{\pr^{\alpha_0}_x\pr^{\beta_0}_{\mathring{y}}\Bigr(e^{ikh(x,\mathring{y})-ikh_1(x,\mathring{y})}g(x,\mathring{y},k)- p(x,\mathring{y},k)\Bigr)}_{(x_0,\mathring{x}_0)}\\
&=C_{\alpha_0,\beta_0}g_0(x_0,\mathring{x}_0)\neq0
\end{split}
\end{equation}
since $g_0(x_0,\mathring{x}_0)\neq0$. From \eqref{e-gue140215fVI}, \eqref{e-gue140215fVII} and 
\eqref{e-gue140215fVIII}, we get a contradiction. Thus, $h(x,\mathring{y})-h_1(x,\mathring{y})$ vanishes to infinite order at $(x_0,\mathring{x}_0)$. 
Since $x_0$ is arbitrary, the theorem follows.
\end{proof}

\section{The proof of Theorem~\ref{t-gue170122}}\label{s-gue170225}

\subsection{Preparation}\label{s-gue170226}

Fix $p\in\mu^{-1}(0)$ and let $x=(x_1,\ldots,x_{2n+1})$ be the local coordinates as in Remark~\ref{r-gue170309} defined in an open set $U$ of $p$. We may assume that $U=\Omega_1\times\Omega_2\times\Omega_3\times\Omega_4$, where $\Omega_1\subset\Real^d$, $\Omega_2\subset\Real^d$ are open sets of $0\in\Real^d$, $\Omega_3\subset\Real^{2n-2d}$ is an open set of $0\in\Real^{2n-2d}$ and $\Omega_4$ is an open set of $0\in\Real$. From now on, we identify $\Omega_2$ with 
\[\set{(0,\ldots,0,x_{d+1},\ldots,x_{2d},0,\ldots,0)\in U;\, (x_{d+1},\ldots,x_{2d})\in\Omega_2},\] 
$\Omega_3$ with $\set{(0,\ldots,0,x_{2d+1},\ldots,x_{2n},0)\in U;\, (x_{d+1},\ldots,x_{2n})\in\Omega_3}$,  $\Omega_2\times\Omega_3$ with 
\[\set{(0,\ldots,0,x_{d+1},\ldots,x_{2n},0)\in U;\, (x_{d+1},\ldots,x_{2n})\in\Omega_2\times\Omega_3}.\] 
For $x=(x_1,\ldots,x_{2n+1})$, we write $x''=(x_{d+1},\ldots,x_{2n+1})$, $\mathring{x}''=(x_{d+1},\ldots,x_{2n})$,
$\hat x''=(x_{d+1},\ldots,x_{2d})$, 
\[\Td x''=(x_{2d+1},\ldots,x_{2n+1}),\ \ \Td{\mathring{x}}''=(x_{2d+1},\ldots,x_{2n}).\] 
From now on, we identify $x''$ with $(0,\ldots,0,x_{d+1},\ldots,x_{2n+1})\in U$, $\mathring{x}''=(x_{d+1},\ldots,x_{2n})$ with $(0,\ldots,0,x_{d+1},\ldots,x_{2n},0)\in U$, $\hat x''$ with 
\[(0,\ldots,0,x_{d+1},\ldots,x_{2d},0,\ldots,0)\in U,\] 
$\Td x''$ with $(0,\ldots,0,x_{2d+1},\ldots,x_{2n+1})\in U$, $\Td{\mathring{x}}''$ with $(0,\ldots,0,x_{2d+1},\ldots,x_{2n},0)$. Since $G\times S^1$ acts globally free on $\mu^{-1}(0)$, we take $\Omega_2$ and $\Omega_3$ small enough so that if $x, x_1\in\Omega_2\times\Omega_3$ and $x\neq x_1$, then 
\begin{equation}\label{e-gue170227c}
g\circ e^{i\theta}\circ x\neq g_1\circ e^{i\theta_1}\circ x_1,\ \ \forall (g,e^{i\theta})\in G\times S^1, \ \ \forall (g_1,e^{i\theta_1})\in G\times S^1.
\end{equation}

We now assume that $q=n_-$ and let $\Psi(x,y)\in C^\infty(U\times U)$ be as in Theorem~\ref{t-gue170128I}. From $S^{(q)}_{G,m}=(S^{(q)}_{G,m})^*$, we get 
\begin{equation}\label{e-gue170225}
e^{im\Psi(x,y)}b(x,y,m)=e^{-im\ol\Psi(y,x)}b^*(x,y,m)+O(m^{-\infty}),
\end{equation}
where $(S^{(q)}_{G,m})^*:L^2_{(0,q)}(X)\To L^2_{(0,q)}(X)$ is the adjoint of $S^{(q)}_{G,m}:L^2_{(0,q)}(X)\To L^2_{(0,q)}(X)$ with respect to $(\,\cdot\,|\,\cdot\,)$ and $b^*(x,y,m):T^{*0,q}_xX\To T^{*0,q}_yX$ is the adjoint of $b(x,y,m):T^{*0,q}_yX\To T^{*0,q}_xX$ with respect to $\langle\,\cdot\,|\,\cdot\,\rangle$. From \eqref{e-gue170225}, we can repeat the proof of Theorem~\ref{t-gue140305II} with minor change and deduce that 
\begin{equation}\label{e-gue170225I}
\mbox{$\Psi(x,y)+\ol\Psi(y,x)$ vanishes to infinite order at ${\rm diag\,}\Bigr((\mu^{-1}(0)\bigcap U)\times(\mu^{-1}(0)\bigcap U)\Bigr)$}.
\end{equation}
From $\ddbar_bS^{(q)}_{G,m}=0$, we can check that 
\begin{equation}\label{e-gue170225II}
\mbox{$\ddbar_b\Psi(x,y)$ vanishes to infinite order at ${\rm diag\,}\Bigr((\mu^{-1}(0)\bigcap U)\times(\mu^{-1}(0)\bigcap U)\Bigr)$}.
\end{equation}
From \eqref{e-gue170225I}, \eqref{e-gue170225II} and notice that $\frac{\pr}{\pr x_j}-i\frac{\pr}{\pr x_{d+j}}\in T^{0,1}_xX$, $j=1,\ldots,d$, where $x\in\mu^{-1}(0)$ (see Remark~\ref{r-gue170309}), and 
$\frac{\pr}{\pr x_j}\Psi(x,y)=\frac{\pr}{\pr y_j}\Psi(x,y)=0$, $j=1,\ldots,d$, we conclude that 
\begin{equation}\label{e-gue170225III}
\begin{split}
&\mbox{$\frac{\pr}{\pr x_{d+j}}\Psi(x,y)|_{x_{d+1}=\cdots=x_{2d}=0}$ and $\frac{\pr}{\pr y_{d+j}}\Psi(x,y)|_{y_{d+1}=\cdots=y_{2d}=0}$ vanish to infinite order at}\\
&{\rm diag\,}\Bigr((\mu^{-1}(0)\bigcap U)\times(\mu^{-1}(0)\bigcap U)\Bigr).
\end{split}
\end{equation}
Let $G_j(x,y):=\frac{\pr}{\pr y_{d+j}}\Psi(x,y)|_{y_{d+1}=\cdots=y_{2d}=0}$, $H_j(x,y):=\frac{\pr}{\pr x_{d+j}}\Psi(x,y)|_{x_{d+1}=\cdots=x_{2d}=0}$. Put
\begin{equation}\label{e-gue170226}
\begin{split}
&\Psi_1(x,y):=\Psi(x,y)-\sum^d_{j=1}y_{d+j}G_j(x,y),\\
&\Psi_2(x,y):=\Psi(x,y)-\sum^d_{j=1}x_{d+j}H_j(x,y).
\end{split}
\end{equation}
Then, 
\begin{equation}\label{e-gue170226Ip}
\begin{split}
&\frac{\pr}{\pr y_{d+j}}\Psi_1(x,y)|_{y_{d+1}=\cdots=y_{2d}=0}=0,\ \ j=1,2,\ldots,d,\\
&\frac{\pr}{\pr x_{d+j}}\Psi_2(x,y)|_{x_{d+1}=\cdots=x_{2d}=0}=0,\ \ j=1,2,\ldots,d,
\end{split}
\end{equation}
and 
\begin{equation}\label{e-gue170226II}
\begin{split}
&\mbox{$\Psi(x,y)-\Psi_1(x,y)$ vanishes to infinite order at ${\rm diag\,}\Bigr((\mu^{-1}(0)\bigcap U)\times(\mu^{-1}(0)\bigcap U)\Bigr)$},\\
&\mbox{$\Psi(x,y)-\Psi_2(x,y)$ vanishes to infinite order at ${\rm diag\,}\Bigr((\mu^{-1}(0)\bigcap U)\times(\mu^{-1}(0)\bigcap U)\Bigr)$}.
\end{split}
\end{equation}

We also write $u=(u_1,\ldots,u_{2n+1})$ to denote the local coordinates of $U$. Recall that for any smooth function $f\in C^\infty(U)$, we write $\Td f\in C^\infty(U^{\Complex})$ to denote an almost analytic extension of $f$ (see the discussion after \eqref{e-gue170102III}). We consider the following two systems 
\begin{equation}\label{e-gue170226III}
\begin{split}
\frac{\pr\Td\Psi_1}{\pr\Td u_{2d+j}}(\Td x,\Td{\Td u''})+\frac{\pr\Td\Psi_2}{\pr\Td x_{2d+j}}(\Td{\Td u''},\Td y)=0,\ \ j=1,2,\ldots,2n-2d,
\end{split}
\end{equation}
and 
\begin{equation}\label{e-gue170226IIIa}
\frac{\pr\Td\Psi_1}{\pr\Td u_{d+j}}(\Td x,\Td{u''})+\frac{\pr\Td\Psi_2}{\pr\Td x_{d+j}}(\Td{u''},\Td y)=0,\ \ j=1,2,\ldots,2n-d,
\end{equation}
where $\Td{\Td u''}=(0,\ldots,0,\Td u_{2d+1},\ldots,\Td u_{2n+1})$, $\Td{u''}=(0,\ldots,0,\Td u_{d+1},\ldots,\Td u_{2n+1})$. From \eqref{e-gue170226Ip} and Theorem~\ref{t-gue170128a}, we can take $\Td\Psi_1$ and $\Td\Psi_2$ so that for every $j=1,2,\ldots,d$, 
\begin{equation}\label{e-gue170226I}
\begin{split}
&\frac{\pr\Td\Psi_1}{\pr\Td u_{d+j}}(\Td x,\Td{u''})=0\ \ \mbox{if $\Td u_{d+1}=\cdots=\Td u_{2d}=0$},\\
&\frac{\pr\Td\Psi_2}{\pr\Td x_{d+j}}(\Td{u''},\Td y)=0\ \ \mbox{if $\Td u_{d+1}=\cdots=\Td u_{2d}=0$},
\end{split}
\end{equation}
and 
\begin{equation}\label{e-gue170227}
\begin{split}
&\Td\Psi_1(\Td x, \Td y)=-\Td x_{2n+1}+\Td y_{2n+1}+\Td{\hat\Psi_1}(\Td{\mathring{x}''},\Td{\mathring{y}''}),\ \ \Td{\hat\Psi_1}\in C^\infty(U^\Complex\times U^\Complex),\\
&\Td\Psi_2(\Td x, \Td y)=-\Td x_{2n+1}+\Td y_{2n+1}+\Td{\hat\Psi_2}(\Td{\mathring{x}''},\Td{\mathring{y}''}),\ \ \Td{\hat\Psi_2}\in C^\infty(U^\Complex\times U^\Complex),
\end{split}
\end{equation}
where $\Td{\mathring{x}''}=(0,\ldots,0,\Td x_{d+1},\ldots,\Td x_{2n},0)$, $\Td{\mathring{y}''}=(0,\ldots,0,\Td y_{d+1},\ldots,\Td y_{2n},0)$.

From Theorem~\ref{t-gue170128a}, \eqref{e-gue170126I} and 
\[d_x\Psi(x,x)=-d_y\Psi(x,x)=-\omega_0(x),\ \ \forall x\in\mu^{-1}(0),\]
it is not difficult to see that 
\[\frac{\pr\Td\Psi_1}{\pr\Td u_{d+j}}(\Td x'',\Td x'')+\frac{\pr\Td\Psi_2}{\pr\Td x_{d+j}}(\Td x'',\Td x'')=0,\ \ j=1,2,\ldots,2n-d,\]
and the matrices 
\[\left(\frac{\pr^2\Psi}{\pr u_{2d+j}\pr u_{2d+k}}(p,p)+\frac{\pr^2\Psi}{\pr x_{2d+j}\pr x_{2d+k}}(p,p)\right)^{2n-2d}_{j,k=1},\ \ 
\left(\frac{\pr^2\Psi}{\pr u_{d+j}\pr u_{d+k}}(p,p)+\frac{\pr^2\Psi}{\pr x_{d+j}\pr x_{d+k}}(p,p)\right)^{2n-d}_{j,k=1}\] 
are non-singular. Moreover, 
\begin{equation}\label{e-gue170226a}
\begin{split}
&\det\left(\frac{\pr^2\Psi}{\pr u_{2d+j}\pr u_{2d+k}}(p,p)+\frac{\pr^2\Psi}{\pr x_{2d+j}\pr x_{2d+k}}(p,p)\right)^{2n-2d}_{j,k=1}=(4i\abs{\mu_{d+1}}\cdots 4i\abs{\mu_n})^2,\\
&\det\left(\frac{\pr^2\Psi}{\pr u_{d+j}\pr u_{d+k}}(p,p)+\frac{\pr^2\Psi}{\pr x_{d+j}\pr x_{d+k}}(p,p)\right)^{2n-d}_{j,k=1}=(8i\abs{\mu_1}\cdots 8i\abs{\mu_d})(4i\abs{\mu_{d+1}}\cdots 4i\abs{\mu_n})^2.
\end{split}
\end{equation}
Hence, near $(p,p)$, we can solve \eqref{e-gue170226III} and \eqref{e-gue170226IIIa} and the solutions are unique. Let $\alpha(x,y)=(\alpha_{2d+1}(x,y),\ldots,\alpha_{2n}(x,y))\in C^\infty(U\times U,\Complex^{2n-2d})$ and $\beta(x,y)=(\beta_{d+1}(x,y),\ldots,\beta_{2n}(x,y))\in C^\infty(U\times U,\Complex^{2n-d})$ be the solutions of \eqref{e-gue170226III} and \eqref{e-gue170226IIIa}, respectively. From \eqref{e-gue170226I}, it is easy to see that 
\begin{equation}\label{e-gue170226b}
\beta(x,y)=(\beta_{d+1}(x,y),\ldots,\beta_{2n}(x,y))=(0,\ldots,0,\alpha_{2d+1}(x,y),\ldots,\alpha_{2n}(x,y)).
\end{equation}
From \eqref{e-gue170226b}, we see that the value of $\Td\Psi_1(x,\Td{\Td u''})+\Td\Psi_2(\Td{\Td u''},y)$ at critical point $\Td{\Td u''}=\alpha(x,y)$ is equal to the value of 
$\Td\Psi_1(x,\Td{u''})+\Td\Psi_2(\Td{u''},y)$ at critical point $\Td{u''}=\beta(x,y)$. Put 
\begin{equation}\label{e-gue170226bI}
\Psi_3(x,y):=\Td\Psi_1(x,\alpha(x,y))+\Td\Psi_2(\alpha(x,y),y)=\Td\Psi_1(x,\beta(x,y))+\Td\Psi_2(\beta(x,y),y).
\end{equation}
$\Psi_3(x,y)$ is a complex phase function. From \eqref{e-gue170227}, we have
\begin{equation}\label{e-gue170227I}
\Psi_3(x,y)=-x_{2n+1}+y_{2n+1}+\hat\Psi_3(\mathring{x}'',\mathring{y}''),\ \ \hat\Psi_3(\mathring{x}'',\mathring{y}'')\in C^\infty(U\times U).
\end{equation}

Moreover, we have the following

\begin{theorem}\label{t-gue170226cw}
The function $\Psi_3(x,y)-\Psi(x,y)$ vanishes to infinite order at ${\rm diag\,}\Bigr((\mu^{-1}(0)\bigcap U)\times(\mu^{-1}(0)\bigcap U)\Bigr)$. 
\end{theorem}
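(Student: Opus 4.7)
The plan is to fix an arbitrary $p\in\mu^{-1}(0)\bigcap U$ and show that every Taylor coefficient of $\Psi_3-\Psi$ at $(p,p)$ vanishes. The strategy proceeds in three movements: first, replace $\tilde\Psi_1,\tilde\Psi_2$ by $\tilde\Psi$ at the level of formal Taylor series; second, identify the resulting ``self-composition'' phase with $\Psi$ modulo equivalence via the idempotence $(S^{(q)}_{G,m})^2 = S^{(q)}_{G,m}$; third, upgrade this equivalence to genuine infinite-order-vanishing of the difference.

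For the first step, since $\tilde\Psi_i-\tilde\Psi$ ($i=1,2$) vanishes to infinite order at ${\rm diag\,}((\mu^{-1}(0)\bigcap U)^2)$ by \eqref{e-gue170226II}, the formal Taylor series of $\tilde\Psi_i$ at $(p,p'')$ agrees with that of $\tilde\Psi$. The non-degeneracy \eqref{e-gue170226a} together with the implicit function theorem then guarantees that the formal Taylor series of $\beta(x,y)$ (the solution of \eqref{e-gue170226IIIa}) coincides at $(p,p)$ with that of the critical point $\beta^\ast(x,y)$ of $\tilde\Psi(x,u'')+\tilde\Psi(u'',y)$ in $u''$. Consequently the formal Taylor series of $\Psi_3$ and of
\[
\Psi^\sharp(x,y) := \tilde\Psi(x,\beta^\ast(x,y)) + \tilde\Psi(\beta^\ast(x,y),y)
\]
at $(p,p)$ coincide, and it suffices to show $\Psi^\sharp-\Psi$ vanishes to infinite order at ${\rm diag\,}((\mu^{-1}(0)\bigcap U)^2)$.

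For the second step, I will compute $(S^{(q)}_{G,m})^2(x,y)=\int S^{(q)}_{G,m}(x,z)\,S^{(q)}_{G,m}(z,y)\,dv(z)$ by the complex stationary phase of Melin-Sj\"ostrand. The form $\Psi(x,y)=-x_{2n+1}+y_{2n+1}+\hat\Psi(\mathring x'',\mathring y'')$ from Theorem~\ref{t-gue170128a} makes $z_{2n+1}$ drop out of $\Psi(x,z)+\Psi(z,y)$, while the remaining critical-point equations in $z_{d+1},\dots,z_{2n}$ match precisely those defining $\beta^\ast$. Stationary phase therefore yields $(S^{(q)}_{G,m})^2(x,y) = e^{im\Psi^\sharp(x,y)} B(x,y,m) + O(m^{-\infty})$ with nonvanishing leading amplitude on ${\rm diag\,}(\mu^{-1}(0)\bigcap U)$, and combining the idempotence identity with the phase-uniqueness result of Theorem~\ref{t-gue140305II} produces $\Psi^\sharp = f\Psi + R$ with $f\in C^\infty(U\times U)$ satisfying $f(x,x)=1$ on $\mu^{-1}(0)\bigcap U$ and $R$ vanishing to infinite order at ${\rm diag\,}((\mu^{-1}(0)\bigcap U)^2)$.

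The main obstacle is the third step: showing that $(f-1)\Psi$, and hence $\Psi^\sharp-\Psi=(f-1)\Psi+R$, vanishes to infinite order. Both $\Psi^\sharp$ and $\Psi$ carry the explicit form $-x_{2n+1}+y_{2n+1}+(\text{function of }\mathring x'',\mathring y'')$, so differentiating $\Psi^\sharp=f\Psi+R$ separately in $x_{2n+1}$ and $y_{2n+1}$ gives the pair of functional equations
\[
f-1 \;=\; \Psi\,\partial_{x_{2n+1}}f+\partial_{x_{2n+1}}R \;=\; -\Psi\,\partial_{y_{2n+1}}f-\partial_{y_{2n+1}}R,
\]
from which $\Psi(\partial_{x_{2n+1}}+\partial_{y_{2n+1}})f$ vanishes to infinite order at the diagonal. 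Since $\partial_{y_{2n+1}}\Psi=1$, the function $\Psi$ can serve as a local coordinate transverse to ${\rm diag\,}(\mu^{-1}(0)\bigcap U)$, so at the level of formal Taylor series at $(p,p)$ one is permitted to divide by $\Psi$. Alternating divisions using the two equations above form the main technical content of the argument and will inductively force every Taylor coefficient of $f-1$ at $(p,p)$ to vanish, completing the proof.
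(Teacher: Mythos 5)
Your overall strategy — the idempotence $(S^{(q)}_{G,m})^2 = S^{(q)}_{G,m}$, the complex stationary-phase computation identifying the self-composed phase with $\Psi_3$ modulo infinite-order-vanishing terms, and then a phase-uniqueness argument — is exactly the route the paper takes. Steps 1 and 2 match the paper's argument: your cutoff manipulation near the diagonal of $\mu^{-1}(0)$, the reduction to $\Psi_1,\Psi_2$ via \eqref{e-gue170226II}, and the stationary phase in $\mathring{u}''$ are the content of \eqref{e-gue170227b}--\eqref{e-gue170227yI}.

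Step 3, which you concede is ``the main obstacle'' and leave as a sketch, is a genuine gap — but it is a self-imposed one. You invoke Theorem~\ref{t-gue140305II} as a black box to get $\Psi^\sharp = f\Psi + R$ with $f(x,x)=1$ only along $\mu^{-1}(0)\cap U$, and then you must separately force $f\equiv 1$ to infinite order. The paper's ``repeat the proof of Theorem~\ref{t-gue140305II} with minor change'' means re-running that proof, not quoting its statement. In that proof the $f$-factor arises solely from Malgrange preparation of the two phases into the shape $y_{2n+1}+h(x,\mathring{y})$; what is actually shown there is that $h-h_1$ vanishes to infinite order. Here both phases are \emph{already} in prepared form — $\Psi_3 = -x_{2n+1}+y_{2n+1}+\hat\Psi_3(\mathring{x}'',\mathring{y}'')$ by \eqref{e-gue170227I} and $\Psi = -x_{2n+1}+y_{2n+1}+\hat\Psi(\mathring{x}'',\mathring{y}'')$ by Theorem~\ref{t-gue170128a}, with coefficient identically $1$ on $y_{2n+1}$ — so the Malgrange factors are identically $1$, and the same $k$-scaling comparison of oscillatory integrals gives $\Psi_3-\Psi$ vanishing to infinite order directly, with no residual $f$. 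Your functional-equation route (differentiating $\Psi^\sharp = f\Psi + R$ in $x_{2n+1}$ and $y_{2n+1}$, subtracting, and dividing formal series by $\Psi$) can in fact be pushed through — the lowest-degree homogeneous part $g_k$ of $f-1$ satisfies $g_k+s\,\partial_s g_k=0$ with $s=x_{2n+1}-y_{2n+1}$, forcing $g_k=0$ — but ``alternating divisions ... will inductively force'' is not yet a proof, and there is no reason to take this detour when the prepared form of both phases makes it unnecessary.
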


\begin{proof}
We consider the kernel $S^{(q)}_{G,m}\circ S^{(q)}_{G,m}$ on $U$. Let $V\Subset U$ be an open set of $p$. Let $\chi(\mathring{x}'')\in C^\infty_0(\Omega_2\times\Omega_3)$. From \eqref{e-gue170227c}, we can extend $\chi(\mathring{x}'')$ to 
\[W:=\set{g\circ e^{i\theta}\circ x;\, (g,e^{i\theta})\in G\times S^1, x\in\Omega_2\times\Omega_3}\]
by $\chi(g\circ e^{i\theta}\circ\mathring{x}''):=\chi(\mathring{x}'')$, for every $(g,e^{i\theta})\in G\times S^1$. Assume that  $\chi=1$ on some neighborhood of $V$. Let $\chi_1\in C^\infty_0(U)$ with $\chi_1=1$ on some neighborhood 
of $V$ and ${\rm Supp\,}\chi_1\subset\set{x\in X;\, \chi(x)=1}$. We have 
\begin{equation}\label{e-gue170227b}
\chi_1S^{(q)}_{G,m}\circ S^{(q)}_{G,m}=\chi_1S^{(q)}_{G,m}\chi\circ S^{(q)}_{G,m}+\chi_1S^{(q)}_{G,m}(1-\chi)\circ S^{(q)}_{G,m}.
\end{equation}
Let's first consider $\chi_1S^{(q)}_{G,m}(1-\chi)\circ S^{(q)}_{G,m}$. We have 
\begin{equation}\label{e-gue170227bI}
(\chi_1S^{(q)}_{G,m}(1-\chi))(x,u)
=\frac{1}{\abs{G}_{d\mu}2\pi}\chi_1(x)\int^{\pi}_{-\pi}\int_GS^{(q)}(x,g\circ e^{i\theta}\circ u)(1-\chi(u))e^{im\theta}d\mu(g)d\theta.
\end{equation}
If $u\notin\set{x\in X;\, \chi(x)=1}$. Since ${\rm Supp\,}\chi_1\subset\set{x\in X;\, \chi(x)=1}$ and $\chi(x)=\chi(g\circ e^{i\theta}\circ x)$, for every $(g,e^{i\theta})\in G\times S^1$, for every $x\in X$, we conclude that $g\circ e^{i\theta}\circ u\notin{\rm Supp\,}\chi_1$, for every $(g,e^{i\theta})\in G\times S^1$. From this observation and notice that $S^{(q)}$ 
is smoothing away the diagonal, we can integrate by parts with respect to $\theta$ in \eqref{e-gue170227bI} and deduce that $\chi_1S^{(q)}_{G,m}\circ(1-\chi)=O(m^{-\infty})$ and hence 
\begin{equation}\label{e-gue170227bII}
\chi_1S^{(q)}_{G,m}(1-\chi)\circ S^{(q)}_{G,m}=O(m^{-\infty}). 
\end{equation}
From \eqref{e-gue170227b} and \eqref{e-gue170227bII}, we get 
\begin{equation}\label{e-gue170227bIII}
\chi_1S^{(q)}_{G,m}\circ S^{(q)}_{G,m}=\chi_1S^{(q)}_{G,m}\chi\circ S^{(q)}_{G,m}+O(m^{-\infty}). 
\end{equation}
We can check that on $U$, 
\begin{equation}\label{e-gue170227y}
\begin{split}
&(\chi_1S^{(q)}_{G,m}\chi\circ S^{(q)}_{G,m})(x,y)\\
&=(2\pi)\int e^{im\Psi(x,u'')+im\Psi(u'',y)}\chi_1(x)b(x,\mathring{u}'',m)\chi(\mathring{u}'')b(\mathring{u}'',y,m)dv(\mathring{u}'')+O(m^{-\infty})\\
&=(2\pi)\int e^{im\Psi_1(x,u'')+im\Psi_2(u'',y)}\chi_1(x)b(x,\mathring{u}'',m)\chi(\mathring{u}'')b(\mathring{u}'',y,m)dv(\mathring{u}'')+O(m^{-\infty})\\ 
&\mbox{(here we use \eqref{e-gue170226II})},
\end{split}
\end{equation}
where $d\mu(g)d\theta dv(\mathring{u}'')=dv(x)$ on $U$. We use complex stationary phase formula of Melin-Sj\"ostrand~\cite{MS74} to carry out the integral \eqref{e-gue170227y} and get 
\begin{equation}\label{e-gue170227yI}
\begin{split}
&(\chi_1S^{(q)}_{G,m}\chi\circ S^{(q)}_{G,m})(x,y)=e^{im\Psi_3(x,y)}a(x,y,m)+O(m^{-\infty})\ \ \mbox{on $U$}, \\
&a(x,y,m)\in S^{n-\frac{d}{2}}_{{\rm loc\,}}(1; U\times U, T^{*0,q}X\boxtimes(T^{*0,q}X)^*),\\
&\mbox{$a(x,y,m)\sim\sum^\infty_{j=0}m^{n-\frac{d}{2}-j}a_j(x,y)$ in $S^{n-\frac{d}{2}}_{{\rm loc\,}}(1; U\times U, T^{*0,q}X\boxtimes(T^{*0,q}X)^*)$},\\
&a_j(x,y)\in C^\infty(U\times U, T^{*0,q}X\boxtimes(T^{*0,q}X)^*),\ \ j=0,1,2,\ldots,\\
&a_0(p,p)\neq0.
\end{split}
\end{equation}
From \eqref{e-gue170227bIII}, \eqref{e-gue170227yI} and notice that $(\chi_1S^{(q)}_{G,m}\circ S^{(q)}_{G,m})(x,y)=(\chi_1S^{(q)}_{G,m})(x,y)$, we deduce that 
\begin{equation}\label{e-gue170227yII}
e^{im\Psi_3(x,y)}a(x,y,m)=e^{im\Psi(x,y)}\chi_1(x)b(x,y,m)+O(m^{-\infty})\ \ \mbox{on $U$}. 
\end{equation}
From \eqref{e-gue170227yII}, we can repeat the proof of Theorem~\ref{t-gue140305II} with minor change and deduce that $\Psi_3(x,y)-\Psi(x,y)$ vanishes to infinite order at ${\rm diag\,}\Bigr((\mu^{-1}(0)\bigcap U)\times(\mu^{-1}(0)\bigcap U)\Bigr)$.
\end{proof}

The following two theorems follow from \eqref{e-gue170226II}, \eqref{e-gue170226bI}, Theorem~\ref{t-gue170226cw}, complex stationary phase formula of Melin-Sj\"ostrand~\cite{MS74} and some straightforward computation. We omit the details. 

\begin{theorem}\label{t-gue170301w}
With the notations used above, let 
\[\begin{split}
&A_m(x,y)=e^{im\Psi(x,y)}a(x,y,m),\ \ B_m(x,y)=e^{im\Psi(x,y)}b(x,y,m),\\
&a(x,y,m)\in S^{k}_{{\rm loc\,}}(1; U\times U, H\boxtimes F^*),\\
&b(x,y,m)\in S^{\ell}_{{\rm loc\,}}(1; U\times U, F\boxtimes E^*),\\
&\mbox{$a(x,y,m)\sim\sum^\infty_{j=0}m^{k-j}a_j(x,y)$ in $S^{k}_{{\rm loc\,}}(1; U\times U, H\boxtimes F^*)$},\\
&\mbox{$b(x,y,m)\sim\sum^\infty_{j=0}m^{\ell-j}b_j(x,y)$ in $S^{\ell}_{{\rm loc\,}}(1; U\times U, F\boxtimes E^*)$},\\
&a_j(x,y)\in C^\infty(U\times U, H\boxtimes F^*),\ \ j=0,1,2,\ldots, \\
&b_j(x,y)\in C^\infty(U\times U, F\boxtimes E^*),\ \ j=0,1,2,\ldots, 
\end{split}\]
where $E$, $F$ and $H$ are vector bundles over $X$. 
Let $\chi(\mathring{x}'')\in C^\infty_0(\Omega_2\times\Omega_3)$. Then, we have 
\[\begin{split}
&\int A_m(x,u)\chi(\mathring{u}'')B_m(u,y)dv(\mathring{u}'')=e^{im\Psi(x,y)}c(x,y,m)+O(m^{-\infty}),\\
&c(x,y,m)\in S^{k+\ell-(n-\frac{d}{2})}_{{\rm loc\,}}(1; U\times U, H\boxtimes E^*),\\
&\mbox{$c(x,y,m)\sim\sum^\infty_{j=0}m^{k+\ell-(n-\frac{d}{2})-j}c_j(x,y)$ in $S^{k+\ell-(n-\frac{d}{2})}_{{\rm loc\,}}(1; U\times U, H\boxtimes E^*)$},\\
\end{split}\]
\begin{equation}\label{e-gue170301u}
c_0(x,x)=2^{-n-\frac{d}{2}}\pi^{n-\frac{d}{2}}\abs{\det\mathcal{L}_{x}}^{-1}\abs{\det R_x}^{\frac{1}{2}}a_0(x,x)b_0(x,x)\chi(\mathring{x}''),\ \ \forall x\in\mu^{-1}(0)\bigcap U,
\end{equation}
where $\abs{\det R_x}$ is in the discussion before Theorem~\ref{t-gue170128}. 

Moreover, if there are $N_1, N_2\in\mathbb N$, such that $\abs{a_0(x,y)}\leq C\abs{(x,y)-(x_0,x_0)}^{N_1}$,  $\abs{b_0(x,y)}\leq C\abs{(x,y)-(x_0,x_0)}^{N_2}$, 
for all $x_0\in\mu^{-1}(0)\bigcap U$, where $C>0$ is a constant, then, 
\begin{equation}\label{e-gue170301uI}
\abs{c_0(x,y)}\leq\hat C\abs{(x,y)-(x_0,x_0)}^{N_1+N_2}, 
\end{equation}
for all $x_0\in\mu^{-1}(0)\bigcap U$, where $\hat C>0$ is a constant.
\end{theorem}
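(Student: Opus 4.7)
The plan is to apply the complex stationary phase method of Melin-Sj\"ostrand~\cite{MS74} to the oscillatory integral defining the composition, namely
\[
\int e^{im(\Psi(x,u)+\Psi(u,y))}a(x,\mathring{u}'',m)\chi(\mathring{u}'')b(\mathring{u}'',y,m)\,dv(\mathring{u}''),
\]
viewed as an integral in the transverse variables $\mathring{u}''\in\Real^{2n-d}$. Exactly as in the proof of Theorem~\ref{t-gue170226cw}, I would first invoke \eqref{e-gue170226II} to replace $\Psi(x,u)$ by $\Psi_1(x,u'')$ and $\Psi(u,y)$ by $\Psi_2(u'',y)$ in the two exponentials, at the cost of an $O(m^{-\infty})$ error. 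This reduction is what makes the critical point analysis of the combined phase tractable: by construction (cf.\ \eqref{e-gue170226I}) the cross derivatives of $\Psi_1$ and $\Psi_2$ in $u_{d+1},\ldots,u_{2d}$ vanish on $\{u_{d+1}=\cdots=u_{2d}=0\}$, so the full critical system \eqref{e-gue170226IIIa} admits the unique solution $\beta(x,y)$ of the special form \eqref{e-gue170226b}, and the Hessian is nondegenerate by \eqref{e-gue170226a}.

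Next I apply Melin-Sj\"ostrand stationary phase in the $2n-d$ variables $\mathring{u}''$. The critical value is $\Psi_3(x,y)$ from \eqref{e-gue170226bI}, the symbol order drops by $\frac{2n-d}{2}=n-\frac{d}{2}$, yielding the predicted order $k+\ell-(n-\frac{d}{2})$, and the leading amplitude is
\[
c_0(x,y)=\frac{(2\pi)^{\frac{2n-d}{2}}\,a_0(x,\beta(x,y))\,\chi(\beta(x,y))\,b_0(\beta(x,y),y)}{\bigl[\det\bigl(-i(\Psi_1''+\Psi_2'')(x,\beta(x,y),y)\bigr)\bigr]^{1/2}}.
\]
Applying Theorem~\ref{t-gue170226cw} lets me replace $\Psi_3$ by $\Psi$ in the exponential modulo $O(m^{-\infty})$. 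To evaluate at $x_0\in\mu^{-1}(0)\bigcap U$, I note that $d_u\Psi(x_0,u)\big|_{u=x_0}+d_u\Psi(u,x_0)\big|_{u=x_0}=\omega_0(x_0)-\omega_0(x_0)=0$, so the critical point $\beta(x_0,x_0)$ corresponds to $u=x_0$ on the transverse slice. Substituting the explicit Hessian determinant from \eqref{e-gue170226a} together with $|\det\mathcal{L}_{x_0}|=|\mu_1|\cdots|\mu_n|$ and $|\det R_{x_0}|=4^d|\mu_1|\cdots|\mu_d|$ (both read off from \eqref{e-gue161219}) then yields \eqref{e-gue170301u}.

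For the vanishing estimate \eqref{e-gue170301uI}, the key observation is that the map $(x,y)\mapsto\beta(x,y)$ is smooth with $\beta(x_0,x_0)$ corresponding to $x_0$ at every $x_0\in\mu^{-1}(0)\bigcap U$, so the pullbacks $(x,y)\mapsto a_0(x,\beta(x,y))$ and $(x,y)\mapsto b_0(\beta(x,y),y)$ inherit vanishing orders $N_1$ and $N_2$ respectively at $(x_0,x_0)$; their product vanishes to order $N_1+N_2$, and since the Hessian and $\chi$ factors are smooth and uniformly bounded on compact sets, \eqref{e-gue170301uI} follows. The main obstacle I anticipate is purely computational bookkeeping: verifying that the numerical constants in the Hessian determinant combine with $(2\pi)^{\frac{2n-d}{2}}$ and the normalizations of $R_x$ and $\mathcal{L}_x$ to produce exactly the coefficient $2^{-n-\frac{d}{2}}\pi^{n-\frac{d}{2}}|\det\mathcal{L}_x|^{-1}|\det R_x|^{\frac{1}{2}}$ in \eqref{e-gue170301u}; the conceptual structure is already in place from Theorem~\ref{t-gue170226cw} together with the identities \eqref{e-gue170226a}--\eqref{e-gue170226bI}.
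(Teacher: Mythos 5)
Your proposal matches the paper's intended argument: the authors explicitly state that this theorem ``follows from \eqref{e-gue170226II}, \eqref{e-gue170226bI}, Theorem~\ref{t-gue170226cw}, complex stationary phase formula of Melin-Sj\"ostrand~\cite{MS74} and some straightforward computation,'' and that is exactly the chain you execute — replace each exponent $\Psi$ by the appropriate $\Psi_1$, $\Psi_2$ via \eqref{e-gue170226II} modulo $O(m^{-\infty})$, apply Melin--Sj\"ostrand in the $2n-d$ variables $\mathring{u}''$ using the Hessian non-degeneracy \eqref{e-gue170226a}, and convert the resulting critical phase $\Psi_3$ of \eqref{e-gue170226bI} back to $\Psi$ by Theorem~\ref{t-gue170226cw}. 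Your argument for the vanishing estimate \eqref{e-gue170301uI} via smoothness of the critical-point map $\beta$ with $\beta(x_0,x_0)$ corresponding to $x_0$ is also the intended one; note only that for non-real $\beta(x,y)$ you are implicitly composing with the almost analytic extensions of $a_0$, $b_0$, which is harmless because those extensions vanish to the same order along the real locus.

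One numerical slip worth flagging, since you single out the constant check as the uncertain step: $|\det R_{x_0}|=2^d|\mu_1|\cdots|\mu_d|$, not $4^d|\mu_1|\cdots|\mu_d|$. The factor of $4\mu_j$ in \eqref{e-gue161219} must be read against the normalization $\langle\,\partial/\partial x_j\,|\,\partial/\partial x_k\,\rangle=2\delta_{jk}$ from \eqref{e-gue161102}, so the eigenvalues of $R_{x_0}$ are $2\mu_j$; one can also read this off by comparing \eqref{e-gue170108I} with the general statement in Theorem~\ref{t-gue170128}. With the corrected value the bookkeeping closes: $(2\pi)^{n-\frac d2}$ divided by the square root of $|\det M|=8^d\,4^{2(n-d)}\,|\mu_1|\cdots|\mu_d|\,(|\mu_{d+1}|\cdots|\mu_n|)^2$ (from \eqref{e-gue170226a}, whose overall phase $i^{2n-d}$ is cancelled by $(-i)^{2n-d}$ in $\det(-iM)$) does reproduce $2^{-n-\frac d2}\pi^{n-\frac d2}|\det\mathcal{L}_x|^{-1}|\det R_x|^{\frac12}$.
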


\begin{theorem}\label{t-gue170301wI}
With the notations used above, let 
\[\begin{split}
&\mathcal{A}_m(x,\Td y'')=e^{im\Psi(x,\Td y'')}\alpha(x,\Td y'',m),\ \ \mathcal{B}_m(\Td x'',y)=e^{im\Psi(\Td x'',y)}\beta(\Td x'',y,m),\\
&\alpha(x,\Td y'',m)\in S^{k}_{{\rm loc\,}}(1; U\times(\Omega_3\times\Omega_4), H\boxtimes F^*),\\
&\beta(\Td x'',y,m)\in S^{\ell}_{{\rm loc\,}}(1; (\Omega_3\times\Omega_4)\times U, F\boxtimes E^*),\\
&\mbox{$\alpha(x,\Td y'',m)\sim\sum^\infty_{j=0}m^{k-j}\alpha_j(x,\Td y'')$ in $S^{k}_{{\rm loc\,}}(1; U\times(\Omega_3\times\Omega_4), H\boxtimes F^*)$},\\
&\mbox{$\beta(\Td x'',y,m)\sim\sum^\infty_{j=0}m^{\ell-j}\beta_j(\Td x'',y)$ in $S^{\ell}_{{\rm loc\,}}(1; (\Omega_3\times\Omega_4)\times U, F\boxtimes E^*)$},\\
&\alpha_j(x,\Td y'')\in C^\infty(U\times(\Omega_3\times\Omega_4), H\boxtimes F^*),\ \ j=0,1,2,\ldots, \\
&\beta_j(\Td x'',y)\in C^\infty((\Omega_3\times\Omega_4)\times U, F\boxtimes E^*),\ \ j=0,1,2,\ldots, 
\end{split}\]
where $E$, $F$ and $H$ are vector bundles over $X$. 
Let $\chi_1(\Td{\mathring{x}}'')\in C^\infty_0(\Omega_3)$. Then, we have 
\[\begin{split}
&\int\mathcal{A}_m(x,\Td u'')\chi_1(\Td{\mathring{u}}'')\mathcal{B}_m(\Td u'',y)dv(\Td{\mathring{u}})=e^{im\Psi(x,y)}\gamma(x,y,m)+O(m^{-\infty}),\\
&\gamma(x,y,m)\in S^{k+\ell-(n-d)}_{{\rm loc\,}}(1; U\times U, H\boxtimes E^*),\\
&\mbox{$\gamma(x,y,m)\sim\sum^\infty_{j=0}m^{k+\ell-(n-d)-j}\gamma_j(x,y)$ in $S^{k+\ell-(n-d)}_{{\rm loc\,}}(1; U\times U, H\boxtimes E^*)$},\\
\end{split}\]
\begin{equation}\label{e-gue170301ua}
\gamma_0(x,x)=2^{-n}\pi^{n-d}\abs{\det\mathcal{L}_{x}}^{-1}\abs{\det R_x}\alpha_0(x,\Td x'')\beta_0(\Td x'',x)\chi_1(\Td{\mathring{x}}''),\ \ \forall x\in\mu^{-1}(0)\bigcap U,
\end{equation}
where $\abs{\det R_x}$ is in the discussion before Theorem~\ref{t-gue170128}. 

Moreover, if there are $N_1, N_2\in\mathbb N$, such that $\abs{\alpha_0(x,\Td y'')}\leq C\abs{(x,\Td y'')-(x_0,x_0)}^{N_1}$,  $\abs{\beta_0(x,\Td y'')}\leq C\abs{(x,\Td y'')-(x_0,x_0)}^{N_2}$, 
for all $x_0\in\mu^{-1}(0)\bigcap U$, where $C>0$ is a constant, then, 
\begin{equation}\label{e-gue170301uaI}
\abs{\gamma_0(x,y)}\leq\hat C\abs{(x,y)-(x_0,x_0)}^{N_1+N_2}, 
\end{equation}
for all $x_0\in\mu^{-1}(0)\bigcap U$, where $\hat C>0$ is a constant.
\end{theorem}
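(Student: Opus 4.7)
My plan is to follow exactly the same pattern as the proof of Theorem~\ref{t-gue170301w}, replacing the $(2n-d)$-dimensional stationary phase argument by a $(2n-2d)$-dimensional one driven by the system \eqref{e-gue170226III} instead of \eqref{e-gue170226IIIa}. The starting point is to use Theorem~\ref{t-gue170226cw} together with \eqref{e-gue170226II} to replace $\Psi(x,\Td u'')$ in $\mathcal{A}_m$ by $\Psi_1(x,\Td u'')$, and $\Psi(\Td u'',y)$ in $\mathcal{B}_m$ by $\Psi_2(\Td u'',y)$, at the cost of functions vanishing to infinite order on the diagonal over $\mu^{-1}(0)$; these corrections can be absorbed into the amplitudes modulo $O(m^{-\infty})$ by the same equivalence-of-phases argument (Melin--Sj\"ostrand \cite{MS74}) that was used in the proof of Theorem~\ref{t-gue170226cw}.

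Next, I will apply the complex stationary phase formula of Melin--Sj\"ostrand to the integral
\[
\int e^{im(\Td\Psi_1(x,\Td u'')+\Td\Psi_2(\Td u'',y))}\chi_1(\Td{\mathring{u}}'')\alpha(x,\Td u'',m)\beta(\Td u'',y,m)\,dv(\Td{\mathring{u}}'')
\]
in the $N=2n-2d$ real variables $\Td{\mathring{u}}''=(u_{2d+1},\ldots,u_{2n})$. The critical point equations are precisely \eqref{e-gue170226III}, and the Hessian is non-degenerate near $(p,p)$ by the first identity of \eqref{e-gue170226a}. Hence near $(p,p)$ the unique almost-analytic critical point is the function $\alpha(x,y)$ from \eqref{e-gue170226bI}, and the critical value is $\Psi_3(x,y)$. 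Theorem~\ref{t-gue170226cw} allows us to replace $\Psi_3(x,y)$ by $\Psi(x,y)$ in the exponent, absorbing the (flat on diagonal) difference into the symbol via a Taylor expansion; the resulting expression has the claimed form $e^{im\Psi(x,y)}\gamma(x,y,m)+O(m^{-\infty})$, and the order count $k+\ell-(n-d)$ follows because stationary phase in $N$ real variables drops the order by $N/2=n-d$.

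For the leading symbol at $x=y\in\mu^{-1}(0)\cap U$, observe that $\alpha(x,x)=\Td x''$, so the principal amplitude equals
\[
\gamma_0(x,x)=\frac{(2\pi)^{n-d}}{\sqrt{\det\bigl(-i\,(\Td\Psi_1+\Td\Psi_2)''_{\Td u''\Td u''}(x,\Td x'',x)\bigr)}}\,\alpha_0(x,\Td x'')\,\beta_0(\Td x'',x)\,\chi_1(\Td{\mathring{x}}''),
\]
and inserting the Hessian determinant from \eqref{e-gue170226a} together with the identifications $|\det\mathcal{L}_x|=|\mu_1\cdots\mu_n|$ and $|\det R_x|=2^d|\mu_1\cdots\mu_d|$ produces the factor $2^{-n}\pi^{n-d}|\det\mathcal{L}_x|^{-1}|\det R_x|$ of \eqref{e-gue170301ua}, once one is careful with the normalization of the Riemannian volume form $dv(\Td{\mathring{u}}'')$ relative to the orthonormal basis of Theorem~\ref{t-gue161202z}. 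The vanishing estimate \eqref{e-gue170301uaI} is then immediate: since the Melin--Sj\"ostrand formula gives $\gamma_0(x,y)$ as a smooth non-vanishing multiple of $\alpha_0(x,\alpha(x,y))\beta_0(\alpha(x,y),y)\chi_1(\alpha_{2d+1,\ldots,2n}(x,y))$, and since $\alpha(x_0,x_0)=\Td x_0''$ for $x_0\in\mu^{-1}(0)$, any vanishing of $\alpha_0$ and $\beta_0$ at $(x_0,x_0)$ of orders $N_1$ and $N_2$ is multiplicatively inherited by $\gamma_0$.

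The main obstacle I expect is purely computational: the bookkeeping of the factors of $2$, $\pi$, and $i$ that arise when comparing the algebraic determinant of \eqref{e-gue170226a} to the intrinsic invariants $|\det\mathcal{L}_x|$ and $|\det R_x|$, including the Jacobian of $dv(\Td{\mathring{u}}'')$ with respect to the orthonormal frame $\{Z_{d+1},\ldots,Z_n\}$. The analytic mechanism, namely stationary phase plus equivalence of phases, is entirely standard once the preparatory results of Section~\ref{s-gue170226} are in hand.
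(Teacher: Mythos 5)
Your approach is exactly the one the paper has in mind. The paper states that Theorems~\ref{t-gue170301w} and \ref{t-gue170301wI} "follow from \eqref{e-gue170226II}, \eqref{e-gue170226bI}, Theorem~\ref{t-gue170226cw}, complex stationary phase formula of Melin--Sj\"ostrand and some straightforward computation" and omits the details; your proposal fills those in with the same ingredients in the expected way: replace $\Psi$ by $\Psi_1,\Psi_2$ using \eqref{e-gue170226II}, do stationary phase in the $2n-2d$ variables $\Td{\mathring{u}}''$ using the critical-point system \eqref{e-gue170226III} and the Hessian non-degeneracy \eqref{e-gue170226a}, obtain $\Psi_3$ at the critical point, revert to $\Psi$ via Theorem~\ref{t-gue170226cw}, and read off the order drop $N/2=n-d$ and the leading symbol (with the one caveat, which you correctly flag, that getting the exact constant in \eqref{e-gue170301ua} requires tracking the Hessian determinant, the factor $\sqrt{\det(-iH)}$, and the Jacobian of $dv(\Td{\mathring{u}}'')$ against the orthonormal frame).
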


\subsection{The proof of Theorem~\ref{t-gue170122}}\label{s-gue170303}

Since $\underline{\mathfrak{g}}_x$ is orthogonal to $H_xY\bigcap JH_xY$ and $H_xY\bigcap JH_xY\subset\underline{\mathfrak{g}}^{\perp_b}_x$, for every $x\in Y$, we can find a $G$-invariant orthonormal basis $\set{Z_1,\ldots,Z_n}$ of $T^{1,0}X$ on $Y$ such that 
\[\mathcal{L}_x(Z_j(x),\ol Z_k(x))=\delta_{j,k}\lambda_j(x),\ \ j,k=1,\ldots,n,\ \ x\in Y,\]
and 
\[\begin{split}
&Z_j(x)\in\underline{\mathfrak{g}}_x+iJ\underline{\mathfrak{g}}_x,\ \ \forall x\in Y,\ \ j=1,2,\ldots,d,\\
&Z_j(x)\in\Complex H_xY\bigcap J(\Complex H_xY),\ \ \forall x\in Y,\ \ j=d+1,\ldots,n.
\end{split}\]
Let $\set{e_1,\ldots,e_n}$ denote the orthonormal basis of $T^{*0,1}X$ on $Y$, dual to $\set{\ol Z_1,\ldots,\ol Z_n}$.
Fix $s=0,1,2,\ldots,n-d$. For $x\in Y$, put 
\begin{equation}\label{e-gue170303}
B^{*0,s}_xX=\set{\sum_{d+1\leq j_1<\cdots<j_s\leq n}a_{j_1,\ldots,j_s}e_{j_1}\wedge\cdots\wedge e_{j_s};\, a_{j_1,\ldots,j_s}\in\Complex,\ \forall d+1\leq j_1<\cdots<j_s\leq n}
\end{equation}
and let $B^{*0,s}X$ be the vector bundle of $Y$ with fiber $B^{*0,s}_xX$, $x\in Y$. Let $C^\infty(Y,B^{*0,s}X)^G$ denote the set of all $G$-invariant sections of $Y$ with values in $B^{*0,s}X$. Let 
\begin{equation}\label{e-gue170303cw}
\iota_G:C^\infty(Y,B^{*0,s}X)^G\To\Omega^{0,s}(Y_G)
\end{equation}
be the natural identification. 

Assume that $\lambda_1<0,\ldots,\lambda_r<0$, and $\lambda_{d+1}<0,\ldots,\lambda_{n_--r+d}<0$. For $x\in Y$, put 
\begin{equation}\label{e-gue170303I}
\hat{\mathcal{N}}(x,n_-)=\set{ce_{d+1}\wedge\cdots\wedge e_{n_--r+d};\, c\in\Complex},
\end{equation}
and let 
\begin{equation}\label{e-gue170303cwI}
\begin{split}
&\hat p=\hat p_x:\mathcal{N}(x,n_-)\To \hat{\mathcal{N}}(x,n_-),\\
&u=ce_1\wedge\cdots\wedge e_r\wedge e_{d+1}\wedge\cdots\wedge e_{n_--r+d}\To ce_{d+1}\wedge\cdots\wedge e_{n_--r+d}.
\end{split}
\end{equation}
Let $\iota:Y\To X$ be the natural inclusion and let $\iota^*:\Omega^{0,q}(X)\To\Omega^{0,q}(Y)$ be the pull-back of $\iota$. Recall that we work with the assumption that $q=n_-$. 
Let $\Box^{(q-r)}_{b,Y_G}$ be the Kohn Laplacian for $(0,q-r)$ forms on $Y_G$. Fix $m\in\mathbb N$. Let 
\[H^{q-r}_{b,m}(Y_G):=\set{u\in\Omega^{0,q-r}(Y_G);\, \Box^{(q-r)}_{b,Y_G}u=0,\ \ Tu=imu}.\]
Let $S^{(q-r)}_{Y_G,m}:L^2_{(0,q-r)}(Y_G)\To H^{q-r}_{b,m}(Y_G)$ be the orthogonal projection and let $S^{(q-r)}_{Y_G,m}(x,y)$ be the distribution kernel of $S^{(q-r)}_{Y_G,m}$. Let 
\begin{equation}\label{e-gue170303cwa}
f(x)=\sqrt{V_{{\rm eff\,}}(x)}\abs{\det\,R_x}^{-\frac{1}{4}}\in C^\infty(Y)^G.
\end{equation}
Let
\begin{equation}\label{e-gue170303cwII}
\begin{split}
\sigma_m:\Omega^{0,q}(X)&\To H^{q-r}_{b,m}(Y_G),\\
u&\To m^{-\frac{d}{4}}S^{(q-r)}_{Y_G,m}\circ\iota_G\circ \hat p\circ\tau_{x,n_-}\circ f\circ \iota^*\circ S^{(q)}_{G,m}u.
\end{split}
\end{equation}
%and let 
%\begin{equation}\label{e-gue170303cwIIa}
%\begin{split}
%\hat\sigma_m:\Omega^{0,q}(X)&\To H^{q-r}_{b,m}(Y_G),\\
%u&\To m^{-\frac{d}{4}}S^{(q-r)}_{Y_G,m}\circ\iota_G\circ \hat p\circ\tau_{x,n_-}\circ\hat f\circ \pi^*\circ S^{(q)}_{G,m}u
%\end{split}
%\end{equation}
Recall that $\tau_{x,n_-}$ is given by \eqref{tau140530}. Let $\sigma^*_m:\Omega^{0,q-r}(Y_G)\To \Omega^{0,q}(X)$ be the adjoints of $\sigma_m$. It is easy to see that 
\[\sigma^*_mu\in H^q_{b,m}(X)^G:=({\rm Ker\,}\Box^{(q)}_b)^G_m,\ \ \forall u\in\Omega^{0,q-r}(Y_G).\]
Let $\sigma_m(x,y)$ and $\sigma^*_m(x,y)$ denote the distribution kernels of $\sigma_m$ and $\sigma^*_m$, respectively.

Let's pause and recall some well-known results for $S^{(q-r)}_{Y_G,m}$. We first introduce some notations.  Let $\mathcal{L}_{Y_G,x}$ be the Levi form on $Y_G$  at $x\in Y_G$ induced naturally from $\mathcal{L}$. The Hermitian metric $\langle\,\cdot\,|\,\cdot\,\rangle$ on $T^{1,0}X$ induces a Hermitian metric $\langle\,\cdot\,|\,\cdot\,\rangle$ on $T^{1,0}Y_G$. Let $\det\,\mathcal{L}_{Y_G,x}=\lambda_1\ldots\lambda_{n-d}$, where $\lambda_j$, $j=1,\ldots,n-d$, are the eigenvalues of $\mathcal{L}_{Y_G,x}$ with respect to the Hermitian metric $\langle\,\cdot\,|\,\cdot\,\rangle$. For $x\in Y_G$, let 
\[\hat\tau_x:T^{*0,q-r}_xY_G\To \hat{\mathcal{N}}(x,n_-)\]
be the orthogonal projection. 

Let $\pi:Y\To Y_G$ be the natural quotient. Let $S^{(q-r)}_{Y_G}:L^2_{(0,q-r)}(Y_G)\To{\rm Ker\,}\Box^{(q-r)}_{b,Y_G}$ be the Szeg\"o projection as \eqref{e-suXI-I}. Since $S^{(q-r)}_{Y_G}$ is smoothing away the diagonal (see Theorem~\ref{t-gue161109I}), it is easy to see that for  any $x, y\in Y$, if $\pi(e^{i\theta}\circ x)\neq\pi(e^{i\theta}\circ y)$, for every $\theta\in[0,2\pi[$, then there are open sets $U$ of $\pi(x)$ in $Y_G$ and $V$ of $\pi(y)$ in $Y_G$ such that for all $\hat\chi\in C^\infty_0(U)$, $\Td\chi\in C^\infty_0(V)$, we have 
\begin{equation}\label{e-gue170304}
\hat\chi S^{(q-r)}_{Y_G,m}\Td\chi=O(m^{-\infty})\ \ \mbox{on $Y_G$}. 
\end{equation}
Fix $p\in Y$ and let $x=(x_1,\ldots,x_{2n+1})$ be the local coordinates as in Remark~\ref{r-gue170309}. We will use the same notations as in the beginning of Section~\ref{s-gue170226}. From now on, we identify $\Td x''$ as local coordinates of $Y_G$ near $\pi(p)\in Y_G$ and we identify $W:=\Omega_3\times\Omega_4$ with an open set of $\pi(p)$ in $Y_G$. It is well-known that (see Theorem 4.11 in~\cite{HM14a}), as $m\To+\infty$, 
\begin{equation}\label{e-gue170304I}
\begin{split}
&S^{(q-r)}_{Y_G,m}(\Td x'',\Td y'')=e^{im\phi(\Td x'',\Td y'')}b(\Td x'',\Td y'',m)+O(m^{-\infty})\ \ \mbox{on $W$},\\
&\beta(\Td x'',\Td y'',m)\in S^{n-d}_{{\rm loc\,}}(1; W\times W, T^{*0,q-r}Y_G\boxtimes(T^{*0,q-r}Y_G)^*),\\
&\mbox{$\beta(\Td x'',\Td y'',m)\sim\sum^\infty_{j=0}m^{n-d-j}b_j(\Td x'',\Td y'')$ in $S^{n-d}_{{\rm loc\,}}(1; W\times W, T^{*0,q-r}Y_G\boxtimes(T^{*0,q-r}Y_G)^*)$},\\
&\beta_j(\Td x'',\Td y'')\in C^\infty(W\times W, T^{*0,q-r}Y_G\boxtimes(T^{*0,q-r}Y_G)^*),\ \ j=0,1,2,\ldots,
\end{split}
\end{equation}
\begin{equation}\label{e-gue170304II}
\beta_0(\Td x'',\Td x'')=\frac{1}{2}\pi^{-(n-d)-1}\abs{\det\,\mathcal{L}_{Y_G,\Td x''}}\hat\tau_{\Td x''},\ \ \forall \Td x''\in W,
\end{equation}
and 
\begin{equation}\label{e-gue170304III}
\begin{split}
&\phi(\Td x'',\Td y'')=-x_{2n+1}+y_{2n+1}+\hat\phi(\Td{\mathring{x}}'',\Td{\mathring{y}}'')\in C^\infty(W\times W),\\
&d_{\Td x''}\phi(\Td x'',\Td y'')=-d_{\Td y''}\phi(\Td x'',\Td x'')=-\omega_0(\Td x''),\\
&{\rm Im\,}\hat\phi(\Td{\mathring{x}}'',\Td{\mathring{y}}'')\geq c\abs{\Td{\mathring{x}}''-\Td{\mathring{y}}''}^2,\ \ \mbox{where $c>0$ is a constant}, \\
&\mbox{$p_0(\Td x'', d_{\Td x''}\phi(\Td x'',\Td y''))$ vanishes to infinite order at $\Td{\mathring{x}}''=\Td{\mathring{y}}''$},\\
&\phi(\Td x'', \Td y'')=-x_{2n+1}+y_{2n+1}+i\sum^{n}_{j=d+1}\abs{\mu_j}\abs{z_j-w_j}^2 \\
&\quad+\sum^{n}_{j=d+1}i\mu_j(\ol z_jw_j-z_j\ol w_j)+O(\abs{(\Td{\mathring{x}}'', \Td{\mathring{y}}'')}^3),
\end{split}
\end{equation}
where $p_0$ denotes the principal symbol of $\Box^{(q-r)}_{b,Y_G}$, $z_j=x_{2j-1}+ix_{2j}$, $j=d+1,\ldots,n$, and $\mu_{d+1},\ldots,\mu_n$ are the eigenvalues of $\mathcal{L}_{Y_G,p}$. 

It is well-known that (see Remark 3.6 in~\cite{HM14a} ) for any $\phi_1(\Td x'',\Td y'')\in C^\infty(W\times W)$, if $\phi_1$ satisfies \eqref{e-gue170304III}, then $\phi_1-\phi$ vanishes to infinite order at $\Td{\mathring{x}}''=\Td{\mathring{y}}''$. It is not difficult to see that the phase function $\Psi(\Td x'',\Td y'')$ satisfies \eqref{e-gue170304III}. Hence, we can replace the phase $\phi(\Td x'',\Td y'')$ by $\Psi(\Td x'',\Td y'')$ and we have 
\begin{equation}\label{e-gue170304ry}
S^{(q-r)}_{Y_G,m}(\Td x'',\Td y'')=e^{im\Psi(\Td x'',\Td y'')}\beta(\Td x'',\Td y'',m)+O(m^{-\infty})\ \ \mbox{on $W$}.
\end{equation}
We can now prove 

\begin{theorem}\label{t-gue170304ry}
With the notations used above, if $y\notin Y$, then for any open set $D$ of $y$ with $\ol D\bigcap Y=\emptyset$, we have
\begin{equation}\label{e-gue170304ryI}
\sigma_m=O(m^{-\infty})\ \ \mbox{on $Y_G\times D$.}
\end{equation}

Let $x, y\in Y$.  If $\pi(e^{i\theta}\circ x)\neq\pi(e^{i\theta}\circ y)$, for every $\theta\in[0,2\pi[$, then there are open sets $U_G$ of $\pi(x)$ in $Y_G$ and $V$ of $y$ in $X$ such that 
\begin{equation}\label{e-gue170304ryII}
\sigma_m=O(m^{-\infty})\ \ \mbox{on $U_G\times V$}. 
\end{equation}

Let $p\in\mu^{-1}(0)$  and let $x=(x_1,\ldots,x_{2n+1})$ be the local coordinates as in Remark~\ref{r-gue170309}. Then, 
\begin{equation}\label{e-gue170304ryIII}
\begin{split}
&\sigma_m(\Td x'',y)=e^{im\Psi(\Td x'',y'')}\alpha(\Td x'',y'',m)+O(m^{-\infty})\ \ \mbox{on $W\times U$},\\
&\alpha(\Td x'',y'',m)\in S^{n-\frac{3}{4}d}_{{\rm loc\,}}(1; W\times U, T^{*0,q-r}Y_G\boxtimes(T^{*0,q}X)^*),\\
&\mbox{$\alpha(\Td x'',y'',m)\sim\sum^\infty_{j=0}m^{n-\frac{3}{4}d-j}\alpha_j(\Td x'',y'')$ in $S^{n-\frac{3}{4}d}_{{\rm loc\,}}(1; W\times U, T^{*0,q-r}Y_G\boxtimes(T^{*0,q}X)^*)$},\\
&\alpha_j(\Td x'',y'')\in C^\infty(W\times U, T^{*0,q-r}Y_G\boxtimes (T^{*0,q}X)^*),\ \ j=0,1,2,\ldots,
\end{split}
\end{equation}
\begin{equation}\label{e-gue170304rc}
\alpha_0(\Td x'',\Td x'')=2^{-n+2d-1}\pi^{\frac{d}{2}-n-1}\frac{1}{\sqrt{V_{{\rm eff\,}}(\Td x'')}}\abs{\det\,\mathcal{L}_{\Td x''}}\abs{\det\,R_x}^{-\frac{3}{4}}\hat\tau_{\Td x''}\tau_{\Td x'',n_-},\ \ \forall \Td x''\in W,
\end{equation}
where $U$ is an open set of $p$, $W=\Omega_3\times\Omega_4$, $\Omega_3$ and $\Omega_4$ are open sets as in the beginning of Section~\ref{s-gue170226}. 
\end{theorem}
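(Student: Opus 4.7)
The plan is to analyze the composition kernel
\[
\sigma_m(\tilde x'',y)=m^{-\frac{d}{4}}\int S^{(q-r)}_{Y_G,m}(\tilde x'',\tilde u'')\,\bigl(\hat p\circ\tau_{\tilde u'',n_-}\circ f\bigr)\bigl(\iota^*S^{(q)}_{G,m}\bigr)(\tilde u'',y)\,dv(\tilde u'')
\]
by combining Theorem~\ref{t-gue170128I} (asymptotics of $S^{(q)}_{G,m}$) with the known asymptotics \eqref{e-gue170304I}--\eqref{e-gue170304III} of $S^{(q-r)}_{Y_G,m}$ via the composition formula of Theorem~\ref{t-gue170301wI}, treating the three regimes of the statement in turn.

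For \eqref{e-gue170304ryI}, if $\ol D\cap Y=\emptyset$ then $d(y,\mu^{-1}(0))$ is uniformly bounded below on $D$. A covering argument combining the local asymptotics of Theorem~\ref{t-gue170128I} (whose phase $\Psi$ satisfies the sharp lower bound \eqref{e-gue170117pVIIIb}) with the smoothing statement away from $\mu^{-1}(0)$ yields $\iota^*S^{(q)}_{G,m}(\tilde u'',y)=O(m^{-\infty})$ uniformly on $W\times D$; substitution then gives \eqref{e-gue170304ryI}. For \eqref{e-gue170304ryII}, pick small disjoint $S^1$-invariant neighbourhoods $N_x$ of $\pi(x)$ and $N_y$ of $\pi(y)$ in $Y_G$ consistent with the hypothesis, and cut off the $\tilde u''$ integration by $\rho$ supported near $N_x$. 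On $\mathrm{supp}\,\rho$ the lifted $\tilde u''$ lies at positive distance from the $G\times S^1$-orbit of $y$ in $X$, so $\iota^*S^{(q)}_{G,m}(\tilde u'',y)=O(m^{-\infty})$ by the phase lower bound of Theorem~\ref{t-gue170128I}; on $\mathrm{supp}(1-\rho)$, $S^{(q-r)}_{Y_G,m}(\tilde x'',\tilde u'')=O(m^{-\infty})$ by \eqref{e-gue170304}. Adding gives \eqref{e-gue170304ryII}.

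For the main claim \eqref{e-gue170304ryIII}, the crucial step is to write both factors with the \emph{same} phase $\Psi$. For $S^{(q)}_{G,m}$ this is Theorem~\ref{t-gue170128I}; for $S^{(q-r)}_{Y_G,m}$ the phase $\phi$ of \eqref{e-gue170304I} can be replaced by $\Psi|_{W\times W}$ because $\Psi|_{W\times W}$ satisfies the characterisation \eqref{e-gue170304III} and the Szeg\"{o} phase is unique modulo infinite-order vanishing on the diagonal (this substitution is exactly the argument used to obtain \eqref{e-gue170304ry}). With
\[
S^{(q-r)}_{Y_G,m}(\tilde x'',\tilde u'')\equiv e^{im\Psi(\tilde x'',\tilde u'')}\beta(\tilde x'',\tilde u'',m),\qquad \iota^*S^{(q)}_{G,m}(\tilde u'',y)\equiv e^{im\Psi(\tilde u'',y)}b(\tilde u'',y,m),
\]
of respective symbol orders $k=n-d$ and $\ell=n-\frac{d}{2}$, Theorem~\ref{t-gue170301wI} produces a composition with phase $\Psi(\tilde x'',y)$ and amplitude of order $k+\ell-(n-d)=n-\frac{d}{2}$; multiplication by $m^{-d/4}$ yields the advertised order $n-\frac{3d}{4}$. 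The leading coefficient \eqref{e-gue170304rc} follows by feeding \eqref{e-gue170304II}, \eqref{e-gue170117pVIIIam}, and the scalar $f(\tilde x'')=\sqrt{V_{\mathrm{eff}}(\tilde x'')}\,|\det R_{\tilde x''}|^{-1/4}$ into \eqref{e-gue170301ua}, together with the normalisation $|\det R_x|=2^d|\mu_1\cdots\mu_d|$ inherent in the coordinates of Theorem~\ref{t-gue161202z}.

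The main obstacle is precisely the common-phase identification in the third step; without it Theorem~\ref{t-gue170301wI} cannot be invoked directly, and one would be left with incompatible phases coming from the two Szeg\"{o} projections. All other ingredients --- the $O(m^{-\infty})$ decay in the first two regimes, the bookkeeping of symbol orders, and the leading coefficient computation --- are routine once the common phase is in place.
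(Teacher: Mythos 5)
Your proposal is correct and follows essentially the same route as the paper: all three parts match the paper's argument, including the decisive common-phase substitution $\phi\rightsquigarrow\Psi$ via the characterisation \eqref{e-gue170304III} (which is exactly how the paper derives \eqref{e-gue170304ry}), the cutoff localisation in the $\tilde u''$-variable, and the invocation of Theorem~\ref{t-gue170301wI} for the amplitude order and leading coefficient. The only cosmetic difference is that the paper writes out the $\chi$, $\chi_1$ cutoffs explicitly in the third regime, whereas you fold that into your $\rho$-cutoff in the second regime and treat the third as routine; this is harmless once the common phase is in place.
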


\begin{proof}
Note that $S^{(q)}_{G,m}=O(m^{-\infty})$ away $Y$. From this observation, we get \eqref{e-gue170304ryI}. Let $x, y\in Y$.  Assume that $\pi(e^{i\theta}\circ x)\neq\pi(e^{i\theta}\circ y)$, for every $\theta\in[0,2\pi[$. Since 
\[S^{(q)}_{G,m}(x,y)=\frac{1}{2\pi\abs{G}_{d\mu}}\int^{\pi}_{-\pi}\int_GS^{(q)}(x,e^{i\theta}\circ g\circ y)e^{im\theta}d\mu(g)d\theta\]
and $S^{(q)}$ is smoothing away the diagonal, we can integrate by parts with respect to $\theta$ and deduce that 
 there are open sets $U_1$ of $x$ in $X$ and $V_1$ of $y$ in $X$ such that 
\begin{equation}\label{e-gue170305}
S^{(q)}_{G,m}=O(m^{-\infty})\ \ \mbox{on $U_1\times V_1$}. 
\end{equation}
From \eqref{e-gue170304}, we see that  there are open sets $\hat U_G$ of $\pi(x)$ in $Y_G$ and $\hat V_G$ of $\pi(y)$ in $Y_G$ such that 
\begin{equation}\label{e-gue170305I}
S^{(q-r)}_{Y_G,m}=O(m^{-\infty})\ \ \mbox{on $\hat U_G\times\hat V_G$}. 
\end{equation}
From \eqref{e-gue170305} and \eqref{e-gue170305I}, we get \eqref{e-gue170304ryII}. 

Fix $u=(u_1,\ldots,u_{2n+1})\in Y\bigcap U$. From \eqref{e-gue170304ryI} and \eqref{e-gue170304ryII}, we only need to show that \eqref{e-gue170304ryIII} and \eqref{e-gue170304rc} hold near $u$ and  we may assume that $u=(0,\ldots,0,u_{2d+1},\ldots,u_{2n},0)=\Td{\mathring{u}}''$. Let $V$ be a small neighborhood of $u$. 
 Let $\chi(\Td{\mathring{x}}'')\in C^\infty_0(\Omega_3)$. From \eqref{e-gue170227c}, we can extend $\chi(\Td{\mathring{x}}'')$ to 
\[Q=\set{g\circ e^{i\theta}\circ x;\, (g,e^{i\theta})\in G\times S^1, x\in \Omega_3}\]
by $\chi(g\circ e^{i\theta}\circ\Td{\mathring{x}}''):=\chi(\Td{\mathring{x}}'')$, for every $(g,e^{i\theta})\in G\times S^1$. Assume that $\chi=1$ on some neighborhood of $V$. Let $V_G=\set{\pi(x);\, x\in V}$. 
Let $\chi_1\in C^\infty_0(Y_G)$ with $\chi_1=1$ on some neighborhood 
of $V_G$ and ${\rm Supp\,}\chi_1\subset\set{\pi(x)\in Y_G;\, x\in Y, \chi(x)=1}$. We have 
\begin{equation}\label{e-gue170227bq}
\begin{split}
\chi_1\sigma_m&=m^{-\frac{d}{4}}\chi_1S^{(q-r)}_{Y_G,m}\circ\iota_G\circ \hat p\circ\tau_{x,n_-}\circ f\circ \iota^*\circ S^{(q)}_{G,m}\\
&=m^{-\frac{d}{4}}\chi_1S^{(q-r)}_{Y_G,m}\circ\iota_G\circ \hat p\circ\tau_{x,n_-}\circ f\circ \iota^*\circ \chi S^{(q)}_{G,m}\\
&\quad+m^{-\frac{d}{4}}\chi_1S^{(q-r)}_{Y_G,m}\circ\iota_G\circ \hat p\circ\tau_{x,n_-}\circ f\circ \iota^*\circ(1-\chi)S^{(q)}_{G,m}.
\end{split}
\end{equation}
If $u\in Y$ but $u\notin\set{x\in X;\, \chi(x)=1}$. Since ${\rm Supp\,}\chi_1\subset\set{\pi(x)\in X;\, x\in Y, \chi(x)=1}$ and $\chi(x)=\chi(g\circ e^{i\theta}\circ x)$, for every $(g,e^{i\theta})\in G\times S^1$, for every $x\in X$, we conclude that $\pi(e^{i\theta}\circ u)\notin{\rm Supp\,}\chi_1$, for every $e^{i\theta}\in S^1$. From this observation and \eqref{e-gue170304}, we get 
\begin{equation}\label{e-gue170227bIIq}
m^{-\frac{d}{4}}\chi_1S^{(q-r)}_{Y_G,m}\circ\iota_G\circ \hat p\circ\tau_{x,n_-}\circ f\circ \iota^*\circ(1-\chi)S^{(q)}_{G,m}=O(m^{-\infty})\ \ \mbox{on $Y_G\times X$}. 
\end{equation}
From \eqref{e-gue170227bq} and \eqref{e-gue170227bIIq}, we get 
\begin{equation}\label{e-gue170227bIIIq}
\chi_1\sigma_m=m^{-\frac{d}{4}}\chi_1S^{(q-r)}_{Y_G,m}\circ\iota_G\circ \hat p\circ\tau_{x,n_-}\circ f\circ \iota^*\circ \chi S^{(q)}_{G,m}+O(m^{\infty})\ \ \mbox{on $Y_G\times X$}. 
\end{equation}
From \eqref{e-gue170304ry} and Theorem~\ref{t-gue170128I}, we can check that on $U$, 
\begin{equation}\label{e-gue170227yq}
\begin{split}
&\chi_1\sigma_m(\Td x'',y)\\
&=(2\pi)\int e^{im\Psi(\Td x'',\Td v'')+im\Psi(v'',y)}\chi_1(\Td x)\beta(\Td x'',\Td{\mathring{v}}'',m)\hat b(\Td{\mathring{v}}'',y,m)dv(\Td{\mathring{v}}'')+O(m^{-\infty}),
\end{split}
\end{equation}
where $\hat b(\Td{\mathring{v}}'',y,m)=\Bigr(\iota_G\circ \hat p\circ\tau_{x,n_-}\circ f\circ \iota^*\circ \chi(\Td{\mathring{v}}'')\circ b\Bigr)(\Td{\mathring{v}}'',y,m)$. From \eqref{e-gue170227yq} and Theorem~\ref{t-gue170301wI}, we see that \eqref{e-gue170304ryIII} and \eqref{e-gue170304rc} hold near $u$. The theorem follows. 
\end{proof}

Let  
\begin{equation}\label{e-gue170305a}
\begin{split}
&F_m:=\sigma_m^*\sigma_m:\Omega^{0,q}(X)\To H^{q}_{b,m}(X)^G,\\
&\hat F_m:=\sigma_m\sigma^*_m:\Omega^{0,q-r}(Y_G)\To H^{q-r}_{b,m}(Y_G).
\end{split}
\end{equation}
Let $F_m(x,y)$ and $\hat F_m(x,y)$ be the distribution kernels of $F_m$ and $\hat F_m$ respectively. From Theorem~\ref{t-gue170301w}, Theorem~\ref{t-gue170301wI}, we can repeat the proof of Theorem~\ref{t-gue170304ry} with minor change and deduce the following two theorems

\begin{theorem}\label{t-gue170305a}
With the notations used above, if $y\notin Y$, then for any open set $D$ of $y$ with $\ol D\bigcap Y=\emptyset$, we have
\begin{equation}\label{e-gue170304ryIp}
F_m=O(m^{-\infty})\ \ \mbox{on $X\times D$.}
\end{equation}

Let $x, y\in Y$.  If $\pi(e^{i\theta}\circ x)\neq\pi(e^{i\theta}\circ y)$, for every $\theta\in[0,2\pi[$, then there are open sets $D_1$ of $x$ in $X$ and $D_2$ of $y$ in $X$ such that 
\begin{equation}\label{e-gue170305b}
F_m=O(m^{-\infty})\ \ \mbox{on $D_1\times D_2$}. 
\end{equation}

Let $p\in\mu^{-1}(0)$  and let $x=(x_1,\ldots,x_{2n+1})$ be the local coordinates as in Remark~\ref{r-gue170309}. Then, 
\begin{equation}\label{e-gue170305bI}
\begin{split}
&F_m(x,y)=e^{im\Psi(x'',y'')}a(x'',y'',m)+O(m^{-\infty})\ \ \mbox{on $U\times U$},\\
&a(x'',y'',m)\in S^{n-\frac{d}{2}}_{{\rm loc\,}}(1; U\times U, T^{*0,q}X\boxtimes (T^{*0,q}X)^*),\\
&\mbox{$a(x'',y'',m)\sim\sum^\infty_{j=0}m^{n-\frac{d}{2}-j}a_j(\Td x'',y'')$ in $S^{n-\frac{d}{2}}_{{\rm loc\,}}(1; U\times U, T^{*0,q}X\boxtimes(T^{*0,q}X)^*)$},\\
&a_j(x'',y'')\in C^\infty(U\times U, T^{*0,q}X\boxtimes(T^{*0,q}X)^*),\ \ j=0,1,2,\ldots,
\end{split}
\end{equation}
\begin{equation}\label{e-gue170305bII}
a_0(\Td x'',\Td x'')=2^{-3n+4d-1}\pi^{-n-1}\frac{1}{V_{{\rm eff\,}}(\Td x'')}\abs{\det\,\mathcal{L}_{\Td x''}}\abs{\det\,R_x}^{-\frac{1}{2}}\tau_{\Td x'',n_-},\ \ \forall \Td x''\in U\bigcap Y,
\end{equation}
where $U$ is an open set of $p$. 
\end{theorem}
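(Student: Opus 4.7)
The plan is to compute the distribution kernel $F_m(x,y) = \int_{Y_G}\sigma_m^*(x,\Td u'')\sigma_m(\Td u'',y)\,dv_{Y_G}(\Td u'')$ by combining the Fourier integral representation of $\sigma_m$ supplied by Theorem~\ref{t-gue170304ry} with the composition formula Theorem~\ref{t-gue170301wI}. First, taking formal adjoints in \eqref{e-gue170304ryIII} and invoking \eqref{e-gue170225I} to replace $-\ol{\Psi}(\Td y'',x'')$ by $\Psi(x'',\Td y'')$ up to a function vanishing to infinite order on ${\rm diag\,}((\mu^{-1}(0)\cap U)\times(\mu^{-1}(0)\cap U))$, one obtains
\begin{equation*}
\sigma_m^*(x,\Td y'')=e^{im\Psi(x'',\Td y'')}\alpha^\sharp(x,\Td y'',m)+O(m^{-\infty})
\end{equation*}
on $U\times W$, with $\alpha^\sharp\in S^{n-\frac{3}{4}d}_{{\rm loc\,}}(1)$ whose leading term is the pointwise adjoint of $\alpha_0(\Td y'',x'')$. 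This matches precisely the input format required by Theorem~\ref{t-gue170301wI}.

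Parts (i) and (ii) are handled by repeating the off-diagonal localization arguments of Theorem~\ref{t-gue170304ry}. The factor $S^{(q)}_{G,m}$ inside $\sigma_m$ forces $\sigma_m(\Td u'',y) = O(m^{-\infty})$ for $y$ in any open set $D$ with $\ol D\cap Y=\emptyset$ (by Theorem~\ref{t-gue170128I}), so the integral representing $F_m(x,y)$ is $O(m^{-\infty})$ for such $y$, proving \eqref{e-gue170304ryIp}. For \eqref{e-gue170305b}, the assumption $\pi(e^{i\theta}\circ x)\neq\pi(e^{i\theta}\circ y)$ for every $\theta$ lets one choose neighborhoods $D_1$, $D_2$ of $x,y$ whose $G\times S^1$-saturations project to disjoint open subsets of $Y_G$; applying \eqref{e-gue170304ryII} to $\sigma_m$ and to $\sigma_m^*$ separately and integrating over $Y_G$ yields the claim.

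For part (iii), fix $p\in\mu^{-1}(0)$ and work in the coordinates of Remark~\ref{r-gue170309}. Choose a $G\times S^1$-invariant cut-off $\chi_1(\Td{\mathring{u}}'')\in C^\infty_0(\Omega_3)$ equal to $1$ on a neighborhood of $p$ (using the extension procedure employed in the proof of Theorem~\ref{t-gue170226cw}); part (ii) applied to points outside ${\rm Supp\,}\chi_1$ shows that the complementary contribution is $O(m^{-\infty})$. The remaining piece is exactly of the form treated by Theorem~\ref{t-gue170301wI} with $k=\ell=n-\tfrac{3}{4}d$, giving
\begin{equation*}
F_m(x,y)=e^{im\Psi(x'',y'')}a(x'',y'',m)+O(m^{-\infty})
\end{equation*}
with $a\in S^{k+\ell-(n-d)}_{{\rm loc\,}}(1)=S^{n-d/2}_{{\rm loc\,}}(1)$, as required by \eqref{e-gue170305bI}. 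The leading symbol \eqref{e-gue170305bII} follows by inserting the explicit value of $\alpha_0$ from \eqref{e-gue170304rc} into the composition formula \eqref{e-gue170301ua}, using $\tau_{\Td x'',n_-}^*=\tau_{\Td x'',n_-}$ together with $\hat p^*\hat p = {\rm id}$ on $\hat{\mathcal{N}}(\Td x'',n_-)$ and the fact that the two factors of $m^{-d/4}$ in \eqref{e-gue170303cwII} combine into an overall $m^{-d/2}$ absorbed into the order of $a$.

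The main obstacle is purely the leading-symbol bookkeeping: one must carefully track the pointwise adjoint $\alpha_0^\sharp\alpha_0$ at the diagonal, the numerical prefactor $2^{-n}\pi^{n-d}$ from \eqref{e-gue170301ua}, the rescaling $f=\sqrt{V_{{\rm eff\,}}(x)}\abs{\det R_x}^{-1/4}$ built into $\sigma_m$, and the identifications $\iota_G$, $\hat p$, $\tau_{x,n_-}$, and verify that all these factors conspire to produce exactly $2^{-3n+4d-1}\pi^{-n-1}\frac{1}{V_{{\rm eff\,}}(\Td x'')}\abs{\det\mathcal{L}_{\Td x''}}\abs{\det R_x}^{-1/2}\tau_{\Td x'',n_-}$. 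No new geometric input is required beyond the material already developed in the paper; only a careful algebraic manipulation.
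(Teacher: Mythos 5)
Your proposal follows exactly the route the paper takes (and says so explicitly: ``From Theorem~\ref{t-gue170301w}, Theorem~\ref{t-gue170301wI}, we can repeat the proof of Theorem~\ref{t-gue170304ry} with minor change''): pass to the adjoint kernel of $\sigma_m$, use \eqref{e-gue170225I} to put it in the phase-form required by Theorem~\ref{t-gue170301wI}, localize away from the diagonal via the off-diagonal decay already established for $\sigma_m$ and $S^{(q)}_{G,m}$, and read off the order and leading symbol from the composition formula. Two small bookkeeping remarks: the extra factor of $2\pi$ coming from the $u_{2n+1}$-integration (the integral over $Y_G$ is $(2n-2d+1)$-dimensional, while \eqref{e-gue170301ua} is stated with $dv(\Td{\mathring{u}})$ only) is needed to turn $2^{-3n+4d-2}\pi^{-n-2}$ into $2^{-3n+4d-1}\pi^{-n-1}$, so be sure to carry it; and $\hat p^*\hat p = \operatorname{id}$ holds on $\mathcal{N}(\Td x'',n_-)$, not on $\hat{\mathcal{N}}(\Td x'',n_-)$ (it is $\hat p\hat p^*$ that is the identity there). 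Neither affects the validity of the argument.
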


\begin{theorem}\label{t-gue170305aI}
Let $x, y\in Y$.  If $\pi(e^{i\theta}\circ x)\neq\pi(e^{i\theta}\circ y)$, for every $\theta\in[0,2\pi[$, then there are open sets $D_G$ of $\pi(x)$ in $Y_G$ and $V_G$ of $\pi(y)$ in $Y_G$ such that 
\begin{equation}\label{e-gue170305bp}
\hat F_m=O(m^{-\infty})\ \ \mbox{on $D_G\times V_G$}. 
\end{equation}

Let $p\in\mu^{-1}(0)$  and let $x=(x_1,\ldots,x_{2n+1})$ be the local coordinates as in Remark~\ref{r-gue170309}. Then, 
\begin{equation}\label{e-gue170305bIp}
\begin{split}
&\hat F_m(x,y)=e^{im\Psi(\Td x'',\Td y'')}\hat a(\Td x'',\Td y'',m)+O(m^{-\infty})\ \ \mbox{on $W\times W$},\\
&\hat a(\Td x'',\Td y'',m)\in S^{n-d}_{{\rm loc\,}}(1; W\times W, T^{*0,q-r}Y_G\boxtimes(T^{*0,q-r}Y_G)^*),\\
&\mbox{$\hat a(\Td x'',\Td y'',m)\sim\sum^\infty_{j=0}m^{n-d-j}\hat a_j(\Td x'',\Td y'')$ in $S^{n-d}_{{\rm loc\,}}(1; W\times W, T^{*0,q-r}Y_G\boxtimes(T^{*0,q-r}Y_G)^*)$},\\
&\hat a_j(\Td x'',\Td y'')\in C^\infty(W\times W, T^{*0,q-r}Y_G\boxtimes(T^{*0,q-r}Y_G)^*),\ \ j=0,1,2,\ldots,
\end{split}
\end{equation}
\begin{equation}\label{e-gue170305bIIp}
\hat a_0(\Td x'',\Td x'')=2^{-3n+\frac{5}{2}d-1}\pi^{-n+\frac{d}{2}-1}\abs{\det\,\mathcal{L}_{Y_G,\Td x''}}\hat\tau_{\Td x''},\ \ \forall \Td x''\in W
\end{equation}
where $W=\Omega_3\times\Omega_4$, $\Omega_3$ and $\Omega_4$ are open sets as in the beginning of Section~\ref{s-gue170226}. 
\end{theorem}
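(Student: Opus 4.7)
The plan is to parallel the proof of Theorem~\ref{t-gue170305a} for $F_m=\sigma_m^*\sigma_m$, but to interchange the roles of $X$ and $Y_G$ in the intermediate integration: for $\hat F_m=\sigma_m\sigma_m^*$ the middle variable lives in $X$, so the relevant composition formula is Theorem~\ref{t-gue170301w} rather than Theorem~\ref{t-gue170301wI}.

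For the off-diagonal vanishing \eqref{e-gue170305bp}, suppose $x,y\in Y$ satisfy $\pi(e^{i\theta}\circ x)\neq\pi(e^{i\theta}\circ y)$ for every $\theta\in[0,2\pi[$. I would choose $G\times S^1$-invariant cut-offs $\psi_1,\psi_2\in C^\infty(X)$ with disjoint supports containing neighborhoods of the full $G\times S^1$-orbits of $x$ and $y$, write
\[
\hat F_m(\Td x_G,\Td y_G)=\int_X\sigma_m(\Td x_G,z)\,\sigma_m^*(z,\Td y_G)\,dv(z),
\]
and insert $1=\psi_1(z)+(1-\psi_1(z))$. The piece with $\psi_1$ forces $\sigma_m^*(z,\Td y_G)$ to be evaluated outside a neighborhood of the $G\times S^1$-orbit of $y$, to which the off-diagonal estimates of Theorem~\ref{t-gue170304ry} (for the adjoint $\sigma_m^*$) apply; the piece with $1-\psi_1$ is handled symmetrically. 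Both contribute $O(m^{-\infty})$ after the usual cut-off and integration-by-parts in $\theta$.

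For the on-diagonal expansion \eqref{e-gue170305bIp}, work in the local coordinates of Remark~\ref{r-gue170309} near $p\in\mu^{-1}(0)$. Substituting the expansion of Theorem~\ref{t-gue170304ry} into $\hat F_m=\sigma_m\sigma_m^*$ gives, modulo $O(m^{-\infty})$,
\[
\hat F_m(\Td x'',\Td y'')\equiv\int e^{im[\Psi(\Td x'',z'')-\ol\Psi(\Td y'',z'')]}\alpha(\Td x'',z'',m)\,\alpha^*(\Td y'',z'',m)\,dv(z).
\]
By \eqref{e-gue170225I}, $-\ol\Psi(\Td y'',z'')-\Psi(z'',\Td y'')$ vanishes to infinite order at the diagonal of $\mu^{-1}(0)$, so the phase can be replaced by $\Psi(\Td x'',z'')+\Psi(z'',\Td y'')$ up to an $O(m^{-\infty})$ error in the Melin--Sj\"ostrand equivalence sense. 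After inserting cut-offs that localize to a small neighborhood of the diagonal of $\mu^{-1}(0)$, Theorem~\ref{t-gue170301w} (applied to the composition integrating over $X$ in the coordinate-slice form $dv(\mathring{u}'')$ obtained as in \eqref{e-gue170227y}, with the $G$- and $S^1$-directions absorbed into the $\abs{G}_{d\mu}$ and $2\pi$ factors) produces a kernel of the form $e^{im\Psi(\Td x'',\Td y'')}\hat a(\Td x'',\Td y'',m)$ with $\hat a$ a classical symbol of order $(n-\tfrac{3d}{4})+(n-\tfrac{3d}{4})-(n-\tfrac{d}{2})=n-d$, matching the order in \eqref{e-gue170305bIp}.

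The computation of $\hat a_0$ at the diagonal is then a direct substitution: \eqref{e-gue170301u} gives
\[
\hat a_0(\Td x'',\Td x'')=2^{-n-\tfrac{d}{2}}\pi^{n-\tfrac{d}{2}}\abs{\det\mathcal{L}_{\Td x''}}^{-1}\abs{\det R_{\Td x''}}^{\tfrac{1}{2}}\alpha_0(\Td x'',\Td x'')\,\alpha_0(\Td x'',\Td x'')^*,
\]
where $\alpha_0\alpha_0^*$ collapses to a scalar multiple of $\hat\tau_{\Td x''}$ on $T^{*0,q-r}Y_G$ because $\hat p_x$ is a unitary identification of $\mathcal{N}(x,n_-)$ with $\hat{\mathcal{N}}(x,n_-)$ and $\tau_{x,n_-}$ is an orthogonal projection. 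Plugging in \eqref{e-gue170304rc} and using the orbit-splitting identity $\abs{\det\mathcal{L}_x}=2^{-d}\abs{\det R_x}\,\abs{\det\mathcal{L}_{Y_G,x}}$ (which follows from Theorem~\ref{t-gue161202}, since the eigenvalues of $R_x$ on $\underline{\mathfrak{g}}_x$ are $2\mu_1,\ldots,2\mu_d$ while $\mathcal{L}_{Y_G,x}$ has eigenvalues $\mu_{d+1},\ldots,\mu_n$), together with the $(2\pi)V_{\mathrm{eff}}(x)$ factor generated by the $S^1$- and $G$-averaging in the $X$-integration, yields \eqref{e-gue170305bIIp}. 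The main obstacle is precisely this last step of numerical bookkeeping, since the coefficient combines the two stationary-phase reductions already hidden in Theorem~\ref{t-gue170304ry} with a further stationary-phase reduction coming from Theorem~\ref{t-gue170301w}; keeping careful track of the $V_{\mathrm{eff}}$, $\abs{\det R_x}$ and $2^a\pi^b$ factors is where errors are most likely to enter.
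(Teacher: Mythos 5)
Your proposal is correct and follows the same route the paper intends: the paper simply says to repeat the proof of Theorem~\ref{t-gue170304ry}, and you correctly identify that for $\hat F_m=\sigma_m\sigma_m^*$ the intermediate variable lives in $X$ so the relevant composition formula is Theorem~\ref{t-gue170301w} (with the $G$- and $S^1$-directions contributing the extra $2\pi V_{\rm eff}$ factor), while the phase is put into the required form via \eqref{e-gue170225I}. Your numerical bookkeeping does close: combining \eqref{e-gue170301u}, \eqref{e-gue170304rc}, the collapse $\alpha_0\alpha_0^*=c_1^2\hat\tau_{\Td x''}$, the identity $\abs{\det\mathcal{L}_x}=2^{-d}\abs{\det R_x}\abs{\det\mathcal{L}_{Y_G,x}}$, and the factor $2\pi V_{\rm eff}(x)$ reproduces $\hat a_0$ exactly as in \eqref{e-gue170305bIIp}.
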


Let $R_m:=\frac{1}{C_0}F_m-S^{(q)}_{G,m}:\Omega^{0,q}(X)\To H^q_{b,m}(X)^G$, where $C_0=2^{-3d+3n}\pi^{\frac{d}{2}}$. Since $F_m=F_mS^{(q)}_{G,m}$, it is clear that 
\begin{equation}\label{e-gue170305i}
\frac{1}{C_0}F_m=S^{(q)}_{G,m}+R_m=S^{(q)}_{G,m}+R_mS^{(q)}_{G,m}=(I+R_m)S^{(q)}_{G,m}.
\end{equation}

Our next goal is to show that for $m$ large, $I+R_m:\Omega^{0,q}(X)\To \Omega^{0,q}(X)$ is injective. 
From Theorem~\ref{t-gue170305a} and Theorem~\ref{t-gue170128I}, we see that if $y\notin Y$, then for any open set $D$ of $y$ with $\ol D\bigcap Y=\emptyset$, we have
\begin{equation}\label{e-gue170305f}
R_m=O(m^{-\infty})\ \ \mbox{on $X\times D$.}
\end{equation}
Let $x, y\in Y$.  If $\pi(e^{i\theta}\circ x)\neq\pi(e^{i\theta}\circ y)$, for every $\theta\in[0,2\pi[$, then there are open sets $D_1$ of $x$ in $X$ and $D_2$ of $y$ in $X$ such that 
\begin{equation}\label{e-gue170305fI}
R_m=O(m^{-\infty})\ \ \mbox{on $D_1\times D_2$}. 
\end{equation}

Let $p\in\mu^{-1}(0)$  and let $x=(x_1,\ldots,x_{2n+1})$ be the local coordinates as in Remark~\ref{r-gue170309}. Then, 
\begin{equation}\label{e-gue170305fII}
\begin{split}
&R_m(x,y)=e^{im\Psi(x'',y'')}r(x'',y'',m)+O(m^{-\infty})\ \ \mbox{on $U\times U$},\\
&r(x'',y'',m)\in S^{n-\frac{d}{2}}_{{\rm loc\,}}(1; U\times U, T^{*0,q}X\boxtimes(T^{*0,q}X)^*),\\
&\mbox{$r(x'',y'',m)\sim\sum^\infty_{j=0}m^{n-\frac{d}{2}-j}r_j(x'',y'')$ in $S^{n-\frac{d}{2}}_{{\rm loc\,}}(1; U\times U, T^{*0,q}X\boxtimes(T^{*0,q}X)^*)$},\\
&r_j(x'',y'')\in C^\infty(U\times U, T^{*0,q}X\boxtimes(T^{*0,q}X)^*),\ \ j=0,1,2,\ldots.
\end{split}
\end{equation}
Moreover, from \eqref{e-gue170305bII} and \eqref{e-gue170117pVIIIam}, it is easy to see 
\begin{equation}\label{e-gue170305fIII}
\abs{r_0(x,y)}\leq C\abs{(x,y)-(x_0,x_0)},
\end{equation}
for all $x_0\in\mu^{-1}(0)\bigcap U$, where $C>0$ is a constant. We need 

\begin{lemma}\label{l-gue170306s}
Let $p\in\mu^{-1}(0)$  and let $x=(x_1,\ldots,x_{2n+1})$ be the local coordinates as in Remark~\ref{r-gue170309} defined in an open set $U$ of $p$. Let
\[
\begin{split}
&H_m(x,y)=e^{im\Psi(x'',y'')}h(x,y,m)\ \ \mbox{on $U\times U$},\\
&h(x,y,m)\in S^{n-1-\frac{d}{2}}_{{\rm loc\,}}(1; U\times U, T^{*0,q}X\boxtimes(T^{*0,q}X)^*),\\
&\mbox{$h(x,y,m)\sim\sum^\infty_{j=0}m^{n-1-\frac{d}{2}-j}h_j( x,y)$ in $S^{n-1-\frac{d}{2}}_{{\rm loc\,}}(1; U\times U, T^{*0,q}X\boxtimes(T^{*0,q}X)^*)$},\\
&h_j(x,y)\in C^\infty_0(U\times U, T^{*0,q}X\boxtimes(T^{*0,q}X)^*),\ \ j=0,1,2,\ldots.
\end{split}\]
Assume that $h(x,y,m)\in C^\infty_0(U\times U, T^{*0,q}X\boxtimes(T^{*0,q}X)^*)$. Then, there is a constant $\hat C>0$ independent of $m$ such that 
\begin{equation}\label{e-gue170306s}
\norm{H_mu}\leq \delta_m\norm{u},\ \ \forall u\in\Omega^{0,q}(X),\ \ \forall m\in\mathbb N,
\end{equation}
where $\delta_m$ is a sequence with $\lim_{m\To\infty}\delta_m=0$. 
\end{lemma}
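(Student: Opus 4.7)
\textbf{Proof proposal for Lemma~\ref{l-gue170306s}.} The plan is to bound the operator norm $\|H_m\|_{L^2\to L^2}$ directly by Schur's test, exploiting the Gaussian decay provided by $\mathrm{Im}\,\Psi$ in \eqref{e-gue170117pVIIIbm}. Since $h(\cdot,\cdot,m)\in C^\infty_0(U\times U,\ldots)$ is a symbol of order $n-1-\tfrac{d}{2}$, there is a fixed compact $K\Subset U\times U$ containing $\mathrm{supp}\,h(\cdot,\cdot,m)$ for all $m$, together with a constant $C_0$ independent of $m$ such that $|h(x,y,m)|\le C_0\, m^{n-1-\frac{d}{2}}$ uniformly. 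Combined with \eqref{e-gue170117pVIIIbm}, this yields the pointwise estimate
$$
|H_m(x,y)|\le C_0\, m^{n-1-\frac{d}{2}}\,\exp\!\Bigl(-c\,m\,\bigl[\,d(x,\mu^{-1}(0))^2 + d(y,\mu^{-1}(0))^2 + \textstyle\inf\limits_{g\in G,\,\theta\in S^1}d(e^{i\theta}\!\circ g\!\circ x,\,y)^2\,\bigr]\Bigr)
$$
for $(x,y)\in K$, and $H_m(x,y)=0$ outside $K$.

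Next I estimate $M_1:=\sup_{x}\int_X|H_m(x,y)|\,dv(y)$. Because $G\times S^1$ acts freely near $\mu^{-1}(0)$ (Assumption~\ref{a-gue170128}) and the projection of $K$ to the first factor lies in such a neighborhood, for each such $x$ the orbit $G\cdot S^1\cdot x$ is a smooth compact $(d+1)$-dimensional submanifold of $X$. Using the adapted coordinates of Theorem~\ref{t-gue161202z} together with the exponential map in the orthogonal complement, I set up a tubular neighborhood of this orbit with coordinates $(s,t)\in\mathbb{R}^{d+1}\times\mathbb{R}^{2n-d}$ in which $\inf_{g,\theta}d(e^{i\theta}gx,y)^2\asymp |t|^2$, with constants uniform in $x$. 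Dropping the nonnegative term $d(y,\mu^{-1}(0))^2$ and integrating,
$$
\int_X e^{-cm\inf_{g,\theta} d(e^{i\theta} g x,y)^2}\,dv(y)\le C_1\int_{\mathbb{R}^{2n-d}}e^{-cm|t|^2}\,dt\le C_2\, m^{-(2n-d)/2}.
$$
The remaining factor $e^{-cm\,d(x,\mu^{-1}(0))^2}$ is bounded by $1$ (and in fact makes things smaller when $x$ is away from $\mu^{-1}(0)$), so
$$
M_1\le C_0\,m^{n-1-\frac{d}{2}}\cdot C_2\, m^{-(2n-d)/2}=C_3\,m^{-1}.
$$
Symmetry of the right-hand side of \eqref{e-gue170117pVIIIbm} under $x\leftrightarrow y$ yields the same bound for $M_2:=\sup_y\int_X|H_m(x,y)|\,dv(x)$.

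Schur's test now gives $\|H_m u\|\le\sqrt{M_1 M_2}\,\|u\|\le C_4\, m^{-1}\|u\|$ for every $u\in\Omega^{0,q}(X)$, so the lemma holds with $\delta_m:=C_4 m^{-1}\to 0$. The main technical point is the uniform comparison $\inf_{g,\theta}d(e^{i\theta}gx,y)^2\asymp|t|^2$ across a neighborhood of $\mu^{-1}(0)$; this is a routine consequence of the free action of $G\times S^1$ near $\mu^{-1}(0)$ together with Theorem~\ref{t-gue161202z}. The identity $(n-1-\tfrac{d}{2})-(2n-d)/2=-1$ reflects that $h$ carries one fewer power of $m$ than the symbol $b$ of $S^{(q)}_{G,m}$ in \eqref{e-gue170117pVIIIm}; since $\|S^{(q)}_{G,m}\|_{L^2}=1$, this single-order deficit is forced to translate into an $O(m^{-1})$ operator norm for $H_m$, which is precisely what the calculation produces — exactly the estimate needed in \eqref{e-gue170306ngI} to make $I+R_m$ injective for large $m$.
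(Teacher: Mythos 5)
Your proof is correct but takes a genuinely different route from the paper's. The paper uses the operator-power inequality $\|H_m u\| \le \|(H_m^*H_m)^{2^N}u\|^{1/2^{N+1}}\|u\|^{1-1/2^{N+1}}$ together with the composition calculus of Theorem~\ref{t-gue170301w}: each composition of two operators with phase $\Psi$ drops the symbol order by $n-\tfrac{d}{2}$, so $(H_m^*H_m)^{2^N}$ has symbol order $n - 2^{N+1} - \tfrac{d}{2}$, which is negative once $N$ is large; a pointwise bound on its (compactly supported) kernel then gives an operator-norm bound $O(m^{(n-d/2)/2^{N+1}-1})$, i.e.\ $O(m^{-1+\varepsilon})$. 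You instead estimate the kernel directly and apply Schur's test, using the quadratic lower bound \eqref{e-gue170117pVIIIbm} on ${\rm Im\,}\Psi$ and integrating a Gaussian over the $2n-d$ directions transverse to the $G\times S^1$-orbit; this yields the sharper rate $\delta_m = O(m^{-1})$. The paper's argument is essentially a black box once Theorem~\ref{t-gue170301w} is in place and needs no further geometric input, while your argument is more elementary and quantitatively sharper but rests on the tubular-neighborhood comparison $\inf_{g,\theta}d(e^{i\theta}gx,y)^2\asymp|t|^2$ holding uniformly over the relevant compact set, which you correctly flag as the crux and which does follow from the free $G\times S^1$ action near $\mu^{-1}(0)$ and the adapted coordinates of Theorem~\ref{t-gue161202z}. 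One small caveat worth stating explicitly: Schur's test as you run it needs ${\rm supp\,}h(\cdot,\cdot,m)$ contained in a single compact $K\Subset U\times U$ uniformly in $m$; this is not literal in the hypotheses as written, but it holds in every application in the paper since the cutoffs producing $h$ are fixed, so the gap is benign.
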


\begin{proof}
Fix $N\in\mathbb N$. It is not difficult to see that 
\begin{equation}\label{e-gue170306Ix}
\norm{H_mu}\leq\norm{(H^*_mH_m)^{2^N}u}^{\frac{1}{2^{N+1}}}\norm{u}^{1-\frac{1}{2^{N+1}}},\ \ \forall u\in\Omega^{0,q}(X),
\end{equation}
where $H^*_m$ denotes the adjoint of $H_m$. From Theorem~\ref{t-gue170301w}, we can repeat the proof of Theorem~\ref{t-gue170304ry} with minor change and deduce that 
\begin{equation}\label{e-gue170306xII}
\begin{split}
&(H^*_mH_m)^{2^N}(x,y)=e^{im\Psi(x'',y'')}p (x,y,m)+O(m^{-\infty})\ \ \mbox{on $U\times U$},\\
&p(x,y,m)\in S^{n-2^{N+1}-\frac{d}{2}}_{{\rm loc\,}}(1; U\times U, T^{*0,q}X\boxtimes(T^{*0,q}X)^*),\\
&p(x,y,m)\in C^\infty_0(U\times U, T^{*0,q}X\boxtimes(T^{*0,q}X)^*).
\end{split}
\end{equation}
Hence, 
\begin{equation}\label{e-gue170306xIII}
\abs{(H^*_mH_m)^{2^N}(x,y)}\leq \hat Cm^{n-2^{N+1}-\frac{d}{2}},\ \ \forall (x,y)\in U\times U,
\end{equation}
where $\hat C>0$ is a constant independent of $m$. Take $N$ large enough so that $n-2^{N+1}-\frac{d}{2}<0$. From \eqref{e-gue170306xIII} and \eqref{e-gue170306Ix}, 
we get \eqref{e-gue170306s}. 
\end{proof}

We also need 

\begin{lemma}\label{l-gue170306}
Let $p\in\mu^{-1}(0)$  and let $x=(x_1,\ldots,x_{2n+1})$ be the local coordinates as in Remark~\ref{r-gue170309} defined in an open set $U$ of $p$. Let \[
\begin{split}
&B_m(x,y)=e^{im\Psi(x'',y'')}g(x,y,m)\ \ \mbox{on $U\times U$},\\
&g(x,y,m)\in S^{n-\frac{d}{2}}_{{\rm loc\,}}(1; U\times U, T^{*0,q}X\boxtimes(T^{*0,q}X)^*),\\
&\mbox{$g(x,y,m)\sim\sum^\infty_{j=0}m^{n-\frac{d}{2}-j}g_j( x,y)$ in $S^{n-\frac{d}{2}}_{{\rm loc\,}}(1; U\times U, T^{*0,q}X\boxtimes(T^{*0,q}X)^*)$},\\
&g_j(x,y)\in C^\infty_0(U\times U, T^{*0,q}X\boxtimes(T^{*0,q}X)^*),\ \ j=0,1,2,\ldots,\\
&g(x,y)\in C^\infty_0(U\times U, T^{*0,q}X\boxtimes(T^{*0,q}X)^*).
\end{split}\]
Suppose that 
\begin{equation}\label{e-gue170306}
\abs{g_0(x,y)}\leq C\abs{(x,y)-(x_0,x_0)},
\end{equation}
for all $x_0\in\mu^{-1}(0)\bigcap U$, where $C>0$ is a constant. Then, 
\begin{equation}\label{e-gue170306I}
\norm{B_mu}\leq \varepsilon_m\norm{u},\ \ \forall u\in\Omega^{0,q}(X),\ \ \forall m\in\mathbb N,
\end{equation}
where $\varepsilon_m$ is a sequence with $\lim_{m\To\infty}\varepsilon_m=0$. 
\end{lemma}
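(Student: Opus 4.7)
The plan is to combine the $T^*T$ identity with the composition formula of Theorem~\ref{t-gue170301w} to upgrade the first-order vanishing of $g_0$ on $\operatorname{diag}(\mu^{-1}(0)\cap U)$ into a quadratic vanishing of the principal symbol of $B_m^*B_m$, and then to bound the resulting operator via Schur's test using the imaginary-part estimate for $\Psi$. In the intended application (the operator $R_m$ appearing in the proof of Theorem~\ref{t-gue170122}), the kernel $g$ is $G\times S^1$-invariant under the diagonal action, a property inherited from $S^{(q)}_{G,m}$ and $\sigma_m$, and this invariance will be used essentially in the Schur step.

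First I would decompose $g = g_0 + r$ with $r\in S^{n-d/2-1}_{{\rm loc}}$ compactly supported, and write $B_m = B_m^{(0)} + R_m'$ correspondingly. Since $r$ has strictly lower order, Lemma~\ref{l-gue170306s} applied to $R_m'$ gives $\|R_m' u\|\le\delta_m\|u\|$ with $\delta_m\to 0$; it therefore suffices to prove the claim for $B_m^{(0)}$. For this, use $\|B_m^{(0)}\|^2 = \|(B_m^{(0)})^*B_m^{(0)}\|$ and apply Theorem~\ref{t-gue170301w} (with the cutoff $\chi\equiv 1$ on $\operatorname{supp} g_0$): the composition $(B_m^{(0)})^*B_m^{(0)}$ has kernel $e^{im\Psi(x,y)}c(x,y,m) + O(m^{-\infty})$ with $c\in S^{n-d/2}_{{\rm loc}}$ compactly supported, and the "moreover" clause applied with $N_1 = N_2 = 1$ yields
\[
|c_0(x,y)|\le C\,|(x,y)-(x_0,x_0)|^2\qquad\forall\,x_0\in\mu^{-1}(0)\cap U.
\]

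Because the kernel of $(B_m^{(0)})^*B_m^{(0)}$ is $G\times S^1$-invariant under the diagonal action, $c_0$ descends to the quotient, and the infimum over $x_0$ in the above bound may be restricted to the transverse variables, yielding $|c_0(x,y)|\le C\,T(x,y)$ where $T := |\hat x''|^2 + |\hat y''|^2 + |\mathring x''-\mathring y''|^2$. Combined with the imaginary-part estimate $\operatorname{Im}\Psi(x,y)\ge c_1 T$ from Theorem~\ref{t-gue170126} and the elementary inequality $t\,e^{-c_1 m t}\le (C/m)e^{-c_1 m t/2}$, this gives $|c_0\, e^{im\Psi}|\le (C/m)\,e^{-c_1 m T/2}$; the lower-order terms of $c$ contribute at most $C m^{n-d/2-1}e^{-c_1 m T}$. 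Integrating in $y$, the $d+1$ orbit-direction variables $(y',y_{2n+1})$ produce a bounded factor while the $2n-d$ transverse variables $(\hat y'',\tilde y'')$ contribute $m^{-(2n-d)/2}$ by Gaussian concentration; combining, $\sup_x\int |e^{im\Psi}c|\,dy \le C\, m^{-1}$, and symmetrically for $\int\cdot\,dx$. Schur's test then gives $\|(B_m^{(0)})^*B_m^{(0)}\|\le C m^{-1}$, so $\|B_m^{(0)}\|\le C m^{-1/2}\to 0$, which together with the bound on $R_m'$ completes the proof.

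The main subtlety lies in the transition from the raw hypothesis $|g_0|\le C|(x,y)-(x_0,x_0)|$, whose optimal bound over $x_0\in\mu^{-1}(0)$ involves \emph{all} coordinate differences (in particular the orbit-direction quantities $|x'-y'|^2$ and $(x_{2n+1}-y_{2n+1})^2$), to the transverse-only bound $|c_0|\le CT$ that matches $\operatorname{Im}\Psi\ge c_1 T$. Without the reduction afforded by the $G\times S^1$-invariance of the composed kernel, the orbit-direction contributions would prevent the Schur integrals from decaying; verifying carefully that this invariance is preserved under the compositions is the key bookkeeping point of the proof.
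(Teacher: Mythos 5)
Your proof takes a genuinely different route from the paper's, and it is worth comparing. The paper does not use a Schur test at all: it fixes a large $N$, forms $(B_m^*B_m)^{2^N}$, and iterates the composition formula of Theorem~\ref{t-gue170301w} $2^{N+1}-1$ times so that the leading coefficient $\hat g_0$ vanishes to order $2^{N+1}$ on $\operatorname{diag}(\mu^{-1}(0)\cap U)$; once $2^{N}>n-\tfrac{d}{2}$, the elementary inequality $T^{2^N}e^{-cmT}\lesssim m^{-2^N}$ already forces the kernel of $(B_m^*B_m)^{2^N}_0$ to be $O(m^{n-\frac d2-2^N})$ uniformly, and since the kernel is compactly supported this pointwise bound suffices for the operator bound. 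You instead stop after a single composition, keep the second-order vanishing $|c_0|\le CT$, and recover the same order of decay by exploiting the Gaussian concentration in the $2n-d$ transverse variables through Schur's test: $T\,e^{-c_1mT}\lesssim m^{-1}e^{-c_1mT/2}$ gives one power, the transverse Gaussian integral gives $m^{-(n-d/2)}$, and these cancel the symbol order $m^{n-d/2}$. Both arguments are correct in structure; yours is more economical (one composition instead of $2^{N+1}-1$), at the cost of the more delicate Schur bookkeeping.

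One point needs flagging. The passage from $|c_0(x,y)|\le C|(x,y)-(x_0,x_0)|^2$ (which, after taking the infimum over $x_0\in\mu^{-1}(0)\cap U$, controls $c_0$ by the \emph{full} quantity $|x'-y'|+|\hat x''|+|\hat y''|+|\tilde x''-\tilde y''|$, including the orbit-direction differences) to the transverse-only bound $|c_0|\le C\,T$ does \emph{not} follow from the hypotheses of the lemma as stated: $g$ is an arbitrary compactly supported classical symbol, and neither it nor $(B_m^{(0)})^*B_m^{(0)}$ is assumed $G\times S^1$-invariant. You explicitly invoke that invariance, imported from the intended application to $R_m$, to justify the step. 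The paper's own proof makes an exactly analogous leap (from \eqref{e-gue170306III} to \eqref{e-gue170306r}) without stating it; so you have not introduced a new gap, you have merely made visible a hypothesis that the paper uses silently. For a self-contained proof of the lemma one would either need to add the $G\times S^1$-invariance of $g$ to the hypotheses, or replace the first-order vanishing assumption \eqref{e-gue170306} by a vanishing estimate phrased directly in the transverse quantity $T$.
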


\begin{proof}
Fix $N\in\mathbb N$. It is not difficult to see that 
\begin{equation}\label{e-gue170306Id}
\norm{B_mu}\leq\norm{(B^*_mB_m)^{2^N}u}^{\frac{1}{2^{N+1}}}\norm{u}^{1-\frac{1}{2^{N+1}}},\ \ \forall u\in\Omega^{0,q}(X),
\end{equation}
where $B^*_m$ denotes the adjoint of $B_m$. From Theorem~\ref{t-gue170301w}, we can repeat the proof of Theorem~\ref{t-gue170304ry} with minor change and deduce that 
\begin{equation}\label{e-gue170306II}
\begin{split}
&(B^*_mB_m)^{2^N}(x,y)=e^{im\Psi(x'',y'')}\hat g(x,y,m)+O(m^{-\infty})\ \ \mbox{on $U\times U$},\\
&\hat g(x,y,m)\in S^{n-\frac{d}{2}}_{{\rm loc\,}}(1; U\times U, T^{*0,q}X\boxtimes(T^{*0,q}X)^*),\\
&\mbox{$\hat g(x,y,m)\sim\sum^\infty_{j=0}m^{n-\frac{d}{2}-j}\hat g_j(x,y)$ in $S^{n-\frac{d}{2}}_{{\rm loc\,}}(1; U\times U, T^{*0,q}X\boxtimes(T^{*0,q}X)^*)$},\\
&\hat g_j(x,y)\in C^\infty_0(U\times U, T^{*0,q}X\boxtimes(T^{*0,q}X)^*),\ \ j=0,1,2,\ldots, \\
&\hat g(x,y,m)\in C^\infty_0(U\times U, T^{*0,q}X\boxtimes(T^{*0,q}X)^*),
\end{split}
\end{equation}
and 
\begin{equation}\label{e-gue170306III}
\abs{\hat g_0(x,y)}\leq C\abs{(x,y)-(x_0,x_0)}^{2^{N+1}},
\end{equation}
for all $x_0\in\mu^{-1}(0)\bigcap U$, where $C>0$ is a constant. Let 
\[\begin{split}
(B^*_mB_m)^{2^N}_0(x,y)=e^{im\Psi(x'',y'')}\hat g_0(x,y,m),\\
(B^*_mB_m)^{2^N}_1(x,y)=e^{im\Psi(x'',y'')}h(x,y,m),
\end{split}\]
where $h(x,y,m)=\hat g(x,y,m)-\hat g_0(x,y,m)$. It is clear that 
\[h(x,y,m)\in S^{n-1-\frac{d}{2}}_{{\rm loc\,}}(1; U\times U, T^{*0,q}X\boxtimes(T^{*0,q}X)^*).\]
From Lemma~\ref{l-gue170306s}, we see that 
\begin{equation}\label{e-gue170306e}
\norm{(B^*_mB_m)^{2^N}_1u}\leq \delta_m\norm{u},\ \ \forall u\in\Omega^{0,q}(X),\ \ \forall m\in\mathbb N,
\end{equation}
where $\delta_m$ is a sequence with $\lim_{m\To\infty}\delta_m=0$. 

From \eqref{e-gue170306III}, we see that 
\begin{equation}\label{e-gue170306r}
\abs{\hat g_0(x,y)}\leq C_1\Bigr(\abs{\hat x''}+\abs{\hat y''}+\abs{\Td{\mathring{x}}''-\Td{\mathring{y}}''}\Bigr)^{2^{N+1}},
\end{equation}
where $C_1>0$ is a constant. From \eqref{e-gue170106m}, we see that 
\begin{equation}\label{e-gue170306rI}
\abs{{\rm Im\,}\Psi(x,y)}\geq c\Bigr(\abs{\hat x''}^2+\abs{\hat y''}^2+\abs{\Td{\mathring{x}}''-\Td{\mathring{y}}''}^2\Bigr),
\end{equation}
where $c>0$ is a constant. From \eqref{e-gue170306r} and \eqref{e-gue170306rI}, we conclude that 
\begin{equation}\label{e-gue170306rII}
\abs{(B^*_mB_m)^{2^N}_0(x,y)}\leq \hat Cm^{-2^N+n-\frac{d}{2}},\ \ \forall (x,y)\in U\times U,
\end{equation}
where $\hat C>0$ is a constant independent of $m$. From \eqref{e-gue170306rII}, we see that if $N$ large enough, then
\begin{equation}\label{e-gue170306ryIII}
\norm{(B^*_mB_m)^{2^N}_0u}\leq\hat\delta_m\norm{u},\ \ \forall u\in\Omega^{0,q}(X),\ \ \forall m\in\mathbb N,
\end{equation}
where $\hat\delta_m$ is a sequence with $\lim_{m\To\infty}\hat\delta_m=0$. 

From \eqref{e-gue170306Id}, \eqref{e-gue170306e} and \eqref{e-gue170306ryIII}, we get \eqref{e-gue170306I}. 
\end{proof}

From \eqref{e-gue170305f}, \eqref{e-gue170305fI}, \eqref{e-gue170305fII}, \eqref{e-gue170305fIII} and Lemma~\ref{l-gue170306}, we get 

\begin{theorem}\label{t-gue170306j}
With the notations above, we have 
\[
\norm{R_mu}\leq \varepsilon_m\norm{u},\ \ \forall u\in\Omega^{0,q}(X),\ \ \forall m\in\mathbb N,\]
where $\varepsilon_m$ is a sequence with $\lim_{m\To\infty}\varepsilon_m=0$. 

In particular, if $m$ is large enough, then the map 
\[I+R_m:\Omega^{0,q}(X)\To \Omega^{0,q}(X)\]
is injective. 
\end{theorem}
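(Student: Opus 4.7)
The plan is to realize the isomorphism by showing that both $\sigma_m$ and its adjoint $\sigma_m^*$ are injective when restricted to the respective cohomology spaces for large $m$, which, together with finite-dimensionality, forces bijectivity and the dimension equality. Since $\sigma_m v = \sigma_m S^{(q)}_{G,m} v$ for $v \in H^q_{b,m}(X)^G$, I would first extend $\sigma_m$ to a continuous operator $\Omega^{0,q}(X) \to H^{q-r}_{b,m}(Y_G)$ as in \eqref{e-gue170303cwIIy}, set $F_m := \sigma_m^* \sigma_m$ and $\hat F_m := \sigma_m \sigma_m^*$, and reduce the whole theorem to proving that $F_m$ is injective on $H^q_{b,m}(X)^G$ and $\hat F_m$ is injective on $H^{q-r}_{b,m}(Y_G)$.

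Next, I would compute $F_m(x,y)$ as a complex Fourier integral distribution. Feeding the local description of $\sigma_m$ from Theorem~\ref{t-gue170304ry} into the stationary phase composition formulas of Theorem~\ref{t-gue170301w} and Theorem~\ref{t-gue170301wI}, the phase of $F_m$ is equivalent to $\Psi(x,y)$ and its leading symbol on the diagonal works out to a nonzero scalar multiple of the leading symbol of $S^{(q)}_{G,m}$ (this is exactly the calculation behind \eqref{e-gue170305bII}). Choosing the normalizing constant $C_0 = 2^{-3d+3n}\pi^{d/2}$ so that the two leading symbols agree on $\mathrm{diag}(Y \times Y)$, the operator $R_m := \tfrac{1}{C_0} F_m - S^{(q)}_{G,m}$ is of the same Fourier integral type as $S^{(q)}_{G,m}$ but with leading symbol vanishing on $\mathrm{diag}(Y \times Y)$; since $F_m = F_m S^{(q)}_{G,m}$, we obtain the key factorization $\tfrac{1}{C_0} F_m = (I + R_m) S^{(q)}_{G,m}$ displayed in \eqref{e-gue170306ng}.

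The hard part is the $L^2$ bound $\|R_m u\| \le \varepsilon_m \|u\|$ with $\varepsilon_m \to 0$. For this I would combine two estimates: the vanishing of the leading symbol of $R_m$ to first order on the diagonal, and the positivity $\mathrm{Im}\,\Psi(x,y) \gtrsim |\hat x''|^2 + |\hat y''|^2 + |\mathring x'' - \mathring y''|^2$ from \eqref{e-gue170126}. A direct kernel estimate only yields boundedness, so I would iterate: compute $(R_m^* R_m)^{2^N}$ by repeated stationary phase, observe that each composition multiplies the vanishing order of the leading symbol on the diagonal by $2$ (Theorem~\ref{t-gue170301w} tracks exactly this), then balance the Gaussian weight from $\mathrm{Im}\,\Psi$ against powers of $m^{-1/2}$ coming from the vanishing. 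For $N$ large enough the pointwise kernel bound $O(m^{-2^{N+1}+n-d/2})$ gives $\|(R_m^* R_m)^{2^N}\| = o(1)$, and the Schur--type inequality $\|R_m u\| \le \|(R_m^* R_m)^{2^N} u\|^{1/2^{N+1}} \|u\|^{1-1/2^{N+1}}$ delivers the desired norm estimate.

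Once $\|R_m\| < 1$, the map $I + R_m$ is injective on $\Omega^{0,q}(X)$, hence by \eqref{e-gue170306ng} the composition $F_m$ is injective on $H^q_{b,m}(X)^G$, and consequently $\sigma_m : H^q_{b,m}(X)^G \to H^{q-r}_{b,m}(Y_G)$ is injective. Symmetrically, using Theorem~\ref{t-gue170305aI} in place of Theorem~\ref{t-gue170305a}, I would run exactly the same argument on $Y_G$ with the asymptotic expansion of $S^{(q-r)}_{Y_G,m}$ to factor $\tfrac{1}{\hat C_0} \hat F_m = (I + \hat R_m) S^{(q-r)}_{Y_G,m}$ with $\|\hat R_m\| \to 0$, yielding injectivity of $\sigma_m^*$ on $H^{q-r}_{b,m}(Y_G)$. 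Since both spaces are finite-dimensional, injectivity in both directions forces $\dim H^q_{b,m}(X)^G = \dim H^{q-r}_{b,m}(Y_G)$ and promotes $\sigma_m$ to an isomorphism.
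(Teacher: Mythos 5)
Your overall strategy matches the paper's: iterate $(R_m^*R_m)^{2^N}$, track the vanishing of the leading symbol at ${\rm diag\,}(Y\times Y)$ via Theorem~\ref{t-gue170301w}, balance the Gaussian decay of $e^{im\Psi}$ against powers of $m$, and close with the Schur-type inequality $\norm{R_mu}\leq\norm{(R_m^*R_m)^{2^N}u}^{1/2^{N+1}}\norm{u}^{1-1/2^{N+1}}$. But there is a genuine gap in the step that reads off $\norm{(R_m^*R_m)^{2^N}}=o(1)$ from a pointwise kernel bound. Theorem~\ref{t-gue170301w}, specifically \eqref{e-gue170301uI}, controls vanishing at ${\rm diag\,}(Y\times Y)$ only for the \emph{leading} coefficient of the composed symbol; the subleading coefficients involve derivatives of the factors and do not inherit the high-order vanishing. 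Consequently the full symbol of $(R_m^*R_m)^{2^N}$ has a top term of size $m^{n-\frac{d}{2}}\,O(\abs{(x,y)-(x_0,x_0)}^{2^{N+1}})$, but its lower terms contribute $O(m^{n-\frac{d}{2}-1})$ with no diagonal vanishing, so the pointwise bound on the \emph{full} kernel is only $O(m^{n-\frac{d}{2}-1})$, not $O(m^{n-\frac{d}{2}-2^N})$. (There is also a minor arithmetic slip: maximizing $t^{2^{N+1}}e^{-cmt^2}$ over $t$ yields $O(m^{-2^N})$, not $O(m^{-2^{N+1}})$.) Since $n-\frac{d}{2}-1$ need not be negative, the iteration as you run it does not close.

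The paper's proof of this norm estimate (Lemma~\ref{l-gue170306}) fixes the gap by an explicit split $(R_m^*R_m)^{2^N}=(R_m^*R_m)^{2^N}_0+(R_m^*R_m)^{2^N}_1$: the first piece carries only the leading coefficient, which vanishes to order $2^{N+1}$ and is controlled exactly by the Gaussian balance you describe; the second piece has symbol in $S^{n-1-\frac{d}{2}}_{{\rm loc\,}}(1)$, i.e.\ strictly lower order, and is bounded by the companion Lemma~\ref{l-gue170306s}. That lemma runs the \emph{same} Schur iteration but exploits the lower symbolic order instead of diagonal vanishing: each squaring sends the order $k\mapsto 2k-(n-\frac{d}{2})$, so $(H_m^*H_m)^{2^N}$ has symbolic order $n-2^{N+1}-\frac{d}{2}<0$ for $N$ large, at which point the crude pointwise kernel bound alone gives a small operator norm. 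Both mechanisms are needed; without the split and Lemma~\ref{l-gue170306s}, the estimate $\norm{R_mu}\leq\varepsilon_m\norm{u}$ does not follow.
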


\begin{proof} [Proof of Theorem~\ref{t-gue170122}]
From \eqref{e-gue170305i} and Theorem~\ref{t-gue170306j}, we see that if $m$ is large enough, then the map
\[F_m=\sigma^*_m\sigma_m: H^q_{b,m}(X)^G\To H^q_{b,m}(X)^G\]
is injective. Hence, if $m$ is large enough, then the map
\[\sigma_m:H^q_{b,m}(X)^G\To H^{q-r}_{b,m}(Y_G)\]
is injective and ${\rm dim\,}H^q_{b,m}(X)^G\leq{\rm dim\,}H^{q-r}_{b,m}(Y_G)$. Similarly, we can repeat the proof of Theorem~\ref{t-gue170306j} with minor change and deduce that, if $m$ is large enough, then the map
\[\hat F_m=\sigma_m\sigma^*_m: H^{q-r}_{b,m}(Y_G)\To H^{q-r}_{b,m}(Y_G)\]
is injective. Hence, if $m$ is large enough, then the map
\[\sigma^*_m:H^{q-r}_{b,m}(Y_G)\To H^{q}_{b,m}(X)^G\]
is injective. Thus,  ${\rm dim\,}H^q_{b,m}(X)^G={\rm dim\,}H^{q-r}_{b,m}(Y_G)$ and $\sigma_m$ is an isomorphism if $m$ large enough. 
\end{proof}

\bibliographystyle{plain}

\end{document}